\title{Well-posedness of the Green-Naghdi and Boussinesq-Peregrine systems}
\author{Vincent Duch\^ene%
\thanks{IRMAR - UMR6625, CNRS and Univ. Rennes 1, Campus de Beaulieu, F-35042 Rennes cedex, France.
 VD is partially supported by the project Dyficolti ANR-13-BS01-0003-01 of the Agence Nationale de la Recherche.
}
\and Samer Israwi%
\thanks{Math\'ematiques, Facult\'e des sciences I et Ecole doctorale des sciences et technologie, Universit\'e Libanaise, Beyrouth, Liban. 
SI is partially supported by the Lebanese University research program (MAA group project).
}
}
\date{\today}
\numberwithin{equation}{section}
\newtheorem{Theorem}{Theorem}
\newtheorem{Proposition}{Proposition}[section]
\newtheorem{Lemma}[Proposition]{Lemma}
\newtheorem{Remark}[Proposition]{Remark}
\let\Title\@title
\let\Author\@author
\newcommand{\RR}{\mathbb{R}}
\newcommand{\NN}{\mathbb{N}}
\renewcommand{\t}{\widetilde}
\renewcommand{\S}{\mathcal{S}}
\newcommand{\T}{\mathcal{T}}
\newcommand{\Q}{\mathcal{Q}}
\newcommand{\R}{\mathcal{R}}
\renewcommand{\O}{\mathcal{O}}
\newcommand{\E}{\mathcal{E}}
\newcommand{\F}{\mathcal{F}}
\newcommand{\mfT}{\mathfrak{T}}
\newcommand{\mfa}{\mathfrak{a}}
\newcommand{\dd}{{\rm d}}
\newcommand{\Id}{{\rm Id}}
\renewcommand{\u}{{\bf u}}
\renewcommand{\v}{\mathbf{v}}
\newcommand{\w}{\mathbf{w}}
\renewcommand{\r}{{\bf r}}
\newcommand{\eqdef}{\stackrel{\rm def}{=}}
\newcommand{\nn}{\nonumber}
\newcommand{\ie}{{\em i.e.}~}
\newcommand{\eg}{{\em e.g.}~}
\newcommand{\id}[1]{\left\vert_{_{#1}}\right.}
\DeclareMathOperator{\curl}{curl}
\DeclareMathOperator*{\esssup}{ess\,sup}
\DeclarePairedDelimiter\abs{\lvert}{\rvert}
\DeclarePairedDelimiter\norm{\big\lvert}{\big\rvert}
\DeclarePairedDelimiter\bra{\big\langle}{\big\rangle}
\DeclarePairedDelimiter\Par{\big(}{\big)}
\begin{document}
\maketitle

\begin{abstract}
In this paper we address the Cauchy problem for two systems modeling the propagation of long gravity waves in a layer of homogeneous, incompressible and inviscid fluid delimited above by a free surface, and below by a non-necessarily flat rigid bottom. Concerning the Green-Naghdi system, we improve the result of Alvarez-Samaniego and Lannes~\cite{Alvarez-SamaniegoLannes08a} in the sense that much less regular data are allowed, and no loss of derivatives is involved. Concerning the Boussinesq-Peregrine system, we improve the lower bound on the time of existence provided by Mésognon-Gireau~\cite{Mesognon-Gireaub}. The main ingredient is a physically motivated change of unknowns revealing the quasilinear structure of the systems, from which energy methods are implemented.
\end{abstract}

\vfill

\tableofcontents

\vfill

\newpage

\section{Introduction}\label{S.Intro}

\subsection{Motivation}

The Green-Naghdi system\footnote{The Boussinesq-Peregrine system can be viewed as a simplification of the Green-Naghdi system for small-amplitude waves, which is particularly relevant for numerical purposes; see~\cite{Peregrine67,Lannes,Mesognon-Gireaub,BellecColinRicchiuto}. This work is dedicated to the Green-Naghdi system and we only remark incidentally that our strategy may also be favorably applied to the Boussinesq-Peregrine system.} (sometimes called Serre or fully nonlinear Boussinesq system) is a model for the propagation of gravity waves in a layer of homogeneous incompressible inviscid fluid with rigid bottom and free surface. It has been formally derived several times in the literature, in particular in~\cite{Serre53,SuGardner69,GreenNaghdi76,MilesSalmon85,Seabra-SantosRenouardTemperville87}, using different techniques and various hypotheses. For a clear and modern exposition, it is shown in~\cite{LannesBonneton09} that the Green-Naghdi system can be derived as an asymptotic model from the water waves system (namely the ``exact'' equations for the propagation of surface gravity waves), by assuming that the typical horizontal length of the flow is much larger than the depth of the fluid layer ---that is in the shallow-water regime---  and that the flow is irrotational. Roughly speaking, a Taylor expansion with respect to the small ``shallow-water parameter'' yields at first order the Saint-Venant system, and at second order the Green-Naghdi system (see for instance~\cite{Matsuno16} for higher order systems). As a relatively simple fully nonlinear model (that is without restriction on the amplitude of the waves) formally improving the precision of the Saint-Venant system, the Green-Naghdi system is widely used to model and numerically simulate the propagation of surface waves, in particular in coastal oceanography. It would be impossible to review the vast literature on the subject, and we only let the reader refer to~\cite{Barthelemy04,BonnetonChazelLannesEtAl11} for an introduction and relevant references.

In this work, we are interested in the structural properties and rigorous justification of the Green-Naghdi system. The derivation through formal Taylor expansions of Bonneton and Lannes~\cite{LannesBonneton09} can be made rigorous~\cite[Prop.~5.8]{Lannes}: roughly speaking, any sufficiently smooth solution of the water waves system satisfies the Green-Naghdi system up to a quantifiable (small) remainder. This consistency result is only one step towards the full justification of the model in the following sense: the solution of the water waves system and the solution of the Green-Naghdi system with corresponding initial data remain close on a relevant time interval. In order for such result to hold, one needs of course to ensure the existence and uniqueness of a solution to the Green-Naghdi system in the aforementioned time interval for a large class of initial data; one also needs a stability property ensuring that the two solutions are close. These two results typically call for robust energy estimates on exact and approximate solutions. 

Somewhat surprisingly, the well-posedness theory concerning the Cauchy problem for the Green-Naghdi system is in some sense less satisfactory than the corresponding one for the water waves system. Again, the literature on the latter problem is too vast to summarize, and we only mention the result of~\cite{Alvarez-SamaniegoLannes08,Iguchi09} and~\cite[Theorem 4.16]{Lannes}. Indeed, the latter pay attention to the various dimensionless parameters of the system, and in particular obtain results which hold uniformly with respect to the shallow-water parameter. The outcome of these results is that provided that the initial data is sufficiently regular (measured through Sobolev spaces) and satisfy physical assumptions ---the so-called non-cavitation and Rayleigh-Taylor criteria--- then there exists a unique solution of the water waves system preserving the regularity of the initial data. Moreover, the maximal time of existence may be bounded from below uniformly with respect to the shallow-water parameter; see details therein. Such result is very much nontrivial as the limit of small shallow-water parameter is singular in some sense. Similar results have been proved for the Green-Naghdi system in horizontal dimension $d=1$ in~\cite{Li06} (for flat bottom) and~\cite{Israwi11} (for general bathymetries), but are not yet available in dimension $d=2$. Alvarez-Samaniego and Lannes~\cite{Alvarez-SamaniegoLannes08a} proved an existence and uniqueness result on the correct time-scale but their proof relies on a Nash-Moser scheme, and as such involves a loss of derivatives between the regularity of the initial data and the control of the solution at positive times. {\em The main result of this paper is to show that this loss of derivatives is in fact not necessary, and that the Cauchy problem for the Green-Naghdi system is well-posed in the sense of Hadamard in Sobolev-type spaces.}

\subsection{Strategy}

Let us now introduce the system of equations at stake. In order to ease the discussion and notations, we restrict the study to the horizontal space $X\in\RR^d$ with $d=2$, although the results are easily adapted to the situation of $d=1$, thus yielding another proof of the result in~\cite{Israwi11}. The non-dimensionalized Green-Naghdi system may be expressed (see~\cite{LannesBonneton09,Lannes}) as
\begin{equation}\label{GN-u}
\left\{\begin{array}{l}
\partial_t\zeta+\nabla\cdot(h\u)=0,\\ \\
\big(\Id+\mu \T[h,\beta b]\big)\partial_t\u +\nabla\zeta+\epsilon(\u\cdot\nabla)\u+\mu\epsilon \big(\Q[h,\u]+\Q_b[h,\beta b,\u]\big)=0,
\end{array}\right.
\end{equation}
with $h=1+\epsilon\zeta-\beta b$ and
  \begin{align}
\label{def-T}
     \mathcal T[h,\beta b]\u &\eqdef \frac{-1}{3h}\nabla(h^3\nabla\cdot \u)+\frac1{2h}\Big(\nabla\big(h^2(\beta\nabla b)\cdot \u\big)-h^2(\beta\nabla b)\nabla\cdot \u\Big)+\beta^2(\nabla b\cdot \u)\nabla b,\\
\Q[h,\u]&\eqdef \frac{-1}{3h}\nabla\Big(h^3\big((\u\cdot\nabla)(\nabla\cdot\u)-(\nabla\cdot\u)^2\big)\Big),\nn \\
    \Q_b[h,\beta b,\u]&\eqdef\frac{\beta}{2h}\Big(\nabla\big( h^2(\u\cdot\nabla)^2 b\big) -h^2\big((\u\cdot\nabla)(\nabla\cdot\u)-(\nabla\cdot\u)^2\big)\nabla b\Big)+\beta^2\big((\u\cdot\nabla)^2 b\big) \nabla b.\nn
   \end{align}
Here, the unknowns are $\zeta(t,X)\in \RR$ and $\u(t,X)\in\RR^d$ (representing respectively the dimensionless surface deformation and layer-averaged horizontal velocity), $b(X)\in\RR$ is the fixed bottom topography (so that $h(t,X)$ represents the depth of the fluid layer) and $\epsilon,\beta,\mu$ are dimensionless parameters. 
   
As aforementioned, by setting $\mu=0$ in~\eqref{GN-u}, one recovers the Saint-Venant system, which is an archetype of first-order quasilinear systems of conservation laws. Our strategy in the following is to adapt to the Green-Naghdi equations the well-known techniques --- and in particular {\em a priori} energy estimates --- developed for such systems. Such energy estimates are obviously not guaranteed, due to the presence of the additional third-order nonlinear operators. The key ingredient of this work is the extraction of a quasilinear structure of~\eqref{GN-u}, form which energy estimates can be deduced, and eventually a standard Picard iteration scheme can be set up.

Such a ``quasilinearization'' is also the key ingredient in the proof of the local existence of solutions to the water waves system~\cite[Theorem 4.16]{Lannes}. However, the structure of the water waves system and the one of the Green-Naghdi system look different, due to the fact that they use different unknowns. Indeed, the second equation in~\eqref{GN-u} describes the time-evolution of the layer-averaged horizontal velocity, while the Zakharov/Craig-Sulem formulation of the water waves system involves the trace of the velocity potential at the surface. To our opinion, the main contribution of this work is the demonstration that when expressed in a different set of variables, the Green-Naghdi system possesses a structure which is very similar to the water waves one; and that one can take advantage of this fact to adapt the proof of the local well-posedness of the latter to the one of the former.

To be more precise, our work is based on another formulation of system~\eqref{GN-u}, namely
  \begin{equation}\label{GN-v}
   \left\{ \begin{array}{l}
   \partial_t\zeta+\nabla\cdot(h\u) =0,\\ \\
\big(\partial_t+\epsilon\u^\perp \curl\big) \v+\nabla\zeta+\frac\epsilon2\nabla(\abs{\u}^2)=\mu\epsilon\nabla \big(\R[h,\u]+\R_b[h,\beta b,\u]\big),
      \end{array}\right.
      \end{equation}
      where we denote $\curl (v_1,v_2)\eqdef \partial_1v_2-\partial_2v_1$ and $(u_1,u_2)^\perp\eqdef(-u_2,u_1)$,
   and
      \begin{align}\label{def-R}\R[h,\u]&\eqdef \frac{\u}{3h}\cdot\nabla(h^3\nabla\cdot\u)+\frac12 h^2(\nabla\cdot\u)^2, \\
      \label{def-Rb} \R_b[h,\beta b,\u]&\eqdef -\ \frac12  \left(\frac{\u}{h}\cdot\nabla\big(h^2(\beta\nabla b\cdot\u)\big)+ h(\beta\nabla b\cdot\u) \nabla\cdot\u+(\beta\nabla b\cdot\u)^2\right),
      \end{align}
      and $\v$ is defined (recalling the definition of the operator $\T$ in~\eqref{def-T}) by
   \begin{equation}\label{def-mfT}
  h\v \ = \  h\u+\mu h\T[h,\beta b]\u \eqdef\  \mfT[h,\beta b]\u.
   \end{equation}
In the following, we study system~\eqref{GN-v} as evolution equations for the variables $\zeta$ and $\v$, with $\u=\u[h,\beta b,\v]$ being uniquely defined (see Lemma~\ref{L.T-invertible} thereafter) by~\eqref{def-mfT}, and deduce the well-posedness of~\eqref{GN-u} from the one of~\eqref{GN-v}.

That system~\eqref{GN-v} is equivalent to~\eqref{GN-u} is certainly not straightforward, and we detail the calculations in Section~\ref{S.Formulations-comparison}.  Physically speaking, the variable $\v$ approximates (in the shallow-water regime, $\mu\ll1$) $\v_{\rm ww}=\nabla\psi_{\rm ww}$ where $\psi_{\rm ww}$ is the trace of the velocity potential at the surface, and thus system~\eqref{GN-v} is more directly comparable to the water waves system. In particular, the Hamiltonian structure of system~\eqref{GN-v}, as brought to light in~\cite{Holm88,Li02}, is a direct counterpart of the celebrated one of the water waves system, and we show in Appendix~\ref{S.Formulations-Hamiltonian} how system~\eqref{GN-v} can be quickly derived thanks to the Hamiltonian formalism. This allows to obtain preserved quantities of the system in a straightforward way (see~\cite{SiriwatKaewmaneeMeleshko16} and references therein). Most importantly for our purposes, this allows us to follow the strategy of the proof for the local existence of a solution to the water waves system in~\cite{Lannes}, and to obtain the corresponding local existence result for system~\eqref{GN-v}. More precisely, the variables $\zeta,\v$ allow to define the analogue of Alinhac's ``good unknowns''~\cite{Alinhac89} for the water waves system (see \eg~\cite{AlazardMetivier09}) on which energy estimates can be established. We then extend the analysis so as to prove the well-posedness of the Cauchy problem, in the sense of Hadamard.

After the completion of this work, it was pointed to us that the use of system~\eqref{GN-v} was not necessary (and may be viewed as an over-complicated strategy) to derive {\em a priori} energy estimates. For the sake of completeness, we sketch in Appendix~\ref{S.direct-estimates} the computations which provide such {\em a priori} estimates directly on system~\eqref{GN-u} and would yield an alternative proof of our main results. We still believe that the similarity of structure between the Green-Naghdi system and the one of the water waves system that we exhibit and exploit in this work is an interesting feature. It may serve as a pedagogical tool to get a grasp at some properties of the latter without technical difficulties related to the Dirichlet-to-Neumann operator. Incidentally, we do not claim the discovery of formulation~\eqref{GN-v}; see~\cite{LeGavrilyukHank10} and references therein, as well as in Appendix~\ref{S.Formulations-Hamiltonian}.

\subsection{Main results}

Let us now present the main results of this work.
Here and thereafter, we fix the parameters $\epsilon,\beta\geq0$ and $\mu\in(0,\mu^\star)$. The validity of the Green-Naghdi system as an asymptotic model for the water waves system stems from assuming $\mu\ll1$ while $\epsilon,\beta=\O(1)$ ---see~\cite{Lannes} for details--- but we do not make use of such restriction in this work. However, we shall
always assume that
\begin{equation}\label{cond-h0}
0<h_\star<h(\epsilon\zeta(x),\beta b(x))<h^\star<\infty, \qquad h(\epsilon\zeta,\beta b)\eqdef 1+\epsilon\zeta-\beta b.
\end{equation}
We work with the following functional spaces, defined for $n\in\NN$ by
\begin{align*}
&H^n\eqdef \{\zeta\in L^2(\RR^d) ,  & &\norm{ \zeta}_{H^n}^2\eqdef \sum_{|\alpha|=0}^n\norm{\partial^\alpha\u}_{L^2}^2<\infty\}, \qquad \dot{H}^n\eqdef \{b\in L^2_{\rm loc}(\RR^d) , \quad \nabla b\in (H^{n-1})^d\},\\
&X^n\eqdef \{\u\in L^2(\RR^d)^d ,  & &\norm{ \u}_{X^n}^2\eqdef \sum_{|\alpha|=0}^n\norm{\partial^\alpha\u}_{X^0}^2=\sum_{|\alpha|=0}^n\norm{\partial^\alpha\u}_{L^2}^2+\mu\norm{\partial^\alpha\nabla\cdot\u}_{L^2}^2<\infty\},\\
&Y^n\eqdef\{\v\in (X^0)^\prime ,   & &\norm{\v}_{Y^n}^2\eqdef \sum_{|\alpha|=0}^n\norm{\partial^\alpha\v}_{(X^0)^\prime}^2<\infty\}.
\end{align*}
Here, $\alpha\in\NN^d$ is a multi-index, $(X^0)^\prime$ is the topological dual space of $X^0$, endowed with the norm of the strong topology; and we denote by $\bra{ \v,\u}_{(X^0)^\prime}$ the $(X^0)^\prime-X^0$ duality bracket.

Given variables $\lambda_i\in\RR^+$, we denote $C(\lambda_1,\lambda_2,\dots)$ a multivariate polynomial with non-negative coefficients, and $F(\lambda_1,\lambda_2,\dots)$ a multivariate polynomial with non-negative coefficients and zero constant term. Since such notations are used for upper bounds and will take variables restricted to line segments, $C$ should be regarded as a constant and $F$ as a linear functional.

\begin{Theorem}[Well-posedness]\label{T.WP}
Let $N\geq 4$, $b\in \dot{H}^{N+2}$ and $(\zeta_0,\u_0)\in H^N\times X^N$ satisfying~\eqref{cond-h0} with $h_\star,h^\star>0$. Then there exists $T>0$ and a unique $(\zeta,\u)\in {C([0,T];H^N\times X^N)}$ satisfying~\eqref{GN-u} and $(\zeta,\u)\id{t=0}=(\zeta_0,\u_0)$. Moreover, one can restrict
\[ T^{-1}=C(\mu^\star,h_\star^{-1},h^\star)F(\beta\norm{\nabla b}_{H^{N+1}},\epsilon\norm{\zeta_0}_{H^N},\epsilon\norm{\u_0}_{X^N})>0\]
such that, for any $t\in[0,T]$,~\eqref{cond-h0} holds with $\tilde h_\star=h_\star/2,\tilde h^\star=2h^\star$, and
\[ \sup_{t\in[0,T]} \big(\norm{\zeta}_{H^N}^2+\norm{\u}_{X^N}^2 \big) \leq C(\mu^\star,h_\star^{-1},h^\star,\beta\norm{\nabla b}_{H^{N+1}},\epsilon\norm{\zeta_0}_{H^N},\epsilon\norm{\u_0}_{X^N}) \times \big(\norm{\zeta_0}_{H^N}^2+\norm{\u_0}_{X^N}^2 \big) ,
\]
and the map $(\zeta_0,\u_0)\in H^N\times X^N\mapsto  (\zeta,\u)\in {C([0,T];H^N\times X^N)}$ is continuous.
\end{Theorem}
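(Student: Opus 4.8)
The plan is to transfer the Cauchy problem to the equivalent formulation~\eqref{GN-v}, in which the quasilinear structure is transparent, and then to run the classical program for quasilinear evolution equations. First I would assemble the functional toolbox. By Lemma~\ref{L.T-invertible}, under the non-cavitation condition~\eqref{cond-h0} the operator $\mfT[h,\beta b]$ from~\eqref{def-mfT} is an isomorphism from $X^n$ onto the relevant dual space, with the operator norms of $\mfT[h,\beta b]^{\pm1}$ bounded by $C(\mu^\star,h_\star^{-1},h^\star,\beta\norm{\nabla b}_{H^{n+1}})$; hence $\u=\u[h,\beta b,\v]$ is recovered from $(h,\v)$, the map $h\mapsto\mfT[h,\beta b]^{\pm1}$ is smooth and tame, and $\norm{\zeta}_{H^n}^2+\norm{\u}_{X^n}^2$ is equivalent to $\norm{\zeta}_{H^n}^2+\norm{\v}_{Y^n}^2$. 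Crucially, $\mfT[h,\beta b]$ is symmetric and coercive, $\bra{\mfT[h,\beta b]\u,\u}\gtrsim\norm{\u}_{X^0}^2$, which will furnish a coercive energy. I also use the equivalence between~\eqref{GN-u} and~\eqref{GN-v} proved in Section~\ref{S.Formulations-comparison}, together with the usual tame product, commutator and composition estimates in the scale $(H^n,X^n,Y^n)$ and the continuity (and differentiability) of $\u\mapsto\R[h,\u],\ \R_b[h,\beta b,\u]$ from~\eqref{def-R},~\eqref{def-Rb}.

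\textbf{A priori estimates.}
The heart of the matter is the {\em a priori} energy estimate. Applying $\partial^\alpha$, $\abs\alpha\le N$, to~\eqref{GN-v} and commuting the derivatives through $h$, the transport operator $\epsilon\u^\perp\curl$, and the third-order operators $\R,\R_b$, one cancels the dangerous top-order contributions by introducing Alinhac-type ``good unknowns'': $\zeta_{(\alpha)}=\partial^\alpha\zeta$ and $\v_{(\alpha)}$ equal to $\partial^\alpha\v$ minus an explicit correction, linear in $\partial^\alpha\zeta$ and built from $(h,\u)$, with associated velocity $\u_{(\alpha)}$ defined through $h\v_{(\alpha)}=\mfT[h,\beta b]\u_{(\alpha)}$. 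These satisfy a linear system sharing the structure of the linearization of~\eqref{GN-v} about $(\zeta,\u)$, schematically
\begin{align*}
&\partial_t\zeta_{(\alpha)}+\nabla\cdot\big(h\u_{(\alpha)}\big)+\epsilon\nabla\cdot\big(\u\,\zeta_{(\alpha)}\big)=\mathfrak r^1_{(\alpha)},\\
&\big(\partial_t+\epsilon\u^\perp\curl\big)\v_{(\alpha)}+\nabla\zeta_{(\alpha)}+\epsilon\nabla\big(\u\cdot\u_{(\alpha)}\big)+\mu\epsilon\,\mathcal N[h,\u]\u_{(\alpha)}=\mathfrak r^2_{(\alpha)},
\end{align*}
where $\mathcal N[h,\u]$ is a second-order operator (the linearization of $-\nabla(\R+\R_b)$ in $\u$) and, essentially, the remainders lose no derivatives: $\norm{\mathfrak r^1_{(\alpha)}}_{L^2}+\norm{\mathfrak r^2_{(\alpha)}}_{(X^0)^\prime}\le C(\mu^\star,h_\star^{-1},h^\star)F\big(\beta\norm{\nabla b}_{H^{N+1}},\epsilon\norm{\zeta}_{H^N},\epsilon\norm{\u}_{X^N}\big)\big(\norm{\zeta}_{H^N}+\norm{\u}_{X^N}\big)$. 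One then sets
\[E^N(t)\eqdef\sum_{\abs\alpha\le N}\Big(\norm{\zeta_{(\alpha)}}_{L^2}^2+\bra{\mfT[h,\beta b]\u_{(\alpha)},\u_{(\alpha)}}\Big),\]
which, the corrections being of lower order, is equivalent to $\norm{\zeta}_{H^N}^2+\norm{\u}_{X^N}^2$. Differentiating in time and pairing the first equation with $\zeta_{(\alpha)}$ and the second with $h\u_{(\alpha)}$, the leading terms $\int h\u_{(\alpha)}\cdot\nabla\zeta_{(\alpha)}$ cancel by skew-adjointness; the transport term $\epsilon\u^\perp\curl$, the term $\mu\epsilon\,\mathcal N[h,\u]\u_{(\alpha)}$ and all commutators are handled by integration by parts, where the $\sqrt\mu$-weighting in the $X^n$ norm and the coercivity of $\mfT[h,\beta b]$ are exactly what makes the estimate close; and $\tfrac12\tfrac{\dd}{\dd t}\bra{\mfT[h,\beta b]\u_{(\alpha)},\u_{(\alpha)}}$ produces $\partial_t h=-\epsilon\nabla\cdot(h\u)$, which is controlled. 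This yields $\tfrac{\dd}{\dd t}E^N\le C(\mu^\star,h_\star^{-1},h^\star)F\big(\beta\norm{\nabla b}_{H^{N+1}},\epsilon\norm{\zeta}_{H^N},\epsilon\norm{\u}_{X^N}\big)E^N$ as long as~\eqref{cond-h0} holds, and a continuity (bootstrap) argument shows that on $[0,T]$ with $T^{-1}$ as in the statement,~\eqref{cond-h0} persists with $h_\star/2,\ 2h^\star$ and $E^N$ stays comparable to $E^N(0)$, which gives the announced bound.

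\textbf{Existence, uniqueness, continuity.}
To construct a solution I would regularize~\eqref{GN-v} by inserting Friedrichs mollifiers $J_\delta$ so as to preserve both the skew-symmetric structure and the coercivity of $\mfT[h,\beta b]$; for each $\delta>0$ the regularized system is an ODE in $H^N\times Y^N$ with smooth right-hand side on the open set where~\eqref{cond-h0} holds, hence has a unique maximal solution. The {\em a priori} bounds above being uniform in $\delta$, these solutions exist on a common interval $[0,T]$, and using the equations for time-compactness one extracts a limit as $\delta\to0$ solving~\eqref{GN-v} in $C([0,T];H^N\times Y^N)$, with continuity in the strong topology recovered from weak continuity plus energy continuity (Bona--Smith trick) once uniqueness is known. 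Uniqueness, and a Lipschitz bound for the flow in low norm, follow from an energy estimate on the difference of two solutions at the level $H^0\times Y^0$, controlled by the $H^N\times Y^N$ norms thanks to $N\ge4$ (so that $H^{N-1}$ is an algebra embedded in $W^{1,\infty}$ in dimension $d=2$). Continuous dependence of $(\zeta_0,\u_0)\mapsto(\zeta,\u)$ in $C([0,T];H^N\times X^N)$ is then obtained by the Bona--Smith method: one mollifies the data, combines the uniform high-norm bound with the difference estimate at intermediate regularity, and interpolates to upgrade low-norm convergence to high-norm convergence. Finally all of these statements are transported from $(\zeta,\v)$ to $(\zeta,\u)$ through $\u=\u[h,\beta b,\v]$ and the boundedness and continuity of $\mfT[h,\beta b]^{-1}$ from Lemma~\ref{L.T-invertible}, completing the proof of Theorem~\ref{T.WP}.

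\textbf{Main difficulty.}
The main obstacle is the {\em a priori} estimate, and specifically the explicit identification of the good unknowns $(\zeta_{(\alpha)},\v_{(\alpha)})$ and of the algebraic form of the remainders $\mathfrak r_{(\alpha)}$ guaranteeing that every contribution of the third-order operators $\T,\Q,\Q_b$ (equivalently $\R,\R_b$) and of the commutators either cancels in the energy pairing by (skew-)symmetry or is genuinely of order $\le N$, hence bounded by $C(E^N)$. This is precisely where the interplay between the $\sqrt\mu$-weighted norm $X^n$, the symmetry and positivity of $\mfT[h,\beta b]$, and repeated integration by parts is decisive; the remainder of the argument is an adaptation of the by-now classical quasilinear scheme, following the proof of well-posedness for the water waves system in~\cite{Lannes}.
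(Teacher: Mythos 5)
Your proposal follows essentially the same route as the paper: pass to the $(\zeta,\v)$ formulation, quasilinearize via the good unknowns $\zeta_{(\alpha)}=\partial^\alpha\zeta$ and $\v_{(\alpha)}=\partial^\alpha\v-\mu\epsilon\nabla(w\,\partial^\alpha\zeta)$ with $\u_{(\alpha)}=\mfT[h,\beta b]^{-1}(h\v_{(\alpha)})$, close the energy estimate by pairing with $(\zeta_{(\alpha)},h\u_{(\alpha)})$ and exploiting the symmetry and coercivity of $\mfT$, construct solutions by mollification, and obtain continuity by Bona--Smith, before transferring back to $(\zeta,\u)$ via the equivalence of the two formulations. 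The one step that would fail as literally stated is running the uniqueness/difference estimate at the raw $L^2\times Y^0$ level: the difference of the cubic terms $\mu\epsilon\nabla\R[h,\u]$ contains $\mu\epsilon\nabla\big(\u_2\cdot\nabla(h^3\nabla\cdot\underline\u)\big)$, which even after the $\mu^{-1/2}$ gain of the $Y^0$ norm costs one more derivative of $\nabla\cdot\underline\u$ than $\norm{\underline\u}_{X^0}$ controls; the paper therefore performs the difference estimate on the quasilinearized system at the level $\F^2$ (i.e.\ $H^2\times Y^2$), a fix entirely within your framework.
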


The full justification of the Green-Naghdi system is a consequence of the local existence and uniqueness result for the  water waves system (\cite[Th.~4.16]{Lannes}), its consistency with the Green-Naghdi system (\cite[Prop.~5.8]{Lannes}), the well-posedness of the Cauchy problem for the Green-Naghdi system (Theorem~\ref{T.WP}) as well as a stability result ensuring the Lipschitz dependence of the error with respect to perturbations of the system (Proposition~\ref{P.stability}, thereafter). The following result improves~\cite[Th.~6.15]{Lannes} by the level of regularity required on the initial data.

\begin{Theorem}[Full justification]\label{T.justification} Let $N\geq 7$ and $(\zeta_{\rm ww},\psi_{\rm ww})\in C([0,T_{\rm ww}];H^N\times \dot{H}^{N+1})$ be a solution to the water waves system~\eqref{WW-psi} (recall that such solutions exist for a large class of initial data by~\cite[Th.~4.16]{Lannes}) such that~\eqref{cond-h0} holds with $b\in H^{N+2}(\RR^d)$. 

Denote $\zeta_0=(\zeta_{\rm ww})\id{t=0},\psi_0=(\psi_{\rm ww})\id{t=0}$, $h_0=1+\epsilon\zeta_0-\beta b$ and $\u_0=\mfT[h_0,\beta b]^{-1}(h_0\nabla\psi_0)$ and $ M= \sup_{t\in [0,T_{\rm ww}]} \big(\norm{\zeta_{\rm ww}}_{H^{N}}+\norm{\nabla\psi_{\rm ww}}_{H^{N}}\big)$.

Then there exists $T>0$ and $(\zeta_{\rm GN},\u_{\rm GN})\in C([0,T];H^{N}\times X^{N})$ unique strong solution to the Green-Naghdi system~\eqref{GN-u} with initial data $(\zeta_0,\u_0)$, by Theorem~\ref{T.WP}; and one can restrict
\[T^{-1}=C(\mu^\star,h_\star^{-1},h^\star)F(\beta\norm{ b}_{H^{N+2}},\epsilon M)>0\]
such that for any $t\in[0,\min(T,T_{\rm ww})]$,
\[\norm{\zeta_{\rm ww}-\zeta_{\rm GN}}_{H^{N-6}}+\norm{\nabla\psi_{\rm ww}-\v_{\rm GN}}_{Y^{N-6}}\leq  {\bf C}\, \mu^2\, t,\]
with ${\bf C}=C(\mu^\star,h_\star^{-1},h^\star,\beta\norm{ b}_{H^{N+2}},\epsilon M)$, and $\v_{\rm GN}$ is defined by~\eqref{def-mfT}.
\end{Theorem}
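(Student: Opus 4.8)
The plan is to combine three ingredients already available or proven in the paper: the local existence theory for the water waves system (\cite[Th.~4.16]{Lannes}), the consistency of the Green-Naghdi system with the water waves system (\cite[Prop.~5.8]{Lannes}), and our own well-posedness result, Theorem~\ref{T.WP}, together with a stability/Lipschitz-dependence estimate (Proposition~\ref{P.stability}). The structure of the argument is the classical ``consistency $+$ stability $\Rightarrow$ convergence'' scheme. First I would use Theorem~\ref{T.WP} applied to the initial data $(\zeta_0,\u_0)$ defined in the statement (noting that $\u_0\in X^N$ because $\mfT[h_0,\beta b]^{-1}$ maps $h_0\nabla\psi_0\in H^N$ into $X^N$ by Lemma~\ref{L.T-invertible}, and $h_0$ satisfies~\eqref{cond-h0}) to produce the Green-Naghdi solution $(\zeta_{\rm GN},\u_{\rm GN})\in C([0,T];H^N\times X^N)$ on a time interval whose length is controlled from below by $C(\mu^\star,h_\star^{-1},h^\star)F(\beta\Norm{b}_{H^{N+2}},\epsilon M)$; here one must check that the norm $\Norm{\u_0}_{X^N}$ appearing in Theorem~\ref{T.WP} is itself bounded by $C(\dots)M$, which follows from the continuity of $\mfT[h_0,\beta b]^{-1}$ in Sobolev scales (Lemma~\ref{L.T-invertible}). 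On the overlapping interval $[0,\min(T,T_{\rm ww})]$ both solutions are thus smooth with uniformly controlled norms.

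Next I would invoke the consistency result: since $(\zeta_{\rm ww},\psi_{\rm ww})$ solves the water waves system and is $H^N$-regular with $N\geq 7$, the pair $(\zeta_{\rm ww},\v_{\rm ww})$ with $\v_{\rm ww}=\nabla\psi_{\rm ww}$ solves the formulation~\eqref{GN-v} of the Green-Naghdi system up to a residual $(r_1,r_2)$ which is of size $\O(\mu^2)$ in the relevant norm — more precisely $\Norm{r_1}_{H^{N-?}}+\Norm{r_2}_{Y^{N-?}}\leq C(\mu^\star,h_\star^{-1},h^\star,\beta\Norm{b}_{H^{N+2}},\epsilon M)\,\mu^2$, with the loss of derivatives (six in total, as reflected by the index $N-6$ in the statement) coming from the two applications of the consistency estimate and from the regularity cost of passing between the $(\zeta,\u)$ and $(\zeta,\v)$ formulations via $\mfT$. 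I would then apply Proposition~\ref{P.stability}, which gives a Grönwall-type bound on the difference of two solutions of~\eqref{GN-v}, one exact and one approximate: the difference at time $t$ is controlled by the initial difference (which is zero here, since $\zeta_0,\u_0$ are constructed from the water waves data) plus $t$ times the size of the residual, multiplied by an exponential in $t$ of the controlled norms. On $[0,\min(T,T_{\rm ww})]$ with $T\leq T_{\rm ww}$ normalized appropriately this exponential is $\leq C(\dots)$, yielding
\[
\Norm{\zeta_{\rm ww}-\zeta_{\rm GN}}_{H^{N-6}}+\Norm{\nabla\psi_{\rm ww}-\v_{\rm GN}}_{Y^{N-6}}\leq \mathbf{C}\,\mu^2\,t.
\]

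The main obstacle I anticipate is bookkeeping the regularity indices and making sure that all the constants depend only on the quantities allowed — $\mu^\star$, $h_\star^{-1}$, $h^\star$, $\beta\Norm{b}_{H^{N+2}}$ and $\epsilon M$ — and not on higher norms of the water waves solution or on $\mu$ itself in a bad way. In particular the stability estimate in Proposition~\ref{P.stability} must be applied at a level of regularity ($N-6$) lower than that at which the consistency residual is estimated, so one needs the residual bound to hold in a norm strong enough to feed the stability argument; tracking exactly how many derivatives are consumed at each step (two for consistency of~\eqref{GN-v} versus the water waves system, and additional ones for the change of variables $\u\leftrightarrow\v$ and for the stability estimate itself, which typically loses one or two derivatives) is where the index $N-6$ is pinned down, and getting it to close is the delicate part. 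A secondary subtlety is verifying that the non-cavitation condition~\eqref{cond-h0} propagated by Theorem~\ref{T.WP} for $(\zeta_{\rm GN})$ (with constants $h_\star/2,2h^\star$) is compatible with the one assumed for $(\zeta_{\rm ww})$, so that the stability estimate — which requires uniform lower and upper bounds on both depths $h_{\rm ww}$ and $h_{\rm GN}$ — may legitimately be applied; this is immediate once $T$ is taken small enough, at the cost of at most a further shrinking of the time of existence absorbed into $F$.
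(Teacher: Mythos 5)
Your proposal follows essentially the same route as the paper: Theorem~\ref{T.WP} for existence of the Green-Naghdi solution, the consistency result of \cite[Prop.~5.8]{Lannes} (transferred to formulation~\eqref{GN-v} via Proposition~\ref{P.GNuvsGNv}) to show $(\zeta_{\rm ww},\nabla\psi_{\rm ww})$ solves~\eqref{GN-v} up to $\O(\mu^2)$ residuals in $H^{N-6}$, and Proposition~\ref{P.stability} with zero initial difference to conclude. The only slight imprecision is that the $X^N$ bound on $\u_0$ comes from Lemma~\ref{L.T-differentiable} rather than Lemma~\ref{L.T-invertible} (which only controls the $X^0$ norm), but the argument is otherwise the paper's.
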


\begin{Remark}[Functional setting]
The functional spaces arise as natural energy spaces for the quasilinear structure of system~\eqref{GN-v} that we exhibit in this work. The regularity assumption $N\geq 4$ is most certainly not optimal. By the method of our proof, we are restricted to $N\in\NN$, and we cannot hope to obtain a lower threshold than in the pure quasilinear setting, namely $N>d/2+1$. That $N=3$ is not allowed when $d=2$ comes from technical limitations in various places.

Since our proof is based solely on the structural properties of the system and on energy estimates (in particular, no dispersive estimates are used), it may be adapted almost verbatim to the periodic situation. Although we have not checked all the technical details, we also expect that the strategy may be extended to the more general situation of Kato's uniformly local Sobolev spaces~\cite{Kato75}.
\end{Remark}
\begin{Remark}\label{R.irrot}
Applying the operator $\curl$ to $\eqref{GN-v}_2$, one observes the identity
 \[ \partial_t\curl\v+\epsilon\nabla\cdot (\u\curl\v)=0.\]
 Thus standard energy estimates on conservation laws~\cite{Benzoni-GavageSerre07} yield, for any $t\in[0,T]$,
 \[\norm{\curl \v}_{H^{N-1}}(t) \leq  \norm{\curl \v_0}_{H^{N-1}} \exp(\epsilon  \lambda t) \]
 with $\lambda\lesssim \norm{\u}_{H^N}\leq \norm{\u}_{X^N}$. Thus a byproduct of Theorem~\ref{T.WP} is that initial smallness of the ``generalized vorticity''~\cite{MilesSalmon85,GavrilyukTeshukov01}, $\curl\v$, propagates for large positive times ---and remains trivial if it vanishes initially, as in Theorem~\ref{T.justification}.
As already mentioned, the variable $\v$ physically approximates $\nabla\psi_{\rm ww}$ where $\psi_{\rm ww}$ is the trace of the velocity potential at the surface. It is therefore physically relevant (although not mathematically required) to assume
\[\curl\v=\curl \big(\u+\mu \T[h,\beta b]\u\big) \equiv   0.\]
This two-dimensional irrotationality condition is a direct consequence of the three-dimensional irrotationality assumption on the velocity flow inside the fluid layer. Outside of this irrotational framework, we believe that the Green-Naghdi system is not a valid model, and refer to~\cite{CastroLannesa} for a thorough discussion in this situation. 
\end{Remark}

\paragraph{Outline} The body of the paper is dedicated to the proof of Theorem~\ref{T.WP}. In Section~\ref{S.preliminary}, we provide some technical results concerning our functional spaces and the operator $\mfT$. We then exhibit the quasilinear system satisfied by the derivatives of any regular solution to~\eqref{GN-v} in Section~\ref{S.quasilinear}. This quasilinear formulation allows to obtain {\em a priori} energy estimates in Section~\ref{S.energy}. Finally, we make use of these energy estimates for proving the well-posedness of the Cauchy problem for system~\eqref{GN-v} in Section~\ref{S.WP}. The equivalence between formulation~\eqref{GN-v} and~\eqref{GN-u} is stated in Proposition~\ref{P.GNuvsGNv}. Theorems~\ref{T.WP} and~\ref{T.justification} follow as direct consequence of the results in Section~\ref{S.WP} and Proposition~\ref{P.GNuvsGNv}, and we complete the proofs in Section~\ref{S.Formulations-comparison}. In Appendix~\ref{S.direct-estimates}, we roughly sketch of how energy estimates could be obtained directly on formulation~\eqref{GN-u}. For motivation purposes, we also provide a quick discussion on the derivation and Hamiltonian formulation of system~\eqref{GN-v} in Appendix~\ref{S.Formulations-Hamiltonian}.

We conclude this section with some independent remarks.

\paragraph{Large time well-posedness and asymptotics} A very natural question in the oceanographic context concerns the large time asymptotic behavior of solutions to the Green-Naghdi system.
After a straightforward rescaling of~\eqref{GN-u}, the problem is naturally formulated in terms of solutions to the system
\begin{equation}\label{GN-u-LT}
\left\{\begin{array}{l}
\partial_t\zeta+\frac1{\epsilon}\nabla\cdot(h\u)=0,\qquad h\eqdef 1+\epsilon\zeta-\beta b,\\ \\
\big(\Id+\mu \T[h,\beta b]\big)\partial_t\u+\frac1\epsilon\nabla\zeta+(\u\cdot\nabla)\u+\mu \big(\Q[h,\u]+\Q_b[h,\beta b,\u]\big)=0.
\end{array}\right.
\end{equation}
Is the Cauchy problem for~\eqref{GN-u-LT} locally well-posed, uniformly with respect to the small parameter $\epsilon$? Is it globally well-posed for sufficiently small $\epsilon$? Can we exhibit ``averaged'' equations asymptotically describing the behavior of the solution?

This type of singular limit has been widely studied in particular in the context of the low Mach number limit; see \eg~\cite{Gallagher05,Alazard08} and references therein. As a matter of fact, when $\beta=\mu=0$, one recognizes the incompressible limit for the isentropic two-dimensional Euler equations, and it is tempting to elaborate on the analogy. One would then expect the solutions to~\eqref{GN-u-LT} to be asymptotically described (as $\epsilon\to0$) as the superposition of two components:
\begin{itemize}
\item The ``incompressible'' component, being defined as the solution to
\begin{equation}\label{GN-u-LT-incomp} \hspace{-5pt}
\left\{\begin{array}{l}
\nabla\cdot((1-\beta b)\u)=0,\\ \\
\big(\Id+\mu \T[1-\beta b,\beta b]\big)\partial_t\u+(\u\cdot\nabla)\u+\mu \big(\Q[1-\beta b,\u]+\Q_b[1-\beta b,\beta b,\u]\big)=-\nabla p,
\end{array}\right.
\end{equation}
where the ``pressure'' $p$ in the right-hand side of~$\eqref{GN-u-LT-incomp}_2$ is the Lagrange multiplier associated to the ``incompressibility'' constraint $\eqref{GN-u-LT-incomp}_1$.
 \item The ``acoustic'' component, being defined as the solution to
 \begin{equation}\label{GN-u-LT-acous}
 \left\{\begin{array}{l}
 \partial_t\zeta+\frac1{\epsilon}\nabla\cdot((1-\beta b)\u)=0,\\ \\
 \big(\Id+\mu \T[1-\beta b,\beta b]\big)\partial_t\u+\frac1\epsilon\nabla\zeta=0,
 \end{array}\right.
 \end{equation}
 with initial data satisfying $\curl (\u+\mu \T[1-\beta b,\beta b]\u)\id{t=0}=0$.
\end{itemize}
The system~\eqref{GN-u-LT-incomp} was derived in~\cite{CamassaHolmLevermore96,CamassaHolmLevermore97}, and is usually referred to as the {\em great-lake equations}. Its well-posedness, extending the theory concerning the two-dimensional incompressible Euler equations, was subsequently provided in~\cite{LevermoreOliverTiti96,Oliver97}. However, these authors apparently overlooked the role of the irrotationality assumption discussed in Remark~\ref{R.irrot}, as the only functions satisfying the constraint $\eqref{GN-u-LT-incomp}_1$ as well as the irrotationality condition $\curl (\u+\mu \T[1-\beta b,\beta b]\u)=0$ are in fact trivial.
In other words, in the irrotational framework that is the only one for which the Green-Naghdi system is rigorously justified, the incompressible (or rigid-lid) component vanishes; see also~\cite{Mesognon-Gireaua} for a similar discussion on the water waves system. One thus expects that the flow is asymptotically described by~\eqref{GN-u-LT-acous} only, in the limit $\epsilon\to0$.

However, when trying to adapt the usual strategy for rigorously proving such behavior, one immediately encounters a serious difficulty in the physically relevant situation of non-trivial topography, which transpires in the the fact that our lower bound for the existence time in Theorem~\ref{T.WP} depends on the size of the bottom variations in addition to the size of the initial data.
When transcribed to system~\eqref{GN-u-LT}, this means that we are not able to obtain a lower bound on the existence time of its solutions which is uniform with respect to $\epsilon$, unless $\beta=\O(\epsilon)$.

For the Saint-Venant system, that is setting $\mu=0$, Bresch and Métivier~\cite{BreschMetivier10} have obtained such a uniform lower bound without any restriction on the amplitude bathymetry. The strategy consists in estimating first the time derivatives of the solution, and then using the system to deduce estimates on space derivatives. A related strategy (in the sense that we look for operators commuting with the singular component of the system) amounts to remark that for any $n\in\NN$, one can control the $L^2$-norm of
\[  \zeta_n\eqdef  (\nabla\cdot (1-b) \nabla)^n\zeta,\qquad \u_n\eqdef  (\nabla (1-b) \nabla\cdot )^n \u\]
by exhibiting the quasilinear system satisfied by $(\zeta_n,\u_n)$ and applying simple energy estimates. This allows to control the $H^{2n}$-norm of $\zeta,\u$, provided that the initial data and bottom topography are sufficiently regular.
One expects a similar strategy to work for the water waves system (see~\cite{MelinandMesognon}; partial results have been obtained by the method of time
derivatives in~\cite{Mesognon-Gireau}), that is to control
\[  \zeta_n\eqdef  (\frac1\mu G^{\mu}[0,\beta b])^n\zeta,\qquad \psi_n\eqdef  (\frac1\mu G^{\mu}[0,\beta b])^n \psi\]
where $G^\mu$ is the Dirichlet-to-Neumann operator, recalled in Appendix~\ref{S.Formulations-Hamiltonian}. Since $G^\mu$ is an operator of order $1$, controlling $\zeta_n,\psi_n$ indeed allows to control higher regularities on $\zeta,\psi$. The strategy however fails for the Green-Naghdi system, as the corresponding operator, namely (see Appendix~\ref{S.Formulations-Hamiltonian})
\[ \frac1\mu G^{\mu}[0,\beta b] \bullet \approx -\nabla\cdot \Big( (1-\beta b)\mfT[1-\beta b,\beta b]^{-1}\big\{(1-\beta b)\nabla \bullet\big\}\Big)\]
is of order $0$. One could easily propose different systems that do not suffer from such a shortcoming, by adding the effect of surface tension as in~\cite{Mesognon-Gireau}, or modifying the system without hurting its consistency as in~\cite{Mesognon-Gireaub}. This is however out of the scope of the present work, and we leave the question of uniform lower bounds for the existence time of solutions to~\eqref{GN-u-LT} as an open problem.

\paragraph{Other models} We expect that our strategy may be of interest to other models for gravity waves. Since our result holds uniformly with respect to $\mu\in(0,1)$, we easily deduce the well-posedness of the Saint-Venant system. This result is of course a direct application of the standard theory on quasilinear hyperbolic systems~\cite{Benzoni-GavageSerre07}. In the other direction, it would be interesting to apply our strategy to the higher-order models derived by Matsuno~\cite{Matsuno16}, which enrich the Saint-Venant and Green-Naghdi systems with models of arbitrary high order, while preserving the structure of which we take advantage in this work. Similarly, one can derive models with improved frequency dispersion while preserving the structure of the Green-Naghdi model, by modifying the approximate Hamiltonian in Appendix~\ref{S.Formulations-Hamiltonian}. Such a strategy was applied by the authors in the one-dimensional and bilayer situation in~\cite{DucheneIsrawiTalhouk16}. Interestingly, since such models can be tuned to fit the dispersion relation of the water waves system, they do not suffer from the shortcoming described above, so that large-time well-posedness is expected to hold even in the presence of a non-trivial bathymetry.

Let us clarify however that our strategy does not require the Hamiltonian structure, but only exploits the ability to construct ``good unknowns'' through $\zeta,\v$ and their derivatives. Thus even models which are derived through careless approximations may preserve the quasilinear structure which is necessary for our energy estimates. Such is the case for the Boussinesq-Peregrine system (see~\cite{Peregrine67,Lannes,Mesognon-Gireaub}), which consists in a simplification of the Green-Naghdi system obtained by withdrawing contributions of size $\O(\mu\epsilon)$  while keeping $\O(\mu\beta)$ ones:
\begin{equation}\label{BP}
\left\{\begin{array}{l}
\partial_t\zeta+\nabla\cdot(h\u)=0,\\ \\
\partial_t\big(\u+\mu \T[1-\beta b,\beta b]\u\big) +\nabla\zeta+\epsilon(\u\cdot\nabla)\u=0.
\end{array}\right.
\end{equation}
Because the elliptic operator $\Id+\mu \T[1-\beta b,\beta b]$ does not depend on time, solving numerically the Boussinesq-Peregrine system is much less costly than solving the Green-Naghdi system; see~\cite{LeGavrilyukHank10,LannesMarche15}. It turns out that the proof of Theorem~\ref{T.WP} does extend to the Boussinesq-Peregrine system with straightforward modifications (all on the side of simplification). More precisely, we obtain the well-posedness of system~\eqref{BP} after applying
the change of variable
\begin{equation}
\label{BP-v}
\v\eqdef \u+\mu \T[1-\beta b,\beta b]\u
\end{equation}
and extracting the quasilinear system satisfied by $\partial^\alpha\zeta$ and $\partial^\alpha\v$.
\begin{Theorem}[Boussinesq-Peregrine]
Theorem~\ref{T.WP} holds replacing system~\eqref{GN-u} with system~\eqref{BP}. The full justification in the sense of Theorem~\ref{T.justification} holds as well, controlling the error as
\[\forall  t\in[0,\min(T,T_{\rm ww})], \qquad \norm{\zeta_{\rm ww}-\zeta_{\rm BP}}_{H^{N-6}}+\norm{\nabla\psi_{\rm ww}-\v_{\rm BP}}_{Y^{N-6}}\leq  {\bf C}\, (\mu^2+\mu\epsilon)\, t,\]
with $\zeta_{\rm BP},\u_{\rm BP}$ the unique strong solution to~\eqref{BP} and $\v_{\rm BP}$ defined by~\eqref{BP-v}.
\end{Theorem}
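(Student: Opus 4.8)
The strategy is to reproduce, essentially line by line, the proof of Theorem~\ref{T.WP} (developed in Sections~\ref{S.preliminary}--\ref{S.Formulations-comparison}) after replacing the operator $\mfT[h,\beta b]$ with the \emph{time-independent} operator $\Id+\mu\T[1-\beta b,\beta b]$, and noting that every place where nonlinear dispersive contributions $\O(\mu\epsilon)$ appeared in~\eqref{GN-v} (namely the $\R,\R_b$ terms and the $\Q,\Q_b$ terms) is now simply absent. First I would record the analogue of Lemma~\ref{L.T-invertible}: since $1-\beta b$ satisfies a coercivity bound of the form~\eqref{cond-h0} (with $\epsilon=0$), the operator $\Id+\mu\T[1-\beta b,\beta b]$ is a well-defined isomorphism from $X^n$ onto an appropriate dual-type space, with bounds depending only on $\mu^\star$, $h_\star^{-1}$, $h^\star$ and $\beta\norm{\nabla b}_{H^{n+1}}$; here the argument is in fact \emph{simpler} than for $\mfT$ because the elliptic operator does not involve $\zeta$, hence no tame estimate in $\zeta$ is needed and no time-differentiation of the operator ever occurs. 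Then, setting $\v\eqdef\u+\mu\T[1-\beta b,\beta b]\u$ as in~\eqref{BP-v}, the system~\eqref{BP} becomes $\partial_t\zeta+\nabla\cdot(h\u)=0$, $\partial_t\v+\nabla\zeta+\epsilon(\u\cdot\nabla)\u=0$, which one rewrites (as in the passage from~\eqref{GN-u} to~\eqref{GN-v}, but with the dispersive right-hand side set to zero and the vorticity term handled identically) in the form $\big(\partial_t+\epsilon\u^\perp\curl\big)\v+\nabla\zeta+\tfrac\epsilon2\nabla(\abs{\u}^2)=0$.

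Next I would carry out the quasilinearization of Section~\ref{S.quasilinear}: differentiating the $\zeta$-equation and the $\v$-equation by $\partial^\alpha$, $\abs\alpha\le N$, and introducing the good unknowns $\zeta_{(\alpha)}\eqdef\partial^\alpha\zeta$ and $\v_{(\alpha)}\eqdef\partial^\alpha\v - (\text{lower-order transport correction involving }\partial^\alpha\u\cdot\nabla(\dots))$ exactly as in the Green-Naghdi case. Because the operator defining $\v$ from $\u$ is now independent of $(\zeta,t)$, the commutator $[\partial^\alpha,\Id+\mu\T[1-\beta b,\beta b]]$ only produces terms involving derivatives of $b$ (up to order $N+2$, whence the hypothesis $b\in\dot H^{N+2}$ is inherited unchanged) and is controlled by the product and commutator estimates already established in Section~\ref{S.preliminary}; there is no analogue of the $\partial_t\T$ contribution, which in the Green-Naghdi proof was one of the delicate points. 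The resulting system for $(\zeta_{(\alpha)},\v_{(\alpha)})$ has precisely the symmetric-hyperbolic-plus-skew-dispersive structure used in Section~\ref{S.energy}, with a source term bounded by $F(\beta\norm{\nabla b}_{H^{N+1}},\epsilon\norm{\zeta}_{H^N},\epsilon\norm{\u}_{X^N})\times(\norm{\zeta}_{H^N}^2+\norm{\u}_{X^N}^2)$.

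The energy estimate of Section~\ref{S.energy} then applies verbatim: one uses the natural energy $\E^N\eqdef\sum_{\abs\alpha\le N}\big(\norm{\zeta_{(\alpha)}}_{L^2}^2+\bra{\v_{(\alpha)},\u_{(\alpha)}}\big)$, which is coercive and equivalent to $\norm{\zeta}_{H^N}^2+\norm{\u}_{X^N}^2$ by the invertibility of $\Id+\mu\T[1-\beta b,\beta b]$, and obtains $\tfrac{\dd}{\dd t}\E^N\le C(\dots)F(\dots)\E^N$, together with the control of $h_\star,h^\star$ on a time interval $T$ with $T^{-1}=C(\mu^\star,h_\star^{-1},h^\star)F(\beta\norm{\nabla b}_{H^{N+1}},\epsilon\norm{\zeta_0}_{H^N},\epsilon\norm{\u_0}_{X^N})$. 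Existence, uniqueness and continuity of the flow then follow from the Picard iteration scheme of Section~\ref{S.WP}, and the equivalence of~\eqref{BP} with its $\v$-formulation (the analogue of Proposition~\ref{P.GNuvsGNv}) is immediate since the change of variables~\eqref{BP-v} is a fixed linear isomorphism. For the full justification statement, one invokes the consistency of~\eqref{BP} with the water waves system: withdrawing the $\O(\mu\epsilon)$ terms from Green-Naghdi costs an extra $\O(\mu\epsilon)$ in the consistency error while the $\O(\mu^2)$ error of Green-Naghdi itself is retained, so the error estimate becomes $\O(\mu^2+\mu\epsilon)$; the stability estimate (the analogue of Proposition~\ref{P.stability}) and the Gr\"onwall argument are then applied as in the proof of Theorem~\ref{T.justification}, yielding the stated bound $\norm{\zeta_{\rm ww}-\zeta_{\rm BP}}_{H^{N-6}}+\norm{\nabla\psi_{\rm ww}-\v_{\rm BP}}_{Y^{N-6}}\le {\bf C}\,(\mu^2+\mu\epsilon)\,t$. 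The main (very mild) obstacle is purely bookkeeping: verifying that stripping out the nonlinear dispersive terms does not destroy the coercivity/symmetry structure that the energy method relies on --- and indeed it does not, since those terms only ever appeared as controlled remainders, never in the principal part --- so that the entire argument goes through "all on the side of simplification."
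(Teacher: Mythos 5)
Your overall plan --- apply the change of variables~\eqref{BP-v}, quasilinearize, and rerun the energy method of Sections~\ref{S.quasilinear}--\ref{S.WP} with the simplifications coming from the time- and $\zeta$-independence of the elliptic operator --- is the approach the paper indicates, and several of your observations (no shape derivative of the operator, hence no Alinhac-type correction in $\v_{(\alpha)}$; no $[\partial_t,\T]$ commutator; the $\O(\mu^2+\mu\epsilon)$ consistency) are correct. Two steps as written, however, would fail.

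First, the reformulation of $\eqref{BP}_2$ is wrong. From $(\u\cdot\nabla)\u=\tfrac12\nabla(\abs{\u}^2)+\u^\perp\curl\u$ one obtains $\partial_t\v+\epsilon\u^\perp\curl\u+\nabla\zeta+\tfrac\epsilon2\nabla(\abs{\u}^2)=0$, with $\curl\u$ and \emph{not} $\curl\v$. Since $\curl\v-\curl\u=\mu\curl\big(\T[1-\beta b,\beta b]\u\big)$ does not vanish for non-flat bottoms, the system you write differs from~\eqref{BP} by $\mu\epsilon\,\u^\perp\curl(\T[1-\beta b,\beta b]\u)$; after applying $\partial^\alpha$ this contains $\abs{\alpha}+2$ derivatives of $\u$ and is only controlled by $\sqrt\mu\norm{\u}_{X^{\abs{\alpha}+1}}$, so it belongs to the principal part, not to the remainders. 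You would thus be proving well-posedness of a different system. The correct quasilinear second equation keeps the transport form $\epsilon(\u\cdot\nabla)\u_{(\alpha)}$ (equivalently $\epsilon\u^\perp\curl\u_{(\alpha)}+\epsilon\nabla(\u\cdot\u_{(\alpha)})$ in terms of $\u_{(\alpha)}$, not $\v_{(\alpha)}$), and the analogue of Lemma~\ref{L.commut-Gtransport} must be run on that expression.

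Second, the energy functional. The pairing $\Par{\v_{(\alpha)},\u_{(\alpha)}}_{L^2}=\Par{(\Id+\mu\T[1-\beta b,\beta b])\u_{(\alpha)},\u_{(\alpha)}}_{L^2}$ is not coercive as written: $\T[1-\beta b,\beta b]$ is not symmetric, only $(1-\beta b)\big(\Id+\mu\T[1-\beta b,\beta b]\big)=\mfT[1-\beta b,\beta b]$ is (Lemma~\ref{L.T-invertible}). One must therefore test $\eqref{BP}_2$ against $(1-\beta b)\u_{(\alpha)}$. But then the coupling with the mass equation, which carries $\nabla\cdot(h\u_{(\alpha)})$ with the full $h=1+\epsilon\zeta-\beta b$, no longer cancels: the residue contains $\epsilon\Par{\zeta\,\nabla\cdot\u_{(\alpha)},\zeta_{(\alpha)}}_{L^2}$, which by~\eqref{embed-Z} costs a factor $\epsilon\mu^{-1/2}$ and destroys the uniformity in $\mu$ claimed in the theorem (reintroducing a restriction of the type $\epsilon=\O(\sqrt\mu)$ that the result is meant to avoid). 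This is the one place where the Boussinesq--Peregrine structure genuinely differs from the Green--Naghdi one, where the same weight $h$ appears in the symmetrizer and in the mass flux; it is not ``purely bookkeeping''. A fix is to re-weight the mass equation as well, e.g.\ testing it against $\tfrac{1-\beta b}{h}\zeta_{(\alpha)}$, so that the energy becomes $\int \tfrac{1-\beta b}{h}\zeta_{(\alpha)}^2+\mfT[1-\beta b,\beta b]\u_{(\alpha)}\cdot\u_{(\alpha)}$ and the coupling mismatch is of order zero in $\u_{(\alpha)}$; your proposal does not identify this issue nor any remedy for it.
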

Compared with~\cite[Th.~2.1]{Mesognon-Gireaub}, our result provides a larger lower bound on the time of existence, and does not rely on the assumption $\epsilon=\O(\mu)$.

\section{Preliminary results}\label{S.preliminary}

In this section, we fix parameters $n\in\NN$, $\alpha\in\NN^d$
and denote $a\lesssim b$ for $a\leq C b$ where $C$ is a constant depending  (non-decreasingly) only on $n$, $d$, and possibly $|\alpha|$ and $\mu$. We denote $\bra{ A }_{n>r}=A$ if $n>r$ and $\bra{ A }_{n>r}=0$ otherwise, and $a\vee b=\max(a,b)$. The results are tailored for the dimension $d=2$ (through the repeated use of the continuous Sobolev embedding $H^2\subset L^\infty$ for instance) but hold as well when $d=1$. They are not meant to be sharp, but only sufficient for our needs. The call for ``tame'' estimates stems from the Bona-Smith technique enforcing continuity properties stated in Proposition~\ref{P.continuity}; rougher estimates would be sufficient for the existence and uniqueness of solutions, as in Proposition~\ref{P.existence}. 
\begin{Lemma}\label{L.embeddings}
One has the continuous embeddings $H^{n+1}(\RR^d)^d\subset X^n\subset H^n(\RR^d)^d$ and the corresponding $H^n(\RR^d)^d\subset Y^n \subset H^{n-1}(\RR^d)^d$. The following inequalities hold as soon as the right-hand side is finite:
\begin{align}\label{embed-X}
&\norm{\u}_{H^n}\leq \norm{\u}_{X^n}, &\qquad &\norm{\u}_{X^n}\lesssim \norm{\u}_{H^{n+1}},\\
\label{embed-Y}
&\norm{\v}_{H^{n-1}}\lesssim \norm{\v}_{Y^n}, &\qquad &\norm{\v}_{Y^n}\leq \norm{\v}_{H^n}.
\end{align}
We also have the  non-uniform continuous embedding 
\begin{equation}\label{embed-Z}
 \norm{\nabla f}_{Y^n}\lesssim \frac1{\sqrt\mu}\norm{f}_{H^n}, \qquad   \qquad \norm{\nabla \cdot \u }_{H^n}\lesssim \frac1{\sqrt\mu}\norm{\u}_{X^n}.
 \end{equation}
\end{Lemma}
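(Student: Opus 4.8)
The plan is to establish the claimed embeddings by unwinding the definitions of the norms $\norm{\cdot}_{X^n}$, $\norm{\cdot}_{Y^n}$ and working component by component on multi-indices $\alpha$ with $|\alpha|\leq n$. First I would treat~\eqref{embed-X}: the left inequality $\norm{\u}_{H^n}\leq\norm{\u}_{X^n}$ is immediate since $\norm{\partial^\alpha\u}_{L^2}^2\leq \norm{\partial^\alpha\u}_{L^2}^2+\mu\norm{\partial^\alpha\nabla\cdot\u}_{L^2}^2=\norm{\partial^\alpha\u}_{X^0}^2$ by definition, and summing over $|\alpha|\leq n$. For the right inequality $\norm{\u}_{X^n}\lesssim\norm{\u}_{H^{n+1}}$, I would bound $\mu\norm{\partial^\alpha\nabla\cdot\u}_{L^2}^2\lesssim\norm{\nabla\partial^\alpha\u}_{L^2}^2\leq\norm{\u}_{H^{n+1}}^2$ (absorbing $\mu$ into the implicit constant, which is allowed since constants may depend on $\mu$), again summing over $|\alpha|\leq n$; the $L^2$ part of $\norm{\u}_{X^n}^2$ is controlled trivially by $\norm{\u}_{H^n}^2\leq\norm{\u}_{H^{n+1}}^2$. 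This simultaneously gives the continuous embeddings $H^{n+1}(\RR^d)^d\subset X^n\subset H^n(\RR^d)^d$.

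Next I would address~\eqref{embed-Y} by duality. Since $Y^n$ is defined through the dual norm $\norm{\partial^\alpha\v}_{(X^0)'}=\sup_{\norm{\w}_{X^0}\leq1}\bra{\partial^\alpha\v,\w}_{(X^0)'}$, the inclusion $X^0\subset H^1(\RR^d)^d$ from~\eqref{embed-X} (with $n=0$, so $H^1\subset X^0$, hence dually $(X^0)'\subset (H^1)'=H^{-1}$) together with $L^2\subset (X^0)'$ (from $X^0\subset L^2$) yields $\norm{\v}_{(H^1)'}\lesssim\norm{\v}_{(X^0)'}\leq\norm{\v}_{L^2}$ pointwise in $\alpha$; summing over $|\alpha|\leq n$ and noting $\norm{\cdot}_{H^{n-1}}$ is the dual-side reformulation via shifting one derivative, one obtains $\norm{\v}_{H^{n-1}}\lesssim\norm{\v}_{Y^n}\leq\norm{\v}_{H^n}$. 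Concretely, for the right inequality one uses that $\bra{\partial^\alpha\v,\w}_{(X^0)'}=\int\partial^\alpha\v\cdot\w\leq\norm{\partial^\alpha\v}_{L^2}\norm{\w}_{L^2}\leq\norm{\partial^\alpha\v}_{L^2}\norm{\w}_{X^0}$, giving $\norm{\partial^\alpha\v}_{(X^0)'}\leq\norm{\partial^\alpha\v}_{L^2}$; for the left inequality one writes, for a test function $\varphi\in H^{1}$ with one derivative to spare, $\abs{\bra{\partial^\alpha\v,\partial_j\varphi}}=\abs{\bra{\partial_j\partial^\alpha\v,\varphi}}$ and uses $\norm{\partial_j\varphi}_{X^0}\lesssim\norm{\varphi}_{H^1}$ together with summation over $|\alpha|\leq n$, shifting one derivative to identify $\norm{\v}_{H^{n-1}}$.

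Finally, for the non-uniform embeddings~\eqref{embed-Z}, the point is that a factor $\mu^{-1/2}$ is unavoidable. For the second inequality, $\norm{\nabla\cdot\u}_{H^n}^2=\sum_{|\alpha|\leq n}\norm{\partial^\alpha\nabla\cdot\u}_{L^2}^2\leq\mu^{-1}\sum_{|\alpha|\leq n}\mu\norm{\partial^\alpha\nabla\cdot\u}_{L^2}^2\leq\mu^{-1}\norm{\u}_{X^n}^2$ directly from the definition of $\norm{\cdot}_{X^n}$. For the first, I would estimate $\norm{\partial^\alpha\nabla f}_{(X^0)'}$ by testing against $\w\in X^0$: $\bra{\partial^\alpha\nabla f,\w}=-\bra{\partial^\alpha f,\nabla\cdot\w}\leq\norm{\partial^\alpha f}_{L^2}\norm{\nabla\cdot\w}_{L^2}\leq\mu^{-1/2}\norm{\partial^\alpha f}_{L^2}\norm{\w}_{X^0}$, where the last step uses $\mu^{1/2}\norm{\nabla\cdot\w}_{L^2}\leq\norm{\w}_{X^0}$ from the very definition of the $X^0$-norm; summing over $|\alpha|\leq n$ gives $\norm{\nabla f}_{Y^n}\lesssim\mu^{-1/2}\norm{f}_{H^n}$. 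None of these steps is genuinely hard; the only thing requiring slight care is the duality bookkeeping in~\eqref{embed-Y}, namely correctly identifying how the dual norm on $(X^0)'$ interacts with the derivative count so that $Y^n$ sits between $H^n$ and $H^{n-1}$, and making sure the $\mu$-dependence is tracked honestly so that~\eqref{embed-Z} is stated with the genuinely non-uniform constant while~\eqref{embed-X}--\eqref{embed-Y} have $\mu$ safely hidden in $\lesssim$.
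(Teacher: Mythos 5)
Your proof is correct and follows essentially the same route as the paper's (very terse) argument: establish the $n=0$ embeddings $H^1\subset X^0\subset L^2$ and, by duality, $L^2\subset Y^0\subset H^{-1}$, obtain~\eqref{embed-Z} for $n=0$ by testing $\nabla f$ against $\w\in X^0$ and integrating by parts so that $\sqrt\mu\norm{\nabla\cdot\w}_{L^2}\leq\norm{\w}_{X^0}$ absorbs the divergence, and then reduce the general case to $n=0$ by applying $\partial^\alpha$ for $|\alpha|\leq n$. The only cosmetic remark is that your description of the left inequality in~\eqref{embed-Y} is phrased in a slightly roundabout way; the clean statement is simply that $H^1\subset X^0$ continuously dualizes to $(X^0)'\subset H^{-1}$, which is exactly what the paper invokes.
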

\begin{proof}The continuous embeddings $H^{1}(\RR^d)^d\subset X^0\subset L^2(\RR^d)^d$ are straightforward, and the corresponding $L^2(\RR^d)^d\subset Y^0 \subset H^{-1}(\RR^d)^d$ follow by duality. 
The estimate~\eqref{embed-Z} with $n=0$ is easily checked, as for any $\u\in X^0$,
\[\abs{\bra{\nabla f,\u}_{(X^0)^\prime}}=\abs{\Par{f,\nabla\cdot\u}_{L^2}}\leq \frac1{\sqrt{\mu}}\norm{f}_{L^2}\norm{\u}_{X^0}.\]
The case $n\in \NN^\star $ is reduced to the case $n=0$ by considering $\partial^\alpha\u,\partial^\alpha\v,\partial^\alpha f$ with $0\leq |\alpha|\leq n$.
\end{proof}
\begin{Lemma}\label{L.products}
Let $f\in H^{2\vee n}(\RR^d)$ and $g\in H^n(\RR^d)$. Then $fg\in H^n(\RR^d)$
\begin{equation}\label{product-H-0} 
\norm{fg}_{H^n}\lesssim \norm{f}_{H^2}\norm{g}_{H^n} +\bra{ \norm{f}_{H^n}\norm{g}_{H^2}}_{n>2} .
\end{equation}
The above holds as well allowing exceptionally the value $n=-1$.

Let $f\in L^{\infty}(\RR^d) \cap \dot{H}^{3\vee n}(\RR^d)$ and $g\in H^n(\RR^d)$. Then $fg\in H^n(\RR^d)$ and
\begin{equation}\label{product-H} 
\norm{fg}_{H^n}\lesssim \big(\norm{f}_{L^{\infty}}+\norm{\nabla f}_{H^2}\big)\norm{g}_{H^n}+ \bra{ \norm{\nabla f}_{H^{n-1}}\norm{g}_{H^2}}_{n>2} .
\end{equation}
If, moreover, $f\geq f_0>0$, then $f^{-1}g\in H^n(\RR^d)$ and
\begin{equation}\label{product-H-1} 
  \norm{f^{-1}g}_{H^n}\leq C(f_0^{-1},\norm{f}_{L^{\infty}},\norm{\nabla f}_{H^2})\big(\norm{g}_{H^n}+ \bra{ \norm{\nabla f}_{H^{n-1}}\norm{g}_{H^2}}_{n>2}\big) .
  \end{equation}
Let $f\in L^{\infty}(\RR^d) \cap \dot{H}^{3 \vee n+1}(\RR^d)$ and $\u\in X^n$, with $n\in\NN$. Then $f\u\in X^n$ and
\begin{equation}\label{product-X} 
 \norm{f\u}_{X^n}\lesssim \big(\norm{f}_{L^{\infty}}+\norm{\nabla f}_{H^2}\big)\norm{\u}_{X^n}+ \bra{ \norm{\nabla f}_{H^{n}}\norm{\u}_{X^2}}_{n>2} .
   \end{equation}
Let $f\in L^{\infty}(\RR^d) \cap \dot{H}^{3\vee n}(\RR^d)$, and $\v\in Y^n$, with $n\in\NN$. One has $f\v\in Y^n$ and
\begin{equation}\label{product-Y} 
 \norm{f\v}_{Y^n}\lesssim \big(\norm{f}_{L^{\infty}}+\norm{\nabla f}_{H^2}\big)\norm{\v}_{Y^n}+ \bra{ \norm{\nabla f}_{H^{n-1}}\norm{\v}_{H^2}}_{n>2} .
   \end{equation}
\end{Lemma}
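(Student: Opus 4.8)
The statement to prove is Lemma~\ref{L.products}, a collection of tame product (and quotient) estimates in the scale of spaces $H^n$, $X^n$, $Y^n$. The natural strategy is to establish~\eqref{product-H-0} first by the classical Leibniz + Gagliardo--Nirenberg argument, and then bootstrap the remaining estimates from it using the embeddings of Lemma~\ref{L.embeddings} together with duality for the $Y^n$ estimates. I would organize the proof around four blocks, in this order: (1) the scalar product estimate~\eqref{product-H-0}; (2) the $L^\infty + \dot H$ variants~\eqref{product-H} and the quotient estimate~\eqref{product-H-1}; (3) the $X^n$ estimate~\eqref{product-X}; (4) the $Y^n$ estimate~\eqref{product-Y} by duality.

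\textbf{Step 1 (basic product estimate).} For $fg\in H^n$ one writes $\partial^\alpha(fg)=\sum_{\beta\le\alpha}\binom{\alpha}{\beta}\partial^\beta f\,\partial^{\alpha-\beta}g$ with $|\alpha|\le n$ and splits each term according to whether $|\beta|\le 2$ or $|\beta|>2$ (so that the complementary factor loses at most $n-2$ derivatives and lands in $L^\infty$ via $H^2\subset L^\infty$ when $d=2$). The $|\beta|\le 2$ terms are bounded by $\norm{f}_{H^2}\norm{g}_{H^n}$ using $\norm{\partial^\beta f}_{L^\infty}\lesssim\norm{f}_{H^2}$; the $|\beta|>2$ terms, which only occur when $n>2$, are bounded by $\norm{f}_{H^n}\norm{g}_{H^2}$ after putting $\partial^{\alpha-\beta}g$ in $L^\infty$. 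The borderline case $n=-1$ follows by duality against $H^1$, using that $H^2\cdot H^1\subset H^1$ from the already-proven $n=1$ case. This is entirely routine and I would dispatch it quickly.

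\textbf{Steps 2--4 (the $L^\infty/\dot H$, $X^n$ and $Y^n$ variants).} Estimate~\eqref{product-H} is the same Leibniz split, except the $\beta=0$ term is kept as $\norm{f}_{L^\infty}\norm{g}_{H^n}$ and all $|\beta|\ge 1$ terms are rewritten in terms of $\nabla f\in H^{n-1}$ and estimated by~\eqref{product-H-0} (with $n$ replaced by $n-1$). Estimate~\eqref{product-H-1} follows by writing $f^{-1}=f_0^{-1}-f_0^{-1}f^{-1}(f-f_0)$ or, more simply, controlling $\nabla^k(f^{-1})$ inductively: $\nabla(f^{-1})=-f^{-2}\nabla f$, and one uses~\eqref{product-H}/\eqref{product-H-0} plus the uniform lower bound $f\ge f_0$ to close the induction, which is why the bound is a constant $C(f_0^{-1},\norm{f}_{L^\infty},\norm{\nabla f}_{H^2})$ rather than linear in $f$. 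For~\eqref{product-X} one uses $\norm{f\u}_{X^n}^2=\sum_{|\alpha|\le n}\big(\norm{\partial^\alpha(f\u)}_{L^2}^2+\mu\norm{\partial^\alpha\nabla\cdot(f\u)}_{L^2}^2\big)$; the $L^2$ part is~\eqref{product-H}, and for the divergence part one writes $\nabla\cdot(f\u)=f\nabla\cdot\u+\nabla f\cdot\u$, handling $f\nabla\cdot\u$ by~\eqref{product-H} (which accounts for the factor $\sqrt\mu\norm{\nabla\cdot\u}_{H^n}\lesssim\norm{\u}_{X^n}$) and $\nabla f\cdot\u$ by~\eqref{product-H-0} — this is the reason for the extra derivative on $f$ (hence $\dot H^{3\vee n+1}$) and for the appearance of $\norm{\nabla f}_{H^n}$ in the high-order remainder term. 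Finally~\eqref{product-Y} is obtained by duality: for $\v\in Y^n$ and any test $\u\in X^0$, $|\bra{\partial^\alpha(f\v),\u}_{(X^0)'}|=|\bra{\partial^\alpha\v,\,\text{(terms with }f\text{ and }\u)}_{(X^0)'}|$; distributing derivatives via Leibniz, one must estimate quantities of the form $\norm{(\partial^\gamma f)\u}_{X^0}$, which reduces to~\eqref{product-H}/\eqref{product-X} at low order, and one takes the supremum over $\norm{\u}_{X^0}\le 1$.

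\textbf{Main obstacle.} The only genuine subtlety is the $Y^n$ (dual-space) estimate~\eqref{product-Y}: one cannot differentiate $f\v$ naively since $\v$ is only a distribution in $(X^0)'$, so the Leibniz rule must be justified by moving derivatives onto the $X^0$ test function and then re-examining which factor carries $f$. Keeping careful track of how many derivatives land on $f$ versus the test function (so as to land $f$-factors in $L^\infty$ via $H^2\subset L^\infty$, and to produce exactly the stated remainder $\norm{\nabla f}_{H^{n-1}}\norm{\v}_{H^2}$ rather than something with more derivatives) is where the bookkeeping is most delicate; it also forces one to invoke the $X^0$--$Y^0$ duality and the second inequality of~\eqref{embed-Z} to convert $X^0$-norms of products back into the required form. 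Everything else is a direct, if tedious, application of Step 1 together with the embeddings of Lemma~\ref{L.embeddings}.
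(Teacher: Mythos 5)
Your Step~1 contains a genuine error: the inequality $\norm{\partial^\beta f}_{L^\infty}\lesssim\norm{f}_{H^2}$ is false for $1\le|\beta|\le2$ in dimension $d=2$ (it would require $\norm{f}_{H^{|\beta|+2}}$), so the naive placement ``low-order factor in $L^\infty$, the other in $L^2$'' does not yield the tame bound~\eqref{product-H-0}; moreover, for intermediate $|\beta|\sim n/2$ no pointwise H\"older split produces $\norm{f}_{H^2}\norm{g}_{H^n}+\norm{f}_{H^n}\norm{g}_{H^2}$ without Gagliardo--Nirenberg interpolation. The correct tool is the bilinear Moser estimate $\norm{(\partial^\beta f)(\partial^\gamma g)}_{L^2}\lesssim\norm{f}_{L^\infty}\norm{g}_{H^m}+\norm{f}_{H^m}\norm{g}_{L^\infty}$ for $|\beta|+|\gamma|\le m$, which is exactly what the paper uses: \eqref{product-H-0} is simply quoted from \cite[Prop.~B.2]{Lannes}, and the $|\beta|\ge1$ terms in the proof of~\eqref{product-H} are bounded via \cite[Prop.~3.6]{TaylorIII} applied with $\nabla f$ in place of $f$. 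Relatedly, your plan to bound the individual Leibniz terms of~\eqref{product-H} ``by~\eqref{product-H-0} with $n-1$'' is not licit as stated, since a single bilinear term $(\partial^\beta f)(\partial^\gamma g)$ is not controlled by a Sobolev norm of the product $(\nabla f)g$; the term-by-term Moser estimate is what is needed. With these repairs your treatment of~\eqref{product-H-1} and~\eqref{product-X} coincides with the paper's.

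The route you sketch for~\eqref{product-Y} also has a gap. Moving derivatives of $f$ onto the $X^0$ test function and estimating $\norm{(\partial^\beta f)\u}_{X^0}$ requires $\partial^\beta f\in L^\infty$ (plus one more derivative for the divergence part), which for $|\beta|$ close to $n$ demands far more regularity than the available $\nabla f\in H^{(3\vee n)-1}$; and the second inequality of~\eqref{embed-Z} carries a factor $1/\sqrt\mu$, so invoking it would destroy the uniformity in $\mu$ that~\eqref{product-Y} must have (its constant enters bounds of the form $C(\mu,\dots)$ throughout the paper). The paper's proof avoids both problems: for $n>2$ it expands $\partial^\alpha(f\v)$ by Leibniz, keeps only the single term $f\partial^\alpha\v$ in $Y^0$ (handled by the $n=0$ duality identity $\bra{f\v,\u}_{(X^0)^\prime}=\bra{\v,f\u}_{(X^0)^\prime}$ together with the order-zero product estimate in $X^0$), and observes that every term with $|\beta|\ge1$ is an honest $L^2$ function because $\v\in Y^n\subset H^{n-1}$ by~\eqref{embed-Y}; these terms are then bounded in $L^2\subset Y^0$ by the same bilinear Moser estimate, giving precisely $\norm{\nabla f}_{H^2}\norm{\v}_{H^{n-1}}+\norm{\nabla f}_{H^{n-1}}\norm{\v}_{H^2}$, while the cases $n\in\{1,2\}$ are settled by a short finite induction using~\eqref{product-H-0} and~\eqref{embed-Y}. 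Your ``main obstacle'' paragraph correctly locates the delicate point, but the mechanism you propose to resolve it would not close.
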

\begin{proof}
Estimate~\eqref{product-H-0} is well-known; see~\cite[Prop.~B.2]{Lannes} for instance. 

As for~\eqref{product-H}, the cases $n\in\{0,1,2\}$ are straightforward, using Leibniz rule and the continuous Sobolev embedding $H^2\subset L^\infty$. When $n>2$, we decompose Leibniz rule as follows: for any $1\leq |\alpha|\leq n$,
\[\partial^{\alpha}(fg)=f\partial^\alpha g+\sum_{\substack{\beta+\gamma=\alpha\\ |\beta|\geq 1,|\gamma|\geq 0}}\binom{\alpha}{\beta}(\partial^\beta f)(\partial^\gamma g).\] Since $|\beta|\geq 1$ and using the
standard bilinear estimate for (see \eg~\cite[Prop. 3.6]{TaylorIII}) , one has
\[\norm{(\partial^\beta f)(\partial^\gamma g)}_{L^2}\lesssim \norm{\nabla f}_{L^\infty}\norm{g}_{H^{|\beta|+|\gamma|-1}}+\norm{\nabla f}_{H^{|\beta|+|\gamma|-1}}\norm{g}_{L^\infty},\]
and the result follows.

Estimate~\eqref{product-H-1} is obtained in the same way, and using the induction hypothesis to control the contribution of $\norm{\nabla(f^{-1})}_{H^{|\beta|+|\gamma|-1}}$.

As for~\eqref{product-X}, we use
\[ \norm{ f\u}_{X^n}\lesssim  \norm{f \u}_{H^n}+\sqrt\mu\norm{ f\nabla\cdot\u}_{H^n}+\sqrt{\mu}\norm{\nabla f\cdot \u}_{H^n},\]
with product estimates~\eqref{product-H-0} and~\eqref{product-H}, as well as the continuous embedding~\eqref{embed-X} in Lemma~\ref{L.embeddings}.

Finally, we notice that $f\v\in Y^0=(X^0)^\prime$ and $\norm{f\v}_{Y^0}\lesssim \big(\norm{f}_{L^{\infty}}+\norm{\nabla f}_{H^{2}}\big)\norm{\v}_{(X^0)^\prime}$ since for any $\u\in X^0$,
\[ \abs{\bra{ f\v, \u}_{(X^0)^\prime}}=\abs{\bra{ \v,f\u}_{(X^0)^\prime}}\leq
\norm{\v}_{(X^0)^\prime}\norm{f \u}_{X^0}\lesssim
 \norm{\v}_{(X^0)^\prime}\big(\norm{f}_{L^{\infty}}+\norm{\nabla f}_{H^{2}}\big)\norm{\u}_{X^0}.\]
For $n\in\{0,1\}$, we write
 \[\norm{f\v}_{Y^{n+1}}\lesssim \norm{f\v}_{Y^n}+\sum_{|\alpha|=1}\norm{f \partial^\alpha \v}_{Y^n}+\norm{ (\partial^\alpha f) \v }_{Y^n}\lesssim \norm{f\v}_{Y^n}+\sum_{|\alpha|=1}\norm{f \partial^\alpha \v}_{Y^n}+\norm{ (\partial^\alpha f) \v }_{H^{n}},
 \]
 where we used~\eqref{embed-Y} in Lemma~\ref{L.embeddings} for the last estimate.
Estimate~\eqref{product-Y} follows by finite induction, using~\eqref{product-H-0} and again~\eqref{embed-Y}. For $n>2$, we use Leibniz rule:
 \[\norm{\partial^{\alpha}(f\v)}_{Y^0}\leq \norm{f\partial^\alpha \v}_{Y^0}+\sum_{\substack{\beta+\gamma=\alpha\\ |\beta|\geq 1,|\gamma|\geq 0}}\binom{\alpha}{\beta} \norm{(\partial^\beta f)(\partial^\gamma \v)}_{Y^0}.\]
 Estimate~\eqref{product-Y} follows from the above bilinear estimate 
\[\norm{(\partial^\beta f)(\partial^\gamma \v)}_{Y^0}\lesssim \norm{(\partial^\beta f)(\partial^\gamma \v)}_{L^2}\leq \norm{\nabla f}_{H^2}\norm{\v}_{H^{n-1}}+ \norm{\nabla f}_{H^{n-1}}\norm{\v}_{H^2}
\]
and using again the embedding~\eqref{embed-Y}. The proof is complete.
\end{proof}

Let us now exhibit elliptic estimates concerning the operator $\mfT$, defined in~\eqref{def-T},\eqref{def-mfT}.
\begin{Lemma}\label{L.T-invertible}
Let $b\in \dot{H}^3(\RR^d)$ and $h\in \dot{H}^2(\RR^d)$ be such that~\eqref{cond-h0} holds. Then $\mfT[h,\beta b]\in\mathcal{L}(X^0;(X^0)^\prime)$ and is symmetric:
\[\forall\u_1,\u_2\in X^0, \quad \bra{ \mfT[h,\beta b] \u_1,\u_2}_{(X^0)^\prime} \ = \ \bra{ \mfT[h,\beta b] \u_2 , \u_1 }_{(X^0)^\prime} .\]
Moreover, one has
\[\forall\u_1,\u_2\in X^0, \quad \abs{\bra{ \mfT[h,\beta b] \u_1,\u_2}_{(X^0)^\prime}} \leq C(\mu,h^\star,\beta\norm{\nabla b}_{L^\infty})\norm{\u_1}_{X^0}\norm{\u_2}_{X^0},\]
\[ \forall\u\in X^0, \quad \norm{\u}_{X^0}^2 \leq C(h_\star^{-1})\bra{ \mfT[h,\beta b] \u,\u}_{(X^0)^\prime} .\]
In particular, $\mfT[h,\beta b]:X^0\to (X^0)^\prime$ is a topological isomorphism and
\[ \forall\v\in(X^0)^\prime, \quad \norm{\mfT[h,\beta b]^{-1}\v}_{X^0}\leq C(h_\star^{-1})\norm{\v}_{(X^0)^\prime}.\]
\end{Lemma}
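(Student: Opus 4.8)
The plan is to reduce the whole statement to a single bilinear form on the Hilbert space $X^0$ and to invoke the Lax--Milgram theorem. Recall that $X^0$ is a Hilbert space for the inner product $\Par{\u_1,\u_2}_{X^0}\eqdef(\u_1,\u_2)_{L^2}+\mu(\nabla\cdot\u_1,\nabla\cdot\u_2)_{L^2}$ (completeness because $\nabla\cdot$ is a closed operator on $L^2$), that~\eqref{cond-h0} gives $h,h^{-1}\in L^\infty(\RR^d)$ with $\norm h_{L^\infty}\leq h^\star$ and $\norm{h^{-1}}_{L^\infty}\leq h_\star^{-1}$, and that $b\in\dot H^3(\RR^d)$ gives $\nabla b\in H^2(\RR^d)\subset L^\infty(\RR^d)$ since $d\leq 2$. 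Writing out $\mfT[h,\beta b]\u=h\u+\mu h\T[h,\beta b]\u$ from~\eqref{def-T}, \eqref{def-mfT}, the $1/h$ factors cancel against the outer $h$, so that $\mfT[h,\beta b]\u=h\u-\tfrac\mu3\nabla(h^3\nabla\cdot\u)+\tfrac\mu2\big(\nabla(h^2\beta\nabla b\cdot\u)-h^2\beta\nabla b\,\nabla\cdot\u\big)+\mu\beta^2h(\nabla b\cdot\u)\nabla b$ is a well-defined distribution for any $\u\in X^0$ (each term lies in $H^{-1}+L^2$). Integrating by parts one finds, for $\u_1,\u_2\in X^0$, that $\bra{\mfT[h,\beta b]\u_1,\u_2}_{(X^0)'}=a(\u_1,\u_2)$ where
\[a(\u_1,\u_2)\eqdef\int h\,\u_1\cdot\u_2+\frac\mu3\int h^3(\nabla\cdot\u_1)(\nabla\cdot\u_2)-\frac\mu2\int h^2\Par{(\beta\nabla b\cdot\u_1)(\nabla\cdot\u_2)+(\beta\nabla b\cdot\u_2)(\nabla\cdot\u_1)}+\mu\beta^2\int h\,(\nabla b\cdot\u_1)(\nabla b\cdot\u_2),\]
all integrals over $\RR^d$; the integration by parts is justified first for $\u_1\in C_c^\infty$ and then for all $\u_1\in X^0$ by density, once the continuity below is established. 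Since $a$ involves no derivative of $h$, it is defined on all of $X^0\times X^0$, and it is manifestly symmetric, which is the symmetry statement.

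It remains to prove the two inequalities. Boundedness of $a$ follows by Cauchy--Schwarz applied to each of the four terms, using $\norm h_{L^\infty}\leq h^\star$, $\norm{\nabla b}_{L^\infty}<\infty$ and $\sqrt\mu\,\norm{\nabla\cdot\u}_{L^2}\leq\norm\u_{X^0}$ for the terms carrying a divergence (the two mixed terms carry only a single power of $\sqrt\mu$, which is harmless). This gives $\abs{a(\u_1,\u_2)}\leq C(\mu,h^\star,\beta\norm{\nabla b}_{L^\infty})\norm{\u_1}_{X^0}\norm{\u_2}_{X^0}$, hence $\mfT[h,\beta b]\in\mathcal L(X^0;(X^0)')$ together with the claimed bilinear bound. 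Coercivity is the step requiring an idea: taking $\u_1=\u_2=\u$, the last three terms of $a$ combine into $\mu\int h\,q$ with $q\eqdef\tfrac{h^2}3(\nabla\cdot\u)^2-h(\beta\nabla b\cdot\u)(\nabla\cdot\u)+\beta^2(\nabla b\cdot\u)^2$, which, pointwise, is a positive-definite quadratic form in $(h\nabla\cdot\u,\beta\nabla b\cdot\u)$ (symmetric matrix $\left(\begin{smallmatrix}1/3&-1/2\\-1/2&1\end{smallmatrix}\right)$, determinant $1/12$); quantitatively, Young's inequality $h(\beta\nabla b\cdot\u)(\nabla\cdot\u)\leq\tfrac14h^2(\nabla\cdot\u)^2+(\beta\nabla b\cdot\u)^2$ yields the pointwise bound $q\geq\tfrac1{12}h^2(\nabla\cdot\u)^2$. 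Therefore
\[\bra{\mfT[h,\beta b]\u,\u}_{(X^0)'}=a(\u,\u)\geq h_\star\norm\u_{L^2}^2+\frac{\mu h_\star^3}{12}\norm{\nabla\cdot\u}_{L^2}^2\geq\min\big(h_\star,\tfrac{h_\star^3}{12}\big)\norm\u_{X^0}^2,\]
which is the stated estimate with $C(h_\star^{-1})=h_\star^{-1}+12h_\star^{-3}$, a polynomial in $h_\star^{-1}$ with non-negative coefficients.

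Finally I would conclude by Lax--Milgram: $a$ being a bounded coercive bilinear form on the Hilbert space $X^0$, for every $\v\in(X^0)'$ there is a unique $\u\in X^0$ with $\mfT[h,\beta b]\u=\v$, and testing this identity against $\u$ itself gives $\min(h_\star,\tfrac{h_\star^3}{12})\norm\u_{X^0}^2\leq a(\u,\u)=\bra{\v,\u}_{(X^0)'}\leq\norm\v_{(X^0)'}\norm\u_{X^0}$, whence $\norm{\mfT[h,\beta b]^{-1}\v}_{X^0}\leq C(h_\star^{-1})\norm\v_{(X^0)'}$; in particular $\mfT[h,\beta b]$ is a topological isomorphism. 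The only genuinely non-routine point is the pointwise bound $q\geq\tfrac1{12}h^2(\nabla\cdot\u)^2$, which is precisely what makes the elliptic operator coercive in the full $X^0$-norm despite the indefinite sign of the bottom-curvature cross term; the remainder is bookkeeping, the points to watch being the powers of $\mu$ in the boundedness estimate (so the mixed terms are controlled by $\norm\cdot_{X^0}$ and not merely by $\sqrt\mu\,\norm{\nabla\cdot\,\cdot}_{L^2}$) and the density argument legitimizing the integration by parts when $h$ is merely bounded.
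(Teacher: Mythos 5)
Your proof is correct and follows essentially the same route as the paper: write the duality pairing as a symmetric bilinear form via integration by parts (justified by density of smooth functions in $X^0$), get continuity from Cauchy--Schwarz, get coercivity from the pointwise positivity of the quadratic form in $(h\nabla\cdot\u,\beta\nabla b\cdot\u)$, and conclude with Lax--Milgram. The only cosmetic difference is that the paper obtains the same $1/12$ constant by completing the square, $\tfrac\mu{12}h^3\abs{\nabla\cdot\u}^2+\tfrac\mu4 h\abs{h\nabla\cdot\u-2\beta\nabla b\cdot\u}^2$, where you use Young's inequality.
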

\begin{proof}
We establish the estimates for $\u,\u_1,\u_2\in \mathcal{S}(\RR^d)^d$ so that all the terms are well-defined, and the $((X^0)^\prime-X^0)$ duality product coincides with the $L^2$ inner product. The result for less regular functions is obtained by density of $\mathcal{S}(\RR^d)^d$ in $X^0$ and continuous linear extension.

By definition of $\mfT$ in~\eqref{def-mfT} and after integration by parts, one has
\begin{multline*}\Par{ \mfT[h,\beta b] \u_1,\u_2}_{L^2} =\int_{\RR^d} h \u_1\cdot \u_2+\frac\mu3 h^3(\nabla\cdot \u_1)(\nabla\cdot \u_2) \\
-\frac\mu{2} h^2\big((\nabla\cdot\u_2)(\beta\nabla b\cdot \u_1)+(\beta\nabla b\cdot\u_2)(\nabla\cdot \u_1)\big)  +\mu h(\beta\nabla b\cdot \u_1)(\beta\nabla b\cdot \u_2),
\end{multline*}
from which the symmetry is evident. The first estimate of the Lemma follows by Cauchy-Schwarz inequality. The second one is obvious when rewriting
\[\Par{ \mfT[h,\beta b] \u,\u}_{L^2}  = \int_{\RR^d} h \abs{\u}^2+\frac\mu{12} h^3\abs{\nabla\cdot \u}^2  +\frac\mu4 h\abs{ h\nabla\cdot\u-2\beta\nabla b\cdot \u}^2.\]

This shows that $\mfT[h,\beta b] :X^0\to(X^0)^\prime$ is continuous and coercive, so that the operator version of Lax-Milgram theorem ensures that $\mfT[h,\beta b] $ is an isomorphism. The continuity of the inverse follows from the coercivity of $\mfT[h,\beta b] $:
\[\norm{\u}_{X^0}^2\leq C(h_\star^{-1})\abs{\bra{ \mfT[h,\beta b]\u,\u}_{(X^0)^\prime}}\leq C(h_\star^{-1})\norm{\mfT[h,\beta b]\u}_{(X^0)^\prime}\norm{\u}_{X^0},\]
and setting $\u=\mfT[h,\beta b]^{-1}\v$ above.
\end{proof}

\begin{Lemma}\label{L.T-differentiable}
Let $b\in \dot{H}^{4}(\RR^d)$ and $h\in \dot{H}^{3}(\RR^d)$ be such that~\eqref{cond-h0} holds; and let $\v\in Y^n$. Then $\mfT[h,\beta b]^{-1}\v\in X^n$ and
\[ \norm{\mfT[h,\beta b]^{-1}\v}_{X^n}\leq C(\mu,h_\star^{-1},h^\star,\norm{\nabla h}_{H^{2}},\beta\norm{\nabla b}_{H^{3}})\Big(\norm{\v}_{Y^{n}}+\bra{\norm{\nabla h}_{H^{n-1}}\norm{\v}_{Y^2}}_{n>2}\Big).\]
\end{Lemma}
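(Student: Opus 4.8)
The strategy is to establish the higher-regularity bound by induction on $n$, bootstrapping from the $n=0$ (actually $n\le 2$) case already contained in Lemma~\ref{L.T-invertible}. Write $\u = \mfT[h,\beta b]^{-1}\v$, so that by definition $\mfT[h,\beta b]\u = \v$, i.e. $h\u + \mu h\T[h,\beta b]\u = \v$. The point is that differentiating this identity and commuting $\partial^\alpha$ through $\mfT$ produces, on the left, $\mfT[h,\beta b](\partial^\alpha\u)$ plus commutator terms that are lower order in $\u$ but involve derivatives of $h$, and on the right $\partial^\alpha\v$. Then one applies the coercivity estimate of Lemma~\ref{L.T-invertible} — more precisely the isomorphism bound $\norm{\mfT[h,\beta b]^{-1}\w}_{X^0}\le C(h_\star^{-1})\norm{\w}_{(X^0)'}$ — to $\partial^\alpha\u$, and controls the commutator in $Y^0=(X^0)'$ using the product/tame estimates of Lemma~\ref{L.products}.

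Concretely, for $1\le|\alpha|\le n$ I would write $\mfT[h,\beta b](\partial^\alpha\u) = \partial^\alpha\v - \big[\partial^\alpha,\mfT[h,\beta b]\big]\u$, and expand the commutator using the explicit form of $\mfT = h\,\Id + \mu h\T$ from~\eqref{def-T}, \eqref{def-mfT}. Every term in $\mfT\u$ is, schematically, a product of powers of $h$, of $\beta\nabla b$, and of at most two derivatives of $\u$ (with the $\frac1h$ prefactors handled by~\eqref{product-H-1}). Hence each commutator term is a sum of products in which at least one derivative falls on a coefficient ($h$ or $\beta\nabla b$), so it can be bounded in $(X^0)'$ — using $\norm{f\v}_{Y^0}\lesssim(\norm{f}_{L^\infty}+\norm{\nabla f}_{H^2})\norm{\v}_{(X^0)'}$ and $\norm{\nabla f}_{Y^n}\lesssim\mu^{-1/2}\norm{f}_{H^n}$ from Lemma~\ref{L.embeddings}, plus the tame products of Lemma~\ref{L.products} — by $C(\mu,h_\star^{-1},h^\star,\norm{\nabla h}_{H^2},\beta\norm{\nabla b}_{H^3})\big(\norm{\u}_{X^{|\alpha|-1}} + \bra{\norm{\nabla h}_{H^{|\alpha|-1}}\norm{\u}_{X^2}}_{|\alpha|>2}\big)$, where I have used that $\nabla$ of an order-$\le2$-in-$\u$ object lands in $(X^0)'$ at the cost of $\mu^{-1/2}$, absorbed into the constant. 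Summing over $|\alpha|\le n$ and invoking the induction hypothesis to replace $\norm{\u}_{X^{n-1}}$ and $\norm{\u}_{X^2}$ by the corresponding right-hand sides in $\norm{\v}_{Y^{\cdot}}$ (the $\norm{\v}_{Y^2}$ term getting absorbed into $\norm{\v}_{Y^n}$ for $n\le 2$, and producing the $\bra{\cdots}_{n>2}$ term otherwise) closes the induction. The base case $n\le 2$ follows from Lemma~\ref{L.T-invertible} together with the estimate $\norm{\mfT[h,\beta b]^{-1}\v}_{X^0}\le C(h_\star^{-1})\norm{\v}_{(X^0)'}$ and the $n\le2$ instances of the product estimates; note $b\in\dot H^4$, $h\in\dot H^3$ give exactly the regularity needed for the coefficients ($\beta\nabla b\in H^3$, $\nabla h\in H^2$) to make the $L^\infty$ and $H^2$ norms appearing in Lemma~\ref{L.products} finite.

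The main obstacle is purely bookkeeping: keeping track of which derivative of $h$ or $b$ appears in each commutator term and verifying that the distribution of derivatives never forces more than $H^2$ regularity on $\u$ in the "low-frequency" factor while never exceeding $H^{n-1}$ on $\nabla h$ — i.e. that the estimate is genuinely \emph{tame}. The $\frac1h$ prefactors require a little care: one first notes $\norm{h^{-1}g}_{H^n}\le C(h_\star^{-1},h^\star,\norm{\nabla h}_{H^2})(\norm{g}_{H^n}+\bra{\norm{\nabla h}_{H^{n-1}}\norm{g}_{H^2}}_{n>2})$ by~\eqref{product-H-1}, which feeds into the commutator bounds without spoiling tameness. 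A secondary subtlety is that $\mfT$ maps into $(X^0)'$, not $L^2$, so all commutator estimates must be carried out in the $Y^0$ norm rather than $L^2$; this is where the embedding~\eqref{embed-Z}, $\norm{\nabla f}_{Y^n}\lesssim\mu^{-1/2}\norm{f}_{H^n}$, is essential, since the highest-order pieces of $\mfT\u$ are exact gradients. Once these points are handled, the energy/coercivity argument is immediate.
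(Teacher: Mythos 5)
Your proposal is correct and follows essentially the same route as the paper: commute a derivative operator through $\mfT$ via $\mfT(\partial^\alpha\u)=\partial^\alpha\v-[\partial^\alpha,\mfT]\u$, bound the commutator tamely in the dual norm $(X^0)'$ using Lemma~\ref{L.products} and~\eqref{embed-Z}, invert with the isomorphism bound of Lemma~\ref{L.T-invertible}, and close by induction on $n$. The paper does exactly this with $\Lambda^n=(\Id-\Delta)^{n/2}$ in place of $\partial^\alpha$ (using the identity $[\Lambda^n,\mfT^{-1}]=-\mfT^{-1}[\Lambda^n,\mfT]\mfT^{-1}$ and the commutator estimates of~\cite[Cor.~B.9]{Lannes}), a purely cosmetic difference; note also that in $\mfT=h\,\Id+\mu h\T$ the $1/h$ factors of $\T$ cancel, so~\eqref{product-H-1} is not actually needed.
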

\begin{proof}
Let $\v\in \mathcal{S}(\RR^d)^d$, and denote for simplicity $\mfT\eqdef \mfT[h,\beta b]$. One has, for $\Lambda^n=(\Id-\Delta)^{n/2}$,
\[
\big[\Lambda^n,\mfT^{-1}\big]\v=-\mfT^{-1}\Lambda^n \mfT\mfT^{-1}\v+\mfT^{-1}\mfT\Lambda^n\mfT^{-1}\v
=-\mfT^{-1}\big[\Lambda^n,\mfT\big]\mfT^{-1}\v.\]
By definition of $\mfT$ and since $\Lambda^n$ commutes with space differentiation, we have for any $\u,\w\in \S(\RR^d)^d$,
\begin{multline*}\abs{\Par{\big[\Lambda^n,\mfT\big]\u,\w}_{L^2}}= \Par{\big[\Lambda^n,h\big]\u,\w}_{L^2}+\frac\mu3\Par{\big[\Lambda^n,h^3\big]\nabla\cdot\u,\nabla\cdot\w}_{L^2}\\
-\frac\mu{2}\Par{[\Lambda^n,h^2(\beta\nabla b)\cdot] \u,\nabla\cdot \w}_{L^2}-\frac\mu{2}\Par{[\Lambda^n, h^2(\beta\nabla b)]\nabla\cdot \u,\w}_{L^2}+\mu\Par{[\Lambda^n,h(\beta \nabla b)(\beta\nabla b)\cdot ] \u,\w}_{L^2}.
\end{multline*}
Commutator estimates (see \eg \cite[Corollary~B.9]{Lannes}) and product estimate~\eqref{product-H} yield
\[\abs{\Par{\big[\Lambda^n,\mfT\big]\u,\w}_{L^2}}\leq  C(\mu,h^\star,\norm{\nabla h}_{H^{2}},\beta\norm{\nabla b}_{H^{3}})\Big(\norm{\Lambda^{n-1}\u}_{X^0}+\bra{\norm{\nabla h}_{H^{n-1}}\norm{\Lambda^{2}\u}_{X^0}}_{n>2}\Big)\norm{\w}_{X^0}.\]
By density and continuity arguments, we infer that for any $\u\in X^{n-1}$, $\big[\Lambda^n,\mfT\big]\u\in (X^0)^\prime$ and
\[\norm{\big[\Lambda^n,\mfT\big]\u}_{(X^0)^\prime}\leq  C(\mu,h^\star,\norm{\nabla h}_{H^{2}},\beta\norm{\nabla b}_{H^{3}})\Big(\norm{\u}_{X^{n-1}}+\bra{\norm{\nabla h}_{H^{n-1}}\norm{\u}_{X^2}}_{n>2}\Big).\]
Combining the above and by Lemma~\ref{L.T-invertible}, we find 
\begin{align*}\norm{\mfT^{-1}\v}_{X^n}&= \norm{\mfT^{-1}\Lambda^n\v-\mfT^{-1}\big[\Lambda^n,\mfT\big]\mfT^{-1}\v}_{X^0}\leq C(h_\star^{-1})\norm{\Lambda^n\v-\big[\Lambda^n,\mfT\big]\mfT^{-1}\v}_{(X^0)^\prime}\\
&\leq   C(\mu,h_\star^{-1},h^\star,\norm{\nabla h}_{H^{2}},\beta\norm{\nabla b}_{H^{3}})\Big(\norm{\v}_{Y^n}+\norm{\mfT^{-1}\v}_{X^{n-1}}+\bra{\norm{\nabla h}_{H^{n-1}}\norm{\mfT^{-1}\v}_{X^2}}_{n>2}\Big).
\end{align*}
The result now follows by induction on $n\in\NN$, and by density of $ \mathcal{S}(\RR^d)^d$ in $Y^n$.
\end{proof}

We now exhibit first-order expansions of our operators which are used to obtain the quasilinear formulation of our system, in Proposition~\ref{P.quasi} below.
\begin{Lemma}\label{L.com-Hn}
For any $f,g\in \dot{H}^{3\vee n+ |\alpha|-1}(\RR^d)$ with $n\in\NN $ and $|\alpha|\geq 1$, one has
\[
\norm{\partial^\alpha(fg)-f\partial^\alpha g}_{H^n} \lesssim \norm{\nabla f}_{H^{2\vee n+|\alpha|-1}}\norm{ g}_{H^{n+|\alpha|-1}}.
\]
\end{Lemma}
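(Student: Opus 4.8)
The plan is to reduce the commutator-type estimate to the Leibniz rule and then invoke the bilinear estimate already used in the proof of Lemma~\ref{L.products}. First I would write, for any multi-index $\alpha$ with $|\alpha|\geq 1$ and any multi-index $\delta$ with $|\delta|\leq n$,
\[
\partial^\delta\big(\partial^\alpha(fg)-f\partial^\alpha g\big)=\partial^{\delta}\Big(\sum_{\substack{\beta+\gamma=\alpha\\ |\beta|\geq 1}}\binom{\alpha}{\beta}(\partial^\beta f)(\partial^\gamma g)\Big)=\sum_{\substack{\beta+\gamma=\alpha\\ |\beta|\geq 1}}\binom{\alpha}{\beta}\sum_{\substack{\beta'+\gamma'=\delta}}\binom{\delta}{\beta'}(\partial^{\beta+\beta'} f)(\partial^{\gamma+\gamma'} g),
\]
so that it suffices to bound, in $L^2$, each term $(\partial^{\mu_1}f)(\partial^{\mu_2}g)$ with $|\mu_1|\geq 1$ and $|\mu_1|+|\mu_2|\leq n+|\alpha|$. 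Since $|\mu_1|\geq 1$, we may write $\partial^{\mu_1}f=\partial^{\mu_1'}(\nabla f)_j$ for some $j$ and $|\mu_1'|=|\mu_1|-1$, so the factor involving $f$ is a derivative of $\nabla f$.

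Next I would apply the standard bilinear (tame product) estimate, exactly as invoked in the proof of~\eqref{product-H}, namely for $|\mu_1|\geq 1$,
\[
\norm{(\partial^{\mu_1} f)(\partial^{\mu_2} g)}_{L^2}\lesssim \norm{\nabla f}_{L^\infty}\norm{g}_{H^{|\mu_1|+|\mu_2|-1}}+\norm{\nabla f}_{H^{|\mu_1|+|\mu_2|-1}}\norm{g}_{L^\infty}.
\]
Using $|\mu_1|+|\mu_2|-1\leq n+|\alpha|-1$, the Sobolev embedding $H^2\subset L^\infty$ to control $\norm{\nabla f}_{L^\infty}\lesssim\norm{\nabla f}_{H^2}$ and $\norm{g}_{L^\infty}\lesssim\norm{g}_{H^2}$, and the monotonicity of Sobolev norms, every term is bounded by
\[
\norm{\nabla f}_{H^{2\vee(n+|\alpha|-1)}}\norm{g}_{H^{n+|\alpha|-1}},
\]
where the $2\vee$ is needed to absorb the $\norm{\nabla f}_{H^2}$ contribution when $n+|\alpha|-1<2$. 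Summing over the finitely many multi-indices $\beta,\gamma,\beta',\gamma'$ (the number of which depends only on $n$, $d$, $|\alpha|$, hence is swallowed by $\lesssim$) and over $|\delta|\leq n$ yields the claimed bound on $\norm{\partial^\alpha(fg)-f\partial^\alpha g}_{H^n}$. The regularity hypothesis $f,g\in\dot H^{3\vee n+|\alpha|-1}$ is precisely what makes the right-hand side finite (note $\nabla f\in H^{2\vee n+|\alpha|-1}$ requires $f\in\dot H^{3\vee n+|\alpha|}$... here one checks the indices match the statement, with the $3\vee$ accounting for the $H^2$-type terms), and the estimate then extends to all such $f,g$ by the density of $\mathcal S(\RR^d)$ and continuity.

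There is essentially no serious obstacle here: this is a routine tame-estimate bookkeeping exercise, and the only mild subtlety is making sure the Sobolev indices on the right-hand side are taken large enough — via the $\max$ with $2$ (resp.\ $3$) — to dominate uniformly the low-order terms $\norm{\nabla f}_{L^\infty}$ and $\norm{g}_{L^\infty}$ that arise from the bilinear estimate when $n+|\alpha|-1$ is small. One should also double-check that the index $3\vee n+|\alpha|-1$ in the hypothesis, rather than $2\vee n+|\alpha|-1$, is what guarantees $\nabla f\in H^{2\vee n+|\alpha|-1}$ in all cases; this is the one place to be careful about off-by-one shifts in the derivative count.
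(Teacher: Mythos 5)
Your overall strategy --- expand the commutator by Leibniz and bound each product $(\partial^{\mu_1}f)(\partial^{\mu_2}g)$ in $L^2$ --- is the same as the paper's, but the single symmetric bilinear estimate you invoke does not close the argument when $n+|\alpha|\leq 2$, and those cases are genuinely needed later (Lemma~\ref{L.com-T} applies this lemma with $n\in\{0,1\}$ and $|\alpha|=1$ allowed). The second term of your estimate, $\norm{\nabla f}_{H^{|\mu_1|+|\mu_2|-1}}\norm{g}_{L^\infty}$, forces you to pay $\norm{g}_{H^2}$ via the Sobolev embedding, and $\norm{g}_{H^2}\lesssim\norm{g}_{H^{n+|\alpha|-1}}$ only holds when $n+|\alpha|-1\geq 2$. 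You correctly sense that a $\vee\,2$ is needed somewhere, but you place it on the $f$-factor only; the statement deliberately carries no $\vee\,2$ on the $g$-factor, and that asymmetry is the whole content of the lemma. For instance, with $n=0$ and $|\alpha|=1$ the claim reads $\norm{(\partial^\alpha f)g}_{L^2}\lesssim\norm{\nabla f}_{H^2}\norm{g}_{L^2}$, which your bound $\norm{\nabla f}_{L^\infty}\norm{g}_{L^2}+\norm{\nabla f}_{L^2}\norm{g}_{L^\infty}$ does not yield.

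The fix is the paper's case splitting on $|\mu_1|$, arranged so that the number of derivatives (including the two hidden in an $L^\infty$ bound) landing on $g$ never exceeds $n+|\alpha|-1$: for $|\mu_1|=1$, use H\"older with $\norm{\partial^{\mu_1}f}_{L^\infty}\lesssim\norm{\nabla f}_{H^2}$ and $\norm{\partial^{\mu_2}g}_{L^2}\leq\norm{g}_{H^{n+|\alpha|-1}}$; for $|\mu_1|=2$, use the $H^1\times H^1\to L^2$ product estimate, giving $\norm{\nabla f}_{H^2}\norm{g}_{H^{|\mu_2|+1}}$ with $|\mu_2|+1\leq n+|\alpha|-1$; for $|\mu_1|\geq 3$, use H\"older the other way, $\norm{\partial^{\mu_1}f}_{L^2}\norm{\partial^{\mu_2}g}_{L^\infty}\lesssim\norm{\nabla f}_{H^{|\mu_1|-1}}\norm{g}_{H^{|\mu_2|+2}}$, where now $|\mu_2|+2\leq n+|\alpha|-1$ so the $g$-norm is admissible. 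Your argument as written is correct for $n+|\alpha|\geq 3$, but the low-order cases require this asymmetric treatment rather than the tame Kato--Ponce-type bound.
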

\begin{proof}
Using Leibniz rule, we only have to estimate
\[\norm{(\partial^\beta f)(\partial^\gamma g)}_{L^2}\quad \text{ for $|\beta|+|\gamma|\leq n+|\alpha|$ and $|\beta|\geq 1$}.\] 
If $|\beta|=1$, then $|\gamma|\leq n+|\alpha|-1$, and we find by continuous Sobolev embedding $H^2\subset L^\infty$
\[ \norm{(\partial^\beta f)(\partial^\gamma g)}_{L^2}\leq \norm{\partial^\beta f}_{H^2}\norm{\partial^\gamma g}_{L^2}.\]
If $|\beta|=2$, then $|\gamma|\leq n+|\alpha|-2$, and standard product estimate (see \eg \cite[Prop. B.2]{Lannes}) yields
\[ \norm{(\partial^\beta f)(\partial^\gamma g)}_{L^2}\leq \norm{\partial^\beta f}_{H^1}\norm{\partial^\gamma g}_{H^1}.\]
If $|\beta|>2$, then $|\gamma|\leq n+|\alpha|-3$, and we find by continuous Sobolev embedding $H^2\subset L^\infty$
\[ \norm{(\partial^\beta f)(\partial^\gamma g)}_{L^2}\leq \norm{\partial^\beta f}_{L^2}\norm{\partial^\gamma g}_{H^2}.\]
This concludes the proof.
\end{proof}
\begin{Lemma}\label{L.com-Yn}
For any $f\in \dot{H}^{4\vee n+ |\alpha|-1}(\RR^d)$ and $\v\in Y^{n+ |\alpha|-1}$ with $n\in\NN$ and $|\alpha|\geq 1$, one has
\[
\norm{\partial^\alpha(f\v)-f\partial^\alpha \v}_{Y^n} \lesssim \norm{\nabla f}_{H^{3\vee n+|\alpha|-1}}\norm{ \v }_{Y^{n+|\alpha|-1}}.
\]
\end{Lemma}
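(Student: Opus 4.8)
The plan is to transcribe the proof of Lemma~\ref{L.com-Hn} to the $Y$--scale, using the product estimate~\eqref{product-Y} and the embeddings of Lemma~\ref{L.embeddings} in place of the scalar product/commutator estimates. By density of $\mathcal{S}(\RR^d)^d$ in $Y^{n+|\alpha|-1}$ (and of smooth functions for $f$) I would first reduce to smooth, decaying $f$ and $\v$, so that all the quantities below are finite, the general case following by continuous extension. The Leibniz rule gives
\[\partial^\alpha(f\v)-f\partial^\alpha\v\ =\ \sum_{\substack{\beta+\gamma=\alpha\\ |\beta|\geq1}}\binom{\alpha}{\beta}\,(\partial^\beta f)(\partial^\gamma\v),\]
so it suffices to bound $\norm{(\partial^\beta f)(\partial^\gamma\v)}_{Y^n}$ by the right-hand side of the claim for each admissible pair $\beta,\gamma$, noting that $|\gamma|=|\alpha|-|\beta|\leq|\alpha|-1$.

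Fixing such a pair, I would apply~\eqref{product-Y} at the index $n$ to the multiplier $\partial^\beta f$ and the vector field $\partial^\gamma\v$, which yields
\[\norm{(\partial^\beta f)(\partial^\gamma\v)}_{Y^n}\ \lesssim\ \big(\norm{\partial^\beta f}_{L^\infty}+\norm{\nabla\partial^\beta f}_{H^2}\big)\norm{\partial^\gamma\v}_{Y^n}\ +\ \bra{\norm{\nabla\partial^\beta f}_{H^{n-1}}\norm{\partial^\gamma\v}_{H^2}}_{n>2}.\]
Since $|\beta|\geq1$ one may write $\partial^\beta f=\partial^{\beta'}(\nabla_j f)$ with $|\beta'|=|\beta|-1$, so that (using $H^2\subset L^\infty$ and $|\beta|\leq|\alpha|$)
\[\norm{\partial^\beta f}_{L^\infty}+\norm{\nabla\partial^\beta f}_{H^2}\ \lesssim\ \norm{\nabla f}_{H^{|\beta|+2}}\ \leq\ \norm{\nabla f}_{H^{|\alpha|+2}}\ =\ \norm{\nabla f}_{H^{3+|\alpha|-1}}\ \leq\ \norm{\nabla f}_{H^{3\vee n+|\alpha|-1}},\]
and, when $n>2$, $\norm{\nabla\partial^\beta f}_{H^{n-1}}\lesssim\norm{\nabla f}_{H^{|\beta|+n-1}}\leq\norm{\nabla f}_{H^{n+|\alpha|-1}}=\norm{\nabla f}_{H^{3\vee n+|\alpha|-1}}$. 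On the side of $\v$, $\norm{\partial^\gamma\v}_{Y^n}\leq\norm{\v}_{Y^{n+|\gamma|}}\leq\norm{\v}_{Y^{n+|\alpha|-1}}$, and, when $n>2$, $\norm{\partial^\gamma\v}_{H^2}\leq\norm{\v}_{H^{|\gamma|+2}}\lesssim\norm{\v}_{Y^{n+|\alpha|-1}}$, where the last step uses $Y^{n+|\alpha|-1}\subset H^{n+|\alpha|-2}$ from~\eqref{embed-Y} together with $|\gamma|+2\leq|\alpha|+1\leq n+|\alpha|-2$. Summing over the finitely many admissible pairs $\beta,\gamma$ closes the proof.

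I do not expect a genuine obstacle, the statement being a routine transcription of Lemma~\ref{L.com-Hn} to the $Y$--scale. Unwinding~\eqref{product-Y}, the proof amounts to estimating in $Y^0$ the products $(\partial^\sigma f)(\partial^\tau\v)$ for multi-indices with $|\sigma|\geq1$ and $|\sigma|+|\tau|\leq n+|\alpha|$: those with $|\tau|\leq n+|\alpha|-2$ satisfy $\partial^\tau\v\in L^2$ by~\eqref{embed-Y} and reduce to the $L^2$ bilinear bounds already used for Lemma~\ref{L.com-Hn}, while the single top-order term ($|\sigma|=1$, $|\tau|=n+|\alpha|-1$) must be treated through the duality $\bra{(\partial^\sigma f)(\partial^\tau\v),\u}_{(X^0)^\prime}=\bra{\partial^\tau\v,(\partial^\sigma f)\u}_{(X^0)^\prime}$ combined with the $X^0$ product estimate~\eqref{product-X}. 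This is the only place where the structure of $Y^n$ genuinely departs from that of $H^n$, and it is what raises the regularity threshold on $f$ from $2\vee n+|\alpha|-1$ (as in Lemma~\ref{L.com-Hn}) to $3\vee n+|\alpha|-1$; the remaining delicate point is the bookkeeping ensuring that a factor $\partial^\gamma\v$ is never estimated in $H^{n+|\alpha|-1}$, which is out of reach since $\v\in Y^{n+|\alpha|-1}$ embeds only into $H^{n+|\alpha|-2}$.
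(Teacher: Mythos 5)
Your overall strategy (Leibniz expansion combined with the product estimate~\eqref{product-Y} and the embeddings of Lemma~\ref{L.embeddings}) is the paper's, and your closing paragraph is in fact an accurate description of the proof the paper actually writes. The difficulty lies in the first paragraph, which is the argument you present as the proof. Applying~\eqref{product-Y} at level $n$ to each term $(\partial^\beta f)(\partial^\gamma\v)$ with $\beta+\gamma=\alpha$ produces the factor $\norm{\partial^\beta f}_{L^\infty}+\norm{\nabla\partial^\beta f}_{H^2}$, which for $|\beta|=|\alpha|$ costs $\norm{\nabla f}_{H^{|\alpha|+2}}$. You then assert $\norm{\nabla f}_{H^{|\alpha|+2}}=\norm{\nabla f}_{H^{3+|\alpha|-1}}\leq \norm{\nabla f}_{H^{3\vee n+|\alpha|-1}}$, which implicitly reads the exponent as $(3\vee n)+|\alpha|-1$. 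In this paper $a\vee b+c$ means $\max(a,b+c)$: compare the hypothesis $f\in \dot{H}^{3\vee n+1}$ for~\eqref{product-X}, which matches exactly the requirement $\nabla f\in H^{2\vee n}$ of its right-hand side, and the proof of Lemma~\ref{L.com-Hn}, which delivers $\norm{\nabla f}_{H^{\max(2,\,n+|\alpha|-1)}}$. Under that reading the inequality $|\alpha|+2\leq \max(3,n+|\alpha|-1)$ fails whenever $|\alpha|\geq 2$ and $n\leq 2$ --- precisely the regime in which the lemma is invoked (in Lemma~\ref{L.com-T} and Proposition~\ref{P.quasi} one has $n\in\{0,1\}$ and $|\alpha|$ as large as $N$). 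Your argument therefore establishes only the weaker estimate with $\norm{\nabla f}_{H^{(3\vee n)+|\alpha|-1}}$ on the right, which carried downstream would cost an extra derivative on $b$ and $h$ in the main results.

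The repair is what your second paragraph sketches and what the paper does: push the Leibniz expansion all the way down to $Y^0$, i.e.\ estimate $\norm{(\partial^\beta f)(\partial^\gamma\v)}_{Y^0}$ for $|\beta|+|\gamma|\leq n+|\alpha|$ and $|\beta|\geq 1$. Only the terms with $|\beta|=1$ require the genuine $Y^0$ product estimate, and there~\eqref{product-Y} costs only $\norm{\partial^\beta f}_{H^3}\leq\norm{\nabla f}_{H^3}$; every term with $|\beta|\geq 2$ has $|\gamma|\leq n+|\alpha|-2$, so $\partial^\gamma\v\in L^2\subset Y^0$ by~\eqref{embed-Y} and the standard $L^2$ bilinear estimates of Lemma~\ref{L.com-Hn} cost at most $\norm{\nabla f}_{H^{n+|\alpha|-1}}$. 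In this way the tame factor on $f$ never exceeds $\max(3,n+|\alpha|-1)$, as claimed.
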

\begin{proof}
Using Leibniz rule, we only have to estimate
\[\norm{(\partial^\beta f)(\partial^\gamma \v)}_{Y^0}\quad \text{ for $|\beta|+|\gamma|\leq n+|\alpha|$ and $|\beta|\geq 1$}.\] 
If $|\beta|=1$, then $|\gamma|\leq n+|\alpha|-1$, and we use the product estimate~\eqref{product-Y} in Lemma~\ref{L.products} to deduce
\[ \norm{(\partial^\beta f)(\partial^\gamma \v)}_{Y^0}\leq \norm{\partial^\beta f}_{H^3}\norm{\partial^\gamma \v}_{Y^0}.\]
For $|\beta|=2$, $|\beta|=3$ and $|\beta|>3$, we proceed as in the proof of Lemma~\ref{L.com-Hn}, and using~\eqref{embed-Y} in Lemma~\ref{L.embeddings}: by the continuous embedding $L^2\subset Y^0$ we fall into the $L^2$ setting, and the continuous embedding $Y^{n+|\alpha|-1}\subset H^{n+|\alpha|-2}$ allows to control $\norm{\partial^\gamma \v}_{H^{|\beta|-2}}\leq \norm{\partial^\gamma \v}_{H^{n+|\alpha|-2}}$.
\end{proof}
\begin{Lemma}\label{L.lin-Hn}
For any $f,g\in \dot{H}^{3\vee n+ |\alpha|-1}(\RR^d)$ with $n\in\{0,1\}$ and $|\alpha|\geq 1$, one has
\[
\norm{\partial^\alpha(fg)-g\partial^\alpha f-f\partial^\alpha g}_{H^n} \lesssim \norm{\nabla f}_{H^2}\norm{\nabla g}_{H^{n+|\alpha|-2}}+ \norm{\nabla f}_{H^{n+|\alpha|-2}}\norm{\nabla g}_{H^2}.
\]
\end{Lemma}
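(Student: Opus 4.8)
The plan is to expand by the Leibniz rule and estimate the resulting bilinear terms, the one genuine point being that one must exploit that \emph{both} factors carry at least one derivative. First I would write, for $|\alpha|\geq1$,
\[
\partial^\alpha(fg)-g\,\partial^\alpha f-f\,\partial^\alpha g \ = \ \sum_{\substack{\beta+\gamma=\alpha\\ |\beta|\geq1,\ |\gamma|\geq1}}\binom{\alpha}{\beta}(\partial^\beta f)(\partial^\gamma g),
\]
the sum being empty when $|\alpha|=1$ (so the left-hand side vanishes); hence one may assume $|\alpha|\geq2$. Since $n\in\{0,1\}$ one has $\norm{h}_{H^n}^2=\sum_{|\delta|\leq n}\norm{\partial^\delta h}_{L^2}^2$, so applying the Leibniz rule once more to each summand the whole task reduces to bounding, in $L^2$, terms of the form $(\partial^{\tilde\beta}f)(\partial^{\tilde\gamma}g)$ with $|\tilde\beta|\geq1$, $|\tilde\gamma|\geq1$ and $|\tilde\beta|+|\tilde\gamma|\leq|\alpha|+n$ (here $\tilde\beta=\beta+\delta'$, $\tilde\gamma=\gamma+\delta''$ with $\delta'+\delta''=\delta$).

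The key step is the bilinear estimate for such a term. Because $|\tilde\gamma|\geq1$ I would write $\partial^{\tilde\gamma}g=\partial^{\gamma'}(\partial_j g)$ with $|\gamma'|=|\tilde\gamma|-1\geq0$, and since $|\tilde\beta|\geq1$ the standard tame bilinear estimate already invoked in the proof of Lemma~\ref{L.com-Hn} (see~\cite[Prop.~3.6]{TaylorIII}) applies with $g$ replaced by $\partial_j g$, giving
\[
\norm{(\partial^{\tilde\beta}f)(\partial^{\tilde\gamma}g)}_{L^2}\ \lesssim\ \norm{\nabla f}_{L^\infty}\norm{\partial_j g}_{H^{|\tilde\beta|+|\gamma'|-1}}+\norm{\nabla f}_{H^{|\tilde\beta|+|\gamma'|-1}}\norm{\partial_j g}_{L^\infty}.
\]
Then I would use $|\tilde\beta|+|\gamma'|-1=|\tilde\beta|+|\tilde\gamma|-2$, the continuous embedding $H^2\subset L^\infty$ to replace $\norm{\nabla f}_{L^\infty}$ and $\norm{\partial_j g}_{L^\infty}\leq\norm{\nabla g}_{L^\infty}$ by the corresponding $H^2$ norms, and $\norm{\partial_j g}_{H^s}\leq\norm{\nabla g}_{H^s}$, to obtain
\[
\norm{(\partial^{\tilde\beta}f)(\partial^{\tilde\gamma}g)}_{L^2}\ \lesssim\ \norm{\nabla f}_{H^2}\norm{\nabla g}_{H^{|\tilde\beta|+|\tilde\gamma|-2}}+\norm{\nabla f}_{H^{|\tilde\beta|+|\tilde\gamma|-2}}\norm{\nabla g}_{H^2}.
\]

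To conclude I would sum over the finitely many summands: since $|\tilde\beta|+|\tilde\gamma|-2\leq|\alpha|+n-2=n+|\alpha|-2$, every exponent appearing is bounded by $n+|\alpha|-2$, so each contribution is dominated by $\norm{\nabla f}_{H^2}\norm{\nabla g}_{H^{n+|\alpha|-2}}+\norm{\nabla f}_{H^{n+|\alpha|-2}}\norm{\nabla g}_{H^2}$; the hypothesis $f,g\in\dot H^{3\vee n+|\alpha|-1}$, i.e.\ $\nabla f,\nabla g\in H^{2\vee(n+|\alpha|-2)}$, guarantees that all these norms are finite and that the tame estimate above is licit. The only point requiring care — and the reason the naive application of the tame product estimate is insufficient — is precisely the extraction of the derivative falling on $g$ (not merely the one on $f$): without it one recovers $\norm{\nabla f}_{H^2}$ and $\norm{\nabla f}_{H^{n+|\alpha|-2}}$ but only a plain $\norm{g}_{H^{\,\cdot}}$ on the other factor, and misses the symmetric $\norm{\nabla g}_{H^2}$ / $\norm{\nabla g}_{H^{n+|\alpha|-2}}$ structure claimed on the right-hand side. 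Everything else is routine multi-index bookkeeping.
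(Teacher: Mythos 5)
Your proposal is correct and follows essentially the same route as the paper: the Leibniz expansion into terms $(\partial^\beta f)(\partial^\gamma g)$ with $|\beta|,|\gamma|\geq 1$, followed by the tame bilinear estimate of~\cite[Prop.~3.6]{TaylorIII} applied after extracting one derivative from \emph{each} factor, and the embedding $H^2\subset L^\infty$. The only cosmetic difference is that you absorb the extra derivative for $n=1$ into the multi-indices rather than differentiating the identity first, which amounts to the same bookkeeping.
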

\begin{proof}
The result for $n=0$ follows from Leibniz rule:
\[\partial^{\alpha}(fg)-g\partial^\alpha f-f\partial^\alpha g=\sum_{\substack{\beta+\gamma=\alpha\\ |\beta|\geq 1,|\gamma|\geq 1}}\binom{\alpha}{\beta}(\partial^\beta f)(\partial^\gamma g),\]
and we use the standard bilinear estimate for (see~\cite[Prop. 3.6]{TaylorIII}) to estimate
\[\norm{(\partial^\beta f)(\partial^\gamma g)}_{L^2}\lesssim \norm{\nabla f}_{L^\infty}\norm{\nabla g}_{H^{|\beta|+|\gamma|-2}}+\norm{\nabla f}_{H^{|\beta|+|\gamma|-2}}\norm{\nabla g}_{L^\infty},\]
together with 
the continuous embedding $H^2\subset L^\infty$. The case $n=1$ is obtained in the same way after differentiating the above identity.
\end{proof}
\begin{Lemma}\label{L.lin-Yn}
For any $f\in \dot{H}^{4\vee n+ |\alpha|-1}(\RR^d),\v\in Y^{4\vee n+ |\alpha|-1}$ with $n\in\{0,1\}$ and $|\alpha|\geq 1$, one has
\begin{align*}
\norm{\partial^\alpha(f\v)-\v\partial^\alpha f-f\partial^\alpha \v}_{Y^n}\lesssim \norm{\nabla f}_{H^{3}}\norm{\v}_{Y^{n+|\alpha|-1}}+\norm{\nabla f}_{H^{ n+|\alpha|-2}}\norm{\v}_{Y^{4}}.
\end{align*}
\end{Lemma}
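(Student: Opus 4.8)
The plan is to mirror the proof of Lemma~\ref{L.lin-Hn}, reducing to bilinear estimates via the Leibniz rule and carrying them over to the $Y^0$ setting exactly as the proof of Lemma~\ref{L.com-Yn} carries over that of Lemma~\ref{L.com-Hn}. Set $m\eqdef n+|\alpha|$. Since $|\alpha|\geq 1$, the Leibniz rule gives
\[\partial^{\alpha}(f\v)-\v\,\partial^\alpha f-f\,\partial^\alpha \v=\sum_{\substack{\beta+\gamma=\alpha\\ |\beta|\geq 1,\ |\gamma|\geq 1}}\binom{\alpha}{\beta}(\partial^\beta f)(\partial^\gamma \v),\]
and, just as in the proof of Lemma~\ref{L.lin-Hn}, the case $n=1$ follows from the case $n=0$ by differentiating this identity once more. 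Hence the whole matter reduces to bounding $\norm{(\partial^\beta f)(\partial^\gamma\v)}_{Y^0}$, for multi-indices with $|\beta|\geq 1$, $|\gamma|\geq 1$ and $|\beta|+|\gamma|\leq m$, by $\norm{\nabla f}_{H^3}\norm{\v}_{Y^{m-1}}+\norm{\nabla f}_{H^{m-2}}\norm{\v}_{Y^4}$. The hypotheses $f\in\dot H^{4\vee m-1}$, $\v\in Y^{4\vee m-1}$ are exactly what makes $\nabla f\in H^3\cap H^{m-2}$ and, by Lemma~\ref{L.embeddings}, $\v\in Y^4\cap Y^{m-1}\subset H^3\cap H^{m-2}$.

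The endpoint terms I would handle while staying in the $Y^0$-structure. If $|\beta|=1$, then $\partial^\beta f$ is a component of $\nabla f$, so the product estimate~\eqref{product-Y} yields $\norm{(\partial^\beta f)(\partial^\gamma\v)}_{Y^0}\lesssim\big(\norm{\partial^\beta f}_{L^\infty}+\norm{\nabla\partial^\beta f}_{H^2}\big)\norm{\partial^\gamma\v}_{Y^0}\lesssim\norm{\nabla f}_{H^3}\norm{\v}_{Y^{|\gamma|}}$, which the first term of the target absorbs since $|\gamma|\leq m-1$. If $|\gamma|=1$ (so $|\beta|\geq 2$), then $\partial^\gamma\v\in H^2\subset L^\infty$ because $\v\in Y^4\subset H^3$, whence, using $L^2\subset Y^0$, $\norm{(\partial^\beta f)(\partial^\gamma\v)}_{Y^0}\lesssim\norm{\partial^\beta f}_{L^2}\norm{\partial^\gamma\v}_{L^\infty}\lesssim\norm{\nabla f}_{H^{|\beta|-1}}\norm{\v}_{Y^4}$, absorbed by the second term since $|\beta|-1=|\beta|+|\gamma|-2\leq m-2$. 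The case $|\gamma|=2$, $|\beta|\geq 2$ is analogous but relies on a Hölder $L^4\times L^4$ split: $\partial^\gamma\v\in H^1$ (again via $\v\in Y^4\subset H^3$) and $\partial^\beta f\in H^1$ both embed in $L^4$, which gives $\lesssim\norm{\nabla f}_{H^{m-2}}\norm{\v}_{Y^4}$.

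The only genuinely delicate case --- and the one I expect to be the main obstacle --- is the ``intermediate'' regime $|\beta|\geq 2$ and $|\gamma|\geq 3$, where $2\leq|\beta|\leq m-3$ and $3\leq|\gamma|\leq m-2$ and \emph{both} factors carry an intermediate number of derivatives; placing one of them in $L^\infty$ and the other in $L^2$ then spoils the balanced structure of the right-hand side. The remedy is a Gagliardo--Nirenberg interpolation after $L^2\subset Y^0$: since $|\beta|\leq m-3$ one has $\partial^\beta f\in H^2\subset L^\infty$ with $\norm{\partial^\beta f}_{L^\infty}\lesssim\norm{\nabla f}_{H^{|\beta|+1}}$, and since $|\gamma|\leq m-2$ one has $\partial^\gamma\v\in L^2$ with $\norm{\partial^\gamma\v}_{L^2}\lesssim\norm{\v}_{H^{|\gamma|}}$; interpolating between the regularities $H^3$ and $H^{m-2}$ supplied (via Lemma~\ref{L.embeddings}) by $\v\in Y^4$, $\v\in Y^{m-1}$ and directly by the hypothesis on $\nabla f$, one gets $\norm{\nabla f}_{H^{|\beta|+1}}\lesssim\norm{\nabla f}_{H^3}^{1-\theta}\norm{\nabla f}_{H^{m-2}}^{\theta}$ and $\norm{\v}_{H^{|\gamma|}}\lesssim\norm{\v}_{H^3}^{1-\lambda}\norm{\v}_{H^{m-2}}^{\lambda}$ with $\theta=\frac{|\beta|-2}{m-5}$, $\lambda=\frac{|\gamma|-3}{m-5}$. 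The arithmetic identity $\theta+\lambda=\frac{(|\beta|+|\gamma|)-5}{m-5}=1$ when $|\beta|+|\gamma|=m$ (for the terms with $|\beta|+|\gamma|=m-1$ arising from the $n=1$ reduction one interpolates up to $H^{m-3}$ instead, still covered by $\v\in Y^{m-1}$) then lets Young's inequality collapse $\norm{\partial^\beta f}_{L^\infty}\norm{\partial^\gamma\v}_{L^2}$ into $\lesssim\norm{\nabla f}_{H^3}\norm{\v}_{H^{m-2}}+\norm{\nabla f}_{H^{m-2}}\norm{\v}_{H^3}$, which by $Y^{m-1}\subset H^{m-2}$ and $Y^4\subset H^3$ is precisely the target. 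Summing the finitely many contributions finishes the proof; what remains is only the routine bookkeeping of the degenerate sub-cases ($m=5$, or $\theta$ or $\lambda$ hitting $0$ or $1$), where the interpolation trivializes and a single product estimate does the job.
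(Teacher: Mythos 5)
Your proof is correct, and its skeleton coincides with the paper's: Leibniz rule, the $|\beta|=1$ terms handled by the product estimate~\eqref{product-Y}, and all remaining terms pushed into $L^2\subset Y^0$. The difference lies in how the $|\beta|\geq 2$ terms are closed. The paper disposes of them in one line by applying the standard tame bilinear estimate (the same \cite[Prop.~3.6]{TaylorIII} quoted in the proof of Lemma~\ref{L.lin-Hn}) to the pair $\Lambda\nabla f$, $\Lambda\v$, which directly yields $\norm{\Lambda\nabla f}_{L^\infty}\norm{\Lambda\v}_{H^{|\beta|+|\gamma|-3}}+\norm{\Lambda\nabla f}_{H^{|\beta|+|\gamma|-3}}\norm{\Lambda\v}_{L^\infty}$ and hence, via $H^2\subset L^\infty$ and~\eqref{embed-Y}, exactly the two balanced terms of the statement. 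You instead split further according to $|\gamma|$ and, in the generic regime $|\beta|\geq2$, $|\gamma|\geq3$, reconstruct the tame estimate by hand through Gagliardo--Nirenberg interpolation of $\norm{\nabla f}_{H^{|\beta|+1}}$ and $\norm{\v}_{H^{|\gamma|}}$ between the endpoints $H^3$ and $H^{m-2}$, followed by Young's inequality using $\theta+\lambda=1$; your exponent bookkeeping and the treatment of the endpoint and degenerate sub-cases are accurate. What the paper's route buys is brevity and uniformity (no case distinction beyond $|\beta|=1$ versus $|\beta|\geq2$, and the $n=1$ case is identical in form); what yours buys is self-containedness, since you effectively re-prove the cited bilinear estimate rather than invoke it. Either argument is acceptable.
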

\begin{proof}If $\alpha=1$, then the result is obvious. For $|\alpha|\geq 2$, we use Leibniz rule:
\[\partial^{\alpha}(f\v )-\v\partial^\alpha f-f\partial^\alpha \v=\sum_{\substack{\beta+\gamma=\alpha\\ |\beta|\geq 1,|\gamma|\geq 1}}\binom{\alpha}{\beta}(\partial^\beta f)(\partial^\gamma \v).\]
When $|\beta|=1$, then by~\eqref{product-Y} in Lemma~\ref{L.products},
\[\norm{(\partial^\beta f)(\partial^\gamma \v)}_{Y^0}\lesssim \big(\norm{\partial^\beta f}_{L^{\infty}}+\norm{\partial^\beta \nabla f}_{H^{2}})\norm{\partial^\gamma \v}_{Y^0}\lesssim \norm{\nabla f}_{H^{3}}\norm{\v}_{Y^{|\alpha|-1}}.\]
When $|\beta|\geq 2$, we use the standard bilinear estimate as above:
\[\norm{(\partial^\beta f)(\partial^\gamma \v)}_{L^2}\lesssim \norm{\Lambda \nabla f}_{L^\infty}\norm{ \Lambda \v }_{H^{|\beta|+|\gamma|-3}}+\norm{\Lambda\nabla f}_{H^{|\beta|+|\gamma|-3}}\norm{\Lambda \v}_{L^\infty}.\]
This yields the desired estimate for $n=0$, by continuous embedding $H^2\subset L^\infty$ and~\eqref{embed-Y} in Lemma~\ref{L.embeddings}. The case $n=1$ is obtained similarly.
\end{proof}
The following Lemmata exhibit the {\em shape derivatives} of the operators $\mfT$ and $\mfT^{-1}$.
\begin{Lemma}\label{L.com-T}
Let $|\alpha|\geq 1$, $n\in\{0,1\}$. Let $b\in \dot{H}^{4\vee n+|\alpha|+1}(\RR^d)$ and $\zeta\in H^{3\vee n+|\alpha|-1}(\RR^d)$ be such that~\eqref{cond-h0} holds, and $\u\in X^{3\vee n+|\alpha|-1}$. Then one has
\begin{multline*}\norm{\partial^\alpha\big(\mfT[h,\beta b]\u\big)-\mfT[h,\beta b]\partial^\alpha\u-\epsilon\dd_h\mfT[h,\beta b](\partial^\alpha\zeta,\u)}_{Y^n}\\
\leq C(\mu,h^\star)F(\beta\norm{\nabla b}_{H^{3\vee n+|\alpha|}},\epsilon\norm{\nabla\zeta}_{H^{2}},\epsilon \norm{\u}_{X^{3}}) \Big( \norm{\nabla\zeta}_{H^{ n+|\alpha|-2}}+\norm{\u}_{X^{n+|\alpha|-1}}\Big)
\end{multline*}
with
\begin{equation}\label{def-dT}
 \dd_h\mfT[h,\beta b](f,\u)\eqdef f\u-\mu\nabla(h^2 f\nabla\cdot\u)+\mu\nabla\big(f h(\beta\nabla b)\cdot \u\big)-\mu f h (\beta\nabla b)\nabla\cdot \u.
 \end{equation}
\end{Lemma}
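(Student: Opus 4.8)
The strategy is to expand $\mfT[h,\beta b]\u$ term by term using the explicit formula~\eqref{def-mfT} (equivalently the integration-by-parts expression from the proof of Lemma~\ref{L.T-invertible}), and in each monomial distribute the derivative $\partial^\alpha$ via Leibniz rule. The quantity to estimate is the difference between $\partial^\alpha(\mfT\u)$ and the sum of $\mfT\partial^\alpha\u$ (the term where all derivatives hit $\u$) and $\epsilon\dd_h\mfT(\partial^\alpha\zeta,\u)$ (the term where one derivative hits $h=1+\epsilon\zeta-\beta b$ through its $\epsilon\zeta$ part, the remaining $|\alpha|-1$ hitting $\u$, with coefficients frozen). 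The definition~\eqref{def-dT} is exactly the linearization of $\mfT$ with respect to its first argument, evaluated at the increment $\epsilon\partial^\alpha\zeta$: indeed $\dd_h(h\u)=f\u$, $\dd_h(-\frac\mu3\nabla(h^3\nabla\cdot\u))$ picks up $-\mu\nabla(h^2 f\nabla\cdot\u)$, and so on, with $\nabla h = \epsilon\nabla\zeta-\beta\nabla b$; the point is that the top-order contribution of $\partial^\alpha h$ is $\epsilon\partial^\alpha\zeta$ plus the lower-order-in-$\alpha$ piece $-\beta\partial^\alpha b$, and $\partial^\alpha b$ is controlled by $\beta\norm{\nabla b}_{H^{n+|\alpha|-1}}$, which sits inside the functional $F$ on the right, multiplied by $\norm{\u}$ — actually it must be absorbed into the error term. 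So the first bookkeeping step is to write $h = 1+\epsilon\zeta-\beta b$ and track separately the $\epsilon\zeta$-dependence (which produces the shape-derivative term kept on the left) and the $\beta b$-dependence together with all higher-order Leibniz remainders (which go into the right-hand side).

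Concretely, I would treat the leading term $h\u$ with Lemma~\ref{L.lin-Yn} (or rather Lemma~\ref{L.lin-Hn} composed with the embedding $H^n\subset Y^n$ when convenient), writing $\partial^\alpha(h\u)-h\partial^\alpha\u-\u\,\partial^\alpha h$ as a sum of products $(\partial^\beta h)(\partial^\gamma\u)$ with $|\beta|,|\gamma|\ge 1$; then $\u\,\partial^\alpha h = \epsilon\u\,\partial^\alpha\zeta - \beta\u\,\partial^\alpha b$, the first piece being the $\dd_h$ contribution and the second absorbed into the remainder using $\norm{\beta\u\,\partial^\alpha b}_{Y^n}\lesssim \beta\norm{\nabla b}_{H^{n+|\alpha|-1}}\norm{\u}_{X^2}$ via~\eqref{product-Y}. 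For the $\mu$-order terms, e.g. $-\frac\mu3\nabla(h^3\nabla\cdot\u)$, I first pull the (harmless, order-one) gradient out: $\partial^\alpha\nabla(\cdot) = \nabla\partial^\alpha(\cdot)$, and $\norm{\nabla f}_{Y^n}\lesssim\mu^{-1/2}\norm{f}_{H^n}$ by~\eqref{embed-Z}, reducing matters to estimating $\partial^\alpha(h^3\nabla\cdot\u) - h^3\nabla\cdot\partial^\alpha\u - 3h^2(\partial^\alpha h)(\nabla\cdot\u)$ in $H^n$. This is again a "second-order Leibniz remainder" of the form $\sum (\partial^\beta(h^3))(\partial^\gamma\nabla\cdot\u)$ with both multi-indices $\ge 1$, plus the correction from $\partial^\beta(h^3)$ versus $3h^2\partial^\beta h$ at $|\beta|=1$; Lemmas~\ref{L.lin-Hn}, \ref{L.com-Hn} and the product estimates~\eqref{product-H-0}, \eqref{product-H} dispatch all of it, with the chain-rule factor $h^3$ (and $h^2$, $h$) handled by writing $h^k = (1+\epsilon\zeta-\beta b)^k$ and using that $\norm{\nabla(h^k)}_{H^s}\le C(h^\star)F(\beta\norm{\nabla b}_{H^s},\epsilon\norm{\nabla\zeta}_{H^s})$ — note the constant/functional split is exactly why $C$ and $F$ appear as they do. The mixed bathymetry terms $\frac\mu2(\nabla(h^2(\beta\nabla b)\cdot\u) - h^2(\beta\nabla b)\nabla\cdot\u)$ and $\beta^2(\nabla b\cdot\u)\nabla b$ are treated identically, noting that each already carries a factor $\beta\nabla b$, so every contribution lands inside $F(\beta\norm{\nabla b}_{\cdots},\dots)$ automatically, and the shape-derivative terms coming from differentiating the $h^2$, $h$ factors inside them are precisely the last two terms of~\eqref{def-dT}.

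The organizing principle is therefore: (i) linearity of $\dd_h\mfT$ in its coefficient means each monomial of $\mfT$ contributes its own "first-order Taylor remainder in $h$", all of the schematic form (Leibniz remainder with two derivatives split off) $+$ (chain-rule defect $\partial^\beta(h^k)$ vs $kh^{k-1}\partial^\beta h$); (ii) each such remainder has, by Lemmas~\ref{L.com-Hn}–\ref{L.lin-Yn}, a bound of the type $C(\ldots)\big(\norm{\nabla\zeta}_{H^{n+|\alpha|-2}}+\norm{\u}_{X^{n+|\alpha|-1}}\big)$ times a polynomial in the low-regularity norms $\epsilon\norm{\nabla\zeta}_{H^2},\epsilon\norm{\u}_{X^3},\beta\norm{\nabla b}_{H^{3\vee n+|\alpha|}}$ with no constant term (the zero constant term of $F$ is guaranteed because every remainder vanishes when $\u\equiv 0$, and the $\epsilon$ or $\beta$ prefactors ensure it also vanishes at $\zeta\equiv 0,b\equiv 0$); (iii) finally the stray half-power $\mu^{-1/2}$ from~\eqref{embed-Z} is absorbed into $C(\mu,h^\star)$ since we only ever apply it finitely many times. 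The main obstacle — really the only nontrivial point — is the careful bookkeeping to make sure that (a) the retained term is exactly~\eqref{def-dT} and not off by some $h$-dependent factor, and (b) the loss of regularity is kept at $n+|\alpha|-1$ derivatives on $\u$ (and $n+|\alpha|-2$ on $\nabla\zeta$), i.e. that the "tame" form is genuinely achieved: this forces one to always peel off the top-order derivative as the frozen-coefficient term $\mfT\partial^\alpha\u$ and never estimate a product in which both factors could simultaneously be at top order. The restriction $n\in\{0,1\}$ is what makes the bilinear estimates in Lemmas~\ref{L.lin-Hn},~\ref{L.lin-Yn} available (they are only stated there for $n\in\{0,1\}$), and the regularity thresholds $3\vee n+|\alpha|-1$ on $\zeta$ and $4\vee n+|\alpha|+1$ on $b$ are dictated precisely by the worst-case index requirements of those lemmas applied to the $h^3$ and $(\beta\nabla b)$ factors after the gradient is pulled out.
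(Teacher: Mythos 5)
Your proposal is correct and follows essentially the same route as the paper: term-by-term Leibniz expansion of each monomial of $\mfT$, identification of the retained shape-derivative $\epsilon\dd_h\mfT(\partial^\alpha\zeta,\u)$ as the first-order piece in the $\epsilon\zeta$-dependence of $h$, treatment of the bottom contributions via Lemma~\ref{L.com-Hn} and of the surface contributions via Lemma~\ref{L.lin-Hn} together with the product estimates, and absorption of the extra divergence through the $\sqrt\mu$ gain (the paper phrases this by testing against $\w\in X^0$ and integrating by parts, which is exactly the content of your use of~\eqref{embed-Z}). The only quibble is a swapped pair of indices in your displayed bound for $\beta\u\,\partial^\alpha b$ (the high Sobolev index must sit on $\nabla b$, inside $F$, and the low one on $\u$, so that the factor $\norm{\u}_{X^{n+|\alpha|-1}}$ survives), which is precisely what Lemma~\ref{L.com-Hn} delivers.
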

\begin{proof} Assume first that $\u\in \mathcal{S}(\RR^d)^d$, and denote
\[\r\eqdef \partial^\alpha\big(\mfT[h,\beta b]\u\big)-\mfT[h,\beta b]\partial^\alpha\u-\epsilon\dd_h\mfT[h,\beta b](\partial^\alpha\zeta,\u).\]
By definition of $\mfT$ in~\eqref{def-mfT}, integration by parts and since $\Lambda^n$ commutes with spatial differentiation, one has for any $\w\in X^0$,
\begin{align*}\bra{\Lambda^n\r,\w}_{(X^0)^\prime}&=\Par{\Lambda^n\{\partial^\alpha(h\u)-h\partial^\alpha\u-\epsilon\partial^\alpha\zeta\u\},\w}_{L^2}\\
&\qquad +\mu\frac13 \Par{ \Lambda^n\{\partial^\alpha(h^3\nabla\cdot\u)-h^3\partial^\alpha\nabla\cdot\u-3\epsilon h^2(\partial^\alpha\zeta)\nabla\cdot\u\},\nabla\cdot\w}_{L^2}\\
&\qquad -\mu\frac1{2}\Par{ \Lambda^n\{\partial^\alpha (h^2(\beta\nabla b)\cdot \u)-h^2(\beta\nabla b)\cdot\partial^\alpha\u-2\epsilon h(\partial^\alpha\zeta)(\beta\nabla b)\cdot\u \},\nabla\cdot\w}_{L^2}\\
&\qquad -\mu\frac1{2}\Par{\Lambda^n\{\partial^\alpha(h^2(\beta\nabla b)\nabla\cdot \u)-h^2(\beta\nabla b)\nabla\cdot \partial^\alpha\u-2\epsilon h(\partial^\alpha\zeta)(\beta\nabla b)\nabla\cdot \u\},\w}_{L^2}\\
&\qquad +\mu\Par{\Lambda^n\{\partial^\alpha(h\beta^2(\nabla b\cdot \u)\nabla b)-h\beta^2(\nabla b\cdot \partial^\alpha \u)\nabla b-\epsilon(\partial^\alpha\zeta)\beta^2(\nabla b\cdot \u)\nabla b\},\w}_{L^2}\\
&\eqdef \Par{\Lambda^n\r^1,\w}_{L^2}+\sqrt\mu\Par{\Lambda^n\r^2,\nabla\cdot\w}_{L^2}
\end{align*}
The two contributions are estimated using Lemmata~\ref{L.com-Hn} and~\ref{L.lin-Hn}, product estimates~\eqref{product-H-0} and~\eqref{product-H} and the continuous embedding $H^2\subset L^\infty$. For instance, by Lemma~\ref{L.lin-Hn}, one has
\[\norm{\partial^\alpha(\zeta\u)-\zeta\partial^\alpha\u-\partial^\alpha\zeta\u}_{H^n}\lesssim \norm{\nabla \zeta}_{H^{2}}\norm{\u}_{H^{n+|\alpha|-1}}+\norm{\nabla \zeta}_{H^{ n+|\alpha|-2}}\norm{\u}_{H^{3}}\]
and by Lemma~\ref{L.com-Hn},
\[\norm{\partial^\alpha( b\u)-b\partial^\alpha\u}_{H^n}\lesssim \norm{\nabla b}_{H^{2\vee n+|\alpha|-1}}\norm{\u}_{H^{n+|\alpha|-1}},\]
and~\eqref{embed-X} in Lemma~\ref{L.embeddings} allows to complete the estimate of the first contribution. The other terms are treated similarly, using first Lemma~\ref{L.com-Hn} to commute the differentiation operator with bottom contributions, product estimates~\eqref{product-H-0} and~\eqref{product-H} to estimate the commutator, and then Lemma~\ref{L.lin-Hn} to deal with surface contributions.

Altogether, one finds that $\norm{\r^1}_{H^n}$ and $\norm{\r^2}_{H^n}$ are estimated as in the statement.
By Cauchy-Schwarz inequality and duality, we deduce that $\r\in Y^n$ and $\norm{\r}_{Y^n}$ satisfies the same estimate. The result for $\u\in X^{3\vee n+|\alpha|-1}$ follows by density and continuous linear extension.
\end{proof}

\begin{Lemma}\label{L.com-T-1} Let $|\alpha|\geq 1$, $n\in\{0,1\}$. Let $b\in \dot{H}^{4\vee n+|\alpha|+1}(\RR^d)$ and $\zeta\in H^{3\vee n+|\alpha|-1}(\RR^d)$ be such that~\eqref{cond-h0} holds, and $\v\in Y^{3\vee n+|\alpha|-1}$. Then one has
\begin{multline*}\norm{\partial^\alpha\big(\mfT[h,\beta b]^{-1}\v\big)-\mfT[h,\beta b]^{-1}\partial^\alpha\v+\epsilon\mfT[h,\beta b]^{-1}\big\{\dd_h\mfT[h,\beta b](\partial^\alpha\zeta, \mfT[h,\beta b]^{-1}\v)\big\}}_{X^n}\\
\leq
C(\mu,h_\star^{-1},h^\star)F(\beta\norm{\nabla b}_{H^{3\vee n+|\alpha|}},\epsilon\norm{\nabla\zeta}_{H^{2}},\epsilon \norm{\u}_{X^{3}}) \Big( \norm{\nabla\zeta}_{H^{ n+|\alpha|-2}}+\norm{\v}_{Y^{n+|\alpha|-1}}\Big)
\end{multline*}
with $\dd_h\mfT[h,\beta b]$ defined in~\eqref{def-dT}.
\end{Lemma}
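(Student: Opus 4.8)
\textbf{Proof plan for Lemma~\ref{L.com-T-1}.}
The plan is to reduce the estimate on $\mfT^{-1}$ to the one on $\mfT$ just obtained in Lemma~\ref{L.com-T}, by the usual commutator-inversion identity. First I would set $\u\eqdef \mfT[h,\beta b]^{-1}\v$, so that $\v=\mfT[h,\beta b]\u$, and note that by Lemma~\ref{L.T-invertible} and Lemma~\ref{L.T-differentiable} we have $\u\in X^{3\vee n+|\alpha|-1}$ with $\norm{\u}_{X^{3\vee n+|\alpha|-1}}$ controlled by $\norm{\v}_{Y^{3\vee n+|\alpha|-1}}$ (and, for $n+|\alpha|>2$, a tame term involving $\norm{\nabla h}_{H^{n+|\alpha|-1}}$; since $\norm{\nabla h}_{H^s}\lesssim \beta\norm{\nabla b}_{H^s}+\epsilon\norm{\nabla\zeta}_{H^s}$ this is absorbed into the $F(\cdots)$ prefactor). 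Then one writes, denoting $\mfT\eqdef\mfT[h,\beta b]$ and $\dd_h\mfT(f,\cdot)\eqdef\dd_h\mfT[h,\beta b](f,\cdot)$,
\[
\partial^\alpha(\mfT^{-1}\v)-\mfT^{-1}\partial^\alpha\v = \partial^\alpha\u - \mfT^{-1}\partial^\alpha(\mfT\u) = -\,\mfT^{-1}\big(\partial^\alpha(\mfT\u)-\mfT\partial^\alpha\u\big).
\]
Inserting the shape-derivative term $\epsilon\,\dd_h\mfT(\partial^\alpha\zeta,\u)$ and subtracting it back gives
\[
\partial^\alpha(\mfT^{-1}\v)-\mfT^{-1}\partial^\alpha\v+\epsilon\,\mfT^{-1}\big\{\dd_h\mfT(\partial^\alpha\zeta,\u)\big\} = -\,\mfT^{-1}\big(\partial^\alpha(\mfT\u)-\mfT\partial^\alpha\u-\epsilon\,\dd_h\mfT(\partial^\alpha\zeta,\u)\big),
\]
which is exactly $-\mfT^{-1}$ applied to the quantity $\r$ estimated in Lemma~\ref{L.com-T}.

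Next I would apply the continuity of $\mfT^{-1}:(X^0)'\to X^0$ in $X^n$, which is Lemma~\ref{L.T-differentiable}: the $X^n$-norm of $\mfT^{-1}\r$ is bounded by $C(\mu,h_\star^{-1},h^\star,\norm{\nabla h}_{H^2},\beta\norm{\nabla b}_{H^3})\big(\norm{\r}_{Y^n}+\bra{\norm{\nabla h}_{H^{n-1}}\norm{\r}_{Y^2}}_{n>2}\big)$. Since $n\in\{0,1\}$ here the bracket term is absent, so it suffices to use $\norm{\mfT^{-1}\r}_{X^n}\leq C(\mu,h_\star^{-1},h^\star,\beta\norm{\nabla b}_{H^3},\epsilon\norm{\nabla\zeta}_{H^2})\norm{\r}_{Y^n}$, again expanding $\nabla h=\beta\nabla b-\epsilon\nabla\zeta$ wait, $h=1+\epsilon\zeta-\beta b$ so $\nabla h=\epsilon\nabla\zeta-\beta\nabla b$, with $\norm{\nabla h}_{H^2}\lesssim \beta\norm{\nabla b}_{H^2}+\epsilon\norm{\nabla\zeta}_{H^2}$. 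Then I plug in the bound on $\norm{\r}_{Y^n}$ from Lemma~\ref{L.com-T}, which reads $C(\mu,h^\star)F(\beta\norm{\nabla b}_{H^{3\vee n+|\alpha|}},\epsilon\norm{\nabla\zeta}_{H^2},\epsilon\norm{\u}_{X^3})\big(\norm{\nabla\zeta}_{H^{n+|\alpha|-2}}+\norm{\u}_{X^{n+|\alpha|-1}}\big)$. Finally I replace $\u$ by $\mfT^{-1}\v$ throughout using the $X^s$-bounds from Lemma~\ref{L.T-differentiable} once more: $\norm{\u}_{X^3}\lesssim C(\cdots)\norm{\v}_{Y^3}$ and $\norm{\u}_{X^{n+|\alpha|-1}}\leq C(\cdots)\big(\norm{\v}_{Y^{n+|\alpha|-1}}+\bra{\norm{\nabla h}_{H^{n+|\alpha|-2}}\norm{\v}_{Y^2}}_{n+|\alpha|>2}\big)$, absorbing the tame $\nabla h$ factor into the $F$-prefactor and noting it is of the required form (polynomial, zero constant term in $\beta\norm{\nabla b}$ and $\epsilon$). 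Collecting constants into a single $C(\mu,h_\star^{-1},h^\star)F(\cdots)$ and writing the remaining $\norm{\u}_{X^3}$-dependence inside $F$ (as in the statement) completes the argument; as usual one first runs it for $\v\in\mathcal S(\RR^d)^d$ and extends by density of $\mathcal S(\RR^d)^d$ in $Y^{3\vee n+|\alpha|-1}$ together with continuity of all the operators involved.

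The only genuine subtlety, rather than an obstacle, is bookkeeping: one must check that after substituting $\u=\mfT^{-1}\v$ the combined constant still has the claimed structure --- in particular that the prefactor $F$ has zero constant term in its ``small'' arguments $\beta\norm{\nabla b},\epsilon\norm{\nabla\zeta},\epsilon\norm{\u}_{X^3}$ --- which holds because the shape-derivative error $\r$ in Lemma~\ref{L.com-T} already carries such an $F$, and composing with $\mfT^{-1}$ (whose norm is a $C$, not an $F$) and with the $X^s$-bounds on $\u$ (whose prefactors are $C$'s, with the only $F$-type contributions coming through $\norm{\nabla h}$, hence again small) preserves this property. I expect no step to be hard; the whole lemma is a formal consequence of Lemmata~\ref{L.T-invertible}, \ref{L.T-differentiable} and~\ref{L.com-T} via the resolvent identity $[\partial^\alpha,\mfT^{-1}]=-\mfT^{-1}[\partial^\alpha,\mfT]\mfT^{-1}$ suitably corrected by the shape derivative.
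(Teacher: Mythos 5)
Your proposal is correct and follows essentially the same route as the paper: the identity $[\partial^\alpha,\mfT^{-1}]=-\mfT^{-1}[\partial^\alpha,\mfT]\mfT^{-1}$ corrected by the shape derivative, then Lemma~\ref{L.T-differentiable} to pass $\mfT^{-1}$ through the $X^n$-norm, Lemma~\ref{L.com-T} for the resulting residual, and a final application of Lemma~\ref{L.T-differentiable} to convert $\norm{\u}_{X^m}$ into $\norm{\v}_{Y^m}$. The bookkeeping remarks on the structure of the constants are consistent with what the paper does implicitly.
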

\begin{proof}
Denote $\mfT=\mfT[h,\beta b]$, and $\r\eqdef  \partial^\alpha\big(\mfT^{-1}\v\big)-\mfT^{-1}\partial^\alpha\v+\epsilon\mfT^{-1}\big\{\dd_h\mfT(\partial^\alpha\zeta, \mfT^{-1}\v)\big\}$. One has
\begin{align*}
\r&=\mfT^{-1}\mfT\partial^\alpha\big(\mfT^{-1}\v\big)-\mfT^{-1}\partial^\alpha \big(\mfT\mfT^{-1}\v\big)+\epsilon\mfT^{-1}\big\{\dd_h\mfT(\partial^\alpha\zeta, \mfT^{-1}\v)\big\}\\
& =-\mfT^{-1}\Big\{\partial^\alpha \big(\mfT \u\big)-\mfT\partial^\alpha\u-\epsilon \dd_h\mfT(\partial^\alpha\zeta,\u)\Big\}\eqdef -\mfT^{-1}\tilde\r
\end{align*}
with the notation $\u\eqdef \mfT^{-1}\v $. By Lemma~\ref{L.T-differentiable} and Lemma~\ref{L.com-T}, one has

\begin{align*}
\norm{\r}_{X^n}&\leq C(\mu,h_\star^{-1},h^\star,\norm{\nabla h}_{H^{2}},\beta\norm{\nabla b}_{H^{3}})\norm{\tilde\r}_{Y^{n}}\\
&\leq C(\mu,h_\star^{-1},h^\star)F(\beta\norm{\nabla b}_{H^{3\vee n+|\alpha|}},\epsilon\norm{\nabla\zeta}_{H^{2}},\epsilon \norm{\u}_{X^{3}}) \Big( \norm{\nabla\zeta}_{H^{ n+|\alpha|-2}}+\norm{\u}_{X^{n+|\alpha|-1}}\Big)
\end{align*}
and for $m= 3$ or $m= n+|\alpha|-1$,
\[ \norm{\u}_{X^m}\leq C(\mu,h_\star^{-1},h^\star,\beta\norm{\nabla b}_{H^{3}},\epsilon\norm{\nabla \zeta}_{H^{2}})\Big(\norm{\v}_{Y^{m}}+\bra{\norm{\nabla h}_{H^{m-1}}\norm{\v}_{Y^2}}_{m>2}\Big).\]
This concludes the proof.
\end{proof}
We conclude this section with the following result which is essential for estimating the difference between two approximate solutions (in later Proposition~\ref{P.stability} for instance).
\begin{Lemma}\label{L.diff-T-1} Let $n\in\NN$, $b\in \dot{H}^{4\vee n+1}(\RR^d)$ and $\zeta,\t\zeta\in H^{3\vee n}(\RR^d)$ be such that~\eqref{cond-h0} holds, and $\v\in Y^{2\vee n}$. Then, denoting
$h=1+\epsilon\zeta-\beta b$ and $\t h=1+\epsilon\t\zeta-\beta b$, one has
\[\norm{\mfT[h,\beta b]^{-1}\v-\mfT[\t h,\beta b]^{-1}\v}_{X^n}
\leq C(\mu,h_\star^{-1},h^\star,\norm{\nabla b}_{H^{3\vee n}},\norm{\nabla h}_{H^{2\vee {n-1}}},\norm{\nabla \t h}_{H^{2\vee {n-1}}})\times \norm{\v}_{Y^{2\vee n}}\norm{\zeta-\t\zeta}_{H^n}.\]
\end{Lemma}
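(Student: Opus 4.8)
The plan is to write the difference of the two inverse operators as a single application of $\mfT[h,\beta b]^{-1}$ to a controlled source term, using the algebraic identity
\[
\mfT[h,\beta b]^{-1}\v-\mfT[\t h,\beta b]^{-1}\v=\mfT[h,\beta b]^{-1}\Big\{\big(\mfT[\t h,\beta b]-\mfT[h,\beta b]\big)\mfT[\t h,\beta b]^{-1}\v\Big\},
\]
which follows from $A^{-1}-B^{-1}=A^{-1}(B-A)B^{-1}$ applied to $A=\mfT[h,\beta b]$, $B=\mfT[\t h,\beta b]$. By Lemma~\ref{L.T-invertible} (continuity of $\mfT[h,\beta b]^{-1}$ from $(X^0)^\prime$ to $X^0$) together with Lemma~\ref{L.T-differentiable} to upgrade to the $X^n$--$Y^n$ scale, it then suffices to bound $\Norm{\big(\mfT[\t h,\beta b]-\mfT[h,\beta b]\big)\w}_{Y^n}$ for $\w\eqdef\mfT[\t h,\beta b]^{-1}\v$, and to bound $\Norm{\w}_{X^{2\vee n}}$, the latter again via Lemma~\ref{L.T-differentiable} (applied with $\t h$) which gives $\Norm{\w}_{X^{2\vee n}}\leq C(\mu,h_\star^{-1},h^\star,\Norm{\nabla\t h}_{H^{2\vee n-1}},\beta\Norm{\nabla b}_{H^{3}})\Norm{\v}_{Y^{2\vee n}}$.

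The key point is that $\mfT[\t h,\beta b]-\mfT[h,\beta b]$ depends \emph{linearly and homogeneously} on $h-\t h=\epsilon(\zeta-\t\zeta)$. Indeed, from the definition~\eqref{def-mfT} and the symmetric bilinear-form expression for $\mfT$ computed in the proof of Lemma~\ref{L.T-invertible}, one has for all $\w,\w'\in X^0$
\[
\Par{\big(\mfT[h,\beta b]-\mfT[\t h,\beta b]\big)\w,\w'}_{L^2}=\int_{\RR^d}(h-\t h)\,\w\cdot\w'+\frac\mu3(h^3-\t h^3)(\nabla\cdot\w)(\nabla\cdot\w')+\cdots,
\]
where each remaining term carries a factor of the form $h^k-\t h^k$ for $k\in\{1,2,3\}$, hence a factor $h-\t h$ times a polynomial in $h,\t h$. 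Concretely, $\mfT[h,\beta b]-\mfT[\t h,\beta b]$ can be written in divergence form with coefficients that are products of $(h-\t h)$, powers of $h$ and $\t h$, and $\beta\nabla b$. I would therefore estimate $\Norm{(\mfT[h,\beta b]-\mfT[\t h,\beta b])\w}_{Y^n}$ by duality: pairing against $\w'\in X^0$ after differentiation, each resulting term is of the schematic form $\int \partial^{\alpha}\big(c\,\w\big)\cdot\partial^{\alpha'}\w'$ with $c$ a coefficient as above and $|\alpha|+|\alpha'|\leq n$ (plus one derivative landing on a divergence when the $\sqrt\mu$-weighted part of the $X^0$ norm is used), which is controlled by the product estimates~\eqref{product-H-0}, \eqref{product-H}, \eqref{product-X}, \eqref{product-Y} of Lemma~\ref{L.products} applied to $c$ and $\w$. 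The factor $h-\t h=\epsilon(\zeta-\t\zeta)$ is kept intact and its $H^n$ norm pulled out, yielding the factor $\Norm{\zeta-\t\zeta}_{H^n}$ in the final bound; all powers of $h,\t h$ and $\nabla b$ are absorbed into the constant $C(\mu,h_\star^{-1},h^\star,\Norm{\nabla b}_{H^{3\vee n}},\Norm{\nabla h}_{H^{2\vee n-1}},\Norm{\nabla\t h}_{H^{2\vee n-1}})$, using that $h^k-\t h^k=(h-\t h)\sum_{j=0}^{k-1}h^j\t h^{k-1-j}$ and $\Norm{h}_{L^\infty},\Norm{\t h}_{L^\infty}\leq h^\star$.

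Assembling: $\Norm{\mfT[h,\beta b]^{-1}\v-\mfT[\t h,\beta b]^{-1}\v}_{X^n}\lesssim_{h_\star^{-1},\dots}\Norm{\big(\mfT[h,\beta b]-\mfT[\t h,\beta b]\big)\w}_{Y^n}\lesssim C(\cdots)\Norm{\zeta-\t\zeta}_{H^n}\Norm{\w}_{X^{2\vee n}}\lesssim C(\cdots)\Norm{\zeta-\t\zeta}_{H^n}\Norm{\v}_{Y^{2\vee n}}$, which is the claimed inequality. As usual I would first carry out the argument for $\v\in\mathcal{S}(\RR^d)^d$ (so all dualities coincide with $L^2$ inner products and integrations by parts are justified) and conclude by density of $\mathcal{S}(\RR^d)^d$ in $Y^{2\vee n}$ together with continuity of all operators involved.

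The main obstacle is purely bookkeeping rather than conceptual: the operator $\mfT[h,\beta b]-\mfT[\t h,\beta b]$ involves several terms of different orders (zeroth order $(h-\t h)\w$, a genuinely second-order term $\tfrac\mu3(h^3-\t h^3)\nabla\nabla\cdot\w$, and mixed first-order terms with $\beta\nabla b$), so one has to track carefully which derivatives fall on the low-regularity factor $\w$ (only $\Norm{\w}_{X^n}$, or $\Norm{\w}_{X^{2\vee n}}$ when $n\leq 1$, is available) versus on the smooth coefficients, and the $\sqrt\mu$-weighted divergence part of the $X^0$ norm must be handled when dualizing the second-order term. The tame product estimates of Lemma~\ref{L.products} are designed exactly for this and make the distribution of derivatives routine, so the proof is short once the divergence-form structure of $\mfT[h,\beta b]-\mfT[\t h,\beta b]$ and the factorization $h^k-\t h^k=(h-\t h)(\dots)$ are written down.
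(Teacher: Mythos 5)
Your proof is correct and follows essentially the same route as the paper's: the resolvent identity $\mfT[h,\beta b]^{-1}-\mfT[\t h,\beta b]^{-1}=\mfT[h,\beta b]^{-1}(\mfT[\t h,\beta b]-\mfT[h,\beta b])\mfT[\t h,\beta b]^{-1}$ (the paper writes the mirror-image version with the roles of $h$ and $\t h$ swapped, which is immaterial since the final constant is symmetric in $h,\t h$), the factorization $h^k-\t h^k=(h-\t h)\sum_j h^j\t h^{k-1-j}$, the product estimates of Lemma~\ref{L.products} together with the $\sqrt\mu$-weighted embedding to absorb the second-order term, and two applications of Lemma~\ref{L.T-differentiable}. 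No gaps.
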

\begin{proof}
We rewrite
\[\mfT[h,\beta b]^{-1}\v-\mfT[\t h,\beta b]^{-1}\v=\mfT[\t h,\beta b]^{-1}\big(\mfT[\t h,\beta b]-\mfT[h,\beta b]\big)\mfT[h,\beta b]^{-1}\v.\]
By Lemma~\ref{L.T-differentiable}, one has 
\[ \norm{\mfT[h,\beta b]^{-1}\v}_{X^n}\leq C(\mu,h_\star^{-1},h^\star,\norm{\nabla h}_{H^{2\vee n-1}},\beta\norm{\nabla b}_{H^{3}})\norm{\v}_{Y^{n}}.\]
Using the definition~\eqref{def-mfT}, one has
\begin{multline*}
\mfT[\t h,\beta b]\u-\mfT[h,\beta b]\u=\epsilon(\zeta-\t\zeta)\u-\frac\mu3 \nabla \big( (h^3-\t h^3)\nabla\cdot\u\big)\\
+\frac\mu2\nabla \big((h^2-\t h^2)(\beta\nabla b\cdot\u)\big)-\frac\mu2 (h-\t h)(\beta\nabla b)\nabla\cdot\u.
\end{multline*}
By~\eqref{embed-Y} and~\eqref{embed-X} in Lemma~\ref{L.embeddings} and~\eqref{product-H-0} in Lemma~\ref{L.products}, one has immediately
\[ \norm{\epsilon(\zeta-\t\zeta)\u}_{Y^n}\lesssim \epsilon\norm{\u}_{X^{2\vee n}}\norm{\zeta-\t\zeta}_{H^n}.\]
Using now~\eqref{embed-Z} in Lemma~\ref{L.embeddings}, we find
\begin{align*}
\mu\norm{\nabla \big( (h^3-\t h^3)\nabla\cdot\u\big)}_{Y^n}&\leq\sqrt{\mu}\norm{(h^3-\t h^3)\nabla\cdot\u}_{H^n}\leq \norm{\u}_{X^{2\vee n}}\norm{h^3-\t h^3}_{H^n}\\
&\leq \epsilon\, C(\norm{\nabla h}_{H^{2\vee n-1}},\norm{\nabla \t h}_{H^{2\vee n-1}})\norm{\u}_{X^{2\vee n}}\norm{\zeta-\t\zeta}_{H^n}.\end{align*}
The other terms are treated similarly, and we find
\[ \norm{\mfT[\t h,\beta b]\u-\mfT[h,\beta b]\u}_{Y^n}\leq C(\mu,h_\star^{-1},h^\star,\beta\norm{\nabla b}_{H^{3}},\norm{\nabla h}_{H^{2\vee n-1}},\norm{\nabla \t h}_{H^{2\vee n-1}})\norm{\u}_{X^{2\vee n}}\norm{\zeta-\t\zeta}_{H^n}.\]
The proof is completed when collecting the above and applying once again Lemma~\ref{L.T-differentiable}.
\end{proof}

\section{The quasilinear system}\label{S.quasilinear}

The result below is the key ingredient of our proof. We extract the quasilinear structure of system~\eqref{GN-v} in terms of ``good unknowns'', from which energy estimates will be deduced in subsequent Section~\ref{S.energy}. The strategy consists in differentiating system~\eqref{GN-v} several times, and estimate lower order contributions thanks to the formulas obtained in Section~\ref{S.preliminary}. However, in order not too break the structure of the first equation, we are led (as for the water waves system~\cite{Lannes}) to introduce an appropriate velocity variable which is a combination of the original variables and their derivatives.  

\begin{Proposition}\label{P.quasi}
Let $\alpha$ be a non-zero multi-index and $\zeta\in H^{4\vee|\alpha|}(\RR^d),b\in \dot{H}^{5\vee|\alpha|+2}(\RR^d)$ be such that~\eqref{cond-h0} holds, and $\v\in Y^{4\vee|\alpha|}$, satisfying~\eqref{GN-v}. Denote 
\begin{equation}\label{def-w}
\zeta_{(\alpha)}\eqdef \partial^\alpha\zeta \quad \text{ and } \quad \v_{(\alpha)}\eqdef \partial^\alpha\v -\mu\epsilon\, \nabla(w \partial^\alpha\zeta) \quad \text{ where } \quad w\eqdef -h \nabla\cdot\u+ \beta \nabla b\cdot \u.\end{equation}
Then $\zeta_{(\alpha)},\v_{(\alpha)}$ satisfy
\begin{equation}\label{GN-quasi}
   \left\{ \begin{array}{l}
   \partial_t\zeta_{(\alpha)}+\epsilon\nabla\cdot(\u \zeta_{(\alpha)}) +\nabla\cdot(h\u_{(\alpha)}) =  r_{(\alpha)}\\ \\
   (\partial_t + \epsilon \u^\perp \curl)\v_{(\alpha)} + \nabla \zeta_{(\alpha)} +\epsilon \nabla( \u\cdot\v_{(\alpha)})=  \r_{(\alpha)}
   \end{array}\right.
\end{equation}
where we denote
\[ \u\eqdef \mfT[h,\beta b]^{-1}(h\v) \quad \text{ and } \quad \u_{(\alpha)}\eqdef \mfT[h,\beta b]^{-1}(h\v_{(\alpha)}),\]
and $r_{(\alpha)},\r_{(\alpha)}$ satisfy the estimates
\begin{equation}\label{est-r1r2}
\norm{r_{(\alpha)}}_{L^2}+\norm{\r_{(\alpha)}}_{Y^0}\leq  {\bf F} \ \left(\norm{\zeta}_{H^{ |\alpha|}}+\norm{\v}_{Y^{ |\alpha|}}+\norm{\curl \v}_{H^{|\alpha|-1}}\right)
\end{equation}
with ${\bf F}=C(\mu,h_\star^{-1},h^\star)F\big(\beta\norm{\nabla b}_{H^{4\vee |\alpha|+1}},\epsilon\norm{\nabla\zeta}_{H^{3}},\epsilon\norm{\v}_{Y^{4}},\epsilon\norm{\curl\v}_{H^{3}}\big)$.

Moreover, if we denote $\t\zeta,\t\v$ satisfying the same assumptions and $\t r_{(\alpha)},\t \r_{(\alpha)}$ the corresponding residuals, then one has
\begin{equation}\label{est-r1r2-lipsch}
\norm{r_{(\alpha)}-\t r_{(\alpha)}}_{L^2}+\norm{\r_{(\alpha)}-\t\r_{(\alpha)}}_{Y^0}\leq \t {\bf F}\ \left(\norm{\zeta-\t\zeta}_{H^{|\alpha|}}+\norm{\v-\t\v}_{Y^{|\alpha|}}+\norm{\curl\v-\curl\t\v}_{H^{|\alpha|-1}}\right)
\end{equation}
with 
\begin{multline*}
\t {\bf F}=C(\mu,h_\star^{-1},h^\star)F\big(\beta\norm{\nabla b}_{H^{4\vee |\alpha|+1}},\epsilon\norm{\zeta}_{H^{4\vee|\alpha|}},\epsilon\norm{\v}_{Y^{4\vee|\alpha|}},\epsilon\norm{\curl\v}_{H^{3\vee|\alpha|-1}},\\
\epsilon\norm{\t\zeta}_{H^{4\vee|\alpha|}},\epsilon\norm{\t\v}_{Y^{4\vee |\alpha|}},\epsilon\norm{\curl\t\v}_{H^{3\vee|\alpha|-1}}\big).\end{multline*}
\end{Proposition}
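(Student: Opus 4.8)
The plan is to differentiate system~\eqref{GN-v} componentwise by $\partial^\alpha$ and track which terms are genuinely lower order (collectable into $r_{(\alpha)},\r_{(\alpha)}$) versus which terms must be kept as the principal part. The unknowns $\zeta_{(\alpha)},\v_{(\alpha)}$ in~\eqref{def-w} are precisely the ``good unknowns'': the correction $-\mu\epsilon\nabla(w\partial^\alpha\zeta)$ is designed so that when one writes $\u_{(\alpha)}=\mfT[h,\beta b]^{-1}(h\v_{(\alpha)})$, the shape-derivative contribution $\epsilon\dd_h\mfT[h,\beta b](\partial^\alpha\zeta,\u)$ produced by commuting $\partial^\alpha$ through $\mfT[h,\beta b]$ (Lemma~\ref{L.com-T-1}) is exactly compensated, so that $\u_{(\alpha)}$ agrees with $\partial^\alpha\u$ up to an $L^2$-controlled (indeed $X^0$-controlled) remainder. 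I would first verify this reduction: apply Lemma~\ref{L.com-T-1} with $n=0$ to $\partial^\alpha\u = \partial^\alpha(\mfT^{-1}(h\v))$, absorb $h$ into the argument using the product estimates of Lemma~\ref{L.products}, and check that the leftover is $\u_{(\alpha)} + (\text{remainder controlled by }{\bf F})$, with the divergence structure $\nabla(w\partial^\alpha\zeta)$ arising from the explicit form~\eqref{def-dT} of $\dd_h\mfT$ (note $w = -h\nabla\cdot\u+\beta\nabla b\cdot\u$ is exactly the combination appearing there).

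\textbf{The first equation.}
Applying $\partial^\alpha$ to $\eqref{GN-v}_1$ gives $\partial_t\zeta_{(\alpha)} + \nabla\cdot\partial^\alpha(h\u)=0$. Write $\partial^\alpha(h\u) = h\partial^\alpha\u + \epsilon(\partial^\alpha\zeta)\u + (\text{commutator})$, using Lemma~\ref{L.lin-Hn} (with $n\in\{0,1\}$, or the $Y^n$ analogue since a divergence is applied) to extract the two principal terms and bound the commutator by ${\bf F}$ times $\norm{\u}_{X^{|\alpha|}}\lesssim\norm{\v}_{Y^{|\alpha|}}+\cdots$ via Lemma~\ref{L.T-differentiable}. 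Then replace $h\partial^\alpha\u$ by $h\u_{(\alpha)}$ at the cost of $h(\partial^\alpha\u-\u_{(\alpha)})$, which by the previous paragraph is $h$ times an $X^0$-controlled remainder hence its divergence is $Y^0$-controlled --- but one must be careful here: actually the cleanest route is to keep $\nabla\cdot(h\u_{(\alpha)})$ as is and move $\nabla\cdot(\text{correction }\mu\epsilon\nabla(w\partial^\alpha\zeta))$ and the commutators into $r_{(\alpha)}$, checking each piece lies in $L^2$ with the stated bound; the term $\epsilon\nabla\cdot((\partial^\alpha\zeta)\u)=\epsilon\nabla\cdot(\u\zeta_{(\alpha)})$ is kept as principal. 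This is bookkeeping: every remainder is a product of a derivative of $h$ (or $\zeta$, or $b$) of order $\le|\alpha|$ against something of order $\le|\alpha|$, with at least one factor losing a derivative, so Lemmata~\ref{L.products},~\ref{L.com-Hn},~\ref{L.lin-Hn} and their $Y$-counterparts apply.

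\textbf{The second equation and the main obstacle.}
Applying $\partial^\alpha$ to $\eqref{GN-v}_2$ produces $\partial_t\partial^\alpha\v + \epsilon\partial^\alpha(\u^\perp\curl\v) + \nabla\partial^\alpha\zeta + \tfrac\epsilon2\nabla\partial^\alpha(|\u|^2) = \mu\epsilon\nabla\partial^\alpha(\R+\R_b)$. One then: (i) replaces $\partial_t\partial^\alpha\v$ by $\partial_t\v_{(\alpha)} + \mu\epsilon\partial_t\nabla(w\partial^\alpha\zeta)$, using $\eqref{GN-v}_1$ and $\eqref{GN-v}_2$ to rewrite $\partial_t w$ and $\partial_t\partial^\alpha\zeta$ in terms of spatial quantities (this is where the $\mu\epsilon$ prefactor saves a derivative and the whole thing lands in ${\bf F}\cdot Y^0$ --- using~\eqref{embed-Z}); (ii) expands $\epsilon\partial^\alpha(\u^\perp\curl\v)$, keeping the principal $\epsilon\u^\perp\curl(\partial^\alpha\v)$, rewriting $\curl\partial^\alpha\v = \partial^\alpha\curl\v$ and bounding it by $\norm{\curl\v}_{H^{|\alpha|-1}}$ (this is why that norm appears on the right of~\eqref{est-r1r2}), with the commutator handled by the bilinear estimates; one must also reconcile $\u^\perp\curl(\partial^\alpha\v)$ against $\u^\perp\curl\v_{(\alpha)}$, the difference being $\u^\perp\curl(\mu\epsilon\nabla(w\partial^\alpha\zeta))=0$ since $\curl\nabla=0$ --- a clean cancellation; (iii) expands $\tfrac\epsilon2\nabla\partial^\alpha(|\u|^2) = \epsilon\nabla(\u\cdot\partial^\alpha\u) + (\text{commutator})$ and merges $\epsilon\nabla(\u\cdot\partial^\alpha\u)$ with a piece coming from step (i) to form the principal term $\epsilon\nabla(\u\cdot\v_{(\alpha)})$ — again the replacement $\partial^\alpha\u\to\u_{(\alpha)}$ costs an $X^0$-controlled remainder, and the correction term $-\mu\epsilon\nabla(w\partial^\alpha\zeta)$ inside $\v_{(\alpha)}$ pairs against $\u$ to give something of the right size; (iv) treats $\mu\epsilon\nabla\partial^\alpha(\R+\R_b)$ entirely as a remainder --- here the $\mu\epsilon$ gain and the $\nabla$-then-$Y^0$ trick via~\eqref{embed-Z} are essential, and one needs the explicit formulas~\eqref{def-R},~\eqref{def-Rb} together with Lemma~\ref{L.T-differentiable} to bound $\R,\R_b$ in $H^{|\alpha|}$. \emph{The main obstacle is step (iii) combined with the good-unknown reconciliation}: one must show that all the ``naive'' principal terms $\nabla(\u\cdot\partial^\alpha\u)$, the shape-derivative leftover from $\partial^\alpha\u$ vs.\ $\u_{(\alpha)}$, and the time-derivative-of-correction term from (i) conspire to produce \emph{exactly} $\nabla\zeta_{(\alpha)}+\epsilon\nabla(\u\cdot\v_{(\alpha)})$ with no uncontrolled residue --- this is the algebraic heart of the ``quasilinearization'' and mirrors the Alinhac good-unknown computation for water waves; verifying it requires carefully expanding $\dd_h\mfT$ via~\eqref{def-dT} and using that $h\v=\mfT\u$ so that $h\v_{(\alpha)} = h\partial^\alpha\v - \mu\epsilon h\nabla(w\partial^\alpha\zeta)$ matches $\mfT\partial^\alpha\u - \epsilon\dd_h\mfT(\partial^\alpha\zeta,\u)$ modulo controlled terms. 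Finally, the Lipschitz estimate~\eqref{est-r1r2-lipsch} follows by redoing every bound above for differences, replacing each application of Lemmata~\ref{L.products}--\ref{L.lin-Yn} by its bilinear ``difference'' version and using Lemma~\ref{L.diff-T-1} to control differences of $\mfT[h,\beta b]^{-1}$ against differences of $h$; the only new point is that one allows $|\alpha|$ derivatives on the ``coefficient'' factors (hence the larger list of arguments in $\t{\bf F}$), which is harmless since differences are measured in the top norm.
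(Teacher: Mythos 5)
Your overall skeleton is the paper's: differentiate, introduce the good unknowns, use the shape derivatives of $\mfT$ from Lemmata~\ref{L.com-T}--\ref{L.com-T-1}, and exploit the $\mu\epsilon$ prefactors together with~\eqref{embed-Z}. Your treatment of the first equation and of the $\curl$ term (including the cancellation $\curl\v_{(\alpha)}=\curl\partial^\alpha\v$) is sound, and the closing remarks on the Lipschitz estimate are fine.

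There is, however, a genuine gap in your steps (iii)--(iv) for the second equation. You cannot treat $\mu\epsilon\nabla\partial^\alpha\big(\R[h,\u]+\R_b[h,\beta b,\u]\big)$ ``entirely as a remainder'': the leading term of $\R$ contains $\frac{\u}{3h}\cdot\nabla(h^3\nabla\cdot\u)$, so $\norm{\R}_{H^{|\alpha|}}$ requires $\sqrt\mu\,\norm{\nabla\cdot\u}_{H^{|\alpha|+1}}$ (even after the $\sqrt\mu$ gain from~\eqref{embed-Z}), and this is controlled by $\norm{\u}_{X^{|\alpha|+1}}\lesssim\norm{\v}_{Y^{|\alpha|+1}}$ but \emph{not} by $\norm{\v}_{Y^{|\alpha|}}$; the same derivative loss occurs for $\R_b$. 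Correspondingly, your mechanism for producing the principal term $\epsilon\nabla(\u\cdot\v_{(\alpha)})$ is incomplete: the ``piece coming from step (i)'' (the time derivative of the correction $-\mu\epsilon\nabla(w\partial^\alpha\zeta)$) only accounts for the passage from $\partial^\alpha\v$ to $\v_{(\alpha)}$, while the passage from $\partial^\alpha\u$ to $\partial^\alpha\v$ is a difference of size $\mu$ times a \emph{second-order operator} applied to $\partial^\alpha\u$, which is not an $X^0$- or $Y^0$-controlled remainder. The missing idea is precisely that the top-order part of $\mu\epsilon\nabla\partial^\alpha(\R+\R_b)$, namely $\mu\epsilon\nabla\big(\frac{\u}{3h}\cdot\partial^\alpha\nabla(h^3\nabla\cdot\u)-\frac12\frac{\u}{h}\cdot\partial^\alpha\nabla(h^2\beta\nabla b\cdot\u)\big)$, must be \emph{retained} and combined with $\epsilon\nabla(\u\cdot\partial^\alpha\u)$ through the identity $\v=\u-\frac{\mu}{3h}\nabla(h^3\nabla\cdot\u)+\frac{\mu}{2h}\nabla(h^2\beta\nabla b\cdot\u)-\cdots$; it is exactly this cancellation that upgrades $\epsilon\nabla(\u\cdot\partial^\alpha\u)$ to $\epsilon\nabla(\u\cdot\partial^\alpha\v)$ with a controlled residue. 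Without it, either the remainder estimate~\eqref{est-r1r2} fails by one derivative, or the principal part of~$\eqref{GN-quasi}_2$ comes out as $\epsilon\nabla(\u\cdot\u_{(\alpha)})$ rather than $\epsilon\nabla(\u\cdot\v_{(\alpha)})$, which would break the symmetrization used in Section~\ref{S.energy}.
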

\begin{Remark}
One recovers the quasilinear structure of the water waves system, as exhibited in~\cite[Sec.~4.2]{Lannes}, up to two differences. Firstly, the advection velocity is $\u$ instead of $U$ (using the notation introduced in Appendix~\ref{S.Formulations-Hamiltonian}). This was to be expected, comparing the formulation of the Green-Naghdi system~\eqref{GN-vter} with the corresponding formulation of the water waves system~\eqref{WW-vbis}. 

Secondly, from the comparison of the aforementioned formulations, one would expect~$\eqref{GN-quasi}_2$ to exhibit a component of the form
\[ \nabla (\mfa\zeta) \quad \text{ with } \quad \mfa\eqdef 1+\mu\epsilon \partial_t w +\mu\epsilon^2 V\cdot\nabla w ,\]
with the hyperbolicity condition $\mfa>0$ accounting for the Rayleigh-Taylor criterion, ${ (-\partial_z P)_{z=\epsilon\zeta}>0}$; see the discussion in~\cite[Sec. 4.3.5]{Lannes}.
In our system, we simply set $\mfa\equiv 1$ since the additional contributions can be discarded, thanks to our energy norm and the $\mu$ prefactor, by~\eqref{embed-Z} in Lemma~\ref{L.embeddings}. In other words, using the notation of the proof below, one can check that
\[\nabla\zeta_{(\alpha)}\sim_{Y^0}  \nabla \Big(\big(1+\mu\epsilon ( \partial_t +\epsilon \u\cdot\nabla)w\big)\zeta_{(\alpha)}\Big).\]
Thus the Rayleigh-Taylor criterion for water waves, which is automatically satisfied for small values of the parameter $\mu$, disappears in the Green-Naghdi system (for any values of $\mu$).
\end{Remark}
\begin{proof} 
Let us first remark that by~\eqref{product-Y} in Lemma~\ref{L.products}, one has for any $\v\in Y^n$ and $n\in\NN$,
\begin{equation}\label{bound-v}
\forall n\in\NN, \qquad \norm{h\v }_{Y^n} \leq C(h^\star)(1+\norm{\nabla h}_{H^2})\norm{\v}_{Y^n}+\bra{\norm{\nabla h}_{H^{n-1}}\norm{\v}_{Y^2}}_{n>2}.
\end{equation}
Using Lemma~\ref{L.T-differentiable}, it follows $\u= \mfT[h,\beta b]^{-1}(h\v) \in X^n$ and
\begin{equation}\label{bound-u}
\norm{\u}_{X^n}
\leq 
C(\mu,h_\star^{-1},h^\star,\beta\norm{\nabla b}_{H^{3}},\epsilon\norm{\nabla\zeta}_{H^{2}})\big(\norm{\v}_{Y^n}+\bra{\norm{\nabla h}_{H^{n-1}}\norm{\v}_{Y^2}}_{n>2}\big).
\end{equation}
We now enter into the detailed calculations. For simplicity, we denote
\begin{align*}
a\sim_{L^2} b \quad  \Longleftrightarrow \quad  & a-b= r,\\
a\sim_{Y^0} b \quad  \Longleftrightarrow \quad  & a-b= \r\end{align*}
with $\norm{ r}_{L^2}$ and $\norm{\r}_{Y^0}$ satisfying~\eqref{est-r1r2}. 
\medskip

\noindent{\em First equation.} 
We start by differentiating $\alpha$-times the first equation of~\eqref{GN-v}:
\[ \partial_t\zeta_{(\alpha)}+\partial^\alpha\nabla\cdot( h\u)=0.\]
By Lemma~\ref{L.lin-Hn} for the surface contribution and Lemma~\ref{L.com-Hn} for the bottom contribution (with $n=1$), and using~\eqref{embed-X} in Lemma~\ref{L.embeddings} and~\eqref{bound-u}, one finds
\[
\partial^\alpha\nabla\cdot (h\u)\sim_{L^2} \nabla\cdot (h\partial^\alpha \u)+  \epsilon \nabla\cdot( \u\partial^\alpha \zeta).
\]
Now, we use~\eqref{product-H} in Lemma~\ref{L.products},~\eqref{embed-X} in Lemma~\ref{L.embeddings}, Lemma~\ref{L.com-T-1} and~\eqref{bound-v}, so that
\[ \nabla\cdot (h\partial^\alpha \u ) \sim_{L^2} \nabla\cdot \big(h\mfT[h,\beta b]^{-1}\big\{ \partial^\alpha (h\v)-\epsilon\dd_h\mfT[h,\beta b](\partial^\alpha\zeta, \u)\big\} \big).\]
Using~\eqref{embed-X} and~\eqref{embed-Y} in Lemma~\ref{L.embeddings}, Lemma~\ref{L.T-differentiable}, and proceeding as above but with Lemma~\ref{L.lin-Yn} for the surface contribution and Lemma~\ref{L.com-Yn} for the bottom contribution, we find
\[\nabla\cdot \big(h \mfT[h,\beta b]^{-1}\big\{ \partial^\alpha (h\v)\big\}\big)\sim_{L^2} \nabla\cdot \big(h\mfT[h,\beta b]^{-1}\big\{ h\partial^\alpha \v + \epsilon(\partial^\alpha\zeta)\v\big\}\big).\]
In order to deal with the second contribution, we use the identity (see the definition~\eqref{def-mfT})
\[
   \v =\ \u -\frac{\mu}{3h}\nabla(h^3\nabla\cdot \u)+\frac\mu{2h}\nabla\big(h^2(\beta\nabla b)\cdot \u\big)-\frac\mu2 h(\beta\nabla b)\nabla\cdot \u+\mu \beta^2(\nabla b\cdot \u)\nabla b.
\]
By the extra $\mu$ prefactor, we can use now~\eqref{embed-Z}  and~\eqref{embed-Y} in Lemma~\ref{L.embeddings}, Lemmata~\ref{L.products} and~\ref{L.T-invertible} as well as~\eqref{bound-u} to deduce
\[ \epsilon \nabla\cdot \big(h\mfT[h,\beta b]^{-1}\big\{  (\partial^\alpha\zeta)\v\big\}\big) \sim_{L^2} \epsilon \nabla\cdot \big(h\mfT[h,\beta b]^{-1}\big\{  (\partial^\alpha\zeta)\u\big\}\big).\]
Finally, recalling~\eqref{def-dT}, and proceeding as above, we find
\begin{align*} 
&\epsilon\nabla\cdot \big(h \mfT[h,\beta b]^{-1}\big\{\dd_h\mfT[h,\beta b](\partial^\alpha\zeta, \u)\big\}\big) \\
&\qquad \sim_{L^2} \epsilon\nabla\cdot\big(h \mfT[h,\beta b]^{-1} \big\{(\partial^\alpha\zeta)\u-\mu \nabla(h^2 (\partial^\alpha\zeta )\nabla\cdot\u)+\mu \nabla\big(h(\partial^\alpha\zeta) (\beta\nabla b)\cdot \u\big)\big\}\big)\\
 &\qquad \sim_{L^2} \epsilon\nabla\cdot\big(h \mfT[h,\beta b]^{-1} \big\{(\partial^\alpha\zeta)\u-\mu h\nabla \big((\partial^\alpha\zeta )(h \nabla\cdot\u- \beta \nabla b\cdot \u)\big)\big\}\big).
 \end{align*}
Collecting the above information and using the definition of $\u_{(\alpha)}$, we obtain, as desired,
\begin{equation}\label{lin-eq1}
 \partial^\alpha\partial_t\zeta=-\partial^\alpha\nabla\cdot (h\u) \sim_{L^2} -\nabla\cdot (h \u_{(\alpha)} )-\epsilon \nabla\cdot( \u \zeta_{(\alpha)}).
 \end{equation}
\medskip

\noindent{\em Second equation.} 
Now we differentiate the second equation of~\eqref{GN-v}:
\[ \partial_t \partial^\alpha\v+\partial^\alpha\big(\nabla\zeta+ \epsilon \u^\perp \curl\v+\frac\epsilon2\nabla(\abs{\u}^2)\big)= \mu\epsilon\partial^\alpha\nabla\big(\R[h,\u]+\R_b[h,\beta b,\u]\big).\]
By~\eqref{embed-Y} and~\eqref{embed-X} in Lemma~\ref{L.embeddings}, Lemma~\ref{L.lin-Hn} with $n=1$ and~\eqref{bound-u}, one has
\[ \frac\epsilon2\partial^\alpha\nabla(\abs{\u}^2)\sim_{Y^0} \epsilon\nabla(\u\cdot\partial^\alpha \u).\]
Recalling the definition~\eqref{def-R}-\eqref{def-Rb}, we use~\eqref{embed-Z} in Lemma~\ref{L.embeddings}, Lemma~\ref{L.lin-Hn} with $n=0$ and Sobolev embedding $H^2\subset L^\infty$, product estimates~\eqref{product-H} and~\eqref{product-H-1} in Lemma~\ref{L.products} and finally~\eqref{bound-u}, to deduce
\[ \mu\epsilon\partial^\alpha \nabla\big(\R[h,\u]+\R_b[h,\beta b,\u]\big)\sim_{Y^0} \mu\epsilon\nabla\left( \frac{\u}{3h}\cdot \partial^\alpha \nabla(h^3\nabla\cdot\u)- \frac12  \frac{\u}{h}\cdot \partial^\alpha \nabla\big(h^2(\beta\nabla b\cdot\u)\big)\right).\]
Recalling once again the identity
\[
   \v =\ \u -\frac{\mu}{3h}\nabla(h^3\nabla\cdot \u)+\frac\mu{2h}\nabla\big(h^2(\beta\nabla b)\cdot \u\big)-\frac\mu2 h(\beta\nabla b)\nabla\cdot \u+\mu \beta^2(\nabla b\cdot \u)\nabla b,
\]
and proceeding as previously for the remainder terms, we deduce
\[ \epsilon\partial^\alpha \nabla\big(\frac12|\u|^2-\mu\R[h,\u]-\mu\R_b[h,\beta b,\u]\big)\sim_{Y^0} \epsilon \nabla(\u\cdot\partial^\alpha\v).\]
Now, by~\eqref{embed-Y} in Lemma~\ref{L.embeddings} and Lemma~\ref{L.lin-Hn} with $n=0$, Sobolev embedding $H^2\subset L^\infty$ and~\eqref{bound-u}, we find
\[ \epsilon\partial^\alpha (\u^\perp \curl \v)\sim_{Y^0} \epsilon \u^\perp (\curl\partial^\alpha\v),\]
and the combination yields
\[\partial_t\partial^\alpha \v  \sim_{Y^0} -\partial^\alpha\nabla \zeta-\epsilon \nabla(\u\cdot\partial^\alpha\v)-\epsilon \u^\perp (\curl\partial^\alpha\v).\]

In order to conclude, we consider
\[\partial_t (\v_{(\alpha)}-\partial^\alpha\v) =- \mu \epsilon \partial_t\nabla( w \partial^\alpha\zeta).\]
By continuous Sobolev embedding $H^2\subset L^\infty$, one has
\[\norm{\partial_t w }_{L^\infty}\leq C(h^\star,\beta\norm{\nabla b}_{H^2})(\norm{\partial_t \u}_{X^2}+\norm{\partial_t\zeta}_{H^2}).\]
Using the identity
\[\partial_t\u =\mfT[h,\beta b]^{-1}\partial_t(h\v)-\epsilon\mfT[h,\beta b]^{-1}\big\{\dd_h\mfT[h,\beta b](\partial_t\zeta, \u)\big\},\]
plugging the expressions of $\partial_t\zeta,\partial_t\v$ as given by~\eqref{GN-v}, and by
 Lemmata~\ref{L.embeddings} and~\ref{L.products}, Lemma~\ref{L.diff-T-1} with $n=2$, as well as~\eqref{bound-u}, we deduce
\[\partial_t (\v_{(\alpha)}-\partial^\alpha\v)\sim_{Y^0}- \mu\epsilon \nabla (w \partial_t \partial^\alpha \zeta).\]
Finally, using~\eqref{lin-eq1} and~\eqref{embed-Z} in Lemma~\ref{L.embeddings}, we find after straightforward manipulations and proceeding as above,
\[ \mu\epsilon \nabla (w \partial_t \partial^\alpha\zeta ) \sim_{Y^0} -\mu\epsilon^2 \nabla( w\nabla\cdot(\u \zeta_{(\alpha)}) )-\mu\epsilon \nabla(w\nabla\cdot(h\u_{(\alpha)}) )\sim_{Y^0} -\mu\epsilon^2 \nabla(\u\cdot \nabla(w\zeta_{(\alpha)}) ) .\]
Collecting the above information yields
\begin{align}
\partial_t\v_{(\alpha)}  &\sim_{Y^0} -\partial^\alpha\nabla\zeta-\epsilon \nabla( \u\cdot \partial^\alpha\v)
-\epsilon \u^\perp (\curl\partial^\alpha\v) +\mu\epsilon^2\nabla(\u\cdot \nabla(w\zeta_{(\alpha)})) \nonumber\\
&=-\partial^\alpha\nabla\zeta -\epsilon \nabla(\u\cdot \v_{(\alpha)})  -\epsilon \u^\perp (\curl\v_{(\alpha)}).\label{lin-eq2}
\end{align}
Estimates~\eqref{lin-eq1} and~\eqref{lin-eq2} provide the first estimate of the statement, namely~\eqref{est-r1r2}.
\medskip

The second estimate of the statement is obtained identically. Since all contributions on the remainders involve either products or the operator $\mfT^{-1}$, we can express the difference as a sum of terms of the same form but involving at least once $\zeta-\t\zeta$ or $\v-\t\v$, or estimated by Lemma~\ref{L.diff-T-1}. For instance, we find the corresponding estimate for~\eqref{bound-v} and~\eqref{bound-u} as follows. Denote $h=1+\epsilon\zeta-\beta b$ and $\t h=1+\epsilon\t\zeta-\beta b$. By~\eqref{embed-Y} in Lemma~\ref{L.embeddings} and~\eqref{product-H-0} and~\eqref{product-Y} in Lemma~\ref{L.products},
\begin{multline*}\norm{h\v-\t h\t\v}_{Y^n}\leq \norm{h(\v-\t\v)}_{Y^n}+\epsilon\norm{(\zeta-\t\zeta)\t\v}_{Y^n}\leq C(\mu,h^\star,\norm{\nabla h}_{H^{2\vee { n-1}}},\epsilon \norm{\t\v}_{Y^{3\vee n}})\big(\norm{\v-\t\v}_{Y^n}+\norm{ \zeta-\t\zeta}_{H^n}\big) .
\end{multline*}
Applying this to $\u= \mfT^{-1}[h,\beta b](h\v) $, $\t\u= \mfT^{-1}[\t h,\beta b](\t h\t\v)$ and using Lemma~\ref{L.T-differentiable} and Lemma~\ref{L.diff-T-1}, it follows 
\begin{multline*}
\norm{\u-\t\u }_{X^n}\leq 
C(\mu,h_\star^{-1},h^\star,\norm{\nabla b}_{H^{3\vee n}},\norm{\nabla h}_{H^{2\vee {n-1}}},\norm{\nabla \t h}_{H^{2\vee {n-1}}},\epsilon\norm{\v}_{Y^{2\vee n}},\epsilon\norm{\t\v}_{Y^{3\vee n}})\\
\times\big(\norm{\v-\t\v}_{Y^n}+\norm{ \zeta-\t\zeta}_{H^n}\big).
\end{multline*}
It is now a tedious but straightforward task to follow the steps of the proof and check that the estimate~\eqref{est-r1r2-lipsch} holds.
\end{proof}

\section{A priori energy estimates}\label{S.energy}

This section is dedicated to {\em a priori} energy estimates on the quasilinearized system~\eqref{GN-quasi} (or rather a mollified version; see below), which we make use in the proof of the existence and uniqueness of solutions to system~\eqref{GN-v} in subsequent Section~\ref{S.WP}. It is convenient to introduce the following notation, for $n\in\NN$:
\[\E^n(\zeta,\v)\eqdef  \norm{\zeta}_{H^n}^2+\norm{\v}_{Y^n}^2.\]
We wish to show that regularity induced by $\E^n$ is propagated by the flow of the Green-Naghdi system~\eqref{GN-v} provided $n$ is chosen sufficiently large. However, the natural energy of our system is determined by the symmetrizer associated with the quasilinear system~\eqref{GN-quasi} satisfied by $\zeta_{(\alpha)},\v_{(\alpha)}$. This leads us to define
\[\F^n(\zeta,\v)\eqdef \sum_{0\leq |\alpha|\leq n} \F[h,\beta b](\zeta_{(\alpha)},\v_{(\alpha)}).\]
with $\zeta_{(\alpha)},\v_{(\alpha)}$ given by~\eqref{def-w} and
\[\F[h,\beta b](\zeta_{(\alpha)},\v_{(\alpha)})\eqdef \norm{\zeta_{(\alpha)}}_{L^2}^2+\bra{\v_{(\alpha)}, h \u_{(\alpha)}}_{(X^0)^\prime},\]
where we recall that $\u_{(\alpha)}\eqdef \mfT[h,\beta b]^{-1}(h\v_{(\alpha)})$. By convention, we denote $\zeta_{(0)}\eqdef \zeta$ and $\v_{(0)}\eqdef \v$. In the following, we simply denote $\F(\zeta_{(\alpha)},\v_{(\alpha)})=\F[h,\beta b](\zeta_{(\alpha)},\v_{(\alpha)})$ for the sake of readability.

It is not obvious that controlling $\F(\zeta_{(\alpha)},\v_{(\alpha)})$ for $0\leq |\alpha|\leq n$ allows to control $\E^n(\zeta,\v)$ and conversely, and this is what we investigate in the following Lemmas.
\begin{Lemma}\label{L.EvsF0} 
Let $b\in\dot{H}^3(\RR^d)$ and $\zeta\in H^3(\RR^d)$ be such that~\eqref{cond-h0} holds, and let $\underline\zeta\in L^2(\RR^d),\underline\v\in Y^0$. Then $\underline\u\eqdef \mfT[h,\beta b]^{-1}(h\underline\v)\in X^0$ is uniquely defined, and one has
\[
{\bf C}^{-1}\,\norm{\underline\u}_{X^0}\leq  \norm{\underline\v}_{Y^0}\leq {\bf C}\, \norm{\underline\u}_{X^0},\]
and
\[{\bf C}^{-1}\, \F(\underline\zeta,\underline\v)\leq \E^0(\underline\zeta,\underline\v)\leq{\bf C}\, \F(\underline\zeta,\underline\v),\]
with ${\bf C}=C(\mu,h_\star^{-1},h^\star,\beta\norm{\nabla b}_{H^2},\epsilon\norm{\nabla\zeta}_{H^2})$.
\end{Lemma}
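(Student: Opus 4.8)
The plan is to prove the two chains of inequalities separately, the first being the core estimate and the second a fairly direct consequence. The key observation is that everything reduces to the coercivity and continuity of the operator $\mfT[h,\beta b]$ as established in Lemma~\ref{L.T-invertible}, combined with the elementary equivalence between the $X^0$-norm of $\underline\u$ and the $Y^0$-norm of $h\underline\v=\mfT[h,\beta b]\underline\u$.

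First I would record that $\underline\u\eqdef\mfT[h,\beta b]^{-1}(h\underline\v)$ is well-defined: by the product estimate~\eqref{product-Y} in Lemma~\ref{L.products}, $h\underline\v\in Y^0=(X^0)'$ with $\norm{h\underline\v}_{Y^0}\leq C(h^\star,\beta\norm{\nabla b}_{H^2},\epsilon\norm{\nabla\zeta}_{H^2})\norm{\underline\v}_{Y^0}$, and Lemma~\ref{L.T-invertible} then gives $\underline\u\in X^0$ with the corresponding bound. For the lower direction, since $h\underline\v=\mfT[h,\beta b]\underline\u$, the continuity estimate in Lemma~\ref{L.T-invertible} yields $\norm{h\underline\v}_{Y^0}\leq C(\mu,h^\star,\beta\norm{\nabla b}_{L^\infty})\norm{\underline\u}_{X^0}$; dividing by $h$ (again via~\eqref{product-Y} applied to $h^{-1}$, using~\eqref{cond-h0} and~\eqref{product-H-1}) recovers $\norm{\underline\v}_{Y^0}\leq {\bf C}\norm{\underline\u}_{X^0}$. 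Combining the two directions gives the first displayed equivalence of the Lemma.

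Next, for the energy equivalence I would expand $\F(\underline\zeta,\underline\v)=\norm{\underline\zeta}_{L^2}^2+\bra{\underline\v,h\underline\u}_{(X^0)'}$. Writing $h\underline\v=\mfT[h,\beta b]\underline\u$ and using the symmetry and coercivity of $\mfT[h,\beta b]$ from Lemma~\ref{L.T-invertible}, we get
\[\bra{\underline\v,h\underline\u}_{(X^0)'}=\bra{h\underline\v,\underline\u}_{(X^0)'}=\bra{\mfT[h,\beta b]\underline\u,\underline\u}_{(X^0)'},\]
which is comparable to $\norm{\underline\u}_{X^0}^2$ both from above (by the continuity estimate) and from below (by the coercivity estimate $\norm{\underline\u}_{X^0}^2\leq C(h_\star^{-1})\bra{\mfT[h,\beta b]\underline\u,\underline\u}_{(X^0)'}$). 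Then the first part of the Lemma converts this into an equivalence with $\norm{\underline\v}_{Y^0}^2$, and adding $\norm{\underline\zeta}_{L^2}^2$ on both sides yields ${\bf C}^{-1}\F(\underline\zeta,\underline\v)\leq \E^0(\underline\zeta,\underline\v)\leq {\bf C}\F(\underline\zeta,\underline\v)$.

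I do not expect a serious obstacle here; the only point requiring a little care is bookkeeping the dependence of the constant ${\bf C}$ — in particular making sure the $h$ and $h^{-1}$ multiplications only bring in $\norm{\nabla b}_{H^2}$ and $\norm{\nabla\zeta}_{H^2}$ (through $\norm{\nabla h}_{H^2}$ and the lower bound $h_\star$), consistent with the stated ${\bf C}=C(\mu,h_\star^{-1},h^\star,\beta\norm{\nabla b}_{H^2},\epsilon\norm{\nabla\zeta}_{H^2})$. The density argument reducing to Schwartz functions, used already in Lemma~\ref{L.T-invertible} so that the duality bracket coincides with the $L^2$ inner product, carries over verbatim.
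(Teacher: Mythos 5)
Your proof is correct and follows exactly the route the paper takes: the paper's own proof is a one-line citation of Lemma~\ref{L.T-invertible} together with the product estimates~\eqref{product-H},~\eqref{product-H-1} and~\eqref{product-Y}, which is precisely what you spell out (multiplication by $h$ and $h^{-1}$ in $Y^0$, continuity and coercivity of $\mfT[h,\beta b]$, and the symmetry identity $\bra{\underline\v,h\underline\u}_{(X^0)'}=\bra{\mfT[h,\beta b]\underline\u,\underline\u}_{(X^0)'}$). The constant bookkeeping is consistent with the statement.
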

\begin{proof}
The estimates follow from Lemma~\ref{L.T-invertible} and~\eqref{product-H},\eqref{product-H-1} and~\eqref{product-Y} in Lemma~\ref{L.products}.
\end{proof}
\begin{Lemma}\label{L.EvsF}Let $n\in\NN^\star$ and $b\in\dot{H}^{4}(\RR^d)$, $\zeta\in H^{4\vee n}(\RR^d)$ be such that~\eqref{cond-h0} holds. Assume $\v\in Y^{4\vee n}$ and
define $\zeta_{(\alpha)},\v_{(\alpha)}$ as in~\eqref{def-w}. Then one has
\[\F^n(\zeta,\v) \leq C(\mu,h_\star^{-1},h^\star,\beta\norm{\nabla b}_{H^{3}},\epsilon^2\E^4(\zeta,\v))\ \E^n(\zeta,\v).\]
Conversely, if $\v\in Y^4$ is such that $\v_{(\alpha)}\in Y^0$ for any $1\leq|\alpha|\leq n$, then $\v\in Y^n$ and
\[\E^n(\zeta,\v)\leq C(\mu,h_\star^{-1},h^\star,\beta\norm{\nabla b}_{H^{3}},\epsilon^2\F^4(\zeta,\v) )\ \F^n(\zeta,\v) .\]

\end{Lemma}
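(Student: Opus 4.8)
The plan is to prove the two inequalities separately, in both cases reducing to the case $n=0$ (Lemma~\ref{L.EvsF0}) by applying $\partial^\alpha$ and controlling the discrepancy between $\partial^\alpha\v$ and $\v_{(\alpha)}$, which by definition~\eqref{def-w} is $\mu\epsilon\nabla(w\partial^\alpha\zeta)$ with $w=-h\nabla\cdot\u+\beta\nabla b\cdot\u$. The key point, already visible in Lemma~\ref{L.EvsF0}, is that $\F[h,\beta b](\zeta_{(\alpha)},\v_{(\alpha)})$ is equivalent to $\norm{\zeta_{(\alpha)}}_{L^2}^2+\norm{\v_{(\alpha)}}_{Y^0}^2$ up to constants depending on $\E^4$; summing over $|\alpha|\le n$ thus reduces everything to comparing $\sum_{|\alpha|\le n}\big(\norm{\partial^\alpha\zeta}_{L^2}^2+\norm{\v_{(\alpha)}}_{Y^0}^2\big)$ with $\E^n(\zeta,\v)=\norm{\zeta}_{H^n}^2+\norm{\v}_{Y^n}^2$.

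For the first (direct) inequality, I would start from $\F^n(\zeta,\v)\le {\bf C}\sum_{|\alpha|\le n}\big(\norm{\partial^\alpha\zeta}_{L^2}^2+\norm{\v_{(\alpha)}}_{Y^0}^2\big)$ by Lemma~\ref{L.EvsF0} applied with $\underline\zeta=\zeta_{(\alpha)}$, $\underline\v=\v_{(\alpha)}$ (here ${\bf C}=C(\mu,h_\star^{-1},h^\star,\beta\norm{\nabla b}_{H^2},\epsilon\norm{\nabla\zeta}_{H^2})$, and $\epsilon\norm{\nabla\zeta}_{H^2}\le \epsilon\sqrt{\E^4(\zeta,\v)}$). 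Then $\norm{\v_{(\alpha)}}_{Y^0}\le \norm{\partial^\alpha\v}_{Y^0}+\mu\epsilon\norm{\nabla(w\partial^\alpha\zeta)}_{Y^0}$; the first term sums to $\norm{\v}_{Y^n}^2$, and for the second I use~\eqref{embed-Z} in Lemma~\ref{L.embeddings} to bound $\mu\norm{\nabla(w\partial^\alpha\zeta)}_{Y^0}\lesssim \sqrt\mu\,\norm{w\partial^\alpha\zeta}_{L^2}\lesssim\sqrt\mu\,\norm{w}_{L^\infty}\norm{\zeta}_{H^n}$, with $\norm{w}_{L^\infty}\le C(h^\star,\beta\norm{\nabla b}_{H^2})\norm{\u}_{X^2}$ by Sobolev embedding and $\norm{\u}_{X^2}$ controlled via~\eqref{bound-u} (Lemma~\ref{L.T-differentiable}) by $\norm{\v}_{Y^2}\le\sqrt{\E^4(\zeta,\v)}$. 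This produces the stated bound with an extra factor $\epsilon^2\E^4(\zeta,\v)$, which is absorbed into the polynomial $C(\dots,\epsilon^2\E^4(\zeta,\v))$.

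For the converse, the delicate point is that a priori one only knows $\v\in Y^4$ together with $\v_{(\alpha)}\in Y^0$, so I cannot directly write $\norm{\partial^\alpha\v}_{Y^0}\le\norm{\v_{(\alpha)}}_{Y^0}+\mu\epsilon\norm{\nabla(w\partial^\alpha\zeta)}_{Y^0}$ and close, because the correction term involves $\u$ hence $\v$ at regularity $n$ (through $\nabla\cdot\u$, and $\norm{\u}_{X^n}$ needs $\norm{\v}_{Y^n}$). The remedy is a finite induction on $|\alpha|$: assuming $\v\in Y^{k-1}$ for some $4\le k\le n$ (the base case $k=4$ is the hypothesis, after noting $\E^4(\zeta,\v)\le C\,\F^4(\zeta,\v)$ by the direct inequality and Lemma~\ref{L.EvsF0}), one gets for $|\alpha|=k$ that $w\in H^{k-1}$ — since $w$ depends on $\u\in X^{k-1}$, controlled by $\v\in Y^{k-1}$ via~\eqref{bound-u} — whence $\nabla(w\partial^\alpha\zeta)\in Y^0$ is a genuinely lower-order quantity, so that $\partial^\alpha\v=\v_{(\alpha)}+\mu\epsilon\nabla(w\partial^\alpha\zeta)\in Y^0$; ranging over $|\alpha|=k$ gives $\v\in Y^k$, and the quantitative estimate accumulates the polynomial dependence on $\epsilon^2\F^4(\zeta,\v)$. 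In both estimates the $\mu$ prefactor in front of the correction $\nabla(w\partial^\alpha\zeta)$ is essential, since it converts a term of apparent order $n+1$ (one derivative falls on $w$, up to $n$ on $\zeta$, plus the gradient) into the $Y^0$-controllable $\sqrt\mu\,\norm{w\partial^\alpha\zeta}_{L^2}$; this is exactly the mechanism underlying the remark following Proposition~\ref{P.quasi}.

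The main obstacle, then, is the bootstrapping order-of-regularity issue in the converse direction — one must be careful that the correction term $\mu\epsilon\nabla(w\partial^\alpha\zeta)$ is handled as a lower-order perturbation rather than circularly, which is why I organize the converse as a finite induction on $|\alpha|$ rather than a single estimate; everything else is a routine application of the product and commutator estimates from Section~\ref{S.preliminary} together with Lemmata~\ref{L.T-invertible}, \ref{L.T-differentiable} and~\ref{L.EvsF0}.
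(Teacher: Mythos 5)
Your treatment of the first inequality is essentially the paper's own argument and is fine. The converse, however, has a genuine gap at the base of your induction. You anchor it at $k=4$ by asserting $\E^4(\zeta,\v)\le C\,\F^4(\zeta,\v)$ ``by the direct inequality and Lemma~\ref{L.EvsF0}''; but the direct inequality goes the other way ($\F^4\lesssim \E^4$), and Lemma~\ref{L.EvsF0} only converts $\F(\zeta_{(\alpha)},\v_{(\alpha)})$ into $\norm{\zeta_{(\alpha)}}_{L^2}^2+\norm{\v_{(\alpha)}}_{Y^0}^2$ --- it does not bridge the gap between $\norm{\v_{(\alpha)}}_{Y^0}$ and $\norm{\partial^\alpha\v}_{Y^0}$, which is precisely the content of the converse for $|\alpha|\le 4$. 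As you set things up, bounding that gap requires $\norm{w}_{H^2}$, hence $\norm{\v}_{Y^2}$, i.e.\ part of what you are trying to bound: the base case is circular as stated.

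The missing idea is a swap of roles in the product estimate for small $|\alpha|$. For $|\alpha|\le 2$ one writes $\mu\epsilon\norm{\nabla(w\zeta_{(\alpha)})}_{Y^0}\lesssim \epsilon\sqrt\mu\norm{w\zeta_{(\alpha)}}_{L^2}\lesssim \epsilon\sqrt\mu\norm{\zeta_{(\alpha)}}_{H^2}\norm{w}_{L^2}$, putting the $L^\infty$ control on $\zeta_{(\alpha)}$ (costing only $\norm{\zeta}_{H^4}$, which is directly part of $\F^4$ since $\zeta_{(\alpha)}=\partial^\alpha\zeta$) and the $L^2$ on $w$, where $\sqrt\mu\norm{w}_{L^2}\lesssim\norm{\u}_{X^0}\lesssim\norm{\v}_{Y^0}=\norm{\v_{(0)}}_{Y^0}$ is controlled by $\F^0\le\F^n$ with no circularity. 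This yields control of $\norm{\v}_{Y^2}$, hence of $\norm{w}_{H^2}$, after which all $|\alpha|>2$ are handled in one stroke with your estimate $\norm{w\zeta_{(\alpha)}}_{L^2}\lesssim\norm{w}_{H^2}\norm{\zeta_{(\alpha)}}_{L^2}$. Note also that your diagnosis that $w$ must be propagated in $H^{k-1}$ is off target: the outer gradient is absorbed by~\eqref{embed-Z}, so no derivative ever falls on $w$ beyond the $H^2$ (or $L^2$) level, and the full induction on $k$ up to $n$ is unnecessary --- the two-step argument ($|\alpha|\le 2$, then $|\alpha|>2$) suffices.
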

\begin{proof} By Lemma~\ref{L.EvsF0}, it suffices to prove the estimates replacing $\F^n$ with
\[\tilde\E^n(\zeta,\v)\eqdef \sum_{0\leq |\alpha|\leq N} \E^0(\zeta_{(\alpha)},\v_{(\alpha)}).\]
Recall that by definition, we have
\[
\zeta_{(\alpha)}\eqdef \partial^\alpha\zeta \quad \text{ and } \quad \v_{(\alpha)}\eqdef \partial^\alpha\v-\mu\epsilon\, \nabla(w \partial^\alpha\zeta ) \quad \text{ where } \quad w\eqdef - h \nabla\cdot\u+ \beta \nabla b\cdot \u.\]
Thus by Lemma~\ref{L.products} and Lemma~\ref{L.T-differentiable}, one has for any $m\in\{0,1,2\}$,
\[
 \sqrt{\mu}\norm{w}_{H^m} \leq C(\mu,h^\star,\beta\norm{\nabla b}_{H^{2}},\epsilon\norm{\nabla\zeta}_{H^{2}}) \norm{\u}_{X^m}\leq C(\mu,h_\star^{-1},h^\star,\beta\norm{\nabla b}_{H^{3}},\epsilon\norm{\zeta}_{H^{3}})\norm{\v }_{Y^m}.
\]
Now, by~\eqref{embed-Z} in Lemma~\ref{L.embeddings} and continuous Sobolev embedding $H^2\subset L^\infty$, one has
\[\mu\epsilon\norm{\nabla(w\zeta_{(\alpha)})}_{Y^0}\lesssim \epsilon\sqrt\mu\norm{w\zeta_{(\alpha)}}_{L^2}\lesssim \epsilon\sqrt\mu\norm{w}_{H^2}\norm{\zeta_{(\alpha)}}_{L^2}.\]
We immediately deduce the first inequality of the statement.

For the converse, we first use that if $|\alpha|\leq 2$, one has
\[\mu\epsilon\norm{\nabla(w\zeta_{(\alpha)})}_{Y^0}\lesssim \epsilon\sqrt\mu\norm{w\zeta_{(\alpha)}}_{L^2} \lesssim \epsilon\sqrt\mu\norm{\zeta_{(\alpha)} }_{H^2}\norm{w}_{L^2}\leq \sqrt\mu\norm{w}_{L^2} \norm{\epsilon\zeta}_{H^4}.\]
This yields for $n\in\{0,1,2\}$,
\[ \norm{\v }_{Y^n}^2=\sum_{0\leq |\alpha|\leq n}\norm{\partial^\alpha \v}_{Y^0}^2\leq C(\mu,h_\star^{-1},h^\star,\beta\norm{\nabla b}_{H^{3}},\epsilon\norm{\zeta}_{H^{4}}) \times \left( \sum_{0\leq |\alpha|\leq n}  \norm{\v_{(\alpha)}}_{Y^0}^2 \right).\]
Then, for $|\alpha|>2$, we use as above 
\[\mu\epsilon\norm{\nabla(w\zeta_{(\alpha)})}_{Y^0}\lesssim \epsilon\sqrt\mu\norm{w\zeta_{(\alpha)}}_{L^2}\lesssim \epsilon\sqrt\mu\norm{w}_{H^2}\norm{\zeta_{(\alpha)}}_{L^2}.\]
By the above control on $w$ and the previously obtained estimate, we deduce that for $n\in\{3,\dots,N\}$, 
\[ \norm{\v }_{Y^n}^2=\sum_{0\leq |\alpha|\leq n}\norm{\partial^\alpha \v}_{Y^0}^2\leq C(\mu,h_\star^{-1},h^\star,\beta\norm{\nabla b}_{H^{3}},\epsilon\norm{\zeta}_{H^{4}},\epsilon\norm{\v}_{Y^2}) \times \left( \sum_{0\leq |\alpha|\leq n}  \E^0(\zeta_{(\alpha)},\v_{(\alpha)}) \right),\]
so that the second inequality of the statement follows.
\end{proof}

In the following, we provide energy estimates for regular solutions of a mollified version of the Green-Naghdi system~\eqref{GN-v}, namely
 \begin{equation}\label{GN-v-mol}
   \left\{ \begin{array}{l}
   \partial_t\zeta+J^\iota\nabla\cdot(h\u) =r^{\iota}\\ \\
\partial_t \v +J^\iota \left(\nabla\zeta+\epsilon\u^\perp \curl \v+\frac\epsilon2 \nabla\big(\abs{\u}^2\big)-\mu\epsilon\nabla\big(\R[h,\u]+\R_b[h,\beta b,\u]\big)\right)=\r^{\iota}
   \end{array}\right.
   \end{equation}
where $\u\eqdef \mfT[h,\beta b]^{-1}(h\v)$ as well as its associated quasilinear system (see Proposition~\ref{P.quasi})
\begin{equation}\label{GN-quasi-mol}
   \left\{ \begin{array}{l}
   \partial_t\zeta_{(\alpha)}+J^\iota \left(\epsilon\nabla\cdot(\u \zeta_{(\alpha)}) +\nabla\cdot(h\u_{(\alpha)}) \right)=  r^{\iota}_{(\alpha)}\\ \\
   \partial_t \v_{(\alpha)} +J^\iota\left( \nabla\zeta_{(\alpha)} +\epsilon\u^\perp \curl \v_{(\alpha)}+\epsilon \nabla(\u\cdot\v_{(\alpha)})\right)=  \r^{\iota}_{(\alpha)}
   \end{array}\right.
\end{equation}
where we denote $\u_{(\alpha)}\eqdef \mfT[h,\beta b]^{-1}(h \v_{(\alpha)})$.
Here and thereafter, $J^\iota$ is a Friedrichs mollifier, defined as the Fourier multiplier $J^\iota=\varphi(\iota |D|)$, where $\iota\in(0,1)$ is a parameter and $\varphi$ is a smooth function taking values in $[0,1]$, compactly supported and equal to $1$ in a neighborhood of the origin. By convention we write $J^0\equiv \Id$ and $J^\iota(u_1,u_2)=(J^\iota u_1,J^\iota_2)$.
The following properties will be used repeatedly:
\begin{Lemma} \label{L.J}Let $n\leq m\in\NN$, $\iota,\iota_1,\iota_2\in(0,1)$, and define the Fourier multiplier $J^\iota$ as above. Then for any $f\in H^n(\RR^d)$ and $\v\in Y^n$, one has $J^\iota f\in H^{m}(\RR^d)$, $J^\iota\v\in Y^m$ and
\begin{equation}
\label{J-limit} \norm{f-J^{\iota} f  }_{H^{n}}+ \iota^{-1}\norm{f-J^{\iota} f  }_{H^{n-1}}+\norm{\v-J^{\iota} \v  }_{Y^{n}}+ \iota^{-1}\norm{\v-J^{\iota} \v  }_{Y^{n-1}}\to 0 \quad (\iota\to 0).
\end{equation}
The above holds for $f_n\to f$ in $H^n$ and  $\v_n\to\v $ in $Y^n$, uniformly with respect to $n\in\NN$.
Moreover, there exists a constant $C$, depending only on $\varphi$, such that
\begin{align}
\label{J-product} \norm{J^\iota f}_{H^n}&\leq \norm{f}_{H^n}, & \norm{J^\iota \v}_{Y^n}&\leq \norm{\v}_{Y^n}, \\
\label{J-smooth}   \norm{ J^{\iota} f}_{H^{m}}&\leq C\, \iota^{n-m}\norm{f}_{H^{n}}, &  \norm{ J^{\iota} \v}_{Y^{m}}&\leq C\, \iota^{n-m}\norm{\v}_{Y^{n}},\\
\label{J-diff} \norm{(J^{\iota_2}-J^{\iota_1})f}_{H^{n-1}}&\leq C\, |\iota^2-\iota^1|\norm{f}_{H^{n}}, &  \norm{(J^{\iota_2}-J^{\iota_1})\v}_{H^{n-1}}&
\leq C\, |\iota^2-\iota^1|\norm{\v}_{Y^{n}} .
\end{align}
Moreover, if $ f\in H^{3}(\RR^d)$ and $g\in H^{n-1}(\RR^d)$, there exists $C_n$, depending only on $\varphi$ and $n$ such that
\begin{equation}
\label{J-com-H}   \norm{[J^\iota,f]g}_{H^{n}}\leq C\, \norm{\nabla f}_{H^{2\vee {n-1}}}\norm{g}_{H^{n-1}}.
\end{equation}

\end{Lemma}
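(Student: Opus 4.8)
\textbf{Proof plan for Lemma~\ref{L.J}.}
The statement collects standard Friedrichs-mollifier facts, so the plan is to reduce everything to the Fourier-side definition $J^\iota=\varphi(\iota|D|)$ and the structure of the spaces $H^n$, $Y^n$, $X^n$. First I would record that on $H^n(\RR^d)$ the operator $J^\iota$ is a Fourier multiplier with symbol bounded by $1$, hence $\norm{J^\iota f}_{H^n}\leq\norm f_{H^n}$; since $Y^n$ is defined through the $(X^0)'$ norm of $\partial^\alpha\v$ and $J^\iota$ commutes with $\partial^\alpha$ and with the pairing against $X^0$ (because $J^\iota$ is self-adjoint and bounded by $1$ on $X^0$ too, as $X^0$ is defined by an $L^2$-type Fourier norm $\norm\u_{X^0}^2=\int(1+\mu|\xi|^2\text{-type})\,$ of $\u$ and $\nabla\cdot\u$, on which $\varphi(\iota|\xi|)$ again acts as a multiplier bounded by $1$), the corresponding bound on $Y^n$ follows by duality. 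This gives~\eqref{J-product}.

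For the smoothing and convergence statements I would use that $\varphi$ is compactly supported, say in $\{|\xi|\le R\}$: on the support of $\varphi(\iota|\xi|)$ one has $|\xi|\le R/\iota$, so multiplying by $\langle\xi\rangle^{m-n}\le C\,\iota^{n-m}$ yields~\eqref{J-smooth} on $H$, and again on $Y$ by the same multiplier/duality argument. Estimate~\eqref{J-diff} follows from the elementary bound $|\varphi(\iota_2|\xi|)-\varphi(\iota_1|\xi|)|\le \|\varphi'\|_{L^\infty}|\xi|\,|\iota_2-\iota_1|$, so the symbol of $J^{\iota_2}-J^{\iota_1}$ is controlled by $C|\iota_2-\iota_1|\langle\xi\rangle$, costing exactly one derivative. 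For the limit~\eqref{J-limit}, since $\varphi=1$ near $0$, the multiplier $1-\varphi(\iota|\xi|)$ is supported in $|\xi|\gtrsim 1/\iota$ and bounded by $1$; dominated convergence on the Fourier side gives $\norm{f-J^\iota f}_{H^n}\to0$, and on that support $1\le \iota\langle\xi\rangle\cdot(\text{const})$ in a neighborhood of the relevant frequencies so that $\iota^{-1}\norm{f-J^\iota f}_{H^{n-1}}=\iota^{-1}\||1-\varphi(\iota|\xi|)|\langle\xi\rangle^{-1}\langle\xi\rangle^{n}\hat f\|_{L^2}$ is bounded by $C\norm{f}_{H^n}$ and tends to $0$ by dominated convergence; the $Y^n$ statements again transfer by applying this componentwise to $\partial^\alpha\v$ in the $(X^0)'$ norm. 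Uniformity in $n$ along a convergent sequence $f_n\to f$ in $H^n$ is obtained by splitting $f_n-J^\iota f_n=(f_n-f)-J^\iota(f_n-f)+(f-J^\iota f)$ and using~\eqref{J-product} on the first two terms.

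Finally, the commutator estimate~\eqref{J-com-H} is the one genuinely nontrivial point, and I expect it to be the main obstacle. The clean way is to write $J^\iota$ as convolution with $\iota^{-d}\check\varphi(\cdot/\iota)$ (with $\check\varphi\in\mathcal S$) and use the standard Friedrichs/Coifman--Meyer-type commutator lemma: for a mollifier of this form one has $\norm{[J^\iota,f]g}_{H^n}\lesssim\norm{\nabla f}_{H^{n}\cap L^\infty}\norm g_{H^{n-1}}$ uniformly in $\iota$. Concretely, I would expand in a Littlewood--Paley or direct Leibniz fashion: the top-order contribution, where all $n$ derivatives fall on $g$, is handled by the classical fact that $[J^\iota,f]$ gains one derivative with constant $\|\nabla f\|_{L^\infty}$ (Taylor-expand $f(x)-f(y)$ against the kernel, absorbing the factor $|x-y|/\iota$ into the Schwartz tail of $\check\varphi$), controlled by $\norm{\nabla f}_{H^2}$ via $H^2\subset L^\infty$; the lower-order commutator pieces, where at least one derivative hits $f$, are estimated by the tame product estimate~\eqref{product-H-0} together with $\norm{J^\iota}_{H^k\to H^k}\le1$, producing the $\norm{\nabla f}_{H^{n-1}}\norm g_{H^{n-1}}$ term. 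Combining the two regimes gives the stated bound with $\norm{\nabla f}_{H^{2\vee(n-1)}}$. One should quote \cite[Corollary~B.9]{Lannes} (already used in the proof of Lemma~\ref{L.T-differentiable}) or an equivalent reference rather than redo this classical computation in full.
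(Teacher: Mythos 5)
Your proposal is correct and follows essentially the same route as the paper: the paper compresses the Fourier-multiplier computations on $H^n$ into a citation of Bona--Smith, obtains the $Y^n$ statements by exactly your duality argument (symmetry of $J^\iota$ plus commutation with spatial derivatives, hence boundedness on $X^0$ and $(X^0)'$), and proves the commutator estimate~\eqref{J-com-H} by quoting \cite[Corollary~B.9]{Lannes} together with~\eqref{J-product}, as you suggest. No gaps.
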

\begin{proof}
The first estimates in $H^n$ are straightforward by Fourier analysis (see~\cite[Lemma~5]{BonaSmith75}). The estimates in $Y^n$ follow by duality and using that $J^\iota$ is symmetric and commutes with spatial derivatives. The last estimate is a consequence of~\cite[Corollary~B.9]{Lannes} and~\eqref{J-product}.
\end{proof}

Propositions~\ref{P.energy0} and~\ref{P.energy} below provide {\em a priori} energy inequalities for sufficiently regular solutions of~\eqref{GN-v-mol} and~\eqref{GN-quasi-mol}. Again, these estimates are uniform with respect to $\iota\in(0,1)$, and in particular hold as well for $J^\iota=\Id$.

\begin{Proposition}\label{P.energy0}
Let $b\in \dot{H}^4(\RR^d)$, $(\zeta,\v)\in C^1([0,T];H^3(\RR^d)^{1+d})$ be such that~\eqref{cond-h0} holds and satisfy system~\eqref{GN-v-mol} with residuals $(r^{\iota},\r^{\iota})\in C^0([0,T];L^2(\RR^d)^{1+d})$. Then for any $t\in[0,T]$,
\[ \frac{\dd}{\dd t}\F(\zeta,\v)\leq {\bf F}\, \F(\zeta,\v) +{\bf C}\, \big(\F(r^{\iota},\r^{\iota})  \F(\zeta,\v)\big)^{1/2} \]
with ${\bf F}=C(\mu,h_\star^{-1},h^\star)F(\beta\norm{\nabla b}_{H^3},\epsilon\norm{\partial_t\zeta}_{H^3},\epsilon\norm{\nabla\zeta}_{H^2},\epsilon\norm{\v}_{H^3})$, ${{\bf C}=C(\mu,h_\star^{-1},h^\star,\beta\norm{\nabla b}_{H^2},\epsilon\norm{\nabla\zeta}_{H^2})}$, uniformly with respect to $\iota\in(0,1)$.
\end{Proposition}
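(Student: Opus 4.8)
The plan is to differentiate the functional $\F(\zeta,\v)=\norm{\zeta}_{L^2}^2+\bra{\v,h\u}_{(X^0)'}$ in time and to track which terms are ``good'' (bounded by the right-hand side) and which require the structure of the equations. First I would note that, by the symmetry of $\mfT[h,\beta b]$ established in Lemma~\ref{L.T-invertible}, one has $\bra{\v,h\u}_{(X^0)'}=\bra{\mfT[h,\beta b]\u,\u}_{(X^0)'}$, so that $\F(\zeta,\v)$ is comparable to $\norm{\zeta}_{L^2}^2+\norm{\u}_{X^0}^2$ by the coercivity estimate, with constants controlled by ${\bf C}$. Differentiating, $\frac{\dd}{\dd t}\F = 2(\zeta,\partial_t\zeta)_{L^2}+2\bra{\partial_t\v,h\u}_{(X^0)'}+\bra{\v,(\partial_t h)\u}_{(X^0)'}$; the last term is a ``time-derivative-of-the-metric'' contribution and is harmless because $\partial_t h=\epsilon\partial_t\zeta$ is controlled in $H^2\subset L^\infty$, contributing $\epsilon\norm{\partial_t\zeta}_{H^2}$ to ${\bf F}$, and this is why ${\bf F}$ is allowed to depend on $\epsilon\norm{\partial_t\zeta}_{H^3}$.

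Next I would substitute the equations~\eqref{GN-v-mol}. In the first term I get $2(\zeta,J^\iota\nabla\cdot(h\u))_{L^2}$ up to the residual $2(\zeta,r^\iota)_{L^2}$, which is bounded by $\big(\F(r^\iota,\r^\iota)\F(\zeta,\v)\big)^{1/2}$. In the second term I get $-2\bra{J^\iota(\nabla\zeta+\dots),h\u}_{(X^0)'}$; since $J^\iota$ is symmetric and commutes with derivatives, and since $\bra{h\u,\cdot}$ pairs against $\v$ naturally, the key is that the leading-order terms $\pm(\zeta,J^\iota\nabla\cdot(h\u))_{L^2}$ and $\mp\bra{J^\iota\nabla\zeta,h\u}_{(X^0)'}$ cancel after an integration by parts (the duality bracket $\bra{\nabla\zeta,h\u}_{(X^0)'}=-(\zeta,\nabla\cdot(h\u))_{L^2}$ by definition of $Y^0=(X^0)'$). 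This is the structural cancellation inherited from the Hamiltonian/skew-symmetric form of the system, exactly as for the water waves system; I would emphasize that the mollifier $J^\iota$ does not destroy it precisely because it is a symmetric Fourier multiplier.

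What remains are the genuinely nonlinear terms: $\epsilon\u^\perp\curl\v$, $\tfrac\epsilon2\nabla(\abs\u^2)$, and the third-order terms $\mu\epsilon\nabla(\R+\R_b)$, each paired with $h\u$. The term $\bra{J^\iota(\u^\perp\curl\v),h\u}_{(X^0)'}$ I would handle by observing that $\u^\perp$ is orthogonal to $\u$ pointwise, so modulo commutators of $J^\iota$ and modulo the difference between $\v$ and $\u$ (which is $\O(\mu)$ and absorbed by~\eqref{embed-Z}), this contribution is quadratically small in the energy; here the commutator estimate~\eqref{J-com-H} and products estimates from Lemma~\ref{L.products} give the $\epsilon\norm{\nabla\zeta}_{H^2}$, $\epsilon\norm{\v}_{H^3}$ dependence in ${\bf F}$. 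The gradient-of-$\abs\u^2$ term and the $\R,\R_b$ terms are treated by integrating by parts against $h\u$ and using the algebraic identity relating $\R,\R_b$ to $\mfT$ (essentially the computation underlying the equivalence of~\eqref{GN-u} and~\eqref{GN-v}), so that again only lower-order, energy-controlled pieces survive; the $\mu$ prefactor together with~\eqref{embed-Z} lets one trade $\nabla$-derivatives for factors of $\sqrt\mu$. I expect the main obstacle to be bookkeeping: verifying carefully that every term not killed by the skew-symmetric cancellation is genuinely of the form $\text{(polynomial in the small parameters and low norms)}\times\F(\zeta,\v)$, with no uncontrolled top-order derivative; the $\R,\R_b$ terms are where one must be most careful, and this is exactly where Proposition~\ref{P.quasi}'s expansions (or the direct identities of Section~\ref{S.Formulations-comparison}) are invoked. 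Finally I would collect all contributions, use Lemma~\ref{L.EvsF0} to pass between $\F(r^\iota,\r^\iota)$ and $\norm{r^\iota}_{L^2}^2+\norm{\r^\iota}_{Y^0}^2$ as needed, and note that every estimate used is uniform in $\iota\in(0,1)$ by~\eqref{J-product} and~\eqref{J-com-H}, so the inequality holds for $J^\iota=\Id$ as well.
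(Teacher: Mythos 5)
Your skeleton coincides with the paper's: test the two equations of~\eqref{GN-v-mol} against $\zeta$ and against $h\u=h\mfT[h,\beta b]^{-1}(h\v)$, observe that the cross terms $\Par{J^\iota\nabla\cdot(h\u),\zeta}_{L^2}+\Par{J^\iota\nabla\zeta,h\u}_{L^2}$ vanish because $J^\iota$ is symmetric and commutes with derivatives, absorb the residuals by Cauchy--Schwarz, and control the ``metric'' commutator. Two remarks. First, the metric term is the full commutator $\tfrac12\Par{\v,[\partial_t,h\mfT[h,\beta b]^{-1}h]\v}_{L^2}$, not merely $\bra{\v,(\partial_t h)\u}_{(X^0)^\prime}$: one must also expand $[\partial_t,\mfT^{-1}]=-\mfT^{-1}[\partial_t,\mfT]\mfT^{-1}$, which produces the additional terms in $(h^2\partial_t h)\nabla\cdot\u_{(\alpha)}$ etc.\ computed explicitly in the proof of Proposition~\ref{P.energy} (estimate~\eqref{bound-A1}) and quoted in the proof here; the conclusion is the same, so this is only an imprecision. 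Second, and more substantively, where you propose to exploit structure in the nonlinear block --- the pointwise orthogonality $\u^\perp\cdot\u=0$ for the vorticity term, and the algebraic identity relating $\R,\R_b$ to $\mfT$ (i.e.\ the machinery of Proposition~\ref{P.GNuvsGNv} or Proposition~\ref{P.quasi}) for the dispersive terms --- the paper does something much cruder and entirely sufficient at this zeroth-order level: a single integration by parts moves the outer gradient onto $h\u$, and each resulting term is bounded by Cauchy--Schwarz, placing the factor carrying all the derivatives in $L^\infty$-type norms controlled by $\epsilon\norm{\u}_{X^3}$ (hence by $\epsilon\norm{\v}_{H^3}$) and absorbed into ${\bf F}$, and the remaining factor in $\norm{\u}_{X^0}\lesssim\F(\zeta,\v)^{1/2}$, the $\mu$ prefactor paying for the extra divergence via~\eqref{embed-Z}. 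No cancellation is needed because, unlike in Proposition~\ref{P.energy}, no $\partial^\alpha$ has been applied and there is no top-order derivative to lose; for the same reason the $J^\iota$-commutator estimate~\eqref{J-com-H} is not required here, only~\eqref{J-product}. Your route would close as well, but it imports the quasilinearization where rough product estimates suffice.
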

\begin{proof} The regularity assumptions on the data are sufficient to ensure that all the terms and calculations (including integration by parts) below are well-defined.
We test the first equation of~\eqref{GN-v-mol} against $\zeta\in C^0([0,T];L^2(\RR^d))$, and the second one against $h\u=h\mfT[h,\beta b]^{-1}(h\v)\in C^0([0,T];X^0)$. It follows
\[\frac12 \frac{\dd}{\dd t} \F(\zeta,\v)=A^{1}+A^{2,\iota}+A^{3,\iota}+A^{4}\]
with, using the symmetry of the operator $\mfT[h,\beta b]^{-1}$,
\begin{align*}
A^{1}&\eqdef \frac12\Par{\v,[\partial_t, h\mfT[h,\beta b]^{-1}h]\v}_{L^2}\\
A^{2,\iota}&\eqdef -\epsilon \Par{ J^\iota \big(\u^\perp \curl \v+\frac12 \nabla\big(\abs{\u}^2\big)-\mu\nabla\big(\R[h,\u]+\R_b[h,\beta b,\u])\big), h\u}_{L^2}\\
A^{3,\iota}&\eqdef-\Par{J^\iota \nabla\cdot(h\u) ,\zeta}_{L^2}-\Par{ J^\iota \nabla\zeta ,h\u}_{L^2}\\
A^{4}&\eqdef\Par{r^{\iota},\zeta}_{L^2}+\Par{ \r^{\iota}, h\u }_{L^2}.
\end{align*}
Since $J^\iota$ is symmetric and commutes with differential operators, one has $A^{3,\iota}\equiv 0$ after integrating by parts. The contributions of $A^{1}$ and $A^{4}$ are treated in details in the proof of Proposition~\ref{P.energy}; see~\eqref{bound-A1} and~\eqref{bound-A4}, below, with Lemma~\ref{L.EvsF0}. As for $A^{2,\iota}$, one may obtain rough estimates as follows.

By continuous Sobolev embedding $H^2\subset L^\infty$ and~\eqref{product-H} in Lemma~\ref{L.products},
\[ \norm{\epsilon\nabla\cdot ( h\u)}_{L^\infty}\lesssim\norm{h\times(\epsilon\u)}_{H^3}\leq C(h^\star)F(\beta\norm{\nabla b}_{H^2},\epsilon\norm{\nabla\zeta}_{H^2},\epsilon\norm{\u}_{H^3})\]
and therefore, by~\eqref{J-product} in Lemma~\ref{L.J} and Cauchy-Schwarz inequality,
\[\abs{\epsilon \Par{ J^\iota (\frac12\abs{\u}^2),\nabla\cdot ( h\u)}_{L^2}}\leq C(h^\star)F(\beta\norm{\nabla b}_{H^2},\epsilon\norm{\nabla\zeta}_{H^2},\epsilon\norm{\u}_{H^3})\norm{\u}_{L^2}^2.\]
Similarly, recalling the definition~\eqref{def-R}-\eqref{def-Rb}, one obtains as above
\[\sqrt\mu\epsilon\norm{\R[h,\u]+\R_b[h,\beta b,\u]}_{L^2}\leq C(\mu,h_\star^{-1},h^\star)F(\beta\norm{\nabla b}_{H^2},\epsilon\norm{\nabla\zeta}_{H^2},\epsilon\norm{\u}_{X^3})\norm{\u}_{X^0}\]
and
\[\sqrt\mu\norm{\nabla\cdot ( h\u)}_{L^2}\leq \sqrt\mu\norm{\nabla h}_{L^\infty}\norm{\u}_{L^2}+\sqrt\mu\norm{h}_{L^\infty}\norm{\nabla\cdot\u}_{L^2}\leq C(\mu,h^\star,\beta\norm{\nabla b}_{H^2},\epsilon\norm{\nabla\zeta}_{H^2})\norm{\u}_{X^0}.\]
Finally, one has by Cauchy-Schwarz inequality and continuous Sobolev embedding $H^2\subset L^\infty$
\[ \abs{ \Par{J^\iota (\u^\perp \curl \v),h\u}_{L^2}}\leq  C(h^\star)\norm{\curl\v}_{H^2}\norm{\u}_{L^2}^2.\]
It follows, by~\eqref{embed-X} in Lemma~\ref{L.embeddings},
\[\abs{A^{2,\iota}}\leq C(\mu,h_\star^{-1},h^\star)F(\beta\norm{\nabla b}_{H^2},\epsilon\norm{\nabla\zeta}_{H^2},\epsilon\norm{\u}_{X^3},\epsilon\norm{\curl\v}_{H^2}) \norm{\u}_{X^0}^2.\]
We conclude by Lemma~\ref{L.T-differentiable} and~\eqref{product-Y} in Lemma~\ref{L.products}, as well as Lemma~\ref{L.EvsF0}.
\end{proof}

\begin{Proposition}\label{P.energy}
Let $b\in\dot{H}^4$, $(\zeta,\u)\in C^0([0,T];H^4(\RR^d)^{1+d})\cap C^1([0,T];H^3(\RR^d)^{1+d})$ be such that~\eqref{cond-h0} holds, and $(\zeta_{(\alpha)},\v_{(\alpha)})\in C^0([0,T];H^1(\RR^d)^{1+d})$ satisfying system~\eqref{GN-quasi-mol} with remainder terms $(r^{\iota}_{(\alpha)},\r^{\iota}_{(\alpha)}) \in C^0([0,T];L^2(\RR^d)^{1+d})$. Then one has for any $t\in[0,T]$,
\[ \frac{\dd}{\dd t}\F(\zeta_{(\alpha)},\v_{(\alpha)})\leq {\bf F}\,  \F(\zeta_{(\alpha)},\v_{(\alpha)}) +{\bf C}\, \big(\F(r^{\iota}_{(\alpha)},\r^{\iota}_{(\alpha)})  \F(\zeta_{(\alpha)},\v_{(\alpha)}) \big)^{1/2}\]
with ${\bf F}=C(\mu,h_\star^{-1},h^\star)F(\beta\norm{\nabla b}_{H^3},\epsilon\norm{\partial_t\zeta}_{H^3},\epsilon\norm{\nabla\zeta}_{H^3},\epsilon\norm{\u}_{H^3})$, ${\bf C}=C(\mu,h_\star^{-1},h^\star,\beta\norm{\nabla b}_{H^2},\epsilon\norm{\nabla\zeta}_{H^2})$, uniformly with respect to $\iota\in(0,1)$.
\end{Proposition}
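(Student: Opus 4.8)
The plan is to run a symmetrizer energy estimate for the quasilinear system~\eqref{GN-quasi-mol}, the key being that the natural symmetrizer for the velocity unknown is precisely the quadratic form $\bra{\v_{(\alpha)},h\u_{(\alpha)}}_{(X^0)^\prime}$ entering $\F$. Concretely, I would test the first equation of~\eqref{GN-quasi-mol} against $\zeta_{(\alpha)}$ in $L^2$ and the second against $h\u_{(\alpha)}=h\mfT[h,\beta b]^{-1}(h\v_{(\alpha)})\in X^0$ in the $(X^0)^\prime$--$X^0$ duality; using the symmetry of $\mfT[h,\beta b]^{-1}$ (Lemma~\ref{L.T-invertible}) this yields
\[\tfrac12\tfrac{\dd}{\dd t}\F(\zeta_{(\alpha)},\v_{(\alpha)})=A^1+A^2+A^3+A^4,\]
where $A^1=\tfrac12\bra{\v_{(\alpha)},\big[\partial_t,\,h\mfT[h,\beta b]^{-1}h\big]\v_{(\alpha)}}_{(X^0)^\prime}$ is the ``time derivative of the symmetrizer'' term, $A^2$ gathers the pairings of $J^\iota\nabla\zeta_{(\alpha)}$ against $h\u_{(\alpha)}$ and of $J^\iota\nabla\cdot(h\u_{(\alpha)})$ against $\zeta_{(\alpha)}$, $A^3$ gathers the $\epsilon$-order (transport and rotation) contributions, and $A^4=\bra{r^\iota_{(\alpha)},\zeta_{(\alpha)}}_{L^2}+\bra{\r^\iota_{(\alpha)},h\u_{(\alpha)}}_{(X^0)^\prime}$ is the residual.

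The term $A^2$ vanishes: since $J^\iota$ is self-adjoint and commutes with $\nabla$, an integration by parts gives $\bra{\nabla\cdot(h\u_{(\alpha)}),J^\iota\zeta_{(\alpha)}}_{L^2}=-\bra{h\u_{(\alpha)},\nabla J^\iota\zeta_{(\alpha)}}_{L^2}$, which exactly cancels the other contribution. The residual $A^4$ is controlled by Cauchy--Schwarz, the bound $\norm{h\u_{(\alpha)}}_{X^0}\lesssim\norm{\v_{(\alpha)}}_{Y^0}$ (Lemma~\ref{L.T-invertible} and~\eqref{product-X}), and the equivalence $\norm{\zeta_{(\alpha)}}_{L^2}^2+\norm{\v_{(\alpha)}}_{Y^0}^2\simeq\F(\zeta_{(\alpha)},\v_{(\alpha)})$ of Lemma~\ref{L.EvsF0}, producing the stated term $\lesssim{\bf C}\,(\F(r^\iota_{(\alpha)},\r^\iota_{(\alpha)})\F(\zeta_{(\alpha)},\v_{(\alpha)}))^{1/2}$. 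For $A^1$, I would use $\partial_t h=\epsilon\partial_t\zeta$ (the bottom is steady) and $\partial_t(\mfT[h,\beta b]^{-1})=-\mfT[h,\beta b]^{-1}(\partial_t\mfT[h,\beta b])\mfT[h,\beta b]^{-1}$, where $\partial_t\mfT[h,\beta b]$ is read off the explicit polynomial-in-$h$ form of $\mfT$ and is, up to a harmless $\u$-undifferentiated term, the shape derivative $\dd_h\mfT[h,\beta b](\epsilon\partial_t\zeta,\cdot)$ of~\eqref{def-dT}; Lemmata~\ref{L.T-invertible} and~\ref{L.T-differentiable}, the product estimates of Lemma~\ref{L.products}, and $H^2\subset L^\infty$ then give $|A^1|\lesssim{\bf F}\,\norm{\v_{(\alpha)}}_{Y^0}^2\lesssim{\bf F}\,\F$, with the $\epsilon\norm{\partial_t\zeta}_{H^3}$ dependence.

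The heart of the matter is $A^3$, which collects $-\epsilon\bra{J^\iota\nabla\cdot(\u\zeta_{(\alpha)}),\zeta_{(\alpha)}}_{L^2}$ together with $-\epsilon\bra{J^\iota\big(\u^\perp\curl\v_{(\alpha)}+\nabla(\u\cdot\v_{(\alpha)})\big),h\u_{(\alpha)}}_{(X^0)^\prime}$, and must be bounded by ${\bf F}\,\F$ using only the $Y^0=(X^0)^\prime$-level control of $\v_{(\alpha)}$. The scalar transport term is the classical one: moving $J^\iota$ inside, integrating by parts and absorbing the mollifier discrepancy with the $\iota$-uniform Friedrichs commutator estimate~\eqref{J-com-H} gives $-\epsilon\bra{J^\iota\nabla\cdot(\u\zeta_{(\alpha)}),\zeta_{(\alpha)}}_{L^2}=-\tfrac\epsilon2\bra{(\nabla\cdot\u)\zeta_{(\alpha)},J^\iota\zeta_{(\alpha)}}_{L^2}+(\text{commutator})\lesssim\epsilon\norm{\u}_{H^3}\norm{\zeta_{(\alpha)}}_{L^2}^2$. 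For the velocity terms the decisive point is the two-dimensional vector identity $\nabla(\u\cdot\v_{(\alpha)})+\u^\perp\curl\v_{(\alpha)}=(\u\cdot\nabla)\v_{(\alpha)}+(\v_{(\alpha)}\cdot\nabla)\u-\v_{(\alpha)}^\perp\curl\u$: the dangerous $\curl\v_{(\alpha)}$, which is not controlled by $\F$, drops out, leaving a genuine transport term $\epsilon(\u\cdot\nabla)\v_{(\alpha)}$ --- to be handled as the scalar one but against the $\mfT[h,\beta b]^{-1}$-weighted pairing, exploiting the skew-symmetry of $(\u\cdot\nabla)$ up to $\nabla\cdot\u$ and up to the commutator $[(\u\cdot\nabla),h\mfT[h,\beta b]^{-1}h]$, together with~\eqref{J-com-H} --- plus a $\v_{(\alpha)}$-undifferentiated term $\epsilon\big((\v_{(\alpha)}\cdot\nabla)\u-\v_{(\alpha)}^\perp\curl\u\big)$ bounded in $Y^0$ through~\eqref{product-Y} by $\lesssim\epsilon\norm{\u}_{H^3}\norm{\v_{(\alpha)}}_{Y^0}$. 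Collecting $A^1+A^3+A^4$ and applying Lemma~\ref{L.EvsF0} once more to return to $\F$ gives the claimed inequality, uniformly in $\iota\in(0,1)$ since $J^\iota$ intervened only through its self-adjointness, its commutation with $\nabla$, and the $\iota$-uniform bound~\eqref{J-com-H}.

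The main obstacle is thus the control of $A^3$: only $\v_{(\alpha)}\in Y^0=(X^0)^\prime$ is available, so every integration by parts must either land a derivative on a smooth coefficient ($\u$, $h$, $b$) or be compensated by the structure of the symmetrizer $h\mfT[h,\beta b]^{-1}h$ and by the mollifier commutator estimate; the vector-identity cancellation of $\curl\v_{(\alpha)}$ is exactly what makes the velocity estimate close, mirroring the role of the irrotational structure for the water waves system in~\cite{Lannes}.
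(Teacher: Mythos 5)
Your proposal is correct and follows essentially the same route as the paper: the same symmetrizer pairing, the cancellation of the principal antisymmetric part, the commutator $[\partial_t,h\mfT[h,\beta b]^{-1}h]$ for the time derivative of the symmetrizer, and, crucially, the vector identity eliminating $\curl\v_{(\alpha)}$ before exploiting the transport structure (this is exactly the content of Lemma~\ref{L.commut-Gtransport}). The only difference is organizational: where you invoke the abstract commutator $[(\u\cdot\nabla),h\mfT[h,\beta b]^{-1}h]$, the paper instead substitutes $h\v_{(\alpha)}=\mfT[h,\beta b]\u_{(\alpha)}$ explicitly and integrates by parts term by term, which amounts to the same computation.
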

\begin{proof} The regularity assumptions on the data are sufficient to ensure that all the terms and calculations (including integration by parts) below are well-defined.
We test the first equation of~\eqref{GN-quasi-mol} against $\zeta_{(\alpha)}\in C^0([0,T];L^2)$ and the second against $h\u_{(\alpha)}=h\mfT[h,\beta b]^{-1}(h\v_{(\alpha)})\in C^0([0,T];X^0)$. It follows
\[\frac12 \frac{\dd}{\dd t} \F(\zeta_{(\alpha)},\v_{(\alpha)})=A_{(\alpha)}^{1}+A_{(\alpha)}^{2,\iota}+A_{(\alpha)}^{3,\iota}+A_{(\alpha)}^{4}\]
with, using the symmetry of the operator $\mfT[h,\beta b]^{-1}$,
\begin{align*}
A_{(\alpha)}^{1}&\eqdef \frac12\Par{\v_{(\alpha)},[\partial_t, h\mfT[h,\beta b]^{-1}h]\v_{(\alpha)}}_{L^2}\\
A_{(\alpha)}^{2,\iota}&\eqdef-\epsilon\Par{J^\iota\nabla\cdot(\u \zeta_{(\alpha)}),\zeta_{(\alpha)}}_{L^2}-\epsilon \Par{ J^\iota ( \u^\perp \curl \v_{(\alpha)}+ \nabla(\u\cdot\v_{(\alpha)})), h\u_{(\alpha)}}_{L^2}\\
A_{(\alpha)}^{3,\iota}&\eqdef-\Par{J^\iota \nabla\cdot(h\u_{(\alpha)}) ,\zeta_{(\alpha)}}_{L^2}-\Par{ J^\iota \nabla\zeta_{(\alpha)} , h\u_{(\alpha)}}_{L^2}\\
A_{(\alpha)}^{4}&\eqdef\Par{r^{1,\iota}_{(\alpha)},\zeta_{(\alpha)}}_{L^2}+\Par{ r^{2,\iota}_{(\alpha)}, h\u_{(\alpha)} }_{L^2}.
\end{align*}
By Lemma~\ref{L.EvsF0}, there remains to estimate each contribution in terms of $\norm{\v_{(\alpha)}}_{Y^0},\norm{\u_{(\alpha)}}_{X^0},\norm{\zeta_{(\alpha)}}_{L^2}$.
\smallskip

\noindent{\em Estimate on $A_{(\alpha)}^1$.} We use the explicit formula for the commutator
\begin{align*}& \hspace*{-1cm}\frac12\Par{\v_{(\alpha)},[\partial_t, h\mfT[h,\beta b]^{-1}h]\v_{(\alpha)}}_{L^2}\\
&=\Par{\v_{(\alpha)},(\partial_t h) \mfT[h,\beta b]^{-1}\{h\v_{(\alpha)}\}}_{L^2}+\frac12\Par{\v_{(\alpha)},h[\partial_t, \mfT[h,\beta b]^{-1}]h\v_{(\alpha)}}_{L^2}\\
&=\Par{(\partial_t h)\v_{(\alpha)}, \u_{(\alpha)}}_{L^2}-\frac12\Par{\u_{(\alpha)},[\partial_t, \mfT[h,\beta b]]\u_{(\alpha)}}_{L^2}\\
&=\Par{(\partial_t h)\v_{(\alpha)}, \u_{(\alpha)}}_{L^2}-\frac12\Par{\u_{(\alpha)},(\partial_t h)\u_{(\alpha)}}_{L^2}-\frac\mu2\Par{\nabla\cdot \u_{(\alpha)},(h^2\partial_t h)\nabla\cdot\u_{(\alpha)}}_{L^2}\\
&\quad + \mu \Par{ (h\partial_t h) \nabla\cdot \u_{(\alpha)}, (\beta\nabla b)\cdot \u_{(\alpha)}}_{L^2}
\end{align*}
where we used the symmetry of $\mfT[h,\beta b]^{-1}$ and the definitions~\eqref{def-mfT},\eqref{def-T}. Since $\partial_t h=\epsilon\partial_t\zeta$,
we deduce by~\eqref{product-Y} in Lemma~\ref{L.products}, continuous Sobolev embedding $H^2\subset L^\infty$ and Cauchy-Schwarz inequality,
\begin{equation}\label{bound-A1} 
\abs{A_{(\alpha)}^1}\leq  C(\mu,h^\star,\beta\norm{\nabla b}_{H^2})\times \big(\norm{\epsilon \partial_t\zeta}_{H^3} \norm{\u_{(\alpha)}}_{X^0}\norm{\v_{(\alpha)}}_{Y^0}+\norm{\epsilon \partial_t\zeta}_{H^2} \norm{\u_{(\alpha)}}_{X^0}^2\big).\end{equation}
\smallskip

\noindent{\em Estimate on $A_{(\alpha)}^{2,\iota}$.}
First remark that, since $J^\iota$ is symmetric and commutes with differential operators, one has after integrating by parts
\[A_{(\alpha)}^{2,\iota,i}\eqdef-\epsilon\Par{J^\iota\nabla\cdot(\u \zeta_{(\alpha)}),\zeta_{(\alpha)}}_{L^2}
=\epsilon \Par{\zeta_{(\alpha)},\u\cdot \nabla J^\iota \zeta_{(\alpha)}}_{L^2}\]
and therefore, averaging the left-hand side and the right-hand side,
\[A_{(\alpha)}^{2,\iota,i}\eqdef-\frac12\epsilon\Par{J^\iota (\zeta_{(\alpha)}\nabla\cdot\u),\zeta_{(\alpha)}}_{L^2}
-\frac12\epsilon \Par{\zeta_{(\alpha)},[J^\iota,\u\cdot] \nabla  \zeta_{(\alpha)}}_{L^2}.\]
 By the product and commutator estimates~\eqref{J-product} and~\eqref{J-com-H} in Lemma~\ref{L.J}, and applying Cauchy-Schwarz inequality and continuous Sobolev embedding $H^2\subset L^\infty$, we get
\begin{equation}\label{bound-A2i} 
\abs{A_{(\alpha)}^{2,i}}\lesssim \epsilon\norm{ \u}_{H^3} \times \norm{\zeta_{(\alpha)}}_{L^2}^2,
\end{equation}
uniformly with respect to $\iota\in(0,1)$.

The second contribution, namely
\[A_{(\alpha)}^{2,\iota,ii}\eqdef \Par{ J^\iota ( \u^\perp \curl \v_{(\alpha)}+ \nabla(\u\cdot\v_{(\alpha)})), h\mfT[h,\beta b]^{-1}(h\v_{(\alpha)})}_{L^2} ,\]
is by far the most involved (notice that in the case of the water waves system, the corresponding term requires a specific attention as well; see~\cite[Prop.~3.30]{Lannes}). We prove in Lemma~\ref{L.commut-Gtransport}, below, that
\begin{equation}\label{bound-A2ii} 
\abs{A_{(\alpha)}^{2,\iota,ii}}\leq  C(\mu,h_\star^{-1},h^\star)F(\beta\norm{\nabla b}_{H^3},\epsilon\norm{\nabla\zeta}_{H^3},\epsilon\norm{\u}_{H^3}) \times \norm{\v_{(\alpha)}}_{Y^0}^2.
\end{equation}
\smallskip

\noindent{\em Estimate on $A_{(\alpha)}^{3,\iota}$.} Thanks to our choice of the symmetrizer and since $J^\iota$ is symmetric and commutes with differential operators, one has after integrating by parts
\begin{equation}\label{bound-A3} A_{(\alpha)}^{3,\iota}\equiv 0.\end{equation}
\smallskip

\noindent{\em Estimate on $A_{(\alpha)}^4$.} By Cauchy-Schwarz inequality and~\eqref{product-X} in Lemma~\ref{L.products}, one has
\begin{equation}\label{bound-A4} 
\abs{A_{(\alpha)}^4}\leq C(h^\star,\beta\norm{\nabla b}_{H^2},\epsilon\norm{\nabla\zeta}_{H^2}) \times  \big(\norm{\zeta_{(\alpha)}}_{L^2}^2+\norm{\u_{(\alpha)}}_{X^0}^2\big)^{1/2}\E^0(r^1_{(\alpha)},r^2_{(\alpha)}).\end{equation}
\smallskip

Altogether, estimates~\eqref{bound-A1}--\eqref{bound-A4}, with Lemma~\ref{L.EvsF0} yield the desired result.
\end{proof}

We conclude this section with the essential estimate in the proof of Proposition~\ref{P.energy}.
\begin{Lemma}\label{L.commut-Gtransport} Under the assumptions of Proposition~\ref{P.energy}, one has
\[\epsilon\abs{\Par{ J^\iota ( \u^\perp \curl \v_{(\alpha)}+ \nabla(\u\cdot\v_{(\alpha)})), h\mfT[h,\beta b]^{-1}(h\v_{(\alpha)})}_{L^2} } \leq {\bf F}\,\norm{\v_{(\alpha)}}_{Y^0}^2,\]
with ${\bf F}=C(\mu,h_\star^{-1},h^\star)F(\beta\norm{\nabla b}_{H^3},\epsilon\norm{\nabla\zeta}_{H^3},\epsilon\norm{\u}_{H^3})$, uniformly with respect to $\iota\in(0,1)$. 
\end{Lemma}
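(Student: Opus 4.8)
The quantity to estimate involves the transport-type operator $\u^\perp\curl + \nabla(\u\cdot\,)$ acting on $\v_{(\alpha)}$, paired against $h\mfT^{-1}(h\v_{(\alpha)}) = h\u_{(\alpha)}$. Observe first the algebraic identity $\u^\perp\curl\v_{(\alpha)} + \nabla(\u\cdot\v_{(\alpha)}) = (\u\cdot\nabla)\v_{(\alpha)} + {}^t(\nabla\u)\v_{(\alpha)} + \v_{(\alpha)}^\perp\curl\u$, or alternatively the form $(\u\cdot\nabla)\v_{(\alpha)} + (\nabla\v_{(\alpha)})\cdot\u$ up to terms involving $\nabla\u$ only (which are harmless since $\norm{\u}_{H^3}$ controls $\norm{\nabla\u}_{L^\infty}$). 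The plan is to split the pairing into (i) the genuine transport part $\epsilon\Par{J^\iota(\u\cdot\nabla)\v_{(\alpha)}, h\u_{(\alpha)}}_{L^2}$, (ii) the lower-order part where $\v_{(\alpha)}$ is not differentiated, which is immediately bounded by $\epsilon\norm{\nabla\u}_{L^\infty}\norm{\v_{(\alpha)}}_{Y^0}\norm{h\u_{(\alpha)}}_{X^0}$ using that $\mfT^{-1}$ is bounded $Y^0\to X^0$ (Lemma~\ref{L.T-invertible}) and the product estimates of Lemma~\ref{L.products}. The term (ii) therefore yields the desired bound directly; the mollifier $J^\iota$ is innocuous here by~\eqref{J-product}.

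The core of the argument is term (i). The strategy, following~\cite[Prop.~3.30]{Lannes}, is to move the transport field and $\mfT^{-1}$ past each other and symmetrize. Write $\Par{J^\iota(\u\cdot\nabla)\v_{(\alpha)}, h\u_{(\alpha)}}_{L^2} = \Par{(\u\cdot\nabla)\v_{(\alpha)}, J^\iota(h\u_{(\alpha)})}_{L^2}$ and insert the commutator $[J^\iota, h]$, controlled by~\eqref{J-com-H}, plus the commutator $[(\u\cdot\nabla), \mfT^{-1}]$. The key point is that $[(\u\cdot\nabla),\mfT]$ is, by the explicit formula for $\mfT$ in~\eqref{def-mfT}--\eqref{def-T}, an operator of the same order as $\mfT$ with coefficients involving $\nabla h$, $\nabla^2 h$, $\nabla\u$, $\nabla b$ (this is precisely the content of the shape-derivative computation, closely parallel to Lemma~\ref{L.com-T} but with $\u\cdot\nabla$ in place of $\partial^\alpha$); hence $[(\u\cdot\nabla),\mfT^{-1}] = -\mfT^{-1}[(\u\cdot\nabla),\mfT]\mfT^{-1}$ maps $Y^0\to X^0$ with norm $\leq C(\mu,h_\star^{-1},h^\star)F(\beta\norm{\nabla b}_{H^3},\epsilon\norm{\nabla\zeta}_{H^3},\epsilon\norm{\u}_{H^3})$. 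After these commutations the remaining principal term has the form $\Par{(\u\cdot\nabla)\v_{(\alpha)}, h\mfT^{-1}h\,J^\iota\v_{(\alpha)}}$ up to acceptable remainders; integrating by parts in the transport field and using the symmetry of $\mfT^{-1}$ and of $J^\iota$, the principal contribution reduces to $\tfrac12\Par{\v_{(\alpha)}, \big[(\u\cdot\nabla), h\mfT^{-1}h\big]\v_{(\alpha)}}$ plus a term proportional to $\Par{\v_{(\alpha)}, (\nabla\cdot\u)\,h\mfT^{-1}h\,\v_{(\alpha)}}$. The latter is bounded by $\epsilon\norm{\nabla\cdot\u}_{L^\infty}\norm{\v_{(\alpha)}}_{Y^0}^2$; the former is again a commutator of the type just discussed and obeys the same bound.

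The main obstacle is bookkeeping the commutator $[(\u\cdot\nabla),\mfT]$ at the level of the $(X^0)'$--$X^0$ duality: one must check that when $\mfT$ is written in its symmetric bilinear form (the displayed formula in the proof of Lemma~\ref{L.T-invertible}), each term produces, after commuting with $\u\cdot\nabla$ and integrating by parts, only contributions that are bounded bilinear forms on $X^0\times X^0$ with coefficients in $L^\infty$ controlled by $\norm{\nabla h}_{H^2}$, $\norm{\nabla b}_{H^3}$ and $\norm{\u}_{H^3}$ — in particular no genuine loss of derivative on $\v_{(\alpha)}$ survives, the worst terms being third-order in $\u_{(\alpha)}$ and second-order in $\v_{(\alpha)}$ but paired so that integration by parts redistributes one derivative. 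This is the analogue of the delicate estimate for the water waves system and requires care, but no new idea beyond the product and commutator estimates of Lemmata~\ref{L.products}, \ref{L.J} and the elliptic estimates of Lemmata~\ref{L.T-invertible}, \ref{L.T-differentiable}. The role of $J^\iota$ throughout is only to supply the commutator bound~\eqref{J-com-H} and the uniform bound~\eqref{J-product}, so all estimates are uniform in $\iota\in(0,1)$ as claimed.
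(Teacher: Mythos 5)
There is a genuine gap, and it sits at the heart of your plan: the splitting into a ``transport part'' (i) and a ``lower-order part'' (ii) destroys exactly the algebraic structure that makes the lemma true. Term (ii) is not harmless: $\v_{(\alpha)}$ is only controlled in $Y^0=(X^0)'$, so to bound $\epsilon\Par{(\v_{(\alpha)}\cdot\nabla)\u-\v_{(\alpha)}^\perp\curl\u,\ h\u_{(\alpha)}}_{L^2}$ you must place ${}^t(\nabla\u)\,(h\u_{(\alpha)})$ in $X^0$, and since $\nabla\u$ is a matrix (not a scalar) multiplier, $\norm{{}^t(\nabla\u)\,h\u_{(\alpha)}}_{X^0}$ involves $\sqrt\mu\norm{\nabla\u_{(\alpha)}}_{L^2}$ --- the full gradient --- which is not controlled by $\norm{\u_{(\alpha)}}_{X^0}$; the product estimates of Lemma~\ref{L.products} apply to scalar multipliers only, so the claimed bound $\epsilon\norm{\nabla\u}_{L^\infty}\norm{\v_{(\alpha)}}_{Y^0}\norm{h\u_{(\alpha)}}_{X^0}$ does not follow. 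The key operator bound you invoke for (i) fails as well: unlike $[\partial^\alpha,\mfT]$ in Lemma~\ref{L.com-T}, where the constant-coefficient derivative only hits the coefficients $h,b$, commuting the variable-coefficient field $\u\cdot\nabla$ with the second-order part $-\frac\mu3\nabla(h^3\nabla\cdot\,\cdot)$ of $\mfT$ produces terms such as $\mu\,{}^t(\nabla\u)\nabla(h^3\nabla\cdot\w)$ and $\mu\nabla\big(h^3(\partial_ku_j)(\partial_jw_k)\big)$, which involve $\nabla\w$ and not merely $\nabla\cdot\w$; these are not bounded from $X^0$ to $Y^0$ with the stated constant, and even the symmetrized quadratic form $\bra{\w,[(\u\cdot\nabla),\mfT]\w}$ retains an uncontrolled contribution proportional to $\mu\int h^3(\nabla\cdot\w)(\partial_ku_j)(\partial_jw_k)$. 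So neither (i) nor (ii) closes on its own.

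What saves the paper's proof is precisely the combination you separated. Keeping $\u^\perp\curl\v_{(\alpha)}+\nabla(\u\cdot\v_{(\alpha)})=(\u\cdot\nabla)\v_{(\alpha)}+(\v_{(\alpha)}\cdot\nabla)\u-\v_{(\alpha)}^\perp\curl\u$ intact, one substitutes $h\v_{(\alpha)}=\mfT[h,\beta b]\u_{(\alpha)}$ and treats each piece of $\mfT\u_{(\alpha)}$ separately; for the second-order pieces, which are pure gradients $V=\nabla f$ (e.g.\ $f=h^3\nabla\cdot\u_{(\alpha)}$), the identity~\eqref{identity} with $\curl V=0$ collapses the whole combination into $\nabla(\u\cdot V)$, so that after pairing with $J^\iota\u_{(\alpha)}$ and integrating by parts only $\nabla\cdot\u_{(\alpha)}$ ever appears: the dangerous $\nabla\u_{(\alpha)}$ terms that break your step (i) are cancelled exactly by the terms you pushed into (ii). One then symmetrizes the resulting transport acting on $\nabla\cdot\u_{(\alpha)}$, controls the mollifier through~\eqref{J-com-H}, and estimates the $[J^\iota,h]$ contribution separately. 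Your plan, as written, would have to rediscover this cancellation; without it the estimate cannot be closed with the stated constant (which, note, involves only $\norm{\u}_{H^3}$, whereas your route for (ii) would in any case require $\norm{\u}_{H^4}$-type control through~\eqref{product-Y}).
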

\begin{proof} 
The regularity assumptions on the data are sufficient to ensure that all the terms and calculations (including integration by parts) below are well-defined. We denote $\u_{(\alpha)}\eqdef \mfT[h,\beta b]^{-1}(h\v_{(\alpha)})$ and use the identity valid for any $U$ and $V$ sufficiently regular two-dimensional vector fields
\begin{equation} \label{identity}  \nabla(U\cdot V)=(U\cdot\nabla)V+(V\cdot\nabla)U-V^\perp \curl U-U^\perp \curl V.\end{equation}
Thus we have
\begin{align*}B^\iota&\eqdef \Par{J^\iota(  \u^\perp \curl \v_{(\alpha)}+ \nabla(\u\cdot\v_{(\alpha)})),h\u_{(\alpha)}}_{L^2} \\
&=\Par{(\u\cdot\nabla)\v_{(\alpha)} +(\v_{(\alpha)}\cdot\nabla)\u-\v_{(\alpha)}^\perp \curl \u,J^\iota(h\u_{(\alpha)})}_{L^2}\\
&=-\Par{(\u\cdot\nabla h)\v_{(\alpha)},J^\iota\u_{(\alpha)}}_{L^2}+\Par{(\u\cdot\nabla)(h\v_{(\alpha)}) +(h\v_{(\alpha)}\cdot\nabla)\u-(h\v_{(\alpha)})^\perp \curl \u, J^\iota\u_{(\alpha)}}_{L^2}\\
&\quad +\Par{(\u\cdot\nabla)\v_{(\alpha)} +(\v_{(\alpha)}\cdot\nabla)\u-\v_{(\alpha)}^\perp \curl \u,[J^\iota,h]\u_{(\alpha)}}_{L^2}.
\end{align*}
\medskip

The first term in $B^\iota$ is controlled by Cauchy-Schwarz inequality,~\eqref{product-H-0} and~\eqref{product-Y} in Lemma~\ref{L.products} and~\eqref{J-product} in Lemma~\ref{L.J}:
\begin{equation}\label{bound-B0} \epsilon|B_0^\iota|\eqdef \epsilon\abs{\Par{(\u\cdot\nabla h)\v_{(\alpha)},J^\iota \u_{(\alpha)}}_{L^2}}\leq F(\beta\norm{\nabla b}_{H^3},\epsilon\norm{\nabla\zeta}_{H^3},\epsilon\norm{\u}_{H^3})\norm{\v_{(\alpha)}}_{Y^0}\norm{\u_{(\alpha)}}_{X^0}.\end{equation}

For the second term, we plug the identity (recall the definition of $\mfT[h,\beta b]$ in~\eqref{def-T} and~\eqref{def-mfT})
\[ h\v_{(\alpha)}=h\u_{(\alpha)}-\frac{\mu}{3}\nabla(h^3\nabla\cdot\u_{(\alpha)})+\frac\mu2\Big(\nabla\big(h^2(\beta\nabla b)\cdot \u_{(\alpha)}\big)-h^2(\beta\nabla b)\nabla\cdot \u_{(\alpha)}\Big)+\mu h\beta^2(\nabla b\cdot \u_{(\alpha)})\nabla b\]
and consider separately the four contributions.

One has
\[B_1^\iota\eqdef  \Par{(\u\cdot\nabla)(h\u_{(\alpha)}) , J^\iota\u_{(\alpha)}}_{L^2} +\Par{(h\u_{(\alpha)}\cdot\nabla)\u-(h\u_{(\alpha)})^\perp \curl \u,J^\iota \u_{(\alpha)}}_{L^2} .\]
Integrating by parts the advection operator and averaging yields
\begin{align*}
&\Par{(\u\cdot\nabla)(h\u_{(\alpha)}), J^\iota\u_{(\alpha)}}_{L^2}\\
&\quad =-\Par{\u_{(\alpha)},(\u\cdot\nabla)(h J^\iota\u_{(\alpha)})}_{L^2}+\Par{(\u\cdot\nabla h)\u_{(\alpha)}, J^\iota\u_{(\alpha)}}_{L^2}-\Par{\u_{(\alpha)},(h\nabla\cdot\u)J^\iota\u_{(\alpha)}}_{L^2}\\
&\quad =\frac12 \Par{\u_{(\alpha)},  [J^\iota ,(h\u\cdot\nabla) ]\u_{(\alpha)}}_{L^2}+\frac12\Par{(\u\cdot\nabla h)\u_{(\alpha)},J^\iota\u_{(\alpha)}}_{L^2}-\frac12\Par{\u_{(\alpha)},(h\nabla\cdot\u)J^\iota\u_{(\alpha)}}_{L^2}.\end{align*}
All components are now estimated by Cauchy-Schwarz inequality and continuous Sobolev embedding $H^2\subset L^\infty$ as well as~\eqref{J-product},\eqref{J-com-H} in Lemma~\ref{L.J}:
\begin{equation}\label{bound-B1}
\epsilon |B_1^\iota|\leq C(h^\star)F(\beta\norm{\nabla b}_{H^2},\epsilon\norm{\nabla\zeta}_{H^2},\epsilon\norm{\u}_{H^3})\norm{\u_{(\alpha)}}_{L^2}^2.
\end{equation}

One has
\begin{align*}B_2^\iota&\eqdef -\frac\mu3 \Par{(\u\cdot\nabla)(\nabla(h^3\nabla\cdot\u_{(\alpha)})) +(\nabla(h^3\nabla\cdot\u_{(\alpha)})\cdot\nabla)\u-(\nabla(h^3\nabla\cdot\u_{(\alpha)}))^\perp \curl \u,J^\iota\u_{(\alpha)}}_{L^2}\\
&=\frac\mu3 \Par{\u\cdot\nabla(h^3\nabla\cdot\u_{(\alpha)}),J^\iota \nabla\cdot\u_{(\alpha)}}_{L^2}\\
&=-\frac\mu6\Par{(h^3\nabla\cdot\u-3h^2 \u\cdot\nabla h)\nabla\cdot\u_{(\alpha)},J^\iota\nabla\cdot\u_{(\alpha)}}_{L^2}+\frac\mu6 \Par{\nabla\cdot\u_{(\alpha)},[ J^\iota,h^3\u]\cdot\nabla (\nabla\cdot\u_{(\alpha)})}_{L^2},
\end{align*}
where we used the identity~\eqref{identity} with $U=\u$ and $V=\nabla(h^3\nabla\cdot\u_{(\alpha)})$ (notice that $\curl V=0$) and integration by parts.
We conclude as above
\begin{equation}\label{bound-B2}
 \epsilon|B_2^\iota|\leq \mu\, C(h^\star)F(\beta\norm{\nabla b}_{H^2},\epsilon\norm{\nabla\zeta}_{H^2},\epsilon\norm{\u}_{H^3})\norm{\nabla\cdot\u_{(\alpha)}}_{L^2}^2.
 \end{equation}

One has, denoting for readability $F\eqdef h^2(\beta\nabla b)$, 
\begin{align*}B_3^\iota&\eqdef \frac\mu2 \Par{(\u\cdot\nabla)(\nabla(F\cdot \u_{(\alpha)})) +(\nabla(F\cdot \u_{(\alpha)})\cdot\nabla)\u-(\nabla(F\cdot \u_{(\alpha)}))^\perp \curl \u,J^\iota\u_{(\alpha)}}_{L^2}\\
&\quad -\frac\mu2 \Par{(\u\cdot\nabla)(F\nabla\cdot \u_{(\alpha)}) +((F\nabla\cdot \u_{(\alpha)})\cdot\nabla)\u-(F\nabla\cdot \u_{(\alpha)})^\perp \curl \u,J^\iota\u_{(\alpha)}}_{L^2}\\
&=-\frac\mu2 \Par{\u\cdot\nabla(F\cdot \u_{(\alpha)}),J^\iota\nabla\cdot\u_{(\alpha)}}_{L^2}-\frac\mu2 \Par{(\u\cdot\nabla)(F\nabla\cdot \u_{(\alpha)}) ,J^\iota\u_{(\alpha)}}_{L^2}\\
&\quad -\frac\mu2\Par{((F\nabla\cdot \u_{(\alpha)})\cdot\nabla)\u-(F\nabla\cdot \u_{(\alpha)})^\perp \curl \u,J^\iota\u_{(\alpha)}}_{L^2}\\
&=\frac\mu2 \Par{F\cdot \u_{(\alpha)},\u\cdot (J^\iota\nabla \nabla\cdot\u_{(\alpha)})}_{L^2}-\frac\mu2 \Par{F (\u\cdot\nabla\nabla\cdot \u_{(\alpha)}) ,J^\iota\u_{(\alpha)}}_{L^2}\\
&\quad +\frac\mu2 \Par{F\cdot \u_{(\alpha)},(\nabla\cdot\u) J^\iota\nabla\cdot\u_{(\alpha)}}_{L^2}-\frac\mu2 \Par{(\nabla\cdot \u_{(\alpha)})(\u\cdot\nabla)F ,J^\iota\u_{(\alpha)}}_{L^2}\\
&\quad -\frac\mu2\Par{((F\nabla\cdot \u_{(\alpha)})\cdot\nabla)\u-(F\nabla\cdot \u_{(\alpha)})^\perp \curl \u,J^\iota\u_{(\alpha)}}_{L^2},
\end{align*}
where we used again the identity~\eqref{identity} with $U=\u$, $V=\nabla(F\cdot \u_{(\alpha)})$ and integration by parts.
 All the components are estimated as above by Cauchy-Schwarz inequality, continuous Sobolev embedding $H^2\subset L^\infty$,~\eqref{J-product}  as well as~\eqref{J-com-H} for the first line. It follows
\begin{equation}\label{bound-B3}
\epsilon |B_3^\iota|\leq \mu\, C(h^\star)F(\beta\norm{\nabla b}_{H^3},\epsilon\norm{\nabla\zeta}_{H^2},\epsilon\norm{\u}_{H^3})\norm{\u_{(\alpha)}}_{L^2}\norm{\nabla\cdot\u_{(\alpha)}}_{L^2}.
 \end{equation}

One has 
\begin{align*}B_4^\iota&\eqdef \mu \beta^2\Par{(\u\cdot\nabla)(h(\nabla b\cdot \u_{(\alpha)})\nabla b) +h(\nabla b\cdot \u_{(\alpha)})(\nabla b\cdot\nabla)\u-h(\nabla b\cdot \u_{(\alpha)})(\nabla b)^\perp \curl \u,J^\iota\u_{(\alpha)}}_{L^2}\\
&=\mu\beta^2\Par{\u\cdot\nabla(\nabla b\cdot \u_{(\alpha)}) ,h\nabla b\cdot J^\iota \u_{(\alpha)}}_{L^2}+\mu\beta^2\Par{(\nabla b\cdot \u_{(\alpha)}) (\u\cdot\nabla)(h\nabla b) ,J^\iota\u_{(\alpha)}}_{L^2}\\
&\quad + \mu \beta^2\Par{h(\nabla b\cdot \u_{(\alpha)})(\nabla b\cdot\nabla)\u-h(\nabla b\cdot \u_{(\alpha)})(\nabla b)^\perp \curl \u,J^\iota\u_{(\alpha)}}_{L^2}.
\end{align*}
All components but the first are estimated by Cauchy-Schwarz inequality, continuous Sobolev embedding $H^2\subset L^\infty$ and~\eqref{J-product}; and one has after integrating by parts,
\[\Par{\u\cdot\nabla(\nabla b\cdot \u_{(\alpha)}),h\nabla b\cdot J^\iota \u_{(\alpha)}}_{L^2}=-\Par{\nabla b\cdot \u_{(\alpha)},(\nabla\cdot(h\u))(\nabla b\cdot J^\iota \u_{(\alpha)})}_{L^2}-\Par{\nabla b\cdot \u_{(\alpha)},h\u\cdot\nabla(\nabla b\cdot J^\iota\u_{(\alpha)})}_{L^2}.\]
One may estimate the above by averaging and repeated use of the commutator estimate~\eqref{J-com-H}, and one obtains eventually
\begin{equation}\label{bound-B4}
 \epsilon|B_4^\iota|\leq \mu\, C(h^\star)F(\beta\norm{\nabla b}_{H^3},\epsilon\norm{\nabla\zeta}_{H^2},\epsilon\norm{\u}_{H^3})\norm{\u_{(\alpha)}}_{L^2}^2.
  \end{equation}

 We have one last term to estimate, namely
 \[B_5^\iota\eqdef \Par{(\u\cdot\nabla)\v_{(\alpha)} +(\v_{(\alpha)}\cdot\nabla)\u-\v_{(\alpha)}^\perp \curl \u,[J^\iota,h]\u_{(\alpha)}}_{L^2}.\]
 By~\eqref{J-com-H} and~\eqref{product-H-0} in Lemma~\ref{L.products}, and~\eqref{embed-Y} in Lemma~\ref{L.embeddings}, one has
 \[\epsilon\abs{\Par{(\v_{(\alpha)}\cdot\nabla)\u-\v_{(\alpha)}^\perp \curl \u,[J^\iota,h]\u_{(\alpha)}}_{L^2}}
 \leq  F(\beta\norm{\nabla b}_{H^2} ,\epsilon\norm{\nabla\zeta}_{H^2},\epsilon\norm{\u}_{H^3} )\norm{ \v_{(\alpha)}}_{Y^0}\norm{\u_{(\alpha)}}_{L^2}.\]
 Then, one has, integrating by parts,
 \[\epsilon\abs{\Par{(\u\cdot\nabla)\v_{(\alpha)} ,[J^\iota,h]\u_{(\alpha)}}_{L^2}}\leq \epsilon\abs{\Par{\v_{(\alpha)} ,(\nabla\cdot\u)[J^\iota,h]\u_{(\alpha)}}_{L^2}}+\epsilon\abs{\Par{\v_{(\alpha)} ,(\u\cdot\nabla)[J^\iota,h]\u_{(\alpha)}}_{L^2}}.\]
 The first term is estimated as above, and the by duality, since
 \begin{align*}\norm{(\u\cdot\nabla)[J^\iota,h]\u_{(\alpha)}}_{X^0}&\lesssim \epsilon\norm{(\u\cdot\nabla)[J^\iota,h]\u_{(\alpha)}}_{L^2}+\sqrt\mu \epsilon\norm{\nabla\cdot((\u\cdot\nabla)[J^\iota,h]\u_{(\alpha)})}_{L^2} \\
 &\lesssim \epsilon\norm{\u}_{L^\infty}\norm{[J^\iota,h]\u_{(\alpha)}}_{H^1}+\sqrt\mu \epsilon \norm{\Lambda \u}_{L^\infty}\norm{[J^\iota,h]\u_{(\alpha)}}_{H^1} +\sqrt\mu \epsilon\norm{(\u\cdot\nabla)\nabla\cdot[J^\iota,h]\u_{(\alpha)}}_{L^2} \\
 &\leq C(\mu)F(\beta\norm{\nabla b}_{H^3},\epsilon\norm{\nabla\zeta}_{H^3},\epsilon\norm{\u}_{H^3})\norm{\u_{(\alpha)}}_{X^0},
 \end{align*}
 where we used~\eqref{J-com-H} and continuous Sobolev embedding $H^2\subset L^\infty$. Thus we have
 \begin{equation}\label{bound-B5}
  \epsilon|B_5^\iota|\leq \ C(\mu)F(\beta\norm{\nabla b}_{H^3},\epsilon\norm{\nabla\zeta}_{H^3},\epsilon\norm{\u}_{H^3})\norm{\v_{(\alpha)}}_{Y^0}\norm{\u_{(\alpha)}}_{L^2}.
   \end{equation}
 
 Collecting estimates~\eqref{bound-B0}--\eqref{bound-B5} and using Lemma~\ref{L.EvsF0}, we deduce that
 \[\epsilon B^\iota=\epsilon(B_0^\iota+B_1^\iota+B_2^\iota+B_3^\iota+B_4^\iota+B_5^\iota)\]
  is estimated as in the statement. This concludes the proof.
\end{proof}

\section{Well-posedness}\label{S.WP}

We are now in position to prove our main results concerning the Cauchy problem for~\eqref{GN-v}.

\begin{Proposition}[Existence and uniqueness]\label{P.existence}
Let $N\geq 4$, $b\in \dot{H}^{N+2}$ and $(\zeta_0,\v_0)\in H^N\times Y^N$ satisfying~\eqref{cond-h0} with $h_\star,h^\star>0$, and $\curl\v_0\in H^{N-1}$. Then there exists $T>0$ and a unique $(\zeta,\v)\in {L^\infty(0,T;H^N\times Y^N)}\cap C([0,T];H^2\times(H^1)^d)$ satisfying~\eqref{GN-v} and $(\zeta,\v)\id{t=0}=(\zeta_0,\v_0)$. Moreover, one can restrict
\[ T^{-1}=C(\mu,h_\star^{-1},h^\star)F(\beta\norm{\nabla b}_{H^{N+1}},\epsilon\norm{\zeta_0}_{H^4},\epsilon \norm{\v_0}_{Y^4},\epsilon\norm{\curl\v_0}_{H^3} )>0 \]
such that, for any $t\in[0,T]$,~\eqref{cond-h0} holds with $\tilde h_\star=h_\star/2,\tilde h^\star=2h^\star$, and
\[ \E^N(\zeta,\v)+\norm{\curl\v}_{H^{N-1}}^2\leq {\bf C}_0\, \big(\E^N(\zeta_0,\v_0)+\norm{\curl\v_0}_{H^{N-1}}^2\big) 
\]
 with $ {\bf C}_0=C(\mu,h_\star^{-1},h^\star,\beta\norm{\nabla b}_{H^{N+1}},\epsilon\norm{\zeta_0}_{H^4},\epsilon\norm{\v_0}_{Y^4},\epsilon \norm{\curl\v_0}_{H^3} )$.
\end{Proposition}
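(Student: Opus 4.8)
The strategy is a Friedrichs regularization: we obtain the solution of~\eqref{GN-v} as the limit, as $\iota\to0$, of solutions to the mollified system~\eqref{GN-v-mol} with $r^\iota=\r^\iota=0$. First I would fix $\iota\in(0,1)$ and view~\eqref{GN-v-mol} as an ordinary differential equation $\frac{\dd}{\dd t}(\zeta,\v)=\Phi^\iota(\zeta,\v)$ on the open subset of $H^N\times Y^N$ where~\eqref{cond-h0} holds with strict inequalities. By Lemma~\ref{L.T-differentiable} one has $\u=\mfT[h,\beta b]^{-1}(h\v)\in X^N$, the product estimates of Lemma~\ref{L.products} place all nonlinear terms ---including the third-order contributions $\R,\R_b$--- in $H^{N-2}$, and the mollifier then maps everything back into $H^N\times Y^N$; the local Lipschitz character of $\Phi^\iota$ follows from the same lemmas together with Lemma~\ref{L.diff-T-1}. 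The Cauchy--Lipschitz theorem yields a unique maximal solution $(\zeta^\iota,\v^\iota)\in C^1([0,T^\iota);H^N\times Y^N)$, blowing up at $T^\iota$ only if $\norm{\zeta^\iota}_{H^N}+\norm{\v^\iota}_{Y^N}\to\infty$ or $h^\iota$ leaves $(h_\star/2,2h^\star)$ in $L^\infty$-norm. Since $\curl$ commutes with all operations involved and $\curl\v_0\in H^{N-1}$, one also gets $\curl\v^\iota\in C^1([0,T^\iota);H^{N-1})$ solving the transport equation $\partial_t\curl\v^\iota+\epsilon J^\iota\nabla\cdot(\u^\iota\curl\v^\iota)=0$.

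The heart of the proof is a bootstrap on
\[E^\iota(t)\eqdef \F^N(\zeta^\iota,\v^\iota)(t)+\norm{\curl\v^\iota}_{H^{N-1}}^2(t),\]
which by Lemma~\ref{L.EvsF} is equivalent to $\E^N(\zeta^\iota,\v^\iota)+\norm{\curl\v^\iota}_{H^{N-1}}^2$. Differentiating~\eqref{GN-v-mol} $\alpha$ times ($J^\iota$ commuting with $\partial^\alpha$) and invoking Proposition~\ref{P.quasi} ---whose derivation is algebraic and adapts to the mollified system, the extra $J^\iota$-commutators being controlled by Lemma~\ref{L.J}--- produces the quasilinear system~\eqref{GN-quasi-mol} for $(\zeta_{(\alpha)}^\iota,\v_{(\alpha)}^\iota)$, $|\alpha|\le N$, with residuals satisfying~\eqref{est-r1r2} uniformly in $\iota$. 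Summing Proposition~\ref{P.energy} over $|\alpha|\le N$ and adding a standard $H^{N-1}$ estimate for the transport equation on $\curl\v^\iota$ (with $\norm{\u^\iota}_{H^N}$ controlled by Lemma~\ref{L.T-differentiable}), one gets, uniformly in $\iota$ and as long as~\eqref{cond-h0} holds,
\[\frac{\dd}{\dd t}E^\iota\le C(\mu,h_\star^{-1},h^\star)F\big(\beta\norm{\nabla b}_{H^{N+1}},\epsilon\sqrt{E^\iota}\big)\,E^\iota .\]
Crucially, the coefficient involves the unknowns only through $H^3$- and $H^4$-level norms (inspect Propositions~\ref{P.quasi} and~\ref{P.energy}), so the lower sub-energy $\F^4(\zeta^\iota,\v^\iota)+\norm{\curl\v^\iota}_{H^3}^2$ obeys a closed differential inequality that already fixes the existence time. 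Combined with $\norm{\partial_t h^\iota}_{L^\infty}\lesssim\epsilon\norm{\nabla\cdot(h^\iota\u^\iota)}_{H^3}\le {\bf F}$, a continuity argument then yields $T>0$ with the announced formula for $T^{-1}$ such that $T^\iota>T$, that~\eqref{cond-h0} holds on $[0,T]$ with $\tilde h_\star=h_\star/2$, $\tilde h^\star=2h^\star$, and $E^\iota(t)\le {\bf C}_0\big(\E^N(\zeta_0,\v_0)+\norm{\curl\v_0}_{H^{N-1}}^2\big)$ on $[0,T]$, uniformly in $\iota$ (bounding $E^\iota(0)$ by Lemma~\ref{L.EvsF} once more). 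This self-improving nonlinear estimate, resting entirely on the quasilinear structure and the carefully chosen symmetrizer $\F$ of Sections~\ref{S.quasilinear}--\ref{S.energy} (in particular Lemma~\ref{L.commut-Gtransport}), together with the verification that mollification introduces no $\iota$-dependent loss, is the main difficulty of the argument.

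It remains to pass to the limit and conclude. The family $\{(\zeta^\iota,\v^\iota)\}$ is bounded in $L^\infty(0,T;H^N\times Y^N)$ and, by the equation, Lipschitz in time with values in $H^{N-2}\times Y^{N-2}$. Strong convergence is obtained by a Bona--Smith-type argument, which is the second delicate point, since the one-derivative loss intrinsic to the quasilinear system rules out a naive fixed-point scheme: subtracting the mollified systems for parameters $\iota,\iota'$, the differences of nonlinearities are handled via the Lipschitz estimate~\eqref{est-r1r2-lipsch} of Proposition~\ref{P.quasi}, Lemma~\ref{L.diff-T-1} and Lemma~\ref{L.products}, while the $(J^\iota-J^{\iota'})(\cdots)$ terms are $O(|\iota-\iota'|)$ by~\eqref{J-diff} against the uniform $H^N$-bound; applying Proposition~\ref{P.energy} at a regularity level $\le N-2$ (valid also for the un-mollified equations) and Gronwall gives
\[\sup_{[0,T]}\big(\norm{\zeta^\iota-\zeta^{\iota'}}_{H^{N-2}}^2+\norm{\v^\iota-\v^{\iota'}}_{Y^{N-2}}^2\big)\lesssim|\iota-\iota'|^2 .\]
Hence $(\zeta^\iota,\v^\iota)$ converges in $C([0,T];H^{N-2}\times Y^{N-2})$ to a limit $(\zeta,\v)$; as $N-2\ge 2$ this contains $C([0,T];H^2\times(H^1)^d)$, lower semicontinuity gives $(\zeta,\v)\in L^\infty(0,T;H^N\times Y^N)$ with the stated bounds and preservation of~\eqref{cond-h0}, and passing to the limit in~\eqref{GN-v-mol} (using $J^\iota\to\Id$ strongly and continuity of the nonlinear maps on $H^{N-2}\times Y^{N-2}$) shows $(\zeta,\v)$ solves~\eqref{GN-v} with data $(\zeta_0,\v_0)$. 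Finally, uniqueness in the asserted class follows by the same difference estimate with $\iota=\iota'=0$ and no mollifier source, applied at a regularity level compatible with the solutions' regularity, via Proposition~\ref{P.energy0} and Gronwall.
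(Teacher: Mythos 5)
Your proposal follows the paper's proof essentially verbatim: Friedrichs mollification and Cauchy--Lipschitz for the regularized system, uniform-in-$\iota$ energy estimates via Propositions~\ref{P.quasi}, \ref{P.energy0} and~\ref{P.energy} together with the transport equation for $\curl\v^\iota$, a Bona--Smith-type Cauchy estimate in $\iota$, and uniqueness by the same difference argument without mollifier. The one refinement in the paper is that the initial data are also mollified, $(\zeta_0^\iota,\v_0^\iota)=(J^\iota\zeta_0,J^\iota\v_0)$, so that the approximate solutions are genuinely smooth and the integrations by parts behind Proposition~\ref{P.energy} at top order $|\alpha|=N$ are licit (with un-mollified data one only has $\partial^\alpha\zeta^\iota\in L^2$ rather than the $H^1$ assumed there); the difference of mollified data then supplies the $O(|\iota_1-\iota_2|)$ contribution in the Cauchy estimate, which the paper runs at the fixed low level $H^2\times Y^2$ rather than your $H^{N-2}\times Y^{N-2}$.
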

\begin{proof}{\em Construction.} The construction of a solution is fairly classical, and follows the line of~\cite[Sec.~4.3.4]{Lannes}. Consider the mollified system~\eqref{GN-v-mol} with right-hand side $r^\iota=0,\r^\iota={\bf 0}$ and mollified initial data $(\zeta_0^\iota,\v_0^\iota)\eqdef (J^\iota\zeta_0,J^\iota\v_0)$. Obviously, for any $\iota\in (0,1)$, $\zeta_0^\iota,\v_0^\iota\in H^n$ with arbitrary large $n\in\NN$. By the Cauchy-Lipschitz theorem on Banach spaces, there exists a unique (smooth) solution to~\eqref{GN-v-mol} with initial data $(\zeta_0^\iota,\v_0^\iota)$, that we denote $U^\iota\eqdef (\zeta^\iota,\v^\iota)$, defined on the maximal time interval $[0,T^\iota)$. For any multi-index $\alpha\in\NN^d$, denote
\[\zeta^\iota_{(\alpha)}\eqdef \partial^\alpha \zeta^\iota \quad \text{ and } \quad \v^\iota_{(\alpha)}\eqdef  \partial^\alpha\v^\iota-\mu\epsilon\, \nabla(w^\iota \partial^\alpha\zeta^\iota)\]
where $w^\iota\eqdef -h^\iota \nabla\cdot\u^\iota+ \beta \nabla b\cdot \u^\iota$, $h^\iota\eqdef 1+\epsilon\zeta^\iota-\beta b$, $ \u^\iota\eqdef \mfT[h^\iota,\beta b]^{-1}(h^\iota\v^\iota)$. By proceeding exactly as in the proof of Proposition~\ref{P.quasi}, one easily checks that $U^\iota$ satisfies the quasilinear mollified system~\eqref{GN-quasi-mol} with
\[r^{\iota}_{(\alpha)}=J^\iota r_{(\alpha)}(\zeta^{\iota} ,\v^{\iota}) \quad \text{ and } \quad  \r^{\iota}_{(\alpha)} =J^\iota \r_{(\alpha)}(\zeta^{\iota} ,\v^{\iota})+ \mu \epsilon \nabla r^{\prime,\iota}_{(\alpha)},\]
where $r_{(\alpha)} $ and $\r_{(\alpha)} $ are given in Proposition~\ref{P.quasi}, and
\[ r^{\prime,\iota}_{(\alpha)}=-(\Id-J^\iota)\big(\zeta^{\iota}_{(\alpha)}\partial_t w^{\iota}\big)-[J^\iota,w^{\iota}]\partial^\alpha\nabla\cdot(h^{\iota}\u^{\iota}).\]
By using~\eqref{embed-Y} in Lemma~\ref{L.embeddings}, one has
\[\mu\epsilon \norm{ \nabla r^{\prime,\iota}_{(\alpha)} }_{Y^0}\lesssim \epsilon\sqrt\mu \norm{r^{\prime,\iota}_{(\alpha)}}_{L^2},\]
which, by~\eqref{J-product} and \eqref{J-com-H} in Lemma~\ref{L.J} and using bounds obtained in the proof of Proposition~\ref{P.quasi}, is easily seen
to satisfy the same estimate as $\r_{(\alpha)} $ (this is a simplification with regards to the proof of~\cite[Sec.~4.3.4]{Lannes}; see Remark~4.26 therein). Thus we have, for any $1\leq|\alpha|\leq N$,
\[
\norm{r_{(\alpha)}^\iota}_{L^2}+\norm{\r_{(\alpha)}^\iota}_{Y^0}\leq  {\bf F} \ \left(\norm{\zeta^\iota}_{H^{ |\alpha|}}+\norm{\v^\iota}_{Y^{ |\alpha|}}+\norm{\curl \v^\iota}_{H^{|\alpha|-1}}\right)
\]
with ${\bf F}=C(\mu,h_\star^{-1},h^\star)F\big(\beta\norm{\nabla b}_{H^{N+1}},\epsilon\norm{\nabla\zeta^\iota}_{H^{3}},\epsilon\norm{\v^\iota}_{Y^{4}},\epsilon\norm{\curl\v^\iota}_{H^{3}}\big)$.
It follows, applying Proposition~\ref{P.energy0} and Proposition~\ref{P.energy},
\[ \frac{\dd}{\dd t}\F^N(\zeta^\iota,\v^\iota)\leq {\bf F}\ \big(  \F^N(\zeta^\iota,\v^\iota) + \left(\norm{\zeta^\iota}_{H^{N}}+\norm{\v^\iota}_{Y^{N}}+\norm{\curl \v^\iota}_{H^{N-1}}\right) \F^N(\zeta^\iota,\v^\iota)^{1/2} \big)\]
with ${\bf F}$ as above, using that by~$\eqref{GN-v-mol}_1$,~\eqref{J-product} in Lemma~\ref{L.J}, Lemma~\ref{L.T-differentiable} and~\eqref{product-X},\eqref{product-Y} in Lemma~\ref{L.products},
 \[\norm{\partial_t\zeta^\iota}_{H^3}\leq\norm{h^\iota\u^\iota}_{H^4}\leq C(\mu,h_\star^{-1},h^\star,\epsilon\norm{\nabla \zeta^\iota}_{H^3},\beta\norm{\nabla b}_{H^3})\norm{\v^\iota}_{Y^4}.\]
 Notice that~\eqref{cond-h0} propagates for large time since
  \[ 1+\epsilon\zeta^\iota-\beta b=1+\epsilon\zeta_0^\iota-\beta b+\epsilon \int_0^t\partial_t\zeta^\iota\geq h_\star-\epsilon \int_0^t\norm{\partial_t\zeta}_{L^\infty}.\]
Finally, applying the operator $\curl$ to~$\eqref{GN-v-mol}_2$ (recall $r^\iota=0,\r^\iota={\bf 0}$), one has
\[
\partial_t \curl \v^\iota+\epsilon J^\iota\nabla\cdot(\u^\iota\curl \v^\iota)=0.
\]
Proceeding as in Proposition~\ref{P.quasi} to show that $\curl \partial^\alpha\v^\iota$ satisfies the same conservation law up to a tame remainder term, testing against $\curl \partial^\alpha\v^\iota$ to deduce energy estimates and summing over $0\leq|\alpha|\leq N-1$ yields
\[ \frac{\dd}{\dd t}\norm{\curl \v^\iota}_{H^{N-1}}^2 \lesssim    \epsilon\norm{\u^\iota}_{H^{4}} \norm{\curl \v^\iota}_{H^{N-1}}^2+\epsilon \norm{\curl \v}_{H^3}\norm{\u^\iota}_{H^{ N}}\norm{\curl \v^\iota}_{H^{N-1}}.\]

Altogether, using Gronwall-type estimates, Lemmata~\ref{L.EvsF} and~\ref{J-product} and straightforward arguments (see for instance~\cite[p. 109]{Lannes} for details), one deduces that there exists $T>0$ with
\[ T^{-1}=C(\mu,h_\star^{-1},h^\star)F(\beta\norm{\nabla b}_{H^{N+1}},\epsilon\norm{\zeta_0}_{H^4},\epsilon \norm{\v_0}_{Y^4},\epsilon\norm{\curl\v_0}_{H^3} )>0\]
 such that for any $\iota\in(0,1)$, $T^\iota>T$; and for any $t\in[0,T]$,
 \begin{equation}\label{est-in-proof}
 \E^N(\zeta^\iota,\v^\iota)+\norm{\curl\v^\iota}_{H^{N-1}}^2\leq {\bf C}_0\ \big(\E^N(\zeta_0,\v_0)+\norm{\curl\v_0}_{H^{N-1}}^2\big) 
 \end{equation}
 with $ {\bf C}_0=C(\mu,h_\star^{-1},h^\star,\beta\norm{\nabla b}_{H^{N+1}},\epsilon\norm{\zeta_0}_{H^4},\epsilon\norm{\v_0}_{Y^4},\epsilon \norm{\curl\v_0}_{H^3} )$,
 and $1+\epsilon\zeta^\iota-\beta b$ satisfies~\eqref{cond-h0} with $\tilde h_\star=h_\star/2$ and $\tilde h^\star=2h^\star$.

 Notice that the time interval $[0,T]$ and energy estimates are uniform with respect to $\iota\in(0,1)$. We shall prove below that the sequence $(\zeta^{\iota},\v^{\iota})$ defines a Cauchy sequence whose limit provides the desired solution.
 \medskip
 
 {\em Convergence.} Denote $\underline\zeta\eqdef \zeta^{\iota_2}-\zeta^{\iota_1}$ and $\underline\v\eqdef \v^{\iota_1}-\v^{\iota_2}$. Then $(\underline\zeta,\underline\v)$ satisfies
 \begin{equation}\label{GN-v-mol-Cauchy}
    \left\{ \begin{array}{l}
    \partial_t\underline\zeta+J^{\iota_2}\nabla\cdot(\underline h\,\underline\u) = J^{\iota_2} \underline r+(J^{\iota_1}-J^{\iota_2}) \overline r\\ \\
 \partial_t \underline\v +J^{\iota_2} \left(\nabla\underline\zeta+\epsilon\underline\u^\perp \curl \underline\v+\frac\epsilon2 \nabla\big(\abs{\underline\u}^2\big)-\mu\epsilon\nabla\big(\R[\underline h,\underline\u]+\R_b[\underline h,\beta b,\underline\u]\big)\right)\\
 \hfill  = J^{\iota_2} \underline \r+(J^{\iota_1}-J^{\iota_2}) \overline \r \hspace{-1cm}
    \end{array}\right.
    \end{equation}
 with notation $\underline h=1+\epsilon\underline\zeta-\beta b$ and $\underline\u\eqdef \mfT[\underline h,\beta b]^{-1}( h \underline\v)$, and
\begin{align*}
\underline r&=\nabla\cdot(\underline h\,\underline\u) +\nabla\cdot(h^{\iota_1}\u^{\iota_1})- \nabla\cdot(h^{\iota_2}\u^{\iota_2}) ,\qquad 
\overline r= \nabla\cdot(h^{\iota_1}\u^{\iota_1}),\\
\underline \r&=\epsilon \big(\underline\u^\perp \curl \underline\v+(\u^{\iota_1})^\perp \curl \v^{\iota_1}-(\u^{\iota_2})^\perp \curl \v^{\iota_2}\big) +\frac\epsilon2 \nabla\big(\abs{\underline\u}^2+\abs{\u^{\iota_1}}^2-\abs{\u^{\iota_2}}^2\big) \\
&\quad -\mu\epsilon\nabla\big(\R[\underline h,\underline\u]+\R[ h^{\iota_1},\u^{\iota_1}]-\R[ h^{\iota_2},\u^{\iota_2}]+\R_b[\underline h,\beta b,\underline\u] +\R_b[ h^{\iota_1},\beta b,\u^{\iota_1}]-\R_b[ h^{\iota_2},\beta b,\u^{\iota_2}]\big),\\
\overline \r&= \nabla\zeta^{\iota_1}+\epsilon(\u^{\iota_1})^\perp \curl \v^{\iota_1}+\frac\epsilon2 \nabla\big(\abs{\u^{\iota_1}}^2\big)-\mu\epsilon\nabla\big(\R[ h^{\iota_1},\u^{\iota_1}]+\R_b[ h^{\iota_1},\beta b,\u^{\iota_1}]\big).
\end{align*} 
By using the previously obtained energy estimates~\eqref{est-in-proof}, Lemma~\ref{L.T-differentiable} and Lemma~\ref{L.diff-T-1}, one has
\[ \norm{\underline\u}_{X^1}+\norm{\u^{\iota_1}-\u^{\iota_2}}_{X^1}\leq {\bf C}_0\ \E^1(\underline\zeta,\underline\v)^{1/2} \]
 where we denote, here and thereafter, $ {\bf C}_0=C(\mu,h_\star^{-1},h^\star,\beta\norm{\nabla b}_{H^{N+1}},\epsilon\norm{\zeta_0}_{H^4},\epsilon\norm{\v_0}_{Y^4},\epsilon \norm{\curl\v_0}_{H^3} )$. It follows, after straightforward computations and using Lemmata~\ref{L.embeddings},~\ref{L.products} and~\eqref{J-diff} in Lemma~\ref{L.J},
\[ \norm{J^{\iota_2}\underline r}_{L^2}+\norm{J^{\iota_2}\underline \r}_{Y^0}\leq  {\bf C}_0\ \big(\E^1(\underline\zeta,\underline\v)^{1/2}+\norm{\curl\underline\v}_{L^2}\big).\]
By~\eqref{J-diff} in Lemma~\ref{L.J}, we have immediately
 \[  \norm{(J^{\iota_1}-J^{\iota_2}) \overline r}_{L^2}+\norm{(J^{\iota_1}-J^{\iota_2})   \overline \r}_{Y^0}\leq {\bf C}_0\ |\iota^2-\iota^1| .\]

 Similarly, setting $1\leq|\alpha|\leq 2$, $\underline\zeta_{(\alpha)}\eqdef \partial^\alpha \zeta^{\iota_2}-\partial^\alpha \zeta^{\iota_1}$ and $\underline \v_{(\alpha)}\eqdef \v^{\iota_2}_{(\alpha)}-\v^{\iota_1}_{(\alpha)}$ satisfy
 \begin{equation}\label{GN-quasi-mol-Cauchy}
    \left\{ \begin{array}{l}
    \partial_t\underline\zeta_{(\alpha)}+J^{\iota_2} \left(\epsilon\nabla\cdot(\u^{\iota_2} \underline\zeta_{(\alpha)}) +\nabla\cdot\big(h^{\iota_2}\mfT[h^{\iota_2},\beta b]^{-1}(h^{\iota_2} \underline\v_{(\alpha)})\big)\right)=   J^{\iota_2} \underline r_{(\alpha)}+(J^{\iota_1}-J^{\iota_2})  \overline r_{(\alpha)}\hspace{-.5cm}\\ \\
    \partial_t \underline\v_{(\alpha)} +J^{\iota_2}\left( \nabla\underline\zeta_{(\alpha)} +\epsilon(\u^{\iota_2})^\perp \curl \underline\v_{(\alpha)}+\epsilon \nabla\big(\u^{\iota_2}\cdot \underline\v_{(\alpha)}\big)\right)\\
    \hfill=    J^{\iota_2} \underline \r_{(\alpha)}+(J^{\iota_1}-J^{\iota_2})  \overline \r_{(\alpha)}+ \mu \epsilon \nabla(r^{\prime,\iota_2}_{(\alpha)}-r^{\prime,\iota_1}_{(\alpha)})
    \end{array}\right.
 \end{equation}
 where 
\begin{align*}
\underline r_{(\alpha)}&= \nabla\cdot\big(\epsilon(\u^{\iota_1} -\u^{\iota_2}) \zeta^{\iota_1}_{(\alpha)}+h^{\iota_1}\mfT[h^{\iota_1},\beta b]^{-1}(h^{\iota_1}\v^{\iota_1}_{(\alpha)})-h^{\iota_2}\mfT[h^{\iota_2},\beta b]^{-1}(h^{\iota_2} \v^{\iota_1}_{(\alpha)})\big)\\
&\qquad  +r_{(\alpha)}(\zeta^{\iota_2} ,\v^{\iota_2})-r_{(\alpha)} (\zeta^{\iota_1} ,\v^{\iota_1}),\\
\overline r_{(\alpha)}&=\epsilon\nabla\cdot(\u^{\iota_1} \zeta^{\iota_1}_{(\alpha)}) +\nabla\cdot(h^{\iota_1}\u^{\iota_1}_{(\alpha)}) -r_{(\alpha)}(\zeta^{\iota_1} ,\v^{\iota_1}) ,\\
\underline \r_{(\alpha)}&=\epsilon (\u^{\iota_1}-\u^{\iota_2})^\perp\curl \v^{\iota_1}_{(\alpha)}+\epsilon\nabla\big( (\u^{\iota_1}-\u^{\iota_2})\cdot \v^{\iota_1}_{(\alpha)}\big)+\r_{(\alpha)}(\zeta^{\iota_2} ,\v^{\iota_2})-\r_{(\alpha)} (\zeta^{\iota_1} ,\v^{\iota_1}),\\
\overline \r_{(\alpha)}&= \epsilon (\u^{\iota_1})^\perp\curl \v^{\iota_1}_{(\alpha)}+\epsilon \nabla\big(\u^{\iota_1}\cdot \v^{\iota_1}_{(\alpha)}\big)-\r_{(\alpha)}(\zeta^{\iota_1} ,\v^{\iota_1}).
\end{align*} 
In order to estimate the right-hand side, let us first notice that since $|\alpha|+2\leq 4\leq N$, one has by~\eqref{est-in-proof},~\eqref{embed-Y} in Lemma~\ref{L.embeddings},~\eqref{product-H-0} in Lemma~\ref{L.products} and Lemma~\ref{L.T-differentiable}:
\[\norm{\zeta^{\iota_1}_{(\alpha)}}_{H^2}+\norm{\v^{\iota_1}_{(\alpha)}}_{Y^2} +\norm{\u^{\iota_1}_{(\alpha)}}_{X^2}\leq {\bf C}_0.\]
Moreover, we already indicated $\norm{\u^{\iota_1} -\u^{\iota_2}}_{X^1}\leq {\bf C}_0\ \E^1(\underline\zeta,\underline\v)^{1/2}$
and by~\eqref{est-r1r2-lipsch} in Proposition~\ref{P.quasi}, 
\[\norm{r_{(\alpha)}(\zeta^{\iota_1} ,\v^{\iota_1})-r_{(\alpha)} (\zeta^{\iota_2} ,\v^{\iota_2})}_{L^2}+\norm{ \r_{(\alpha)}(\zeta^{\iota_1} ,\v^{\iota_1})- \r_{(\alpha)} (\zeta^{\iota_2} ,\v^{\iota_2})}_{Y^0}
\leq {\bf C}_0\ \big( \E^2(\underline\zeta,\underline\v)^{1/2} + \norm{\curl\underline\v}_{H^1}\big).
\]
With these estimates in hand,~\eqref{J-product} in Lemma~\ref{L.J} and Lemmata~\ref{L.embeddings},~\ref{L.products},~\ref{L.T-invertible}~\ref{L.T-differentiable} and~\ref{L.diff-T-1} yield
\[ \norm{J^{\iota_2}\underline r_{(\alpha)}}_{L^2}+\norm{J^{\iota_2}\underline \r_{(\alpha)}}_{Y^0}\leq  {\bf C}_0\ \big( \E^2(\underline\zeta,\underline\v)^{1/2} + \norm{\curl\underline\v}_{H^1}\big)\]
and, by~\eqref{est-in-proof} and~\eqref{J-diff} in Lemma~\ref{L.J},
 \[  \norm{(J^{\iota_1}-J^{\iota_2}) \overline r_{(\alpha)}}_{L^2}+\norm{(J^{\iota_1}-J^{\iota_2})   \overline \r_{(\alpha)}}_{Y^0}\leq {\bf C}_0\ |\iota^2-\iota^1| .\]
Following the same remark as above, we control the last contribution:
\[\mu\epsilon\norm{\nabla r^{\prime,\iota_2}_{(\alpha)}-\nabla r^{\prime,\iota_1}_{(\alpha)}}_{Y^0}\leq \sqrt\mu\epsilon \norm{r^{\prime,\iota_2}_{(\alpha)}- r^{\prime,\iota_1}_{(\alpha)}}_{L^2}\leq {\bf C}_0\ \Big( \E^2(\underline\zeta,\underline\v)^{1/2} +|\iota^2-\iota^1|\Big).\]
 Thus applying Proposition~\ref{P.energy0} to~\eqref{GN-v-mol-Cauchy}, Proposition~\ref{P.energy} to~\eqref{GN-quasi-mol-Cauchy} and adapting Lemma~\ref{L.EvsF}, one has
 \[ \frac{\dd}{\dd t}\F^2(\underline\zeta,\underline\v)\leq {\bf C}_0\ \Big( \F^2(\underline\zeta,\underline\v) + \norm{\curl\underline\v}_{H^1}^2+|\iota^2-\iota^1|\F^2(\underline\zeta,\underline\v) ^{1/2}\Big).\]
with the notation $\F^2(\underline\zeta,\underline\v)\eqdef \sum_{0\leq |\alpha|\leq 2} \F[h^{\iota_2},\beta b](\underline\zeta_{(\alpha)},\underline\v_{(\alpha)})$. Notice also the identity
\[
\partial_t \curl \underline\v+\epsilon J^{\iota_2}\nabla\cdot(\u^{\iota_2}\curl \underline\v)=J^{\iota_2}\nabla\cdot((\u^{\iota_1}-\u^{\iota_2})\curl \underline\v^{\iota_1})+\epsilon( J^{\iota_1}-J^{\iota_2})\nabla\cdot(\u^{\iota_1}\curl \underline\v^{\iota_1}),
\]
so that, proceeding as above,
\[ \frac{\dd}{\dd t}\norm{\curl\underline\v}_{H^1}^2 \leq {\bf C_0}\ \Big( \F^2(\underline\zeta,\underline\v) + \norm{\curl\underline\v}_{H^1}^2+|\iota^2-\iota^1|\F^2(\underline\zeta,\underline\v) ^{1/2}\Big).\]
Applying Gronwall's Lemma and since, by~\eqref{J-diff} in Lemma~\ref{L.J}, the initial data satisfies
\[\norm{\zeta_0^{\iota_2}-\zeta_0^{\iota_1}}_{H^2}+\norm{\v_0^{\iota_2}-\v_0^{\iota_1}}_{H^2}=\norm{( J^{\iota_2}-J^{\iota_1})\zeta_0}_{H^2}+\norm{( J^{\iota_2}-J^{\iota_1})\v_0}_{H^2}\lesssim |\iota^2-\iota^1| \E^N(\zeta_0,\v_0)^{1/2},
\]
we find that
\[\F^2(\underline\zeta,\underline\v) ^{1/2}+\norm{\curl\underline\v}_{H^1} \leq {\bf C}_0 |\iota^2-\iota^1|(1+t)\exp( {\bf C}_0 t).\]
Slightly adapting the proof Lemma~\ref{L.EvsF} and thanks to~\eqref{est-in-proof} and Lemma~\ref{L.T-differentiable} and~\ref{L.diff-T-1}, we deduce
\[\norm{\zeta^{\iota_2}-\zeta^{\iota_1}}_{H^2}+\norm{\v^{\iota_2}-\v^{\iota_1}}_{Y^2}+\norm{\curl \v^{\iota_2}-\curl\v^{\iota_1}}_{H^1} +\norm{\u^{\iota_2}-\u^{\iota_1}}_{X^2}\leq {\bf C}_0 |\iota^2-\iota^1|(1+t)\exp( {\bf C}_0 t).\]
The Cauchy sequences are strongly convergent in low regularity Banach spaces $C([0,T];H^n)$, and are also bounded thus weakly convergent (up to a subsequence) in high regularity spaces by~\eqref{est-in-proof}. By uniqueness of the limit, there exists 
$(\zeta,\v,w,\u,v)\in L^\infty(0,T;H^N\times Y^N\times H^{N-1}\times X^N\times H^N)$ such that~\eqref{cond-h0} holds with $\tilde h_\star=h_\star/2$ and $\tilde h^\star=2h^\star$, satisfying the desired energy estimates and
\[\lim_{\iota\to 0}\sup_{t\in[0,T]}\big(\norm{\zeta^\iota-\zeta}_{H^2} +\norm{\v^\iota-\v}_{H^1}+\norm{\curl \v^\iota-w}_{H^2}+\norm{\u^\iota-\u}_{H^2}+\norm{\nabla\cdot\u^\iota-v}_{H^2}\big)=0.\]
By uniqueness of the limit, one has $w=\curl\v$, $v=\nabla\cdot\u$ and $\u=\mfT[h,\beta b]^{-1}(h\v)$.
The level of regularity in the above convergence result is sufficient to pass to the limit in~\eqref{GN-v-mol}, so that $(\zeta,\v)\in L^\infty(0,T;H^N\times Y^N)\cap C([0,T];H^2\times (H^1)^d)$ is a strong solution to~\eqref{GN-v}. That it satisfies the desired initial data is guaranteed by~\eqref{J-limit} in Lemma~\ref{L.J}. 
\medskip

{\em Uniqueness.} By considering $\underline\zeta,\underline\v$ the difference between two solutions with same initial data, and proceeding exactly as above (with fewer terms since mollifications are not involved), we find 
\[ \frac{\dd}{\dd t}\Big(\underline\F^2(\underline\zeta,\underline\v) +\norm{\curl\underline\v}_{H^1}^2\Big) \leq {\bf C}_0\ \Big( \underline\F^2(\underline\zeta,\underline\v) + \norm{\curl\underline\v}_{H^1}^2\Big).\]
 Applying Gronwall's estimate and since $\big(\underline\F^2(\underline\zeta,\underline\v) +\norm{\curl\underline\v}_{H^1}^2\big)\id{t=0}=0$, we deduce $\underline\zeta=0,\underline\v={\bf 0}$. This concludes the proof of Proposition~\ref{P.existence}.
\end{proof}

\begin{Proposition}[Stability]\label{P.stability}
Let the assumptions of Proposition~\ref{P.existence} be satisfied, and assume that $(\t\zeta,\t\v,\curl\t\v)\in L^\infty(0,\t T;H^{N+1}\times Y^{N+1}\times H^N)\cap C([0,\t T];H^{n}\times Y^{n}\times H^{n-1})$ satisfies~\eqref{cond-h0} and~\eqref{GN-v} with remainders $(\t r,\t\r,\curl\t\r)\in L^1(0,\t T;H^n\times Y^n \times H^{n-1})$ with $1\leq n\leq N$. Denote
\[ M_0=\norm{\zeta_0}_{H^N}+\norm{\v_0}_{Y^N} +\norm{\curl\v_0}_{H^N}, \quad \t M= \esssup_{t\in [0,T]} \big(\norm{\t\zeta}_{H^{N+1}}+\norm{\t\v}_{Y^{N+1}}+\norm{\curl\t\v}_{H^{N}}\big).\] Then there exists
\[ T^{-1}=C(\mu,h_\star^{-1},h^\star)F(\beta\norm{\nabla b}_{H^{N+1}},\epsilon M_0,\epsilon\t M)>0,\]
such that for any $t\in[0,\min(T,\t T)]$,
\begin{multline*}\big(\norm{\zeta-\t\zeta}_{H^n}+\norm{\v-\t\v}_{Y^n}+\norm{ \curl\v-\curl\t\v}_{H^{n-1}}\big)(t)\leq {\bf C} \big(\norm{\zeta-\t\zeta}_{H^n}+\norm{\v-\t\v}_{Y^n}+\norm{ \curl\v-\curl\t\v}_{H^{n-1}}\big)\id{t=0}\\
+ {\bf C} \, \int_0^t\big(\norm{\t r}_{H^n}+\norm{\t\r}_{Y^n}+\norm{\curl\t\r}_{H^{n-1}}\big)(t')\dd t'.
\end{multline*}
with ${\bf C}=C(\mu,h_\star^{-1},h^\star,\beta\norm{\nabla b}_{H^{N+1}},\epsilon M_0,\epsilon \t M)$.
\end{Proposition}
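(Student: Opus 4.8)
The plan is to prove the stability estimate in Proposition~\ref{P.stability} by a Gronwall argument applied to the difference of the two solutions, mimicking the structure already established for the Cauchy problem in Proposition~\ref{P.existence}. First I would set $\underline\zeta=\zeta-\t\zeta$, $\underline\v=\v-\t\v$, $h=1+\epsilon\zeta-\beta b$, $\t h=1+\epsilon\t\zeta-\beta b$, and write down the system satisfied by $(\underline\zeta,\underline\v)$. Subtracting the two copies of~\eqref{GN-v}, the equations for $\underline\zeta,\underline\v$ take the form of~\eqref{GN-v-mol} with $J^\iota=\Id$, source terms built from the tame differences of the nonlinearities in $\zeta,\v$ (estimated via~\eqref{est-r1r2-lipsch} in Proposition~\ref{P.quasi}, Lemma~\ref{L.diff-T-1} for the difference of the operators $\mfT^{-1}$, and the product estimates of Lemma~\ref{L.products}), plus the genuine remainder differences $\t r,\t\r$. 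A crucial point is that $\t\zeta,\t\v$ are assumed one derivative more regular than $\zeta,\v$, which is exactly what makes the "non-diagonal'' terms (those multiplying $\underline\zeta$ or $\underline\v$ by derivatives of $\t\zeta,\t\v$) controllable: every such term is bounded by $\epsilon\t M$ times an energy-norm of $(\underline\zeta,\underline\v)$.

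Next I would differentiate $\alpha$-times with $|\alpha|\le n$ and introduce the good unknowns $\underline\zeta_{(\alpha)},\underline\v_{(\alpha)}$ exactly as in~\eqref{def-w} (with $h,w$ built from $\zeta,\u$, not the tilded quantities), so that Proposition~\ref{P.quasi} applies verbatim: $(\underline\zeta_{(\alpha)},\underline\v_{(\alpha)})$ solves the quasilinear system~\eqref{GN-quasi-mol} with $J^\iota=\Id$, with residuals that are sums of (i) $r_{(\alpha)}(\zeta,\v)-r_{(\alpha)}(\t\zeta,\t\v)$-type terms controlled by~\eqref{est-r1r2-lipsch}, (ii) terms coming from the difference of the two symmetrizers, controlled by Lemma~\ref{L.diff-T-1} together with the a priori bounds $\norm{\zeta}_{H^N}+\norm{\v}_{Y^N}+\norm{\curl\v}_{H^{N-1}}\le {\bf C}_0 M_0$ from Proposition~\ref{P.existence} and $\t M$, and (iii) the genuine remainders $\partial^\alpha\t r$, $\partial^\alpha\t\r$ modified by the good-unknown shift, whose $H^n\times Y^n$ norms are bounded by $\norm{\t r}_{H^n}+\norm{\t\r}_{Y^n}$ up to ${\bf C}$ factors. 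I would also record the vorticity equation satisfied by $\curl\underline\v$: subtracting the two conservation laws $\partial_t\curl\v+\epsilon\nabla\cdot(\u\curl\v)=\curl\r$ gives $\partial_t\curl\underline\v+\epsilon\nabla\cdot(\u\,\curl\underline\v)=\epsilon\nabla\cdot((\t\u-\u)\curl\t\v)+\curl\underline\r+\curl\t\r$, and standard energy estimates on transport equations (using $\norm{\t\u-\u}_{X^n}\lesssim {\bf C}\,(\norm{\underline\zeta}_{H^n}+\norm{\underline\v}_{Y^n})$ via Lemmata~\ref{L.T-differentiable} and~\ref{L.diff-T-1}) yield a closed estimate for $\norm{\curl\underline\v}_{H^{n-1}}$.

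Then I would apply Proposition~\ref{P.energy0} to the $\alpha=0$ equations and Proposition~\ref{P.energy} to each $(\underline\zeta_{(\alpha)},\underline\v_{(\alpha)})$ for $1\le|\alpha|\le n$, sum, and combine with the vorticity estimate and the analogue of Lemma~\ref{L.EvsF} comparing $\F^n(\underline\zeta,\underline\v)$ with $\E^n(\underline\zeta,\underline\v)$. This produces a differential inequality of the form
\[ \frac{\dd}{\dd t}\Big(\F^n(\underline\zeta,\underline\v)+\norm{\curl\underline\v}_{H^{n-1}}^2\Big)\le {\bf C}\,\Big(\F^n(\underline\zeta,\underline\v)+\norm{\curl\underline\v}_{H^{n-1}}^2\Big)+{\bf C}\,G(t)\,\Big(\F^n(\underline\zeta,\underline\v)+\norm{\curl\underline\v}_{H^{n-1}}^2\Big)^{1/2},\]
where $G(t)=\norm{\t r}_{H^n}+\norm{\t\r}_{Y^n}+\norm{\curl\t\r}_{H^{n-1}}\in L^1(0,\t T)$. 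Here one restricts $T$ so that~\eqref{cond-h0} holds on $[0,T]$ with the halved/doubled constants (which is automatic from the a priori control of $\partial_t\zeta$ and $\partial_t\t\zeta$), making all the ${\bf C}$-constants uniform. A standard Gronwall argument on $y(t)=(\F^n+\norm{\curl\underline\v}_{H^{n-1}}^2)^{1/2}$ gives $y(t)\le {\bf C}\,(y(0)+\int_0^t G)\exp({\bf C}t)$, and translating back from $\F^n$ to $\E^n$ via the adapted Lemma~\ref{L.EvsF} (and from $\underline\v$ to $\underline\u$ if needed) yields the stated estimate. The main obstacle is bookkeeping: tracking carefully that the loss of one derivative is always absorbed by the tilded, more-regular solution and never by $(\zeta,\v)$, and that the difference of the two $h$-dependent symmetrizers only ever costs $\norm{\zeta-\t\zeta}_{H^n}$ (this is precisely the role of Lemma~\ref{L.diff-T-1}); no new analytic difficulty arises beyond those already met in Sections~\ref{S.quasilinear}--\ref{S.energy}.
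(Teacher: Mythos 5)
Your proposal is correct and follows essentially the same route as the paper: difference system, good unknowns $\underline\zeta_{(\alpha)},\underline\v_{(\alpha)}$, the Lipschitz estimate~\eqref{est-r1r2-lipsch}, Lemma~\ref{L.diff-T-1} for the symmetrizer differences, the transport equation for $\curl\underline\v$, the energy estimates of Propositions~\ref{P.energy0} and~\ref{P.energy}, and Gronwall. The only point you leave implicit is the case $n\leq 2$, where the cross terms must be estimated by putting the extra derivatives on the tilded solution (possible since $|\alpha|\leq 2\leq N+1-3$) rather than on $(\underline\zeta,\underline\v)$ — but this is exactly the mechanism you identify in your closing remark, so no new idea is missing.
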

\begin{proof}
This Lipschitz stability property was already at stake in the convergence part of the proof of Proposition~\ref{P.existence}, and is shown in the same way. 

We denote $\underline \zeta\eqdef \zeta-\t\zeta$, $\underline \v\eqdef \v-\t\v$, $\underline h\eqdef 1+\epsilon\underline\zeta-\beta b$ and $\underline\u\eqdef \mfT[\underline h,\beta b]^{-1}(\underline h \underline\v)$; as well as $\underline\zeta_{(\alpha)}\eqdef \partial^\alpha \zeta-\partial^\alpha \t\zeta$, $\underline \v_{(\alpha)}\eqdef \v_{(\alpha)}-\t\v_{(\alpha)}$ and $\underline\u_{(\alpha)}\eqdef \mfT[h,\beta b]^{-1}(h\underline \v_{(\alpha)})$. Consider the system satisfied by $(\underline\zeta,\underline\v)$ and $(\underline\zeta_{(\alpha)},\underline\v_{(\alpha)})$. Compared with~\eqref{GN-v-mol-Cauchy} and~\eqref{GN-quasi-mol-Cauchy}, there are fewer terms in the right-hand side, but they require more precise estimates. For $1\leq |\alpha|\leq n$, one has
 \begin{equation}\label{GN-quasi-stab}
    \left\{ \begin{array}{l}
    \partial_t\underline\zeta_{(\alpha)}+ \epsilon\nabla\cdot(\u \underline\zeta_{(\alpha)}) +\nabla\cdot(h \underline\u_{(\alpha)}) =   \underline r_{(\alpha)}-\partial^\alpha \t r+r_{(\alpha)}(\zeta,\v)-r_{(\alpha)} (\t\zeta ,\t\v)\\ \\
    \partial_t \underline\v_{(\alpha)} +\nabla \underline\zeta_{(\alpha)} +\epsilon\u^\perp \curl \underline\v_{(\alpha)}+\epsilon \nabla(\u\cdot \underline\v_{(\alpha)})=   \underline \r_{(\alpha)}-\partial^\alpha \t\r+\mu \epsilon \nabla(\t w \partial^\alpha \t r)+\r_{(\alpha)}(\zeta ,\v)-\r_{(\alpha)} (\t\zeta ,\t\v)
    \end{array}\right.
 \end{equation}
 where $\t w\eqdef - \t h \nabla\cdot\t\u+ \beta \nabla b\cdot \t\u$, $\t h\eqdef 1+\epsilon\t\zeta-\beta b$, $\t\u \eqdef \mfT[\t h,\beta b]^{-1}(\t h\t\v) $, and
\begin{align*}
\underline r_{(\alpha)}&= \nabla\cdot\big(\epsilon(\t\u -\u) \t\zeta_{(\alpha)}+\t h\mfT[\t h,\beta b]^{-1}(\t h\t\v_{(\alpha)}) -h\mfT[h,\beta b]^{-1}(h\t\v_{(\alpha)})\big),\\
\underline \r_{(\alpha)}&=\epsilon(\t\u-\u)^\perp \curl \t\v_{(\alpha)}+\epsilon \nabla\big( (\t\u-\u)\cdot \t\v_{(\alpha)}\big).
\end{align*} 
 Since $|\alpha|\leq N$, one has by~\eqref{embed-Y} in Lemma~\ref{L.embeddings} and Lemmata~\ref{L.products} and~\ref{L.T-differentiable} to estimate $\norm{\t w}_{H^2}$,
\[ \epsilon\norm{\t\zeta_{(\alpha)}}_{H^{1}}+\epsilon\norm{\t\v_{(\alpha)}}_{Y^{1}}+\epsilon\norm{\curl\t\v_{(\alpha)}}_{L^2}\leq C(\mu,h_\star^{-1},h^\star)F( \beta\norm{\nabla b}_{H^3},\epsilon\norm{\t\zeta}_{H^{N+1}},\epsilon\norm{\t\v}_{Y^{N+1}},\epsilon\norm{\curl\t\v}_{H^N}).\]
Using Lemmata~\ref{L.embeddings},~\ref{L.products},~\ref{L.T-differentiable}, and~\ref{L.diff-T-1}, one checks that for $m\in\{0,1,2,3\}$,
\[\norm{\u-\t\u}_{H^m}\leq\norm{\u-\t\u}_{X^m}\leq {\bf C}\,\times\big(\norm{\underline\zeta}_{H^m}+\norm{\underline\v}_{Y^m}\big)\]
with ${\bf C}=C(\mu,h_\star^{-1},h^\star,\beta\norm{\nabla b}_{H^{3}},\epsilon\norm{\zeta}_{H^3},\epsilon\norm{\v}_{Y^3},\epsilon\norm{\t\zeta}_{H^{3}},\epsilon\norm{\t\v}_{Y^{4}})$.
It follows, using again Lemmata~\ref{L.embeddings},~\ref{L.products},~\ref{L.T-differentiable}, and adapting the proof of Lemma~\ref{L.diff-T-1} to replace $\norm{\v}_{Y^{2\vee n}}\norm{\zeta-\t\zeta}_{H^n}$ with $\norm{\v}_{Y^{n}}\norm{\zeta-\t\zeta}_{H^{2\vee n}}$, 
\[\norm{\underline r_{(\alpha)}}_{L^2}+\norm{\underline \r_{(\alpha)}}_{Y^0}\leq \epsilon\ {\bf C}\,\big(\norm{\underline\zeta}_{H^3}+\norm{\underline\v}_{Y^3}\big) \big( \norm{\t\zeta_{(\alpha)}}_{H^{1}}+\norm{\t\v_{(\alpha)}}_{Y^{1}}+\norm{\curl\t\v_{(\alpha)}}_{L^2}\big) ,\]
with ${\bf C}$ as above. 
Moreover,  by~\eqref{est-r1r2-lipsch} in Proposition~\ref{P.quasi}, one has
\[\norm{r_{(\alpha)}(\zeta,\v)-r_{(\alpha)} (\t\zeta ,\t\v)}_{L^2}+\norm{ \r_{(\alpha)}(\zeta,\v)- \r_{(\alpha)} (\t\zeta ,\t\v)}_{Y^0}
\leq \t{\bf F}\  \left(\norm{\underline\zeta}_{H^{|\alpha|}}+\norm{\underline\v}_{Y^{|\alpha|}}+\norm{\curl\underline\v}_{H^{|\alpha|-1}}\right)
\]
with 
\begin{multline*} \t{\bf F}=C(\mu,h_\star^{-1},h^\star)F\big(\beta\norm{\nabla b}_{H^{4\vee |\alpha|+1}},\epsilon\norm{\zeta}_{H^{4\vee|\alpha|}},\epsilon\norm{\v}_{Y^{4\vee|\alpha|}},\epsilon\norm{\curl\v}_{H^{3\vee|\alpha|-1}},\\
\epsilon\norm{\t\zeta}_{H^{4\vee|\alpha|}},\epsilon\norm{\t\v}_{Y^{4\vee |\alpha|}},\epsilon\norm{\curl\t\v}_{H^{3\vee|\alpha|-1}}\big).\end{multline*}
Finally, using~\eqref{embed-Z} in Lemma~\ref{L.embeddings}, one finds as above
\[\norm{\mu\epsilon \nabla(\t w\partial^\alpha \t r)}_{Y^0} \leq C(\mu) \norm{\epsilon\t w \partial^\alpha \t r}_{L^2}\leq \ {\bf C}\, \ \norm{\partial^\alpha \t r}_{L^2}.\]
By Proposition~\ref{P.energy}, and using the identity $\partial_t\zeta=-\nabla\cdot(h\u)$ to control $\norm{\partial_t\zeta}_{H^3}$, we obtain
\[ \frac{\dd}{\dd t}\F(\underline\zeta_{(\alpha)},\underline\v_{(\alpha)})\leq {\bf F}\,  \big(\F(\underline\zeta_{(\alpha)},\underline\v_{(\alpha)}) +\E^{3\vee|\alpha|}(\underline\zeta,\underline\v) +\norm{\curl\underline\v}_{H^{|\alpha|-1}}^2\big) +{\bf C} \big(\F(\partial^\alpha \t r,\partial^\alpha \t\r)  \F(\underline\zeta_{(\alpha)},\underline\v_{(\alpha)}) \big)^{1/2},\]
with ${\bf F}=C(\mu,h_\star^{-1},h^\star)F(\beta\norm{\nabla b}_{H^{N+1}},\epsilon\norm{\zeta}_{H^N},\epsilon\norm{\v}_{Y^N},\epsilon\norm{\curl\t \v}_{H^{ N-1}},\epsilon\norm{\t \zeta}_{H^{ N+1}},\epsilon\norm{\t \v}_{Y^{ N+1}},\epsilon\norm{\curl\t \v}_{H^{ N}})$ and ${\bf C}=C(\mu,h_\star^{-1},h^\star,\beta\norm{\nabla b}_{H^{N+1}},\epsilon\norm{\zeta}_{H^N},\epsilon\norm{\v}_{Y^N},\epsilon\norm{\curl\t \v}_{H^{ N-1}},\epsilon\norm{\t \zeta}_{H^{ N+1}},\epsilon\norm{\t \v}_{Y^{ N+1}},\epsilon\norm{\curl\t \v}_{H^{ N}})$.

In order to control $\curl \v$, we notice that
\[\partial_t\curl\underline\v+\epsilon\nabla\cdot(\u \curl\underline\v)=\epsilon\nabla\cdot \big((\t\u-\u)\curl\t\v\big)-\curl\t\r,\]
so that standard energy estimates yield
\[\frac{\dd}{\dd t}\Big(\norm{\curl\underline\v}_{H^{n-1}}^2\Big)\lesssim \Big(\epsilon \norm{\u}_{H^{2\vee n}}\norm{\curl\underline\v}_{H^{n-1}}+\epsilon\norm{\t\u-\u}_{H^n}\norm{\curl\t\v}_{H^{2\vee n}}  +\norm{\curl\t\r}_{H^{n-1}}\Big)\norm{\curl\underline\v}_{H^{n-1}}.\]
Finally, one easily checks that, as in Lemma~\ref{L.EvsF}, that
\[ \E^n(\underline\zeta,\underline\v)\leq {\bf C}\, \sum_{0\leq|\alpha|\leq n}\F(\underline\zeta_{(\alpha)},\underline\v_{(\alpha)}) \quad ; \quad \sum_{0\leq |\alpha|\leq n}\F(\underline\zeta_{(\alpha)},\underline\v_{(\alpha)})\leq {\bf C}\, \E^n(\underline\zeta,\underline\v).\]

Thus for any $n\geq 3$, adding the above energy estimates for $1\leq |\alpha|\leq n$, the corresponding one based on Proposition~\ref{P.energy0} when $\alpha=(0,0)$, and by Gronwall's Lemma, we find
\begin{multline*} \big(\norm{\underline\zeta}_{H^n}+\norm{\underline\v}_{Y^n}+\norm{\curl\v}_{H^{n-1}}\big)(t)\leq \big(\norm{\underline\zeta\id{t=0}}_{H^n}+\norm{\underline\v\id{t=0}}_{Y^n}+\norm{\curl\v\id{t=0}}_{H^{n-1}}\big) e^{{\bf F} t}\\
+ {\bf C} \int_0^t \big(\norm{\t r}_{H^n}+\norm{\t\r}_{Y^n}+\norm{\curl\t\r}_{H^{n-1}}\big)(t') e^{{\bf F}(t-t')}\dd t'.\end{multline*}
with ${\bf C},{\bf F}$ as above. The proposition is proved for $n\geq 3$, using the energy estimate of Proposition~\ref{P.existence}. The case $n\leq 2$ is obtained in the same way, but using the estimates
\[\norm{\underline r_{(\alpha)}}_{L^2}+\norm{\underline \r_{(\alpha)}}_{Y^0}\leq \epsilon\ {\bf C}\,\big(\norm{\underline\zeta}_{H^1}+\norm{\underline\v}_{Y^1}\big) \big( \norm{\t\zeta_{(\alpha)}}_{H^{3}}+\norm{\t\v_{(\alpha)}}_{H^{3}}+\norm{\curl\t\v_{(\alpha)}}_{H^2}\big) \]
 (notice that in that case, $|\alpha|\leq 2\leq N+1-3$).
\end{proof}

We now conclude this section with continuity results, completing the well-posedness of the Cauchy problem for system~\eqref{GN-v} in the sense of Hadamard.
\begin{Proposition}[Well-posedness]\label{P.continuity}
Under the hypotheses of Proposition~\ref{P.existence}, the unique strong solution to~\eqref{GN-v} satisfies $(\zeta,\v,\curl\v)\in C([0,T];H^N\times Y^N\times H^{N-1})$. Moreover, the mapping $(\zeta_0,\v_0\curl\v_0)\in H^N\times Y^N\times H^{N-1}\mapsto  (\zeta,\v,\curl\v)\in {C([0,T];H^N\times Y^N\times H^{N-1})}$ is continuous. 

More precisely, given $(\zeta_0,\v_0,\curl\v_0)\in  H^N\times Y^N\times H^{N-1}$ satisfying~\eqref{cond-h0} and a sequence $(\zeta_{0,n},\v_{0,n},\curl\v_{0,n})\to (\zeta_0,\v_0,\curl\v_0) $ in ${H^N\times Y^N\times H^{N-1}}$, then there exists  $n_0\in\NN$ and one can set $T^{-1}=C(\mu,h_\star^{-1},h^\star)\times F(\beta\norm{\nabla b}_{H^{N+1}},\epsilon\norm{\zeta_0}_{H^N},\epsilon\norm{\v_0}_{Y^N},\norm{\curl\v_0}_{H^{N-1}})$ such that for all $n\geq n_0$, there exists a unique $(\zeta_n,\v_n)\in C([0,T];H^N\times Y^N)$ satisfying~\eqref{GN-v} with initial data $(\zeta_n,\v_n)\id{t=0}=(\zeta_{0,n},\v_{0,n})$, and one has
\[ \lim_{n\to\infty}\sup_{t\in[0,T]} \Big(\norm{\zeta_n-\zeta}_{H^N}+\norm{\v_n-\v}_{Y^N}+\norm{\curl\v_n-\curl\v}_{H^{N-1}}\Big)=0.\]
\end{Proposition}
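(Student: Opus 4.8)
The plan is to prove Proposition~\ref{P.continuity} by the Bona--Smith technique, which upgrades the weak-continuity/uniqueness information already obtained in Proposition~\ref{P.existence} to strong continuity in the high-regularity norm, and then to continuity of the data-to-solution map. First I would establish the time-continuity of a single solution: given $(\zeta_0,\v_0,\curl\v_0)\in H^N\times Y^N\times H^{N-1}$, regularise the data as $(\zeta_0^\delta,\v_0^\delta)=(J^\delta\zeta_0,J^\delta\v_0)$ and let $(\zeta^\delta,\v^\delta)$ be the corresponding solutions, which by Proposition~\ref{P.existence} exist on a common interval $[0,T]$ uniform in $\delta$ (the lower bound for $T$ depending only on the $H^4\times Y^4\times H^3$ norms, hence controlled by the $H^N$-data). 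These smoother solutions satisfy $(\zeta^\delta,\v^\delta,\curl\v^\delta)\in C([0,T];H^{N+1}\times Y^{N+1}\times H^N)$ by bootstrapping the existence statement at regularity $N+1$, so they lie in the class for which Proposition~\ref{P.stability} applies. Applying Proposition~\ref{P.stability} with $n=N$ to the pair $(\zeta^{\delta_2},\v^{\delta_2})$, $(\zeta^{\delta_1},\v^{\delta_1})$ (both are exact solutions, so the residuals $\t r,\t\r$ vanish) gives
\[
\sup_{t\in[0,T]}\big(\norm{\zeta^{\delta_2}-\zeta^{\delta_1}}_{H^N}+\norm{\v^{\delta_2}-\v^{\delta_1}}_{Y^N}+\norm{\curl\v^{\delta_2}-\curl\v^{\delta_1}}_{H^{N-1}}\big)\leq {\bf C}\, \norm{(J^{\delta_2}-J^{\delta_1})(\zeta_0,\v_0,\curl\v_0)}_{H^N\times Y^N\times H^{N-1}},
\]
which tends to $0$ as $\delta_1,\delta_2\to0$ by~\eqref{J-limit} in Lemma~\ref{L.J}. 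Hence $(\zeta^\delta,\v^\delta,\curl\v^\delta)$ is Cauchy in $C([0,T];H^N\times Y^N\times H^{N-1})$; its limit is a solution with the original data (by uniqueness, it is \emph{the} solution), and it belongs to $C([0,T];H^N\times Y^N\times H^{N-1})$ as a uniform limit of continuous functions.

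The delicate point, as usual in the Bona--Smith scheme, is that Proposition~\ref{P.stability} as stated loses one derivative on the smoother solution ($\t M$ involves the $H^{N+1}\times Y^{N+1}\times H^N$ norm), so a naive application of the stability estimate to $(\zeta^\delta,\v^\delta)$ versus the rough solution $(\zeta,\v)$ only gives a bound of the form $\norm{\zeta^\delta-\zeta}_{H^N}\leq {\bf C}\,\delta^{-1}\norm{(1-J^\delta)(\zeta_0,\v_0)}_{H^N}$, whose right-hand side does \emph{not} obviously vanish. The standard fix, which I would carry out, is the interpolation argument of Bona--Smith: combine the $H^{N-1}$-Lipschitz estimate (gaining a factor $\delta$ from $\norm{(1-J^\delta)(\zeta_0,\v_0,\curl\v_0)}_{H^{N-1}}\lesssim\delta\, o(1)$, uniformly using the last sentence of~\eqref{J-limit}) with the uniform $H^{N+1}$-bound $\norm{\zeta^\delta}_{H^{N+1}}\lesssim\delta^{-1}\norm{\zeta_0}_{H^N}$ from the high-regularity existence estimate, via the convexity inequality $\norm{\cdot}_{H^N}^2\lesssim \norm{\cdot}_{H^{N-1}}\norm{\cdot}_{H^{N+1}}$; the $\delta$ and $\delta^{-1}$ compensate and one is left with $\norm{\zeta^\delta-\zeta}_{H^N}\to0$. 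This shows simultaneously that $\zeta^\delta\to\zeta$ in $C([0,T];H^N)$ (so the limit constructed above is indeed the rough solution and lies in $C([0,T];H^N)$) and provides the quantitative rate needed below; the same reasoning applies to $\v$ in $Y^N$ and $\curl\v$ in $H^{N-1}$. The ``tame'' form of all the product, commutator and elliptic estimates assembled in Section~\ref{S.preliminary} is precisely what makes this interpolation step go through without extra regularity, and this is where I expect the main bookkeeping effort (and the main obstacle) to lie.

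Finally, for continuity of the flow map, take $(\zeta_{0,n},\v_{0,n},\curl\v_{0,n})\to(\zeta_0,\v_0,\curl\v_0)$ in $H^N\times Y^N\times H^{N-1}$. For $n$ large the data satisfy~\eqref{cond-h0} with slightly relaxed constants and have norms uniformly bounded, so Proposition~\ref{P.existence} yields solutions $(\zeta_n,\v_n)$ on a common interval $[0,T]$ with uniform energy bounds, and the limit $(\zeta,\v)$ exists on $[0,T]$ as well. I would split the difference through a mollified intermediary: write, for fixed $\delta>0$,
\[
\norm{\zeta_n-\zeta}_{H^N}\leq \norm{\zeta_n-\zeta_n^\delta}_{H^N}+\norm{\zeta_n^\delta-\zeta^\delta}_{H^N}+\norm{\zeta^\delta-\zeta}_{H^N},
\]
with analogous splittings for $\v_n-\v$ in $Y^N$ and $\curl\v_n-\curl\v$ in $H^{N-1}$, where $\zeta_n^\delta,\zeta^\delta$ are the solutions issued from the $J^\delta$-mollified data. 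The outer two terms are made small, uniformly in $n$, by the Bona--Smith convergence rate just established (using the uniform bounds on the data); the middle term, for each fixed $\delta$, tends to $0$ as $n\to\infty$ by applying Proposition~\ref{P.stability} with $n=N$ (the mollified solutions being smooth enough, in $C([0,T];H^{N+1}\times Y^{N+1}\times H^N)$, and the residuals vanishing), since the mollified initial data converge in every Sobolev norm. A standard $\varepsilon/3$ argument then gives $\sup_{t\in[0,T]}\big(\norm{\zeta_n-\zeta}_{H^N}+\norm{\v_n-\v}_{Y^N}+\norm{\curl\v_n-\curl\v}_{H^{N-1}}\big)\to0$, which is the claimed continuity of the data-to-solution map and completes the proof.
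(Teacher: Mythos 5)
Your overall strategy is exactly the one the paper uses (Bona--Smith: mollify the data, show the regularised solutions converge in the top norm, then an $\varepsilon/3$ splitting for the flow map), and your third paragraph is essentially the paper's argument for continuity of the data-to-solution map. There are, however, two steps in the first two paragraphs that would fail as written and need the repair that the paper actually carries out.

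First, you cannot invoke Proposition~\ref{P.stability} as a black box when $\delta\to0$. Its constant ${\bf C}$ depends on $\t M$, which contains the $H^{N+1}\times Y^{N+1}\times H^N$ norm of the reference solution; for the mollified solution this is $O(\delta^{-1})$ by~\eqref{J-smooth}, and this quantity enters the constant \emph{polynomially}. So the bound you write in the first paragraph for $\zeta^{\delta_2}-\zeta^{\delta_1}$ has a constant that blows up as $\min(\delta_1,\delta_2)\to0$, and even your $H^{N-1}$-level estimate ``gaining a factor $\delta$'' is destroyed by the same constant if Proposition~\ref{P.stability} is quoted verbatim. The point that saves the argument --- and the reason the paper reruns the energy estimates rather than citing the stability proposition --- is that the $H^{N+1}$ norm of the smooth solution enters the differential energy inequality only \emph{linearly}, multiplying the \emph{low}-norm difference $\E^3(\underline\zeta,\underline\v)^{1/2}$; and at the low level $m=3$ the loss-of-derivative term only involves $\norm{\zeta^\iota}_{H^4}\leq\norm{\zeta_0}_{H^N}$, so the low-norm Gronwall estimate has a uniform constant. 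The single factor $\iota^{-1}$ is then absorbed by $\iota^{-1}\E^3(\underline\zeta_0,\underline\v_0)^{1/2}\to0$ from~\eqref{J-limit}. Your interpolation inequality $\norm{\cdot}_{H^N}^2\lesssim\norm{\cdot}_{H^{N-1}}\norm{\cdot}_{H^{N+1}}$ is an admissible variant of this balancing, but only once the uniform low-norm stability estimate has been secured by the above observation.

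Second, the interpolation in your second paragraph is applied to $\zeta^\delta-\zeta$, whose $H^{N+1}$ norm is infinite in general since the rough solution $\zeta$ only lies in $H^N$. The interpolation (or the paper's two-level Gronwall) must be performed between two mollified solutions $\zeta^{\delta_1},\zeta^{\delta_2}$, both of which are in $H^{N+1}$ with norms $O(\delta^{-1})$ and whose initial difference in $H^{N-1}$ is $O(|\delta_1-\delta_2|)$ by~\eqref{J-diff}; this yields the Cauchy property in $C([0,T];H^N\times Y^N\times H^{N-1})$, after which the limit is identified with the rough solution by uniqueness and the low-norm convergence. With these two corrections your plan coincides with the paper's proof.
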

\begin{proof} Our proof is based on the Bona-Smith technique~\cite{BonaSmith75}. For any $\iota\in (0,1)$, denote $(\zeta^\iota,\v^\iota)$ (resp. $(\zeta_{n}^\iota,\v_{n}^\iota)$) the unique solution to~\eqref{GN-v} with mollified initial data $(\zeta_0^\iota,\v_0^\iota)\eqdef (J^\iota\zeta_0,J^\iota\v_0)$ (resp. $(\zeta_{0,n}^\iota,\v_{0,n}^\iota)\eqdef (J^\iota\zeta_{0,n},J^\iota\v_{0,n})$) in $C([0,T];H^2\times (H^1)^d)\cap L^\infty(0,T;H^N\times Y^N)$, as provided by Proposition~\ref{P.existence}; see Lemma~\ref{L.J} for the definition of $J^\iota$ and relevant properties. In particular, by~\eqref{J-product}, one can restrict $n_0\in\NN$ such that the energy estimate and lower bound on $T$ stated in Proposition~\ref{P.existence} hold uniformly with respect to $\iota\in(0,1)$ and $n\geq n_0$.

We then proceed as in the proof of Proposition~\ref{P.stability}, with $(\tilde\zeta,\tilde \v)=(\zeta^\iota,\v^\iota)$ and $(r,\r)={\bf 0}$. We thus find that the difference, $\underline \zeta\eqdef \zeta-\zeta^\iota$ and $\underline\v=\v-\v^\iota$ satisfies, for any $3\leq m\leq N$,
\begin{multline*} \frac{\dd}{\dd t}\big(\F^m(\underline\zeta,\underline\v)+\norm{\curl\underline\v}_{H^{m-1}}^2\big) \leq {\bf F}\,  \big(\F^m(\underline\zeta,\underline\v)+\norm{\curl\underline\v}_{H^{m-1}}^2\big) \\
+\epsilon {\bf C}\, \F^m(\underline\zeta,\underline\v)^{1/2} \E^3(\underline\zeta,\underline\v)^{1/2} \big(\norm{\zeta^\iota}_{H^{m+1}}+\norm{\v^\iota}_{Y^{m+1}}+\norm{\curl\v^\iota}_{H^{m}}\big) ,\end{multline*}
 with $\F^m(\underline\zeta,\underline\v)\eqdef \sum_{0\leq |\alpha|\leq m} \F[h,\beta b](\underline\zeta_{(\alpha)},\underline\v_{(\alpha)})$ where $\underline\zeta_{(\alpha)}\eqdef \partial^\alpha \zeta-\partial^\alpha \t\zeta$, $\underline \v_{(\alpha)}\eqdef \v_{(\alpha)}-\t\v_{(\alpha)}$, and
\begin{align*}{\bf F}&=C(\mu,h_\star^{-1},h^\star)F\big(\beta\norm{\nabla b}_{H^{N+1}},\epsilon\norm{\zeta}_{H^{N}},\epsilon\norm{\v}_{Y^{N}},\epsilon\norm{\curl\v}_{H^{N-1}},
\epsilon\norm{\zeta^\iota}_{H^{N}},\epsilon\norm{\v^\iota}_{Y^{N}},\epsilon\norm{\curl\v^\iota}_{H^{N-1}}\big),\\
{\bf C}&=C\big(\mu,h_\star^{-1},h^\star,\beta\norm{\nabla b}_{H^{N+1}},\epsilon\norm{\zeta}_{H^{N}},\epsilon\norm{\v}_{Y^{N}},\epsilon\norm{\curl\v}_{H^{N-1}},
\epsilon\norm{\zeta^\iota}_{H^{N}},\epsilon\norm{\v^\iota}_{Y^{N}},\epsilon\norm{\curl\v^\iota}_{H^{N-1}}\big).\end{align*}
By~\eqref{J-product} in Lemma~\ref{L.J} and the energy estimate in Proposition~\ref{P.existence}, one has ${\bf F}+{\bf C}\leq {\bf C}_0$ with
\[ {\bf C}_0=C(\mu,h_\star^{-1},h^\star,\beta\norm{\nabla b}_{H^{N+1}},\epsilon\norm{\zeta_0}_{H^N},\epsilon\norm{\v_0}_{Y^N},\epsilon \norm{\curl\v_0}_{H^{N-1}} ).\]
By~\eqref{J-smooth} in Lemma~\ref{L.J} and the energy estimate in Proposition~\ref{P.existence}, we find
\[ \norm{\zeta^\iota}_{H^{N+1}}+\norm{\v^\iota}_{Y^{N+1}}+\norm{\curl\v^\iota}_{H^{N}}\leq {\bf C}_0\, \big(\norm{J^\iota\zeta_0}_{H^{N+1}}+\norm{J^\iota\v_0}_{Y^{N+1}}+\norm{\curl J^\iota\v_0}_{H^{N}}\big) \leq \iota^{-1} {\bf C}_0.
\]
Moreover, since $3+1\leq N$, the above differential energy inequality with $m=3$ reads
\[  \frac{\dd}{\dd t}\big(\F^N(\underline\zeta,\underline\v)+\norm{\curl\underline\v}_{H^{N-1}}^2\big) \leq {\bf C}_0\,  \big(\F^N(\underline\zeta,\underline\v)+\norm{\curl\underline\v}_{H^{N-1}}^2 +  \E^3(\underline\zeta,\underline\v)\big),\]
so that, adapting Lemma~\ref{L.EvsF} and Gronwall's Lemma, and  finally applying~\eqref{J-limit} in Lemma~\ref{L.J},
\[ \sup_{t\in[0,T]}\iota^{-1}\E^3(\underline\zeta,\underline\v)\leq {\bf C}_0 \exp({\bf C}_0 T) \times  \iota^{-1}\E^3(\underline\zeta_0,\underline\v_0) \to 0 \quad (\iota\to0).\]
Thus applying Gronwall's Lemma to the differential energy inequality with $m=N$ and again adapting Lemma~\ref{L.EvsF} yields
\[\lim_{\iota\to0}\sup_{t\in[0,T]} \big(\norm{ \zeta-\zeta^\iota}_{H^N}+\norm{\v-\v^\iota}_{Y^N} +\norm{\curl\v-\curl\v^\iota}_{H^{N-1}}\big)= 0 ,\]
Using that, for any $\iota\in(0,1)$, $(\zeta^\iota,\v^\iota) \in C([0,T];H^N\times Y^N)$ (by the smoothness of the initial data, Proposition~\ref{P.existence}, and integrating~\eqref{GN-v} with respect to time), we deduce $(\zeta,\v) \in C([0,T];H^N\times Y^N)$.
\medskip

Now let us turn to the continuity of the flow map. The proof above yields
\[\lim_{\iota\to0}\sup_{t\in[0,T]} \big(\norm{ \zeta_n-\zeta_n^\iota}_{H^N}+\norm{\v_n-\v_n^\iota}_{Y^N} +\norm{\curl\v_n-\curl\v_n^\iota}_{H^{N-1}}\big)= 0\]
uniformly with respect to $n\geq n_0$ (notice in particular the uniformity with respect to $n$ in Lemma~\ref{L.J}). Moreover, proceeding in the same way, we find the following estimate for $(\underline\zeta^\iota,\underline\v^\iota)\eqdef (\zeta_n^\iota-\zeta^\iota,\v_n^\iota-\v^\iota)$ with any given $\iota\in(0,1)$: 
\[\frac{\dd}{\dd t}\big(\F^N(\underline\zeta^\iota,\underline\v^\iota)+\norm{\curl\underline\v^\iota}_{H^{N-1}}^2\big) \leq {\bf C}_0\,  \big(\F^N(\underline\zeta^\iota,\underline\v^\iota)+\norm{\curl\underline\v^\iota}_{H^{N-1}}^2
+ \iota^{-1}\F^N(\underline\zeta^\iota,\underline\v^\iota)^{1/2} \E^3(\underline\zeta^\iota,\underline\v^\iota)^{1/2}\big).\]
By Gronwall's Lemma, adapting Lemma~\ref{L.EvsF} and~\eqref{J-product} in Lemma~\ref{L.J}, we immediately deduce
\[\lim_{n\to\infty}\sup_{t\in[0,T]} \big(\norm{\zeta^\iota-\zeta_n^\iota}_{H^N}+\norm{\v^\iota-\v_n^\iota}_{H^N}+\norm{\curl\underline\v}_{H^{N-1}}\big)=0.\]
The continuity of the flow map follows from the above limits and triangular inequality.
\end{proof}

\section{Proof of the main results}\label{S.Formulations-comparison}

We proved in the previous section the well-posedness and stability of the Cauchy problem for system~\eqref{GN-v}. We show in this section how to transcribe these results to the original formulation of the Green-Naghdi system~\eqref{GN-u}. It is claimed in~\cite{CamassaHolmLevermore96,Matsuno16} that~\eqref{GN-v} and~\eqref{GN-u} are equivalent, after ``judiciously differentiating by parts'' and ``lengthy calculations''. Since this fact is of considerable importance in our work and is rather tedious to check, we detail the calculations below. We then conclude this section with the proof of Theorems~\ref{T.WP} and~\ref{T.justification}.
\begin{Proposition}\label{P.GNuvsGNv}
Let $b\in \dot{H}^5(\RR^d)$ and $\zeta\in C^0 ([0,T];H^4(\RR^d))$ be such that~\eqref{cond-h0} holds. 

If $\u \in C^0([0,T]; X^4)$ is such that $(\zeta,\u)$ satisfies~\eqref{GN-u}, then $\v\eqdef h^{-1}\mfT[h,\beta b]\u\in C^0([0,T];Y^4)$ is uniquely defined and $(\zeta,\v)$ satisfies~\eqref{GN-v}.

If $\v \in C^0([0,T]; Y^4)$ is such that $(\zeta,\v)$ satisfies~\eqref{GN-v}, then $\u\eqdef \mfT[h,\beta b]^{-1}(h\v)\in C^0([0,T];X^4)$ is uniquely defined and $(\zeta,\u)$ satisfies~\eqref{GN-u}.
\end{Proposition}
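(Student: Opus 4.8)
The proposition asserts the equivalence of the two formulations~\eqref{GN-u} and~\eqref{GN-v} of the Green-Naghdi system at the regularity level $H^4\times X^4$ (resp. $H^4\times Y^4$). The first equation --- mass conservation $\partial_t\zeta+\nabla\cdot(h\u)=0$ --- is literally the same in both systems, so the entire content is the equivalence of the velocity equations, given the algebraic relation $h\v=\mfT[h,\beta b]\u$ from~\eqref{def-mfT}. The plan is first to check that the change of unknowns is well-defined and regularity-preserving in both directions, then to carry out the computation transforming the momentum equation of~\eqref{GN-u} into that of~\eqref{GN-v} (the reverse direction following by running the same identities backwards).

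\textbf{Step 1: the change of variables is a bijection between the stated spaces.} Given $(\zeta,\u)$ with $\zeta\in C^0([0,T];H^4)$ satisfying~\eqref{cond-h0} and $\u\in C^0([0,T];X^4)$, Lemma~\ref{L.products} (estimate~\eqref{product-X}, and its $Y$-counterpart~\eqref{product-Y} applied after dividing by $h$) together with the explicit formula~\eqref{def-T} for $\T$ shows that $h\v=\mfT[h,\beta b]\u\in C^0([0,T];Y^4)$; dividing by $h$ and using product estimates gives $\v\in C^0([0,T];Y^4)$, and uniqueness is obvious. Conversely, given $\v\in C^0([0,T];Y^4)$, Lemma~\ref{L.T-invertible} makes $\u=\mfT[h,\beta b]^{-1}(h\v)$ well-defined in $X^0$, and Lemma~\ref{L.T-differentiable} upgrades this to $\u\in C^0([0,T];X^4)$ (time-continuity passing through because $\mfT$ depends continuously on $\zeta$, cf. Lemma~\ref{L.diff-T-1}). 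One also records here that $h\v$ and $h\u$ are in the right spaces so that every term appearing in the computations below is well-defined, with $X^4\subset H^4$ and $H^4\subset L^\infty$ by Lemma~\ref{L.embeddings} and Sobolev embedding.

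\textbf{Step 2: the algebraic identity relating the two momentum equations.} This is the heart of the proof. Starting from $\eqref{GN-u}_2$, written as $\partial_t(\u+\mu\T[h,\beta b]\u)+\nabla\zeta+\epsilon(\u\cdot\nabla)\u+\mu\epsilon(\Q+\Q_b)=0$, and using $h\v=\mfT[h,\beta b]\u=h\u+\mu h\T[h,\beta b]\u$, one wants to reach $\eqref{GN-v}_2$, namely $(\partial_t+\epsilon\u^\perp\curl)\v+\nabla\zeta+\tfrac\epsilon2\nabla(|\u|^2)=\mu\epsilon\nabla(\R+\R_b)$. The strategy is to multiply the target equation by... no: rather, to compute $h\big((\partial_t+\epsilon\u^\perp\curl)\v+\nabla\zeta+\tfrac\epsilon2\nabla|\u|^2-\mu\epsilon\nabla(\R+\R_b)\big)$ and, using the mass equation $\partial_t h=-\nabla\cdot(h\u)$ (equivalently $\partial_t h=-\epsilon\nabla\cdot(h\u)$ after tracking the $\epsilon$; here $\partial_t h=\epsilon\partial_t\zeta=-\epsilon\nabla\cdot(h\u)$), rewrite everything so that it matches $h$ times $\eqref{GN-u}_2$ after integrating by parts the third-order terms. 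Concretely: expand $\partial_t(h\v)=h\partial_t\v+(\partial_t h)\v$ and substitute $h\v=\mfT\u$; the time derivative $\partial_t(\mfT[h,\beta b]\u)$ produces $\mfT[h,\beta b]\partial_t\u$ plus shape-derivative terms of the form $\dd_h\mfT[h,\beta b](\partial_t\zeta,\u)$ (cf.~\eqref{def-dT}); one then has to recognize that the difference between $\mu\T[h,\beta b]\partial_t\u$ coming from here and the $\mu\partial_t(\T[h,\beta b]\u)$ appearing in $\eqref{GN-u}_2$ is exactly accounted for, modulo the vorticity term $\epsilon\u^\perp\curl\v$ and the pressure-type terms, by the quadratic operators $\Q,\Q_b$ on one side and $\R,\R_b$ on the other. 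The key sub-identity to verify by direct (tedious) computation is the ``$\nabla$-structure'' relation
\begin{equation}\label{eq:key-RQ}
h\Big(\tfrac\epsilon2\nabla(|\u|^2)-\mu\epsilon\nabla(\R[h,\u]+\R_b[h,\beta b,\u])-\epsilon\u^\perp\curl\u\Big)=\epsilon(\u\cdot\nabla)(h\u)+\text{(terms matching }\mu\epsilon\, h(\Q+\Q_b)\text{ and }\dd_h\mfT\text{)},
\end{equation}
which is established using the vector calculus identity~\eqref{identity} (already invoked in Lemma~\ref{L.commut-Gtransport}), $\nabla(U\cdot V)=(U\cdot\nabla)V+(V\cdot\nabla)U-V^\perp\curl U-U^\perp\curl V$, together with repeated integration/differentiation by parts on the $h^3\nabla\cdot\u$ and $h^2\beta\nabla b\cdot\u$ blocks that appear in all of $\T,\Q,\R,\Q_b,\R_b$.

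\textbf{Main obstacle and conclusion.} The genuine difficulty is precisely this bookkeeping in Step 2: matching, term by term, the third-order contributions $\mu\partial_t\T\u$, $\mu\epsilon\Q$, $\mu\epsilon\Q_b$ against $\mu\,\dd_h\mfT(\partial_t\zeta,\u)$, $\mu\epsilon\nabla\R$, $\mu\epsilon\nabla\R_b$ and the advection/vorticity reshuffling, keeping the $\epsilon,\beta,\mu$ powers and the flat-bottom vs. topography terms straight --- this is the ``lengthy calculation'' alluded to in~\cite{CamassaHolmLevermore96,Matsuno16}. The recommended way to organize it is to split $\mfT=\T_0+\beta\T_1+\beta^2\T_2$ according to powers of $\beta\nabla b$ (and likewise for $\Q,\R$), verifying the identity at each order in $\beta$ separately; the $\beta=0$ case is the flat-bottom computation which can be cross-checked against~\cite{Li02,LeGavrilyukHank10}, and the $\beta^1,\beta^2$ corrections are analogous but with $\nabla b$ treated as a fixed coefficient (time-independent, which is what makes the $\partial_t$ manipulations close). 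Once~\eqref{eq:key-RQ} and the companion $\partial_t$-identity are in hand, one divides by $h>0$ to recover $\eqref{GN-v}_2$ from $\eqref{GN-u}_2$; since every manipulation is an algebraic identity valid pointwise (justified in the stated function spaces by Step 1), it is reversible, giving the converse implication and completing the proof.
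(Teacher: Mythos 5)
Your architecture is the right one and coincides with the paper's: the mass equations of~\eqref{GN-u} and~\eqref{GN-v} are identical, the $\mu$-independent velocity terms match through $\tfrac12\nabla(\abs{\u}^2)=(\u\cdot\nabla)\u-\u^\perp\curl\u$, the change of unknowns is handled by Lemmata~\ref{L.embeddings},~\ref{L.products},~\ref{L.T-invertible} and~\ref{L.T-differentiable} exactly as in your Step 1, and the whole statement reduces to a pointwise algebraic identity between the dispersive operators, to be checked with the vector identity~\eqref{identity} and the substitution $\partial_t h=\epsilon\partial_t\zeta=-\epsilon\nabla\cdot(h\u)$. Since every manipulation is an identity in $L^2$ at the stated regularity, reversibility of the two directions is indeed automatic once the identity is known.

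The genuine gap is that the identity itself is never established. Your key sub-identity is written with the placeholder ``(terms matching $\mu\epsilon\,h(\Q+\Q_b)$ and $\dd_h\mfT$)'', and the assertion that those terms do match is precisely the content of the proposition; the paper emphasizes that this equivalence is ``rather tedious to check'' and that detailing the calculation is the point, so a proof that ends with ``to be verified by direct (tedious) computation'' has not discharged the burden. Concretely, what must be proved (and what the paper proves) is the single identity
\[
\big[\partial_t,\T[h,\beta b]\big]\u+\epsilon\, \u^\perp \curl\big(\T[h,\beta b] \u\big)+ \epsilon \nabla \big(\u\cdot \T[h,\beta b] \u-\tfrac{1}{2}w^2\big)=\epsilon\,\Q[h,\u]+ \epsilon\,\Q_b[h,\beta b,\u],
\qquad w\eqdef \beta\nabla b\cdot\u-h\nabla\cdot\u,
\]
which is isolated by first rewriting~\eqref{GN-v} in the form~\eqref{GN-vter}. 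The paper then (a) decomposes $\T[h,\beta b]\u=\tfrac1h\nabla f_1+\beta f_2\nabla b$ with $f_1=-\tfrac13h^3\nabla\cdot\u+\tfrac\beta2h^2\nabla b\cdot\u$ and $f_2=-\tfrac12h\nabla\cdot\u+\beta\nabla b\cdot\u$, so that~\eqref{identity} collapses to $\u^\perp\curl(f\nabla g)+\nabla(\u\cdot f\nabla g)=(\u\cdot\nabla f)\nabla g+f\nabla(\nabla g\cdot\u)$ on each block; (b) computes $[\partial_t,\T[h,\beta b]]\u$ explicitly; and (c) verifies the resulting identity, first for the $\beta$-independent part (recovering $\Q$) and then for the topographical part (recovering $\Q_b$). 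Your suggestion to sort by powers of $\beta$ is compatible with (c) but is not a substitute for carrying out (a)--(c). A secondary, fixable issue: multiplying the $\v$-equation by $h$ and differentiating $h\v=\mfT[h,\beta b]\u$ in time introduces the extra term $(\partial_t h)\v$ that must then be cancelled by hand; working with $\v=\u+\mu\T[h,\beta b]\u$ directly, as the paper does, avoids this bookkeeping.
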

\begin{proof} 
By Lemmata~\ref{L.embeddings},~\ref{L.products} and~\ref{L.com-Hn}, one easily checks that $\v\eqdef h^{-1}\mfT[h,\beta b]\u\in C^0([0,T];Y^4)$. Conversely, that $\u\eqdef \mfT[h,\beta b]^{-1}(h\v)$ is well-defined and $\u\in C^0([0,T];X^4)$ follows by Lemmata~\ref{L.T-invertible} and~\ref{L.T-differentiable}. The regularity of time derivatives is provided by the equations~\eqref{GN-u} or~\eqref{GN-v}. This regularity is sufficient to ensure that all the identities below hold in, say, $L^2(\RR^d)$.

Let us first notice that, by the identity $\frac12\nabla(\abs{\u}^2)=(\u\cdot\nabla) \u-\u^\perp \curl \u$, all terms of order $\O(\mu)$ in~\eqref{GN-v} and~\eqref{GN-u} agree. Notice also that, as pointed out in Appendix~\ref{S.Formulations-Hamiltonian}, system~\eqref{GN-v} can be rewritten as~\eqref{GN-vter}. It follows that to complete the proof, we only need to show that
\begin{equation}
\label{goal} \big[\partial_t,\T[h,\beta b]\big]\u+\epsilon \u^\perp \curl(\T[h,\beta b] \u)+ \epsilon \nabla \big(\u\cdot \T[h,\beta b] \u-\frac{1}{2}w^2\big)\\=\epsilon\Q[h,\u]+ \epsilon\Q_b[h,\beta b,\u],
\end{equation}
with $w\eqdef  (\beta\nabla b)\cdot\u-h\nabla\cdot\u$.
We clarify below why~\eqref{goal} holds. Let us first decompose
\begin{align*}
\mathcal T[h,\beta b]\u&= \left\{\frac{-1}{3h}\nabla(h^3\nabla\cdot \u)+\frac\beta{2h}\nabla\big(h^2\nabla b\cdot \u\big) \right\} \ +\ \beta \left\{ -\frac1{2} h\nabla b\nabla\cdot \u+\beta\nabla b(\nabla b\cdot \u)\right\}\\
&\eqdef \frac1{h}\nabla f_1+\beta f_2\nabla b.
\end{align*}
Thus we may use the identity valid for any sufficiently regular scalar functions $f,g$:
\[\u^\perp \curl(f\nabla g)+ \nabla (\u\cdot (f\nabla g))= (\u\cdot\nabla )(f\nabla g)+f (\nabla g \cdot\nabla)\u -(\nabla g)^\perp \curl(\u)
=(\u\cdot\nabla f)\nabla g+f\nabla(\nabla g\cdot\u)\]
to deduce
\begin{multline}\label{id1}
\u^\perp \curl(\T[h,\beta b] \u)+ \nabla (\u\cdot \T[h,\beta b] \u)\\
=\frac{-1}{h^2}(\u\cdot\nabla h)\nabla f_1+\frac1h \nabla(\nabla f_1\cdot\u)+\beta (\u\cdot\nabla f_2)\nabla b+ \beta f_2\nabla(\nabla b\cdot \u).
\end{multline}
Now, we write, using that $b$ is time-independent and replacing $\partial_t\zeta=-\nabla\cdot(h\u)$,
\begin{align} \big[\partial_t,\T[h,\beta b]\big]\u&=\frac{\epsilon\nabla\cdot(h\u)}{h^2}\nabla \Big(-\frac13 h^3\nabla\cdot\u+\frac\beta2h^2\u\cdot\nabla b\Big)+\frac1h\nabla\big(h^2(\epsilon\nabla\cdot(h\u)\nabla\cdot\u)\big)\nn\\&\qquad -\frac\beta{h}\nabla\big(h(\epsilon\nabla\cdot(h\u)\u\cdot\nabla b)\big)+\frac\beta2(\epsilon\nabla\cdot(h\u))(\nabla\cdot\u)\nabla b\nn\\
& =\frac{\epsilon\nabla\cdot(h\u)}{h^2} \nabla f_1+\frac{\epsilon}h\nabla\big((h\nabla\cdot(h\u))(h\nabla\cdot\u-\beta \u\cdot\nabla b)\big)+\frac{\epsilon\beta}2(\nabla\cdot(h\u))(\nabla\cdot\u)\nabla b.
\label{id2}
\end{align}

By~\eqref{id1} and~\eqref{id2}, the desired identity~\eqref{goal} becomes
\begin{multline}\label{new-goal}
\frac{\nabla\cdot\u}{h} \nabla f_1 +\frac1{h}\nabla\left(\big(\nabla f_1+(h^2\nabla\cdot\u)\nabla h - \beta h\nabla\cdot(h\u)\nabla b\big)\cdot\u + h^3(\nabla\cdot\u)^2\right)\\
+\frac{\beta}2(\nabla\cdot(h\u))(\nabla\cdot\u)\nabla b+\beta(\u\cdot\nabla f_2)\nabla b+\beta f_2\nabla(\nabla b\cdot \u) -\frac12\nabla\big((\beta\u\cdot\nabla b-h\nabla\cdot\u)^2\big)
\\
=\Q[h,\u]+ \Q_b[h,\beta b,\u].
\end{multline}
The non-topographical contributions in~\eqref{new-goal} (\ie setting $\beta=0$, including in $f_1$) are easily seen to match:
\begin{multline*} \frac{-1}3\frac{\nabla\cdot\u}{h} \nabla (h^3\nabla\cdot\u) +\frac1{h}\nabla\left(\frac{-1}{3}h^3 \big(\nabla(\nabla\cdot\u)\big)\cdot\u +h^3(\nabla\cdot\u)^2\right)-\frac12\nabla\big((h\nabla\cdot\u)^2\big)
\\
= \frac{-1}{3h}\nabla\Big(h^3\big((\u\cdot\nabla)(\nabla\cdot\u)-(\nabla\cdot\u)^2\big)\Big)=\Q[h,\u].
\end{multline*}
The remaining contributions in~\eqref{new-goal} satisfy,
denoting for readability $f=\nabla\cdot\u$ and $g=\beta \nabla b\cdot \u$, 
\begin{align*}&\frac12\frac{f}{h} \nabla (h^2 g) +\frac1{h}\nabla\left(\frac12\u\cdot\nabla(h^2 g )-  h(\u \cdot \nabla h+h f)g\right)\\
&\qquad +\frac{\beta}2( \u\cdot \nabla h+h f)f \nabla b+\beta(\u\cdot\nabla (g-\frac12 hf))\nabla b+ (g-\frac12 hf)\nabla g-\frac12\nabla\big(g^2-2 ghf \big)
\\
&=\frac12\frac{f}{h} \nabla (h^2 g) +\frac1{h}\nabla\left(\frac12 h^2\u\cdot \nabla g- h^2 f g\right)
+\frac{\beta}2hf^2 \nabla b+\beta(\u\cdot (\nabla g-\frac12 h\nabla f))\nabla b-\frac12 hf\nabla g+\nabla\big(ghf \big)\\
&=\frac1{2h}\nabla\left( h^2\u\cdot \nabla g\right)-\frac\beta2 h (\u\cdot \nabla f)\nabla b +\frac{\beta}2hf^2 \nabla b+\beta(\u\cdot \nabla g)\nabla b \ +\ \text{ cancellations}
\\
&= \Q_b[h,\beta b,\u].
\end{align*}
Thus the identity~\eqref{goal} holds, and the Proposition is proved. 
\end{proof}

\paragraph{Proof of Theorem~\ref{T.WP}} Let $N\geq 4$, $b\in \dot{H}^{N+2}$ and $(\zeta_0,\u_0)\in H^N\times X^N$ satisfying~\eqref{cond-h0} with $h_\star,h^\star>0$. Denote $h_0=1+\epsilon\zeta_0-\beta b$ and 
\[\v_0=h_0^{-1}\mfT[h_0,\beta b]\u_0=\u_0+\mu \T[h_0,\beta b]\u_0,\]
where we recall that the operator $\T$ is defined in~\eqref{def-T}. Using Lemma~\ref{L.embeddings},~\ref{L.products} and~\ref{L.com-Hn}, one easily checks that $\v_0\in Y^N,\curl\v_0\in H^{N-1}$ and
\[\norm{\v_0}_{Y^N}+\norm{\curl\v_0}_{H^{N-1}}\leq C(\mu,h_\star^{-1},h^\star,\beta\norm{\nabla b}_{H^{N+1}},\epsilon\norm{\zeta}_{H^4},\epsilon\norm{\u_0}_{X^4}) \big( \norm{\zeta}_{H^N}+\norm{\u_0}_{X^N}\big).\]
One may thus apply Propositions~\ref{P.existence} and~\ref{P.continuity}: there exists $T>0$ and $(\zeta,\v)\in C([0,T];H^N\times Y^N)$ strong solution to~\eqref{GN-v}; and one may restrict
\[ T^{-1}=C(\mu,h_\star^{-1},h^\star)F(\beta\norm{\nabla b}_{H^{N+1}},\epsilon\norm{\zeta_0}_{H^N},\epsilon \norm{\u_0}_{X^N} ) \]
such that, for any $t\in[0,T]$,~\eqref{cond-h0} holds with $\tilde h_\star=h_\star/2,\tilde h^\star=2h^\star$, and
\[ \E^N(\zeta,\v)+\norm{\curl\v}_{H^{N-1}}^2\leq {\bf C}_0\, \big(\E^N(\zeta_0,\v_0)+\norm{\curl\v_0}_{H^{N-1}}^2\big) 
\]
 with $ {\bf C}_0=C(\mu,h_\star^{-1},h^\star,\beta\norm{\nabla b}_{H^{N+1}},\epsilon\norm{\zeta_0}_{H^N},\epsilon\norm{\u_0}_{X^N})$. By Lemmas~\ref{L.embeddings},\ref{L.T-differentiable} and~\ref{L.diff-T-1} as well as Proposition~\ref{P.GNuvsGNv}, setting
 $\u\eqdef \mfT[h,\beta b]^{-1}(h\v)$ defines $(\zeta,\u)\in C([0,T];H^N\times X^N)$ strong solution to~\eqref{GN-u}, and one has
 \[ \sup_{t\in[0,T]} \big(\norm{\zeta}_{H^N}^2+\norm{\u}_{X^N}^2 \big) \leq {\bf C}_0 \big(\norm{\zeta_0}_{H^N}^2+\norm{\u_0}_{X^N}^2 \big) \]
  with $ {\bf C}_0=C(\mu,h_\star^{-1},h^\star,\beta\norm{\nabla b}_{H^{N+1}},\epsilon\norm{\zeta_0}_{H^4},\epsilon\norm{\u_0}_{X^4})$. 
  
 We thus constructed a strong solution to the Cauchy problem for~\eqref{GN-u} with initial data $(\zeta_0,\u_0)$. The uniqueness of the solution follows from the uniqueness in Proposition~\ref{P.existence} and Proposition~\ref{P.GNuvsGNv}. The continuity of the flow map follows from Proposition~\ref{P.continuity} and Lemmas~\ref{L.embeddings},\ref{L.T-differentiable} and~\ref{L.diff-T-1}.
 
 \paragraph{Proof of Theorem~\ref{T.justification}}
 By the assumptions of Theorem~\ref{T.justification}, one has $\zeta_0\in H^N,\nabla\psi_0 \in  H^{N}$ and therefore, by Lemmata~\ref{L.embeddings},~\ref{L.products} and~\ref{L.T-differentiable}, $\u_0\in X^{N}$ and
 \[\norm{\u_0}_{X^{N}}\leq C(\mu,h_\star^{-1},h^\star,\beta\norm{\nabla b}_{H^{N-1}},\epsilon\norm{\zeta_0}_{H^{N}})\norm{\nabla\psi_0}_{H^{N}}.\]
Thus Theorem~\ref{T.WP} applies, $(\zeta_{\rm GN},\u_{\rm GN})$ is well-defined, and one can restrict $T$ as in the Proposition to ensure that
 \[ \sup_{t\in[0,T]}\Big(\norm{\zeta_{\rm GN}}_{H^{N}}+\norm{\u_{\rm GN}}_{X^{N}}\Big)\leq {\bf C}_0,\]
 with ${\bf C}_0=C(\mu,h_\star^{-1},h^\star,\beta\norm{\nabla b}_{H^{N+1}},\epsilon\norm{\zeta_0}_{H^{N}},\epsilon\norm{\nabla \psi_0}_{H^{N}})$.
 
 Now, slightly adapting the proof of~\cite[Prop.~5.8]{Lannes} and denoting $h_{\rm ww}\eqdef 1+\epsilon\zeta_{\rm ww}-\beta b$ and
 \[\v_{\rm ww}\eqdef \nabla \psi_{\rm ww}, \qquad \u_{\rm ww}\eqdef \mfT[h_{\rm ww},\beta b]^{-1}(h_{\rm ww}\v_{\rm ww}),\]
 one finds that $(\zeta_{\rm ww},\u_{\rm ww})\in C(0,T;H^N\times X^{N})$ satisfies~\eqref{GN-u} up to remainder terms $r_{\rm ww},\r_{\rm ww}$, with
 \[\sup_{t\in[0,T]}\big(\norm{r_{\rm ww}}_{H^{N-6}}+\norm{\r_{\rm ww}}_{H^{N-6}}\big)\leq \mu^2\, {\bf C}_{\rm ww},\]
with ${\bf C}_{\rm ww}=C(\mu,h_\star^{-1},\beta\norm{ b}_{H^{N}},\epsilon\norm{\zeta_{\rm ww}}_{H^{N}},\epsilon\norm{\nabla \psi_{\rm ww}}_{H^{N}})$. Proposition~\ref{P.GNuvsGNv} immediately extends to non-trivial remainder terms, and it follows that $(\zeta_{\rm ww},\v_{\rm ww})\in C([0,T];H^{N}\times Y^{N})$ satisfies~\eqref{GN-v} up to the small remainder terms $r_{\rm ww},\r_{\rm ww}$. We apply Proposition~\ref{P.stability} and deduce that
 \[\big( \norm{\zeta_{\rm ww}-\zeta_{\rm GN}}_{H^{N-6}}+\norm{\v_{\rm ww}-\v_{\rm GN}}_{Y^{N-6}}\big)(t)\leq \mu^2\, {\bf C}\, t,\]
 with ${\bf C}$ as in the statement. This concludes the proof.

\appendix

\section{Energy estimates from the original formulation}\label{S.direct-estimates}

As mentioned in the introduction, on can obtain energy estimates directly from system~\eqref{GN-u}, rather than from system~\eqref{GN-v}, as carried out in this work. We roughly sketch the different steps below.

\paragraph{Quasilinearization of the system}
Let $(\zeta,\u)\in C([0,T] ;H^{|\alpha|}\times X^{|\alpha|})$ satisfies~\eqref{GN-u}, with $\alpha$ a non-zero multi-index and $T>0$. Assume that $b$ is sufficiently smooth, $|\alpha|$ is sufficiently large, and~\eqref{cond-h0} holds. 
Then  $\zeta_{(\alpha)}\eqdef \partial^\alpha\zeta$ and $\u_{(\alpha)}\eqdef \partial^\alpha\u$ satisfy
\begin{equation}\label{GN-u-quasi}
   \left\{ \begin{array}{l}
   \partial_t\zeta_{(\alpha)}+\epsilon\nabla\cdot(\u \zeta_{(\alpha)}) +\nabla\cdot(h\u_{(\alpha)}) =  r_{(\alpha)}\\ \\
\big(\Id+\mu \T[h,\beta b]\big)\partial_t\u_{(\alpha)} +\nabla\zeta_{(\alpha)}+\epsilon(\u\cdot\nabla)\u_{(\alpha)}
 +\mu\epsilon \Q_{(\alpha)}[h,\beta b,\u]\u_{(\alpha)}=\r_{(\alpha)},
\end{array}\right.
\end{equation}
with $h=1+\epsilon\zeta-\beta b$ and (abusing notations)
\begin{multline*} \Q_{(\alpha)}[h,\beta b,\u]\u_{(\alpha)}\eqdef \frac{-1}{3h}\nabla\Big(h^3\big((\u\cdot\nabla)(\nabla\cdot\u_{(\alpha)})\big)\Big)\\
+\frac{\beta}{2h}\Big(\nabla\big( h^2(\u\cdot\nabla) ( \u_{(\alpha)}\cdot \nabla b)\big) -h^2\big((\u\cdot\nabla)(\nabla\cdot\u_{(\alpha)})\big)\nabla b\Big)
+\beta^2 \big((\u\cdot\nabla)(\u_{(\alpha)}\cdot\nabla b)\big)\nabla b
\end{multline*}
and
where  $(r_{(\alpha)},\r_{(\alpha)})\in C([0,T] ; L^2\times Y^0)$ satisfies
\begin{equation}\label{est-u-r1r2}
\norm{r_{(\alpha)}}_{L^2}+\norm{\r_{(\alpha)}}_{Y^0}\lesssim \ \norm{\zeta}_{H^{ |\alpha|}}+\norm{\u}_{X^{ |\alpha|}}.
\end{equation}
The system~\eqref{GN-u-quasi} satisfied by $(\zeta_{(\alpha)},\u_{(\alpha)})$ is nothing but the linearized system~\eqref{GN-u} around $(\zeta,\u)$, from which order-zero operators have been discarded. The estimate~\eqref{est-u-r1r2} would follow as in the proof of Proposition~\ref{P.quasi}, and in particular using quasilinearization formulas derived in Section~\ref{S.preliminary}.

\paragraph{A priori energy estimates} For sufficiently smooth and finite-energy solutions of~\eqref{GN-u-quasi}, we add the $L^2$-inner product of the first equation with $\zeta_{(\alpha)}$ and the one of the second equation with $h\u_{(\alpha)}$. After some cancellations, integrations by parts and rearrangements, we find
\begin{equation}\label{ODE-u}
\frac{\dd}{\dd t}\mathcal F_{(\alpha)} + \epsilon \mathcal G_{(\alpha)}
= \int_{\RR^d}  r_{(\alpha)}  \zeta_{(\alpha)} +h \r_{(\alpha)} \cdot \u_{(\alpha)} \,\dd x,\end{equation}
where
\[\mathcal F_{(\alpha)}\eqdef \frac12 \int_{\RR^d} \zeta_{(\alpha)}^2  + h|\u_{(\alpha)}|^2  + \ \mu\, h \T[h,\beta b] \u_{(\alpha)} \cdot\u_{(\alpha)} \, \dd x\]
and
\begin{multline*}\mathcal G_{(\alpha)}\eqdef \frac12 \int_{\RR^d} (\nabla\cdot\u) \zeta_{(\alpha)}^2 -\big(\partial_t\zeta+\nabla\cdot (h\u)\big) |\u_{(\alpha)}|^2
-\frac\mu3 \big(3 h^2 \partial_t \zeta +\nabla\cdot (h^3\u)\big)  (\nabla\cdot\u_{(\alpha)})^2\\
+\mu \big(2h\partial_t \zeta+\nabla\cdot ( h^2\u) \big)(\beta\nabla b\cdot\u_{(\alpha)} )\nabla\cdot\u_{(\alpha)}-\mu  \big(\partial_t \zeta+\nabla\cdot(h\u)\big) (\beta\nabla b\cdot\u_{(\alpha)} )^2\, \dd x. \end{multline*}
By Lemma~\ref{L.T-invertible} and Cauchy-Schwarz inequality, we find that
\[ \norm{\zeta_{(\alpha)}}_{L^2}^2+\norm{\u_{(\alpha)}}_{X^0}^2 \lesssim \mathcal F_{(\alpha)} \quad \text{ and } \quad \mathcal G_{(\alpha)} \lesssim \norm{\zeta_{(\alpha)}}_{L^2}^2+\norm{\u_{(\alpha)}}_{X^0}^2.\]
Using~\eqref{est-u-r1r2} and again Cauchy-Schwarz inequality, Gronwall's Lemma to the differential equation~\eqref{ODE-u} yields (locally in time) the control of the energy $\F_{(\alpha)}$. Proceeding as in Sections~\ref{S.energy} and~\ref{S.WP}, one may then set up a Picard iteration scheme which yields the strong local well-posedness of the Cauchy problem for system~\eqref{GN-u}.

\section{Derivation of  the Green-Naghdi system}\label{S.Formulations-Hamiltonian}

Our work is based on a non-standard formulation of the Green-Naghdi system. We would like to motivate the relevance of this formulation (the verification of the equivalence between the different formulations is provided in Proposition~\ref{P.GNuvsGNv}). Below, we formally derive the non-standard formulation of the Green-Naghdi system from the Hamiltonian formulation of the water waves system, by approximating the associated Hamiltonian functional. This study, which was essentially provided in~\cite{CamassaHolmLevermore96}, has the advantage of revealing in a very straightforward way the Hamiltonian structure of the non-standard formulation of the Green-Naghdi system (and therefore the associated conservation laws) and giving a natural physical interpretation of the variables at stake. 

Let us first recall the canonical Hamiltonian structure of the water waves system as brought to light by~\cite{Zakharov68} and Craig-Sulem~\cite{CraigSulemSulem92,CraigSulem93}. Define the following Hamiltonian functional 
\begin{equation}\label{def-H}
\mathcal{H}(\zeta,\psi)\eqdef \frac12\int_{\RR^d} \zeta^2+\frac1\mu \psi\, G^{\mu}[\epsilon\zeta,\beta b] \psi
\end{equation}
where $
\psi(t,X)=\phi(t,X,\epsilon\zeta(t,X))$ is the trace of the velocity potential at the surface, and $G^\mu$ is the Dirichlet-to-Neumann operator, defined by
\[G^\mu: \varphi \mapsto \sqrt{1+\mu|\epsilon\nabla\zeta|^2} (\partial_n \phi)\id{z=\epsilon\zeta}=(\partial_z\phi)\id{z=\epsilon\zeta}-\mu(\epsilon\nabla\zeta)\cdot(\nabla_X\phi)\id{z=\epsilon\zeta},\]
 where $\phi$ is the unique solution (see \eg \cite{Lannes} for a detailed and rigorous analysis) to 
 \begin{equation}\label{Laplace} \left\{\begin{array}{l}
 \mu\Delta_X\phi+\partial_z^2\phi=0 \quad \text{ in } \{(X,z)\in\RR^{d+1}, \  -1+\beta b(X)\leq z\leq \epsilon\zeta(X)\},\\
 \phi(X,\epsilon\zeta(X))=\varphi\quad \text{and }\quad (\partial_z \phi-\mu(\beta \nabla b)\cdot(\nabla_X\phi))(X,-1+\beta b(X))=0.
 \end{array}\right.\end{equation}
 The operator $G^\mu$ is well-defined provided $h\eqdef 1+\epsilon\zeta-\beta b\geq h_\star>0$, and one can then show that the Zakharov/Craig-Sulem formulation of the water waves system simply reads
 \begin{equation}\label{WW-psi}\partial_t\begin{pmatrix} \zeta\\\psi\end{pmatrix}=\begin{pmatrix} 0& \Id \\ -\Id & 0\end{pmatrix}\begin{pmatrix} \delta_\zeta\mathcal H\\\delta_\psi\mathcal H\end{pmatrix}.\end{equation}
 
 If one reformulates (in dimension $d=2$) the above system using, instead of the canonical variables $(\zeta,\psi)$, the variables $\zeta$ and  $\v=(v_1,v_2)^\top\eqdef \nabla\psi $, then one obtains
 \begin{equation}\label{WW-v}\partial_t \begin{pmatrix} \zeta \\ v_1 \\ v_2 \end{pmatrix} =- 
  \begin{pmatrix} 0&  \partial_1  & \partial_2\\
  \partial_1  & 0 & -q \\
  \partial_2 & q & 0\end{pmatrix}
  \begin{pmatrix}
  \delta_\zeta \mathcal{H}  \\ \delta_{v_1} \mathcal{H} \\ \delta_{v_2} \mathcal{H}
    \end{pmatrix} . \end{equation}
 where $q=\frac{\curl \v}{h}$. Of course, in our situation, $q\equiv 0$ since $\v=\nabla\psi$, but this contribution is kept for the analogy with the Euler or Saint-Venant Hamiltonian formalism ; see \eg~\cite{Shepherd90}. Keeping this contribution turns out to be necessary for comparing with the standard formulation of the Green-Naghdi system in the general setting; see Proposition~\ref{P.GNuvsGNv}.
 \medskip
 
Recall that one has the identity~\cite[Prop.~3.35]{Lannes}
\begin{equation}\label{Gvsu} \frac1\mu G^\mu[\epsilon\zeta,\beta b]\psi \ = \ -\nabla\cdot (h\u), \qquad  \u\eqdef \frac1{1+\epsilon\zeta-\beta b}\int_{-1+\beta b}^{\epsilon\zeta}\nabla_X\phi(\cdot,z)\ \dd z. \end{equation}
so that the first equation in~\eqref{WW-psi} or~\eqref{WW-v} simply reads
\[\partial_t\zeta+\nabla\cdot(h\u)=0,\]
which is the first equation of the Green-Naghdi system. The system is then completed by constructing an evolution equation for $\u$, containing only differential operators, which is approximately satisfied by exact solutions of the water waves system, through asymptotic expansions with respect to $\mu\to 0$. This equation has different equivalent formulations in the literature as the equations have been rediscovered several times; in this paper we use~\eqref{GN-u}, originating from~\cite[(26)]{LannesBonneton09} and justified in the sense of consistency in~\cite[Prop.~5.8]{Lannes}.

Our strategy here is different: we obtain equations written with the original variables $\zeta,\psi$ (or $\zeta,\v$) by using an asymptotic expansion of the Hamiltonian functional $\mathcal H$, and plugging it in~\eqref{WW-psi} or~\eqref{WW-v}. The strategy of deriving the Green-Naghdi system using an approximate Hamiltonian functional or Lagrangian is not new: it was already used in particular in~\cite{Whitham67,CraigGroves94} (leading however to an ill-posed system of Green-Naghdi type) and in~\cite{MilesSalmon85} to derive the original Green-Naghdi system; see also~\cite{CamassaHolmLevermore96,KimBaiErtekinEtAl01,ChhayDutykhClamond16,ClamondDutykhMitsotakis17}. 

Let us recall the Dirichlet-to-Neumann expansion~\cite[Remark 3.39]{Lannes}
\begin{equation}\label{dev-G} \frac1\mu G^\mu[\epsilon\zeta,\beta b]\psi=-\nabla\cdot(h\nabla\psi)+\mu\nabla\cdot(h\mathcal T[h,\beta b]\nabla\psi\big)+\O(\mu^2)\end{equation}
with the notation
\[
\mathcal T[h,b]V\eqdef \frac{-1}{3h}\nabla(h^3\nabla\cdot V)+\frac1{2h}\Big(\nabla\big(h^2\nabla b\cdot V\big)-h^2\nabla b\nabla\cdot V\Big)+\nabla b(\nabla b\cdot V).
\]
It would therefore be natural to consider the approximate Hamiltonian functional from~\eqref{def-H}
\[
\mathcal{H}(\zeta,\psi)\approx \frac12\int_{\RR^d} \zeta^2+\psi\Big(-\nabla\cdot(h\nabla\psi)+\mu\nabla\cdot(h\mathcal T[h,\beta b]\nabla\psi\big)\Big).
\]
However, plugging this approximation into~\eqref{WW-psi} or~\eqref{WW-v} yields an ill-posed system (in the sense that the linearized system around the trivial solution $\zeta=0,\psi=0$, in the flat-bottom case, exhibits unstable modes whose amplitude grows exponentially and arbitrarily rapidly for large frequencies). It is interesting to note that the obtained system corresponds to the one exhibited in~\cite[(10)-(11)]{Whitham67} and \cite[(14)-(15)]{CraigGroves94} (in the one-dimension and flat-bottom situation) and, as pointed out in~\cite[(1.8a),(1.8b)]{MilesSalmon85}, it reduces to the original (ill-posed) Boussinesq system when the amplitude is small, that is withdrawing $\O(\mu\epsilon)$ terms. 

The ill-posedness of the aforementioned systems can be traced back from the fact that the approximate Hamiltonian functional is no longer positive, whereas the original one is; see~\cite[Prop.~3.9 and 3.12]{Lannes}. This issue can be avoided as follows: by~\eqref{Gvsu} and \eqref{dev-G}, one has
\[ \u=\nabla\psi-\mu \mathcal T[h,\beta b]\nabla\psi+\O(\mu^2), \quad \text{thus} \quad \nabla\psi=\u+\mu \mathcal T[h,\beta b]\u+\O(\mu^2).\]
Now, we notice that the operator $\mfT[h,\beta b]$ defined by
\[
   \mfT[h,\beta b]\u\ \eqdef\ h\u+\mu h\T[h,\beta b]\u 
\]
is a topological isomorphism (see Lemma~\ref{L.T-invertible}), and therefore
\begin{equation}\label{uvspsi} \u=\mfT[h,\beta b]^{-1}(h\nabla\psi)+\O(\mu^2).\end{equation}
It is now natural to use the following approximation:
\begin{equation}\label{def-HGN}\mathcal{H}(\zeta,\psi)= \frac12\int_{\RR^d} \zeta^2+  (\nabla\psi) \cdot (h\u) \approx \frac12\int_{\RR^d} \zeta^2+( h\nabla\psi)\cdot \mfT[h,\beta b]^{-1}(h\nabla\psi)  \eqdef \mathcal H_{\rm GN}(\zeta,\psi).
\end{equation}
Now, plugging the new approximate Hamiltonian in~\eqref{WW-psi} and~\eqref{WW-v} yields, respectively,
  \begin{equation}\label{GN-psi2}
   \left\{ \begin{array}{l}
   \partial_t\zeta+\nabla\cdot(h\u) =0,\\ \\
\partial_t \psi +\zeta+\frac\epsilon2 \abs{\u}^2=\mu\epsilon\big(\R[h,\u]+\R_b[h,\beta b,\u]\big),
   \end{array}\right.
   \end{equation}
   and
    \begin{equation}\label{GN-vbis}
      \left\{ \begin{array}{l}
      \partial_t\zeta+\nabla\cdot(h\u) =0,\\ \\
   \big(\partial_t+\epsilon\u^\perp \curl\big) \v+\nabla\zeta+\frac\epsilon2\nabla(\abs{\u}^2)=\mu\epsilon\nabla \big(\R[h,\u]+\R_b[h,\beta b,\u]\big),
      \end{array}\right.
      \end{equation}
where $\v=\nabla\psi$, $\u\eqdef \mfT[h,\beta b]^{-1}(h\nabla\psi)$, $\curl (v_1,v_2)\eqdef \partial_1v_2-\partial_2v_1$, $(u_1,u_2)^\perp\eqdef(-u_2,u_1)$, and
      \begin{align*}
      \R[h,\u]&\eqdef \frac{\u}{3h}\cdot\nabla(h^3\nabla\cdot\u)+\frac12 h^2(\nabla\cdot\u)^2, \\
       \R_b[h,\beta b,\u]&\eqdef -\ \frac12  \left(\frac{\u}{h}\cdot\nabla\big(h^2(\beta\nabla b\cdot\u)\big)+ h(\beta\nabla b\cdot\u) \nabla\cdot\u+(\beta\nabla b\cdot\u)^2\right).
      \end{align*}
System~\eqref{GN-vbis} is the system we study, and we show in Proposition~\ref{P.GNuvsGNv} that it is equivalent to the standard formulation of the Green-Naghdi system, namely~\eqref{GN-u}. System~\eqref{GN-psi2} is immediately deduced in the situation $\curl \v=0$, and inherit the canonical Hamiltonian structure of the Zakharov/Craig-Sulem formulation of the water waves system. Notice that one can rewrite system~\eqref{GN-vbis} as
     \begin{equation}\label{GN-vter}
      \left\{ \begin{array}{l}
      \partial_t\zeta+\nabla\cdot(h\u) =0,\\ \\
   \big(\partial_t+\epsilon\u^\perp \curl\big) \v+\nabla\left(\zeta+\epsilon\u\cdot\v-\frac\epsilon2\u\cdot\u-\frac{\epsilon\mu}{2}w^2\right)=0,
      \end{array}\right.
      \end{equation}
  with $w= (\beta\nabla b)\cdot\u-h\nabla\cdot\u$.
To our knowledge, system~\eqref{GN-vter}, as a new formulation for the Green-Naghdi system, has been first brought to light in~\cite[(4.3)-(4.4)]{CamassaHolmLevermore96} (in the flat bottom case, the formulation~\eqref{GN-psi2} appears in~\cite[(6.5)]{MilesSalmon85} and \cite[(9.12)]{Salmon88} but is quickly disregarded in favor of the aforementioned ill-posed model). It appears also in~\cite[(5.14)-(5.15)]{KimBaiErtekinEtAl01} (in the irrotational setting),~\cite[(30)]{GavrilyukKalischKhorsand15} (in the flat bottom situation) and~\cite[(2.9)-(2.34)-(2.35)]{Matsuno16}. As a matter of fact, the latter references point out that system~\eqref{GN-vter} echoes a formulation of the water waves system. Indeed, system~\eqref{WW-v} may be equivalently written as
     \begin{equation}\label{WW-vbis}
      \left\{ \begin{array}{l}
      \partial_t\zeta+\nabla\cdot(h\u) =0,\\ \\
   \big(\partial_t+\epsilon U^\perp \curl\big)  \v+\nabla\left(\zeta+\epsilon U\cdot\v-\frac\epsilon2 U\cdot U-\frac{\epsilon\mu}{2}w^2\right)=0,
      \end{array}\right.
\end{equation}
  where $\v,U,w$ are determined from the velocity potential, $\phi$, by
  \[\v=\nabla\big(\phi\id{z=1+\epsilon\zeta}\big), \quad  \u=\int_{\beta b}^{1+\epsilon\zeta}\nabla\phi\ \dd z, \quad U=(\nabla_X\phi)\id{z=1+\epsilon\zeta}, \quad w=\frac1\mu(\partial_z\phi)\id{z=1+\epsilon\zeta}.\]
System~\eqref{WW-vbis} is determined by the sole variables $\zeta$ and $\v$ (and $b$), after solving the Laplace problem~\eqref{Laplace}. Now, by the identity~\eqref{Gvsu} and chain rule, one has
\[
 U=\v-\mu\epsilon w\nabla\zeta \quad \text{ and } \quad w=\epsilon U\cdot\nabla \zeta-\nabla\cdot(h\u).
\]
It follows in particular
\[U=\v+\O(\mu)=\u+\O(\mu) \quad \text{ and } \quad w= (\beta\nabla b)\cdot\u-h\nabla\cdot\u+\O(\mu), \]
and therefore~\eqref{GN-vter} is immediately seen as a $\O(\mu^2)$ approximation of~\eqref{WW-vbis}, with the abuse of notation $\u\eqdef \mfT[h,\beta b]^{-1}(h\v)$ and $ w\eqdef  (\beta\nabla b)\cdot\u-h\nabla\cdot\u$ being justified by the above approximations.


\bibliographystyle{abbrv}

\end{document}